\numberwithin{equation}{section}
\def\enumfix{%
\if@inlabel
 \noindent \par\nobreak\vskip-\topsep\hrule\@height\z@
\fi}
\let\olditemize\itemize
\def\itemize{\enumfix\olditemize}
\begin{document}

\title{Derived Mackey functors and $\Cyclic_{p^n}$-equivariant cohomology}

\author{David Ayala, Aaron Mazel-Gee, and Nick Rozenblyum}

\date{\today}

\begin{abstract}
We establish a novel approach to computing $G$-equivariant cohomology for a finite group $G$, and demonstrate it in the case that $G = \Cyclic_{p^n}$.

For any commutative ring spectrum $R$, we prove a symmetric monoidal reconstruction theorem for genuine $G$-$R$-modules, which records them in terms of their geometric fixedpoints as well as gluing maps involving their Tate cohomologies. This reconstruction theorem follows from a symmetric monoidal stratification (in the sense of \cite{AMR-strat}); here we identify the gluing functors of this stratification in terms of Tate cohomology.

Passing from genuine $G$-spectra to genuine $G$-$\ZZ$-modules (a.k.a.\! derived Mackey functors) provides a convenient intermediate category for calculating equivariant cohomology. Indeed, as $\ZZ$-linear Tate cohomology is far simpler than $\SS$-linear Tate cohomology, the above reconstruction theorem gives a particularly simple algebraic description of genuine $G$-$\ZZ$-modules. We apply this in the case that $G = \Cyclic_{p^n}$ for an odd prime $p$, computing the Picard group of genuine $G$-$\ZZ$-modules (and therefore that of genuine $G$-spectra) as well as the $\RO(G)$-graded and Picard-graded $G$-equivariant cohomology of a point.
\end{abstract}

\maketitle

\setcounter{tocdepth}{2}
\tableofcontents

\setcounter{section}{-1}

\section{Introduction}
\label{section.intro}

\subsection{Overview}
\label{subsection.intro.overview}

Let $G$ be a finite group. 
Let $X$ be a topological space equipped with an action by $G$.
There is a natural notion of $G$-equivariant cohomology of $X$,
whose output is a collection of Mackey functors of abelian groups
(see e.g.\!~\cite{GM-handbook} for a motivated account of equivariant cohomology; see also~\Cref{obs.mackey.for.genuine.G.objects} below for a recollection of Mackey functors that is particularly suited to our purposes):
\[
\sH^{V}_G(X)
\in
\Mack_G(\Ab)
~,
\]
indexed by a virtual representation $V \in \RO(G)$.\footnote{
The usual integer grading is given by multiples of the trivial representation.
}
(The $\RO(G)$-grading is of central importance.
For instance, it plays a key role in equivariant duality \cite{Wirth-eqdual,LMM-ROG,CostWan-eqPD}.)
Equivariant cohomology is quite difficult to compute, even in the case that $X$ is a single point.
%
%
%

In this paper, we establish a novel approach to computing equivariant cohomology. 
To explain our approach, let us briefly outline a perspective on why equivariant cohomology is more difficult than non-equivariant cohomology.

Recall that ordinary cohomology of a space takes values in abelian groups;
it can be computed as the cohomology of a cochain complex of abelian groups.  
From the point of view of $\infty$-categories, this can be explained by the identification of the $\ZZ$-linearization of the stable homotopy $\infty$-category as the derived $\infty$-category of its heart:
\[
\Mod_\ZZ
~:=~
\ZZ \otimes \Spectra 
~\simeq~ 
\bD(\Ab)
~\simeq~
\bD\bigl( (\ZZ \otimes \Spectra)^{\heartsuit} \bigr)
~.
\]
As a result, standard homological algebra tools are directly applicable for computations of ordinary cohomology.  

Now, consider the $\ZZ$-linearization $\ZZ \otimes \Spectra^{\gen G}$ of the equivariant stable homotopy $\infty$-category $\Spectra^{\gen G}$; following \Cref{obs.mackey.for.genuine.G.objects}, we denote this by the equivalent $\infty$-category
\[
\Mack_G(\Mod_\ZZ)
\simeq
\ZZ \otimes \Spectra^{\gen G}
~.
\]
Now, equivariant cohomology takes values in the category $\Mack_G(\Ab)$ of Mackey functors of abelian groups, which is the heart of $\Mack_G(\Mod_\ZZ)$:
\[
\Mack_G(\Ab)
~\simeq~
\Mack_G(\Mod_\ZZ)^{\heartsuit}
~.
\]
However, $\Mack_G(\Mod_\ZZ)$ is \emph{not} the derived $\infty$-category of its heart:
\[
\Mack_G(\Mod_\ZZ)
~:=~
\ZZ \otimes \Spectra^{\gen G}
~\not\simeq~ 
\bD\bigl(\Mack_G(\Ab) \bigr)
~\simeq~
\bD\bigl( \Mack_G(\Mod_\ZZ)^{\heartsuit} \bigr)
~.
\]
In this sense, given an equivariant spectrum, it is not possible to systematically associate a cochain complex of Mackey functors that computes its equivariant cohomology.
Therefore, standard homological algebra tools do not obviously apply for computations of equivariant cohomology.

The triangulated category corresponding to the stable $\infty$-category $\Mack_G(\Mod_\ZZ)$ was first considered by Kaledin purely algebraically in \cite{Kaledin-Mack}, under the name \textit{derived Mackey functors}. He suggested that it should receive a functor from $\Spectra^{\gen G}$ compatible with both geometric and categorical fixedpoints; this is essentially the content of \cite[Conjectures 8.10 and 8.11]{Kaledin-Mack}. The work in this paper may be seen as resolving and applying those conjectures.

From our point of view, it is $\Mack_G(\Mod_\ZZ)$ that is a natural intermediate target for equivariant cohomology.
While it is not the derived $\infty$-category of its heart, we prove as \Cref{intro.thm.gluing.functors} below that it can be constructed (in a precise sense) from finitely many $\infty$-categories that are the derived $\infty$-categories of their hearts.  
Using this, we can import tools from standard homological algebra for computations in the equivariant stable homotopy category.

Specifically, we apply the theory of stratifications as developed in \cite{AMR-strat}:
for $\cX$ a presentable stable $\infty$-category, a \textit{stratification} of it over a poset $\pos$ is a collection $\{\cZ_p \subseteq \cX\}_{p\in \pos}$ of full subcategories satisfying certain natural conditions.
Given a stratification of $\cX$, we can consider the ``associated graded'' stable $\infty$-categories $\{\cX_p\}_{p\in \pos}$.
We prove as \cite[Theorem \ref{strat:intro.thm.cosms}]{AMR-strat} that, in favorable situations such as when the poset $\pos$ is finite, $\cX$ can be reconstructed from its associated graded $\infty$-categories together with, for each $p<q$ in $\pos$, a gluing functor
\[
\cX_p
\xra{\Gamma^p_q}
\cX_q
~,
\]
as well as coherence data.  

In \cite[Theorem \ref{strat:intro.thm.gen.G.spt}]{AMR-strat}, we constructed a stratification of the $\infty$-category $\Spectra^{\gen G}$ of genuine $G$-spectra by the poset $\pos_G$ of conjugacy classes of subgroups of $G$, called the \bit{geometric stratification}.
There, we identified the associated graded $\infty$-categories as
\[
(\Spectra^{\gen G})_{[H]}
~\simeq~
\Fun ( \sB \Weyl(H) , \Spectra)
~,
\]
where $\Weyl(H) := \Normzer(H)/H$ is the Weyl group (i.e.\! the normalizer of $H$ in $G$ modulo $H$).
Moreover, the projection functors to the associated gradeds
\[
\Spectra^{\gen G}
\xra{\Phi^H}
(\Spectra^{\gen G})_{[H]} \simeq \Fun ( \sB \Weyl(H) , \Spectra)
\]
are given by geometric $H$-fixedpoints.  
In particular, it follows that categorical fixedpoints can be expressed in terms of geometric fixedpoints and gluing data.

In fact, the stratification of $\Spectra^{\gen G}$ is a \textit{symmetric monoidal} stratification, so that we are also able to describe the symmetric monoidal structure of $\Spectra^{\gen G}$ in terms of those of its strata and gluing functors \cite[Theorem \ref{strat:intro.thm.O.mon.reconstrn}]{AMR-strat}.

However, in~\cite{AMR-strat}, we did not give an explicit general formula for the gluing functors of this stratification.  
Such a general formula is supplied by \Cref{intro.thm.gluing.functors} below in terms of a variant of Tate cohomology.

In fact, we prove \Cref{intro.thm.gluing.functors} in somewhat more generality. 
For $\cR$ a presentable stable $\infty$-category, consider the tensor product in $\PrL$
\[
\cR^{\gen G}
~:=~
\cR \otimes \Spectra^{\gen G}
~\simeq~
\Mack_G(\cR)
\]
(again see \Cref{obs.mackey.for.genuine.G.objects} for the equivalence).
The stratification of $\Spectra^{\gen G}$ over $\pos_G$ induces a stratification of $\cR^{\gen G}$ over $\pos_G$, with associated graded $\infty$-categories
\[
\Mack_G(\cR)_{[H]}
~\simeq~
\Fun ( \sB \Weyl(H) , \cR)
~.
\]
In particular, taking $\cR = \Mod_\ZZ$, we have that the $\ZZ$-linearization of the stable equivariant homotopy $\infty$-category $\Mack_G(\Mod_\ZZ)$ has a stratification by $\pos_G$, with associated graded $\infty$-categories given by
\[
\Mack_G(\Mod_\ZZ)_{[H]}
~\simeq~
\Fun ( \sB \Weyl(H) , \Mod_\ZZ)
~\simeq~
\bD( {\sf Rep}_{\Weyl(H) }( \Ab ) )
~,
\]
the derived $\infty$-category of abelian groups with an action by $\Weyl(H)$.

Now, equivariant cohomology is by definition the cohomology of categorical fixedpoints of genuine $G$-spectra.  By definition, the $\ZZ$-linearization functor
\begin{equation}\label{functor.from.G.spt.to.DMack}
\Spectra^{\gen G}
\longra
\Mack_G(\Mod_\ZZ)
\end{equation}
is compatible with both geometric and categorical fixedpoints. In practice, geometric fixedpoints are much easier to compute than categorical fixedpoints. In particular, the equivariant suspension spectrum functor takes fixedpoints to \emph{geometric} fixedpoints, and moreover the geometric fixedpoints functors are symmetric monoidal.

The compatibility of the $\ZZ$-linearization functor with both types of fixedpoint functors implies that the passage from geometric fixedpoints to categorical fixedpoints can be performed in the much simpler context of derived Mackey functors. Ultimately, this yields an algebraic description of the equivariant cohomology of a genuine $G$-space in terms of cochains on its fixedpoint spaces.

By contrast, a standard technique in equivariant homotopy theory is to compute equivariant cohomology using resolutions of Mackey functors (see e.g.\! \cite{Lewis-ROG-linear-Zp,Green-projmackey,Zeng-eq-EM}).  This is tantamount to performing homological algebra in the derived $\infty$-category $\bD(\Mack_G(\Ab))$ of the abelian category $\Mack_G(\Ab)$ of Mackey functors, which does not enjoy the favorable properties of $\Mack_G(\Mod_\ZZ)$.

Our first main result explicitly identifies the gluing functors of this stratification of $\Mack_G(\cR)$ in the case that $\cR$ is presentably symmetric monoidal and rigidly-compactly generated\footnote{That is, $\cR$ is a compactly generated presentably symmetric monoidal stable $\infty$-category whose compact and dualizable objects coincide.} (such as $\cR = \Mod_\ZZ$ and $\cR = \Spectra$).  

\begin{maintheorem}[\Cref{thm.stratn.of.genuine.G.objects}]
\label{intro.thm.gluing.functors}
For any containment $H  \subset  K$ between subgroups of $G$, the corresponding gluing functor
\[
\Fun ( \sB \Weyl(H) , \cR)
\xra{~\Gamma^H_K~}
\Fun ( \sB \Weyl(K) , \cR)
\]
of the stratification of $\Mack_G(\cR)$ evaluates as
\[
\Gamma^H_K
\colon
E
\longmapsto
\bigoplus_{[g] \in \Weyl(H) \backslash C(H,K) / \Weyl(K)} \Ind_{(\Normzer(H) \cap \Normzer(gKg^{-1})) / (gKg^{-1})}^{\Weyl(K)} E^{\tate ( gKg^{-1} / H) }
~,
\]
where
\begin{itemize}

\item $C(H,K)$ denotes a certain subset of $G/K$ that carries a natural $(\Weyl(H),\Weyl(K))$-bimodule structure (\Cref{notn.C.sub.H.upper.K}),

\item $(-)^{\tate (gKg^{-1}/H)}$ denotes a variant of Tate cohomology (\Cref{defn.proper.tate}), and

\item we implicitly use the isomorphism $\Weyl(gKg^{-1}) \xra{\cong} \Weyl(K)$.

\end{itemize}
\end{maintheorem}

\begin{remark}
If the finite group $G$ is abelian, the above gluing functors simplify as
$
\Gamma^H_K
\colon
E
~\mapsto~
E^{\tate (K/H)}
.
$

\end{remark}

Note that the functors $(-)^{\tate (gKg^{-1}/H)}$ do not preserve colimits, and as a result the gluing functors of the stratification of $\Mack_G(\cR)$ are \textit{not} given by tensoring those of $\Spectra^{\gen G}$ with $\cR$.

The decisive advantage of working with $\Mod^{\gen G}_\ZZ$ is that its stratification is dramatically simpler than that of $\Spectra^{\gen G}$. 
In addition to the associated gradeds being the derived $\infty$-categories of their hearts, the gluing functors are substantially simpler.
Indeed, the failure of the gluing functors of $\Mod_\ZZ^{\gen G}$ to be tensored up from those of $\Spectra^{\gen G}$ is a feature, and not a bug: Tate cohomology in $\Mod_\ZZ$ is far simpler than Tate cohomology in $\Spectra$. 
In the case that $G = \Cyclic_{p^n}$, this simplicity is especially pronounced, due additionally to the Tate vanishing results of Nikolaus--Scholze \cite{NS}.\footnote{Whereas in general the gluing functors of a stratification only laxly compose, in that of $\Mod^{\gen \Cyclic_{p^n}}_\ZZ$ they strictly compose. More than that, all of its nontrivial composite gluing functors are zero. Neither of these facts is the case for the stratification of $\Spectra^{\gen \Cyclic_{p^n}}$.
These facts support a relatively simple description of the $\infty$-category $\Mod_\ZZ^{\gen \Cyclic_{p^n}}$, which we articulate as \Cref{intro.thm.gen.Cpn.Z.mods}.
} All in all, we obtain the following.


\begin{maintheorem}[\Cref{thm.stratn.of.genuine.Cpn.Z.mods}]
\label{intro.thm.gen.Cpn.Z.mods}
The stratification of $\Mod^{\gen \Cyclic_{p^n}}_\ZZ$ determines an equivalence between presentably symmetric monoidal stable $\infty$-categories:
\[
\Mod^{\gen \Cyclic_{p^n}}_\ZZ
\xlongra{\sim}
\lim \left(
\begin{tikzcd}[ampersand replacement=\&]
\Mod_{\ZZ[\Cyclic_{p^n}]}
\arrow{rd}[sloped, swap]{(-)^{\st \Cyclic_p}}
\&
\Ar \left(\Mod_{\ZZ[\Cyclic_{p^{n-1}}]} \right)
\arrow{d}{\ev_1}
\arrow{rd}[sloped, swap]{(-)^{\st \Cyclic_p} \circ \ev_0}
\&
\cdots
\arrow{d}{\ev_1}
\&
\cdots
\arrow{rd}[sloped, swap]{(-)^{\st \Cyclic_p} \circ \ev_0}
\&
\Ar \left( \Mod_\ZZ \right)
\arrow{d}{\ev_1}
\\
\&
\Mod_{\ZZ[\Cyclic_{p^{n-1}}]}
\&
\Mod_{\ZZ[\Cyclic_{p^{n-2}}]}
\&
\cdots
\&
\Mod_\ZZ
\end{tikzcd} \right)
~,
\]
where $\Mod_{\ZZ[\Cyclic_{p^k}]}$ is the derived $\infty$-category $\bD( {\sf Rep}_{\Cyclic_{p^k}}(\Ab))$.  In particular, a genuine $\Cyclic_{p^n}$-$\ZZ$-module $E \in \Mod^{\gen \Cyclic_{p^n}}_\ZZ$ is equivalent to the data of
\begin{itemize}

\item its geometric $\Cyclic_{p^s}$-fixedpoints
\[
E_s
:=
\Phi^{\Cyclic_{p^s}}(E)
\in
\Mod_{\ZZ[\Cyclic_{p^{n-s}}]}
\]
for all $0 \leq s \leq n$, along with

\item its gluing maps
\[
E_s
\xra{\gamma_{s-1,s}^E}
(E_{s-1})^{\st \Cyclic_p}
\]
in 
$
\Mod_{\ZZ[\Cyclic_{p^{n-s-1}}]}
$
for all $1 \leq s \leq n$.

\end{itemize}
Moreover, this description is compatible with symmetric monoidal structures. 

\end{maintheorem}

\noindent \Cref{intro.thm.gen.Cpn.Z.mods} is closely related to \cite[Corollary II.4.7]{NS} (see also \cite[Remark II.4.8]{NS}).\\

\subsection{Computations}
\label{subsection.intro.computations}
As a demonstration of our general machinery, for $G=\Cyclic_{p^n}$ with $p$ an odd prime, we compute:
\begin{enumerate}
\item the Picard group $\Pic(\Mack_G(\Mod_\ZZ))$ and the homomorphism
\[
\RO(G)
\longra
\Pic(\Mack_G(\Mod_\ZZ))
~,
\]
as well as
\item the $\Pic(\Mack_G(\Mod_\ZZ))$-graded cohomology of a point (and hence in particular the $\RO(G)$-graded cohomology of a point).
\end{enumerate}
In fact, the recent paper \cite{Krause-Pic} proves that the homomorphism
\[
\Pic(\Spectra^{\gen G})
\xra{\Pic\Cref{functor.from.G.spt.to.DMack}}
\Pic(\Mack_G(\Mod_\ZZ))
\]
is an isomorphism. Consequently, our computations explicitly identify the Picard group of $\Spectra^{\gen \Cyclic_{p^n}}$.
For comparison, we survey some analogous existing computations in \Cref{subsection.relation.with.lit}.

Although the $\RO(G)$-grading of equivariant cohomology has become standard in the literature, it is more natural to grade over the Picard group $\Pic(\Mod^{\gen G}_\ZZ)$ of genuine $G$-$\ZZ$-modules.\footnote{Recall the isomorphism $\Pic(\Spectra^{\gen G}) \xra{\cong} \Pic(\Mod^{\gen G}_\ZZ)$ of \cite{Krause-Pic}, which implies that this is equivalent to grading over $\Pic(\Spectra^{\gen G})$.} Let $X$ be a $G$-space. For a Picard element $L \in \Pic(\Mod^{\gen G}_\ZZ)$ and a subgroup $H \subseteq G$, we write
\[
\sC^L_G(X)(H)
:=
\ulhom_{\Mod^{\gen G}_\ZZ} ( \Sigma^\infty_G (X \times G/H)_+ \otimes \ZZ , L \otimes_\ZZ \ul{\ZZ} )
\in
\Mod_\ZZ
\]
for the indicated hom-$\ZZ$-module (where we consider $\ul{\ZZ} \in \Mack_G(\Ab) \subset \Mack_G(\Mod_\ZZ) \simeq \Mod_\ZZ^{\gen G}$). Using this, we define $\Pic(\Mod^{\gen G}_\ZZ)$-graded equivariant cohomology as
\[
\sH^{i+L}_G(X)(H)
:=
\pi_{-i}(\sC^L_G(X)(H))
\in
\Ab
~.
\]
This recovers $\RO(G)$-graded equivariant cohomology via pullback along the composite abelian group homomorphism
\begin{equation}
\label{homomorphism.from.ROCpn.to.Pic}
\RO(G)
\xra{V \longmapsto \SS^V}
\Pic( \Spectra^{\gen G} )
\xra{(-) \otimes \ZZ}
\Pic(\Mod^{\gen G}_\ZZ)
~.
\end{equation}

We have the following two computational results. First, we compute the Picard group of genuine $\Cyclic_{p^n}$-$\ZZ$-modules.

\begin{maintheorem}[Theorems \ref{thm.picard.group} and \ref{thm.from.reps.to.Pic}]
\label{intro.thm.Pic.of.gen.Cpn.Z.mods} Let $p$ be an odd prime.
There is an isomorphism between abelian groups:
\[
\ZZ^{\oplus (n+1)}
\oplus
\left(
\bigoplus_{s=1}^n
(\ZZ/p^{n-s+1})^\times / \{ \pm 1 \}
\right)
\xra{~\cong~}
\Pic(\Mod^{\gen \Cyclic_{p^n}}_\ZZ)
~.
\]
Furthermore, the resulting homomorphism
\[
\RO(\Cyclic_{p^n})
\xra{~\Cref{homomorphism.from.ROCpn.to.Pic}~}
\Pic(\Mod^{\gen \Cyclic_{p^n}}_\ZZ)
~\cong~
\ZZ^{\oplus (n+1)}
\oplus
\left(
\bigoplus_{s=1}^n
(\ZZ/p^{n-s+1})^\times / \{ \pm 1 \}
\right)
\]
is given on irreducibles (which freely generate $\RO(\Cyclic_{p^n})$) by
\[
\rho_\triv
\longmapsto
{(e_0,\vec{1})}
\qquad
\text{and}
\qquad
\rho_j
\longmapsto
{
\left(
~
2 e_0 - e_{\nu(j)+1}
~,~
\left( \frac{j}{p^{\nu(j)}} \right)_{\nu(j)+1}
\right)
}
~,
\]
where
\begin{itemize}

\item $\rho_\triv$ denotes the trivial (1-dimensional) representation,

\item $\rho_j$ denotes the 2-dimensional representation in which the generator of $\Cyclic_{p^n}$ acts by rotation by $2\pi  j /p^n$ (for $1 \leq j < p^n$), 

\item 
$\{e_0,\dots,e_n\}\subset \ZZ^{\oplus (n+1)}$ is the standard basis,

\item
$\vec{1} \in \bigoplus_{s=1}^n
(\ZZ/p^{n-s+1})^\times / \{ \pm 1 \}$
is the identity element,

\item
$\nu(j)$ is the $p$-adic valuation of $j$ (i.e., $\nu(j)$ is the largest integer such that $p^{\nu(j)}$ divides $j$),

\item
$
\left( \frac{j}{p^{\nu(j)}} \right)_{\nu(j)+1}
\in 
\bigoplus_{s=1}^n
(\ZZ/p^{n-s+1})^\times / \{ \pm 1 \}
$ 
is the image of the element $\frac{j}{p^{\nu(j)}}$ under the canonical homomorphism $(\ZZ/p^{n-(\nu(j)+1)+1})^\times  \to \bigoplus_{s=1}^n
(\ZZ/p^{n-s+1})^\times / \{ \pm 1 \}$.


\end{itemize}
\end{maintheorem}

%
%
%
%
%
%

\noindent 
Additionally, we compute the $\Pic(\Mod^{\gen \Cyclic_{p^n}}_\ZZ)$-graded cohomology of a point.

\begin{maintheorem}[\Cref{thm.describe.ho.Mod.Z.valued.mackey.functor}]
\label{intro.thm.cohomology} 
Let $p$ be an odd prime.
Denote by ${\rm D}(\Ab)$ the (ordinary) derived category of abelian groups.
For each $L \in \Pic(\Mod^{\gen \Cyclic_{p^n}}_\ZZ)$, there is an explicit
chain-level description of the ${\rm D}(\Ab)$-valued Mackey functor
\[
\sC^L_{\Cyclic_{p^n}}(\pt)
\in
\Mack_{\Cyclic_{p^n}}\left ({\rm D}\left(\Ab\right) \right)
\]
whose $i\th$ cohomology is the (ordinary) Mackey functor 
$\sH^{i+L}_{\Cyclic_{p^n}}(\pt)$.

\end{maintheorem}

Even in the special case that $n=1$ (and restricting to $\RO(\Cyclic_p)$), \Cref{intro.thm.cohomology} gives a new proof of Stong's classical calculation of the $\RO(\Cyclic_p)$-graded cohomology of a point. We refer the reader to \Cref{subsection.relation.with.lit} for a discussion of related literature.

\subsection{Miscellaneous remarks}
\label{subsection.misc.rmks}

\begin{remark}
A key ingredient in the proof of \Cref{intro.thm.cohomology} is an explicit description of the constant Mackey functor $\ul{\ZZ} \in \Mod^{\gen \Cyclic_{p^n}}_\ZZ$ in terms of \Cref{intro.thm.gen.Cpn.Z.mods}, i.e.\! in terms of its geometric $\Cyclic_{p^r}$-fixedpoints and gluing maps (\Cref{prop.describe.gluing.diagram.of.Z.underline}). As a consequence of this computation, we obtain an equivalence
\[
\Phi^{\Cyclic_p} (\ul{\ZZ})
\simeq
\THH(\FF_p)
~,
\]
of which we would be very interested to have a conceptual description (see \Cref{cor.identify.THH.FP} \and \Cref{rmk.wonder.about.significance.of.Bokstedt}).
\end{remark}

\begin{remark}
\label{rmk.why.Zp.n.minus.r.shows.up}
Fix any $1 \leq s \leq n$. The summand $(\ZZ/p^{n-s+1})^\times / \{ \pm 1 \} \subseteq \Pic(\Mod^{\gen \Cyclic_{p^n}}_\ZZ)$ appearing in \Cref{intro.thm.Pic.of.gen.Cpn.Z.mods} arises from the homogeneous invertible elements of the ring $\pi_*(\ZZ^{\st \Cyclic_p})^{\htpy \Cyclic_{p^{n-s}}}$. Indeed, for any commutative ring $R$, there is a commutative monoid homomorphism
\[
\ZZ \times \prod_{s=1}^n (\pi_* (  R^{\st \Cyclic_p})^{\htpy \Cyclic_{p^{n-s}}} )^\times_\homog
\longra
\pi_0 (\iota_0 (  \Mod^{\gen \Cyclic_{p^n}}_R) )
\]
given by the evident generalization of \Cref{notn.potential.picard.elt.K} (see also \Cref{obs.multiplicativity.of.K.bullet}). In the case that $R = \ZZ$, this surjects onto the Picard group $\Pic(\Mod^{\gen \Cyclic_{p^n}}_\ZZ) \subseteq \pi_0 ( \iota_0 ( \Mod^{\gen \Cyclic_{p^n}}_\ZZ ) )$ as a result of the fact that every Picard element of $\Mod_\ZZ^{\htpy \Cyclic_{p^{n-s}}}$ is trivial up to de/suspension.
\end{remark}

\begin{remark}
Our methods allow for the computation of $\Pic(\Mod^{\gen G}_R)$ for more general finite groups $G$ and commutative ring spectra $R \in \CAlg(\Spectra)$ (generalizing \Cref{intro.thm.Pic.of.gen.Cpn.Z.mods}).\footnote{In the notation of \cite[\S\ref{strat:subsection.intro.eq.spt}]{AMR-strat}, this may be seen as the Picard group of the fiber product $\BBGG \times_{\Spec(\SS)} \Spec(R)$.} For instance, in the case that $R = \QQ$, we have that
\[
\Pic(\Mod^{\gen \Cyclic_{p^n}}_\QQ)
\cong
\prod_{s=0}^n \Pic(\Mod^{\htpy \Cyclic_{p^{n-s}}}_\QQ)
\cong
\ZZ^{\oplus (n+1)}
\]
as a result of the fact that the Tate construction vanishes on $\QQ$-modules.

Likewise, our methods allow for more general computations in equivariant cohomology (generalizing \Cref{intro.thm.cohomology}). Specifically, one can vary the (finite) group $G$, the $G$-space, and the coefficients. Moreover, we expect that one can use our techniques to describe the multiplicative structure (i.e.\! the Green functor) on equivariant cohomology. This would involve a more careful analysis of the multiplicative structure of Tate cohomology than is done in this paper.

We would be very interested to see any such computations along these lines.
\end{remark}

\begin{remark}
\label{rmk.use.strat.freely}
Although the theory of stratifications developed in \cite{AMR-strat} is a crucial ingredient in our work here, it does not play a major role from an expositional point of view: its main consequence that we use here is the explicit description of genuine $\Cyclic_{p^n}$-$\ZZ$-modules (as a symmetric monoidal $\infty$-category) of \Cref{intro.thm.gen.Cpn.Z.mods}. So, we use the theory freely here, and refer the interested reader to \cite[\S \ref{strat:subsection.intro.detailed.overview}]{AMR-strat} for a more detailed overview.
\end{remark}

\begin{remark}
As explained above, our work applies to the geometric stratification of $\cR^{\gen G}$. However, there exist other interesting stratifications of $\cR^{\gen G}$. For instance, by \cite[Theorem \ref{strat:intro.thm.balmer}]{AMR-strat}, $\cR^{\gen G}$ also admits an \textit{adelic stratification} over its Balmer spectrum (which is also a symmetric monoidal stratification). The Balmer spectra of $\Spectra^{\gen G}$ and $\Mod^{\gen G}_\ZZ$ are respectively studied in \cite{BS-BalmergenuineSmods} and \cite{PSW-BalmergenuineZmods}.
\end{remark}

\subsection{Relations with existing literature}
\label{subsection.relation.with.lit}

As mentioned in \Cref{subsection.intro.overview}, the importance of derived Mackey functors (i.e.\! genuine $G$-$\ZZ$-modules) goes back to Kaledin \cite{Kaledin-Mack}, who (in different terms) studied its geometric stratification. The idea that genuine $G$-objects can be expressed in terms of their geometric fixedpoints stems from the work of Greenlees and May; see in particular \cite{Greenlees-thesis,GreenMay-Tate}. There is also much work on similar expressions of rational $G$-spectra (which are simpler because the relevant Tate constructions vanish rationally), notably the reconstruction results of Greenlees--Shipley \cite{GreenShip}.  More recent works in this direction include \cite{MNN,Saul-strat}; see also \cite[Remark II.4.8]{NS}.

The first computation of $\RO(G)$-graded cohomology was for $G = \Cyclic_p$, due to Stong (see \cite{Lewis-ROG-linear-Zp}). The works \cite{HHR-kervaire,HHR-C4KO} of Hill--Hopkins--Ravenel give partial computations for $G = \Cyclic_{2^n}$, which play an essential role in their resolution of the Kervaire invariant one problem. Further computations include the works \cite{Zeng-eq-EM,Georgakop-C4} of Zeng and Georgakopoulos for $G = \Cyclic_{p^2}$, as well as the works \cite{HollerKriz-even,HollerKriz-odd} of Holler--Kriz for $G = (\Cyclic_p)^{\times n}$ (with coefficients in $\ul{\ZZ/p}$ and restricting to actual (not virtual) representations). Georgakopoulos also gives a computer program for $G = \Cyclic_{p^n}$.

As mentioned previously, in \cite{Krause-Pic} Krause proves the isomorphism $\Pic(\Spectra^{\gen G}) \xra{\cong} \Pic(\Mod^{\gen G}_\ZZ)$ for any finite group $G$, and gives a partial computation of this Picard group in a number of examples: $\Cyclic_p$, $(\Cyclic_p)^{\times 2}$, $\sD_{2p}$, and $\sA_5$.\footnote{Namely, Krause computes $\Pic(\Spectra^{\gen G})$ in these cases up to unaddressed extension problems, which arise due to the inductive nature of the approach.} Moreover, Fausk--Lewis--May \cite{FLM-PicSpgG} give an algebraic description of $\Pic(\Spectra^{\gen G})$ in terms of the Picard group of the Burnside ring. Using this and the results of tom Dieck--Petrie \cite{tDP-homotopyreps}, one can also compute the Picard group $\Pic(\Spectra^{\gen \Cyclic_{p^n}})$, and hence (by Krause's theorem) deduce \Cref{intro.thm.Pic.of.gen.Cpn.Z.mods}. Our approach is more direct; in particular, it produces an explicit construction of the Picard elements of $\Mod^{\gen \Cyclic_{p^n}}_\ZZ$, which is needed for \Cref{intro.thm.cohomology}.

\subsection{Outline}
\label{subsection.linear.outline}

This paper is divided into two parts. In \Cref{part.smstrat}, we study the geometric stratification of $\cR^{\gen G}$, culminating with the proof of \Cref{intro.thm.gluing.functors}; it is organized as follows.
\begin{itemize}

\item[\Cref{section.defn.gen.G.objects}:] For a compact Lie group $G$ and a presentable stable $\infty$-category $\cM$, we introduce the $\infty$-category $\cM^{\gen G}$ of genuine $G$-objects in $\cM$, and lay out some basic notations and conventions surrounding it. Taking $\cM = \cR$ to be a presentably symmetric monoidal stable $\infty$-category, the $\infty$-category $\cR^{\gen G}$ is also presentably symmetric monoidal.

\item[\Cref{section.stratn.of.tensor.products}:] We introduce the geometric stratification of $\cM^{\gen G}$, which is inherited from that of $\Spectra^{\gen G}$. The geometric stratification of $\cR^{\gen G}$ is a symmetric monoidal stratification.

\item[\Cref{section.stable.quotients}:] We establish some technical results regarding the interplay between small and presentable stable $\infty$-categories.

\item[\Cref{section.proper.tate.in.htpy.G.objects}:] We introduce the proper Tate construction for objects of $\cR^{\htpy G} := \Fun ( \BG , \cR)$ (in a way making no reference to genuine equivariant homotopy theory).

\item[\Cref{section.proper.tate.from.genuine.G.objects}:] We establish a formula for the proper Tate construction in terms of $\cR^{\gen G}$. Starting here, we make the assumption that $\cR$ is rigidly-compactly generated.

\item[\Cref{section.gluing.functors.for.RgG}:] We prove \Cref{intro.thm.gluing.functors}, which describes the gluing functors of the geometric stratification of $\cR^{\gen G}$ under the further assumption that $G$ is finite.

\end{itemize}
In \Cref{part.coh}, we apply the results of \Cref{part.smstrat} to the case that $G = \Cyclic_{p^n}$ and $\cR = \Mod_\ZZ$, proving Theorems \ref{intro.thm.gen.Cpn.Z.mods}-\ref{intro.thm.cohomology}; it is organized as follows.
\begin{itemize}

\item[\Cref{section.stratn.of.gen.Cpn.Z.mods}:] We prove \Cref{intro.thm.gen.Cpn.Z.mods}, which gives a simple and explicit description of the geometric stratification of $\Mod^{\gen \Cyclic_{p^n}}_\ZZ$.

\item[\Cref{section.Pic.of.gen.Cpn.Z.mods}:] We prove the first part of \Cref{intro.thm.Pic.of.gen.Cpn.Z.mods}, our computation of the Picard group of $\Mod^{\gen \Cyclic_{p^n}}_\ZZ$. Starting here, we make the assumption that the prime $p$ is odd.

\item[\Cref{section.fxn.reps.to.Pic}:] We prove the second part of \Cref{intro.thm.Pic.of.gen.Cpn.Z.mods}, which describes the Picard elements of $\Mod^{\gen \Cyclic_{p^n}}_\ZZ$ that underlie (virtual) representation spheres.

\item[\Cref{section.Z.underline}:] We study the gluing diagram of the constant Mackey functor $\ul{\ZZ} \in \Mod^{\gen \Cyclic_{p^n}}_\ZZ$ (the coefficients for equivariant cohomology).

\item[\Cref{section.Cpn.eqvrt.cohomology}:] We prove \Cref{intro.thm.cohomology}, our computation of the $\Pic(\Mod^{\gen \Cyclic_{p^n}}_\ZZ)$-graded cohomology of a point, based on the results of \Cref{section.homological.algebra}.

\item[\Cref{section.homological.algebra}:] We record some auxiliary results in homological algebra.
\end{itemize}

\begin{remark}
Most of the work in this paper takes place at the homotopical (i.e.\! $\infty$-categorical) level. However, we work at the point-set (i.e.\! chain) level as well. We compartmentalize the latter as \Cref{section.homological.algebra}, in which we produce various chain-level data (chain complexes, chain maps, and chain homotopies) and prove that they are presentations of our desired corresponding homotopical data.\footnote{In fact, these chain complexes are all quite simple: for the most part they are levelwise free of rank 0 or 1. This simplicity is ultimately afforded by certain Tate vanishing results (\Cref{obs.tate.vanishing.for.Z.mods}).}

The primary purpose of the material in \Cref{section.homological.algebra} is as input to \Cref{section.Cpn.eqvrt.cohomology} (in which we prove \Cref{intro.thm.cohomology}), and indeed the remainder of the paper (i.e.\! \S\S\ref{section.defn.gen.G.objects}-\ref{section.Z.underline}) can largely be read without reference to it. However, a single straightforward computation made in \Cref{section.homological.algebra} (\Cref{lem.compute.htpy.ring.of.htpy.of.tate}) is used in \Cref{section.Pic.of.gen.Cpn.Z.mods}, and in \Cref{section.fxn.reps.to.Pic} we make use of some basic techniques in homological algebra (our conventions for which are recorded in \Cref{subsection.notation.and.conventions.for.homological.algebra}).
\end{remark}

\subsection{Notation and conventions}
\label{subsection.notation.and.conventions}

\begin{enumerate}

\item \catconventions 

\item \functorconventions

\item \circconventions

\item \kanextnconventions

\item \spacescatsspectraconventions







\item \presentablyenrichedconventions

\item \codenamesformackey

\end{enumerate}

\begin{warning}
In this paper, we study generalizations of a number of notions introduced in \S S.\ref{strat:subsection.stratn.of.SpgG}. We occasionally reappropriate our notation without additional decoration.
\end{warning}

\subsection{Acknowledgments}
\label{subsection.acknowledgments}

It is our pleasure to acknowledge our intellectual debt to Kaledin, which is clear from the discussion of \Cref{subsection.intro.overview}; much of the material in this paper arose from thinking about the paper \cite{Kaledin-Mack}. We thank Akhil Mathew for a number of helpful conversations regarding the Tate construction, and in particular for pointing out \Cref{prop.proper.tate.formula} to us.

\acksupport \ Additionally, all three authors gratefully acknowledge the superb working conditions provided by the Mathematical Sciences Research Institute (which is supported by NSF award 1440140), where DA and AMG were in residence and NR was a visitor during the Spring 2020 semester.

\part{A symmetric monoidal stratification of derived Mackey functors}
\label{part.smstrat}

\section{Genuine $G$-objects in presentable stable $\infty$-categories}
\label{section.defn.gen.G.objects}

In this section, we introduce the $\infty$-categories of genuine and homotopy $G$-objects in a presentable stable $\infty$-category as well as various basic notions surrounding them.

\begin{notation}
We assume a basic familiarity with equivariant homotopy theory; we refer the reader to \S S.\ref{strat:subsection.stratn.of.SpgG} for a rapid review. In general, we use the notation and terminology laid out there (which is largely quite standard). Here we highlight a few conventions of particular interest in the present work.
\begin{enumerate}

\item We fix an arbitrary compact Lie group $G$ (which will sometimes be assumed to be finite).

\item We write $\pos_G$ for the poset of conjugacy classes of closed subgroups of $G$ ordered by subconjugacy. We denote relation of subconjugacy by $\leq$. When we wish to indicate literal containment, we use the notation $\subseteq$.

\item We write $H$ and $K$ for arbitrary closed subgroups of $G$. When discussing closed subgroups that are related by subconjugacy, we will always take $H$ to be subconjugate to $K$.

\end{enumerate}
\end{notation}

\begin{local}
In this section, we fix a presentable stable $\infty$-category $\cM \in \PrLSt$.
\end{local}

\needspace{2\baselineskip}
\begin{definition}
\label{defn.gen.G.objects}
\begin{enumerate}
\item[]

\item\label{item.defn.gen.G.objects}

We define the presentable stable $\infty$-category of \bit{genuine $G$-objects in $\cM$} to be the tensor product
\[
\cM^{\gen G}
:=
\Spectra^{\gen G}
\otimes
\cM
\]
in $\PrLSt$. Given an associative ring spectrum $R \in \Alg(\Spectra)$, we refer to $\Mod^{\gen G}_R$ as the presentable stable $\infty$-category of \bit{genuine $G$-$R$-modules}.

\item

We define the presentable stable $\infty$-category of \bit{homotopy $G$-objects in $\cM$} to be
\[
\cM^{\htpy G}
:=
\Fun ( \BG , \cM)
~.
\]
Given an associative ring spectrum $R \in \Alg(\Spectra)$, we refer to $\Mod^{\htpy G}_R$ as the presentable stable $\infty$-category of \bit{homotopy $G$-$R$-modules}.
\end{enumerate}
\end{definition}

\begin{observation}
\label{obs.tensoring.with.cpctly.gend}
Suppose that $\cC \in \PrLSt$ is a presentable stable $\infty$-category.  If $\cC$ is compactly generated, then there is a canonical equivalence
\[
\cC \otimes (-)
\simeq
\Fun^\ex ( (\cC^\omega)^\op , - )
\]
in $\Fun(\PrLSt,\PrLSt)$. In particular, we have a canonical equivalence
\[
\cC \otimes \cM
\simeq
\Fun^\ex ( (\cC^\omega)^\op , \cM )
~.
\]
\end{observation}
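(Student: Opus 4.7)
The claim is a standard consequence of the universal property of the Ind-completion together with the universal property of the tensor product in $\PrLSt$. Since $\cC$ is compactly generated and stable, $\cC \simeq \mathrm{Ind}(\cC^\omega)$, where $\cC^\omega$ is small, stable, and idempotent complete.

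First, for any $\cE \in \PrLSt$, I would compute
\[
\Fun^L(\cC \otimes \cM, \cE) \simeq \Fun^L(\cC, \Fun^L(\cM, \cE)) \simeq \Fun^\ex(\cC^\omega, \Fun^L(\cM, \cE)),
\]
where the first equivalence is the universal property of $\otimes$ in $\PrLSt$ and the second is the universal property of $\mathrm{Ind}$ applied to $\cC \simeq \mathrm{Ind}(\cC^\omega)$. I would then argue that $\Fun^\ex((\cC^\omega)^\op, \cM)$ represents the same functor $\cE \mapsto \Fun^\ex(\cC^\omega, \Fun^L(\cM, \cE))$, yielding the desired (pointwise) equivalence.

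The route I would take for this last identification is a two-stage reduction: the case $\cM = \Spectra$ follows from the stable Yoneda identification $\mathrm{Ind}(\cC^\omega) \simeq \Fun^\ex((\cC^\omega)^\op, \Spectra)$, and general $\cM \in \PrLSt$ follows from the standard fact that tensoring with $\cM$ in $\PrLSt$ carries $\Fun^\ex(\cA^\op, \Spectra)$ to $\Fun^\ex(\cA^\op, \cM)$ for any small idempotent-complete stable $\cA$. Naturality in $\cM$ is automatic: both $\cC \otimes (-)$ and $\Fun^\ex((\cC^\omega)^\op, -)$ are functorial in $\cM \in \PrLSt$---the former by functoriality of $\otimes$, the latter by postcomposition---and the equivalence is constructed compatibly.

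The main obstacle is verifying the two-stage reduction coherently, i.e., that the bootstrap from $\cM = \Spectra$ to general $\cM$ respects the universal property across all of $\PrLSt$. This is a standard but fiddly calculation that in practice one typically invokes as a black box from the foundational literature on presentable stable $\infty$-categories (e.g.\! Lurie's \emph{Higher Algebra}, \S 4.8).
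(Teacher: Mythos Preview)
Your proposal is correct and follows the standard argument. Note, however, that the paper does not actually prove this statement: it is labeled an ``Observation'' and is stated without proof, being treated as a well-known fact about presentable stable $\infty$-categories (essentially the universal property of $\mathrm{Ind}$ combined with the defining universal property of $\otimes$ in $\PrLSt$, as you outline). Your sketch is exactly the justification one would cite from the foundational literature, and there is nothing to compare it against in the paper itself.
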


\begin{observation}
Note that $\Spectra^{\htpy G} \simeq \LMod_{\Sigma^\infty_+ G}(\Spectra)$ is compactly generated. Hence, by \Cref{obs.tensoring.with.cpctly.gend}, the presentable stable $\infty$-category of homotopy $G$-objects in $\cM$ admits an identification
\[
\cM^{\htpy G}
:=
\Fun( \BG , \cM)
\simeq
\Fun^\ex ( ( ( \Spectra^{\htpy G} )^\omega )^\op , \cM)
\simeq
\Spectra^{\htpy G} \otimes \cM
~.
\]
We use this fact without further comment.
\end{observation}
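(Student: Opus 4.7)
The plan is to verify the two claims in sequence: compact generation of $\Spectra^{\htpy G}$, and then the displayed chain of equivalences. For the first, the category $\Fun(BG,\Spectra)$ is generated by the single object $\Sigma^\infty_+ G$ (the image under $\Sigma^\infty_+$ of the unique point of $BG$), whose endomorphism $E_1$-ring can be identified with $\Sigma^\infty_+ G$ itself; by the Schwede--Shipley recognition principle (Lurie, HA 7.1.2.1) this yields $\Spectra^{\htpy G} \simeq \LMod_{\Sigma^\infty_+ G}(\Spectra)$, which is compactly generated because $\LMod_A$ is compactly generated by $A$ for any $E_1$-ring $A$.

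Granted compact generation, the middle equivalence $\Spectra^{\htpy G} \otimes \cM \simeq \Fun^\ex(((\Spectra^{\htpy G})^\omega)^\op,\cM)$ is a direct application of \Cref{obs.tensoring.with.cpctly.gend} to $\cC = \Spectra^{\htpy G}$. What remains is the first equivalence $\Fun(BG,\cM) \simeq \Fun^\ex(((\Spectra^{\htpy G})^\omega)^\op,\cM)$, which I plan to establish via Morita theory. The compact objects $(\Spectra^{\htpy G})^\omega$ are the perfect left $\Sigma^\infty_+ G$-module spectra, and opposing a perfect-module category for an $E_1$-ring $A$ produces the perfect-module category for $A^\op$. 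Combining with $\Ind(\LMod_B^{\mathrm{perf}}) \simeq \LMod_B$ and the Morita equivalence $\Fun^L(\LMod_B,\cM) \simeq \LMod_{B^\op}(\cM)$ (Lurie, HA 4.8.4.6), one obtains
\[
\Fun^\ex(((\Spectra^{\htpy G})^\omega)^\op,\cM) \simeq \LMod_{\Sigma^\infty_+ G}(\cM).
\]
Finally, a left $\Sigma^\infty_+ G$-module structure on an object of a presentable stable $\infty$-category $\cM$ is precisely an action of $G$, i.e.\ a functor $BG \to \cM$, so this target is $\Fun(BG,\cM)$, completing the identification.

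The main obstacle is the bookkeeping around opposite categories and left/right module conventions. This is ultimately not a serious issue because the anti-involution $g \mapsto g^{-1}$ on $G$ induces an equivalence $(\Sigma^\infty_+ G)^\op \simeq \Sigma^\infty_+ G$ of $E_1$-rings, so the various left/right switches absorb automatically. A secondary point is to verify that the resulting composite equivalence lies in $\PrLSt$ and is natural in $\cM$, but this is automatic because each intermediate step is constructed in $\PrLSt$ and functorially in $\cM$.
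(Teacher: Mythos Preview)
Your proof is correct. The paper itself treats this statement as a brief observation and offers no detailed argument beyond invoking compact generation and \Cref{obs.tensoring.with.cpctly.gend}; in particular, it does not spell out the first equivalence $\Fun(\BG,\cM) \simeq \Fun^\ex(((\Spectra^{\htpy G})^\omega)^\op,\cM)$ at all. You have supplied a complete Morita-theoretic justification of that step, which is valid.

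A shorter route to that first equivalence, and likely what the paper has in mind implicitly, is to use the general identity $\Fun(K,\Spectra)\otimes\cM \simeq \Fun(K,\cM)$ in $\PrLSt$ for any small $\infty$-category $K$ (applied with $K=\BG$), which follows from the fact that $\Fun(K,-)$ commutes with the tensor product in $\PrL$. This bypasses the module-theoretic bookkeeping and the need to invoke the anti-involution $g\mapsto g^{-1}$. Your Morita argument has the advantage of being more explicit about the generator and making the identification $\Spectra^{\htpy G}\simeq\LMod_{\Sigma^\infty_+ G}(\Spectra)$ do real work, at the cost of tracking left/right conventions; the functor-category argument is quicker but less hands-on.
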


\begin{notation}
We simply write
\[
U
:
\cM^{\gen G}
:=
\Spectra^{\gen G} \otimes \cM
\xra{U \otimes \cM}
\Spectra^{\htpy G} \otimes \cM
\simeq
\cM^{\htpy G}
\]
for the tensor product with $\cM$ of the forgetful functor $\Spectra^{\gen G} \xra{U} \Spectra^{\htpy G}$ from genuine $G$-spectra to homotopy $G$-spectra (a morphism in $\PrLSt$). Moreover, we simply write
\[ \begin{tikzcd}[column sep=1.5cm]
\cM^{\gen G}
\arrow[transform canvas={yshift=0.9ex}]{r}{U}
\arrow[dashed, hookleftarrow, transform canvas={yshift=-0.9ex}]{r}[yshift=-0.2ex]{\bot}[swap]{\beta}
&
\cM^{\htpy G}
\end{tikzcd} \]
for the indicated right adjoint (which is fully faithful because the functor $(-) \otimes \cM$ preserves colimits (in particular the quotient $\Spectra^{\gen G} \xra{U} \Spectra^{\htpy G}$)).\footnote{This right adjoint may be referred to as the inclusion of the ``Borel-complete'' genuine $G$-objects in $\cM$.}
\end{notation}

\begin{remark}
A morphism $\cA \xra{F} \cB$ in $\PrLSt$ is the data of an adjunction
\[ \begin{tikzcd}[column sep=2cm]
\cA
\arrow[transform canvas={yshift=0.9ex}]{r}{F}
\arrow[leftarrow, transform canvas={yshift=-0.9ex}]{r}[yshift=-0.2ex]{\bot}[swap]{F^R}
&
\cB
\end{tikzcd} \]
in $\Cat$. Tensoring this morphism with any object $\cC \in \PrLSt$ therefore determines an adjunction
\begin{equation}
\label{tensored.up.adjunction}
\begin{tikzcd}[column sep=2cm]
\cA \otimes \cC
\arrow[transform canvas={yshift=0.9ex}]{r}{F \otimes \cC}
\arrow[leftarrow, transform canvas={yshift=-0.9ex}]{r}[yshift=-0.2ex]{\bot}[swap]{(F \otimes \cC)^R}
&
\cB \otimes \cC
\end{tikzcd}~.
\end{equation}
In general, there is no straightforward description of the right adjoint $(F \otimes \cC)^R$ (e.g.\! in terms of the right adjoint $F^R$).  However, if $\cC$ is compactly generated, then by \Cref{obs.tensoring.with.cpctly.gend} we may identify the adjunction \Cref{tensored.up.adjunction} as the adjunction
\[ \begin{tikzcd}[column sep=3cm]
\cA \otimes \cC
\simeq
\Fun^\ex((\cC^\omega)^\op,\cA)
\arrow[transform canvas={yshift=0.9ex}]{r}{\Fun^\ex((\cC^\omega)^\op,F)}
\arrow[leftarrow, transform canvas={yshift=-0.9ex}]{r}[yshift=-0.2ex]{\bot}[swap]{\Fun^\ex((\cC^\omega)^\op,F^R)}
&
\Fun^\ex((\cC^\omega)^\op,\cB)
\simeq
\cC \otimes \cM
\end{tikzcd} \]
(because right adjoints are unique when they exist).
\end{remark}

\begin{notation}
\label{notn.various.fixedpoints.on.genuine.G.objects.in.M}
We simply write
\[
(-)^H : \cM^{\gen G}
:=
\Spectra^{\gen G} \otimes \cM
\xra{(-)^H \otimes \cM}
\Spectra^{\gen \Weyl(H)} \otimes \cM
=: \cM^{\gen \Weyl(H)}~,
\]
\[
(-)^H : \cM^{\gen G}
:=
\Spectra^{\gen G} \otimes \cM
\xra{(-)^H \otimes \cM}
\Spectra^{\htpy \Weyl(H)} \otimes \cM
\simeq \cM^{\htpy \Weyl(H)}~,
\]
\[
\Phi^H_\gen
:
\cM^{\gen G}
:=
\Spectra^{\gen G} \otimes \cM
\xra{\Phi^H_\gen \otimes \cM}
\Spectra^{\gen \Weyl(H)} \otimes \cM
=:
\cM^{\gen \Weyl(H)}
~,
\]
and
\[
\Phi^H
:
\cM^{\gen G}
:=
\Spectra^{\gen G} \otimes \cM
\xra{\Phi^H \otimes \cM}
\Spectra^{\htpy \Weyl(H)} \otimes \cM
\simeq
\cM^{\htpy \Weyl(H)}
\]
for the tensor product with $\cM$ of the various indicated $H$-fixedpoints functors on genuine $G$-spectra (all of which are morphisms in $\PrLSt$).
\end{notation}

\begin{observation}
\label{obs.mackey.for.genuine.G.objects}
Suppose that $G$ is a finite group.  Then, by \cite{GM-gen,Bar-Mack} we have an equivalence
\[
\Spectra^{\gen G}
\simeq
\Mack_G(\Spectra)
:=
\Fun^\oplus ( \Burn_G , \Spectra)
\]
where $\Burn_G$ denotes the $(2,1)$-category of spans among finite $G$-sets, which is preadditive and so is canonically enriched in commutative monoid spaces.  It follows that the idempotent-complete stable envelope of $\Burn_G$ (i.e.\! that of its homwise $\infty$-group completion) admits a canonical identification
\[
\Env^\idem(\Burn_G)
\simeq
((\Spectra^{\gen G})^\omega)^\op
~.
\]
Hence, using \Cref{obs.tensoring.with.cpctly.gend} and the fact that $\Spectra^{\gen G}$ is compactly generated, we obtain a composite equivalence
\[
\cM^{\gen G}
\simeq
\Fun^\ex( ( ( \Spectra^{\gen G})^\omega )^\op , \cM)
\simeq
\Fun^\ex ( \Env^\idem(\Burn_G) , \cM )
\simeq
\Fun^\oplus ( \Burn_G , \cM )
=:
\Mack_G(\cM)
~.
\]
By construction, evaluating a genuine $G$-object $E \in \cM^{\gen G}$ on the finite $G$-set $G/H \in \Burn_G$ yields its categorical $H$-fixedpoints $E^H \in \cM$, with the homotopy $\Weyl(H)$-action coming from its action on $G/H \in \Burn_G$: in other words, this equivalence 
extends to a commutative diagram
\[ \begin{tikzcd}
\cM^{\gen G}
\arrow{rr}{\sim}
\arrow{rd}[sloped, swap]{(-)^H}
&
&
\Mack_G(\cM)
\arrow{ld}[sloped, swap]{\ev_{G/H}}
\\
&
\cM^{\htpy \Weyl(H)}
\end{tikzcd}
~.
\]
\end{observation}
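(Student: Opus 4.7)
\medskip

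\textbf{Plan of proof.} The main input is the Guillou--May / Barwick theorem \cite{GM-gen,Bar-Mack} cited in the statement, which identifies $\Spectra^{\gen G} \simeq \Mack_G(\Spectra) = \Fun^\oplus(\Burn_G,\Spectra)$. With this in hand, the strategy is to promote this identification from $\Spectra$-valued Mackey functors to $\cM$-valued ones by combining it with the universal properties of group completion, stable envelope, and Ind-completion, and then invoking \Cref{obs.tensoring.with.cpctly.gend}.

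First I would address the identification $\Env^\idem(\Burn_G) \simeq ((\Spectra^{\gen G})^\omega)^\op$. Because $\Burn_G$ is a preadditive $(2,1)$-category (it has biproducts), additive functors from it into any stable idempotent-complete $\infty$-category $\cD$ factor uniquely through the homwise group completion and then through the idempotent-complete stable envelope; that is, there is a natural equivalence
\[
\Fun^\oplus(\Burn_G,\cD) \xra{\sim} \Fun^\ex(\Env^\idem(\Burn_G),\cD)
\]
expressing the universal property that defines $\Env^\idem(\Burn_G)$. Applying this with $\cD = \Spectra$ and combining with the Guillou--May equivalence yields
\[
\Spectra^{\gen G} \simeq \Fun^\ex(\Env^\idem(\Burn_G),\Spectra).
\]
On the other hand, since $\Spectra^{\gen G}$ is compactly generated, Ind-completion gives
\[
\Spectra^{\gen G} \simeq \Fun^\ex\!\left( ((\Spectra^{\gen G})^\omega)^\op,\Spectra\right).
\]
Comparing these two descriptions via the uniqueness of small stable idempotent-complete $\infty$-categories whose Ind-completion is a given compactly generated stable $\infty$-category (equivalently, passing to compact objects on both sides) produces the claimed equivalence $\Env^\idem(\Burn_G) \simeq ((\Spectra^{\gen G})^\omega)^\op$.

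Next, for the $\cM$-valued statement I would simply chain equivalences. Applying \Cref{obs.tensoring.with.cpctly.gend} to the compactly generated $\Spectra^{\gen G}$,
\[
\cM^{\gen G} := \Spectra^{\gen G}\otimes\cM \simeq \Fun^\ex\!\left(((\Spectra^{\gen G})^\omega)^\op,\cM\right);
\]
substituting the identification from the previous paragraph and then reapplying the universal property of the stable envelope (this time with $\cD = \cM$) gives
\[
\cM^{\gen G} \simeq \Fun^\ex(\Env^\idem(\Burn_G),\cM) \simeq \Fun^\oplus(\Burn_G,\cM) =: \Mack_G(\cM).
\]

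Finally, for the compatibility with categorical fixedpoints, I would trace what the equivalence does on generators. Under $\Spectra^{\gen G} \simeq \Mack_G(\Spectra)$, the object $G/H \in \Burn_G$ (viewed as a compact object via the Yoneda / representable functor) corresponds to $\Sigma^\infty_{G,+}(G/H) \in \Spectra^{\gen G}$, and the $(2,1)$-categorical structure of $\Burn_G$ records the $\Weyl(H)$-action on $G/H$. The $\ev_{G/H}$ functor on $\Mack_G(\cM)$ therefore corresponds to mapping out of $\Sigma^\infty_{G,+}(G/H)$ in the presentable tensor product, which is precisely categorical $H$-fixedpoints regarded in $\cM^{\htpy \Weyl(H)}$. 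Assembling these matches across the chain of equivalences above yields the commuting triangle.

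The main technical obstacle is the compact-object identification $\Env^\idem(\Burn_G) \simeq ((\Spectra^{\gen G})^\omega)^\op$: everything else is a formal chain of universal properties, but this step requires carefully unwinding how group completion, stable envelope, and Ind-completion interact, and in particular tracking the appearance of the opposite category coming from $\Ind(\cC) \simeq \Fun^\ex(\cC^\op,\Spectra)$.
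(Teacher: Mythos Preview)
Your proposal is correct and follows essentially the same approach as the paper: the paper presents this as an Observation with the argument inline rather than as a separate proof, and your expansion---identifying $\Env^\idem(\Burn_G)$ via the universal property of the stable envelope combined with Guillou--May/Barwick, then applying \Cref{obs.tensoring.with.cpctly.gend} and the same universal property with $\cM$ in place of $\Spectra$---is exactly the intended chain of equivalences. Your tracing of the fixedpoints compatibility through the corepresenting object $\Sigma^\infty_{G,+}(G/H)$ is likewise what the paper has in mind when it says ``by construction.''
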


\begin{definition}
\label{defn.inc.and.trf}
Suppose that $G$ is a finite group. Given subgroups $H \leq K \leq G$, we obtain a morphism $G/H \ra G/K$ between finite $G$-sets, which determines morphisms in both directions in $\Burn_G$. Via \Cref{obs.mackey.for.genuine.G.objects}, evaluating a genuine $G$-object $E \in \cM^{\gen G}$ on these morphisms determines natural morphisms
\[
E^H
\xra{\inc_H^K(E)}
E^K
\qquad
\text{and}
\qquad
E^K
\xra{\trf_H^K(E)}
E^H
\]
in $\cM$ in both directions between its categorical $H$- and $K$-fixedpoints, which we respectively refer to as the \bit{inclusion} and \bit{transfer} morphisms. As $E$, $H$, and $K$ will always be clear from context, we will generally simply write
\[
\inc
:=
\inc_H^K(E)
\qquad
\text{and}
\qquad
\trf
:=
\trf_H^K(E)
~.
\]
\end{definition}

\begin{remark}
In \Cref{defn.inc.and.trf}, we use the term ``inclusion'' instead of the more familiar term ``restriction'' because the latter is already quite overloaded. However, note that this morphism is \textit{not} generally a monomorphism (indeed, in a stable $\infty$-category, all monomorphisms are equivalences).
\end{remark}

\begin{observation}
Suppose that $G$ is a finite group. Given subgroups $H \leq K \leq G$, in the case that $H$ and $K$ are both normal in $G$ (e.g.\! when $G$ is abelian), the morphisms $\inc_H^K$ and $\trf_H^K$ admit canonical lifts from $\cM$ to $\cM^{\htpy (K/H)}$.\footnote{More generally, these morphisms are equivariant for the \textit{relative Weyl group} (Definition S.\ref{strat:defn.relative.Weyl.group}).} We use this fact without further comment.
\end{observation}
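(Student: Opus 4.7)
The plan is to work within the Mackey functor model $\cM^{\gen G} \simeq \Mack_G(\cM) = \Fun^\oplus(\Burn_G , \cM)$ of \Cref{obs.mackey.for.genuine.G.objects}, under which the categorical fixedpoints $E^H$ are recovered as the values $E(G/H)$, and the morphisms $\inc_H^K$ and $\trf_H^K$ arise by evaluating $E$ on the two spans $G/H = G/H \xra{q} G/K$ and $G/H \xleftarrow{q} G/K = G/K$ in $\Burn_G$ associated to the quotient $q \colon G/H \ra G/K$ of finite $G$-sets. Under this identification, the natural $\Weyl(H)$-action on $E^H$ (and $\Weyl(K)$-action on $E^K$) is induced by the action of $\Weyl(H)$ on $G/H$ (resp.\ of $\Weyl(K)$ on $G/K$) by $G$-equivariant automorphisms in $\Burn_G$. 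Thus it suffices to exhibit each of the two spans above as a $K/H$-equivariant morphism in $\Burn_G$, with trivial $K/H$-action on the $G/K$ side.

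When $H$ and $K$ are both normal in $G$, we have $\Weyl(H) = G/H$ and $\Weyl(K) = G/K$, and $q$ is a surjective group homomorphism with kernel $K/H$. Consequently, for every element $g \in K/H$, the corresponding automorphism $\hat{g}$ of $G/H$ (left multiplication by $g$) preserves the fibers of $q$, i.e.\ $q \circ \hat{g} = q$ as morphisms of finite $G$-sets. This forces both of the above spans to be invariant under pre-/post-composition with $\hat{g}$ in $\Burn_G$, when we equip $G/K$ with the trivial $K/H$-action obtained by restricting the $\Weyl(K)$-action along the composite $K/H \hookra G/H \twoheadra G/K$ (which is zero by the normality hypothesis). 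Applying $E$ then yields the desired lifts of $\inc_H^K$ and $\trf_H^K$ to morphisms in $\cM^{\htpy(K/H)}$.

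The step I expect to require the most care is the verification of higher coherence: one must argue not merely that each element of $K/H$ individually fixes $q$, but that the whole $B(K/H)$-action on the morphism $q$ in $\Burn_G$ is coherently trivial. However, because the $(2,1)$-category $\Burn_G$ is built from the ordinary $1$-category of finite $G$-sets, in which the identities $q \circ \hat{g} = q$ hold strictly, this coherence is automatic: the relevant simplicial diagram of spans factors through $q$ itself.
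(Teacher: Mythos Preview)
Your argument is correct and supplies exactly the reasoning that the paper omits: this observation is stated in the paper without proof (``We use this fact without further comment''), so there is nothing to compare against beyond confirming that your sketch is sound. The key points---that under the Mackey model both $\inc_H^K$ and $\trf_H^K$ arise from the single $G$-map $q\colon G/H \to G/K$, that $q$ is strictly invariant under the right $K/H$-action on $G/H$ (since $K/H = \ker(G/H \twoheadrightarrow G/K)$ when both subgroups are normal), and that coherence is automatic because the equivariance holds already in the $1$-category of finite $G$-sets before passing to $\Burn_G$---are all correct.

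One minor notational slip: your second span ``$G/H \xleftarrow{q} G/K = G/K$'' has the wrong leg; the transfer morphism $G/K \to G/H$ in $\Burn_G$ is the span $G/K \xleftarrow{\,q\,} G/H \xrightarrow{\,=\,} G/H$, again with apex $G/H$. This does not affect the argument, since the same invariance $q \circ \hat g = q$ makes this span $K/H$-equivariant as well.
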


\begin{notation}
\label{notn.htpy.inc}
In line with the notation introduced in \Cref{defn.inc.and.trf}, we simply write $\hinc$ for any inclusion morphism on homotopy fixedpoints.
\end{notation}

\begin{observation}
The equivalence $\cM^{\gen G} \simeq \Mack_G(\cM)$ of \Cref{obs.mackey.for.genuine.G.objects} is compatible with restriction, in the sense that the diagram
\[ \begin{tikzcd}[row sep=1.5cm, column sep=1.5cm]
\cM^{\gen G}
\arrow[leftrightarrow]{r}{\sim}
\arrow{d}[swap]{\Res^G_H}
&
\Mack_G(\cM)
\arrow{d}{\left(\Ind_H^G \right)^*}
\\
\cM^{\gen H}
\arrow[leftrightarrow]{r}[swap]{\sim}
&
\Mack_H(\cM)
\end{tikzcd} \]
commutes. It is also compatible with categorical fixedpoints, in the sense that the diagram
\[ \begin{tikzcd}[row sep=1.5cm, column sep=1.5cm]
\cM^{\gen G}
\arrow[leftrightarrow]{r}{\sim}
\arrow{d}{\Res^G_{\Normzer(H)}}
\arrow[bend right=40]{dd}[swap]{(-)^H}
&
\Mack_G(\cM)
\arrow{d}{\left( \Ind_{\Normzer(H)}^G \right)^*}
\\
\cM^{\gen \Normzer(H)}
\arrow[leftrightarrow]{r}[swap]{\sim}
\arrow{d}{(-)^H}
&
\Mack_{\Normzer(H)}(\cM)
\arrow{d}{\left( \Res^{\Weyl(H)}_{\Normzer(H)} \right)^*}
\\
\cM^{\gen \Weyl(H)}
\arrow[leftrightarrow]{r}[swap]{\sim}
&
\Mack_{\Weyl(H)}(\cM)
\end{tikzcd} \]
commutes. We use these facts without further comment.
\end{observation}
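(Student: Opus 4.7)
The plan is to reduce both compatibilities to the corresponding statements for $\cM = \Spectra$ (i.e.\! for the underlying equivalence $\Spectra^{\gen G} \simeq \Mack_G(\Spectra)$ of \cite{GM-gen,Bar-Mack}), and then tensor with $\cM$.

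First, I would verify the two compatibilities in the case $\cM = \Spectra$. For restriction: both $\Res^G_H : \Spectra^{\gen G} \to \Spectra^{\gen H}$ and $(\Ind_H^G)^* : \Mack_G(\Spectra) \to \Mack_H(\Spectra)$ admit (symmetric monoidal) left adjoints, namely $\Ind_H^G$ and left Kan extension along $\Ind_H^G : \Burn_H \to \Burn_G$ respectively. These left adjoints are both characterized on the compact generator $\SS[G/K]^{\gen H}$ (for $K \leq H$) by the formula $G/K$ viewed as a finite $G$-set; since the Guillou--May/Barwick equivalence is constructed so as to send the compact generator $\SS[G/K]^{\gen G}$ to the representable $\Burn_G(-,G/K)$, uniqueness of left adjoints gives the desired commutative square, and hence (by adjunction) the square for $\Res$ itself. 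For categorical $H$-fixedpoints, I would argue by factoring through the intermediate square for restriction to $\Normzer(H)$ (using the restriction compatibility already established) and reducing to the case in which $H$ is normal. In that case, the categorical $H$-fixedpoints functor on $\Mack_G(\Spectra)$ is computed by evaluation at $G/H$, equipped with the residual $\Weyl(H) = G/H$-action coming from the fact that $G/H$ is a $(G,\Weyl(H))$-biset; this matches the description of $(-)^H$ on the genuine side, e.g.\! by naturality of the Guillou--May/Barwick equivalence under the change-of-groups induced by the quotient homomorphism $G \twoheadrightarrow \Weyl(H)$.

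Next, to pass from $\cM = \Spectra$ to general $\cM \in \PrLSt$, I would observe that every functor in the two diagrams in question is, by construction (see \Cref{notn.various.fixedpoints.on.genuine.G.objects.in.M} and \Cref{obs.mackey.for.genuine.G.objects}), of the form $F \otimes \cM$ for a corresponding morphism $F$ in $\PrLSt$ between $\Spectra$-linear stable $\infty$-categories. Applying the symmetric monoidal functor $(-) \otimes \cM : \PrLSt \to \PrLSt$ to the commutative squares from the previous paragraph yields the desired commutative squares for general $\cM$.

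The main subtlety I expect is the identification of the categorical $H$-fixedpoints functor on the Mackey-functor side as corresponding to the restriction-then-evaluation composite on the right of the second diagram, since it requires keeping track of the residual $\Weyl(H)$-action on $G/H \in \Burn_{\Normzer(H)}$ and verifying that the resulting $\Weyl(H)$-Mackey functor structure on $E^H$ coincides with the one produced by the Guillou--May/Barwick equivalence for $\Weyl(H)$. This is a matter of unwinding definitions but is where one must be careful; everything else is formal consequences of uniqueness of adjoints and the fact that $(-) \otimes \cM$ preserves commutative diagrams.
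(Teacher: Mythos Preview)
The paper gives no proof of this Observation; it is asserted as evident from the construction (the closing sentence is ``We use these facts without further comment''). Your reduction-and-tensor strategy is the natural one and is sound: the equivalence of \Cref{obs.mackey.for.genuine.G.objects} and the left-hand functors $\Res^G_H$, $(-)^H$ are all defined by applying $(-) \otimes \cM$ to the corresponding $\Spectra$-level data (see \Cref{notn.various.fixedpoints.on.genuine.G.objects.in.M}), while the right-hand functors are precomposition with fixed functors of Burnside categories and hence manifestly natural in $\cM$; so commutativity for general $\cM$ follows from the case $\cM = \Spectra$, which is the naturality of the Guillou--May/Barwick equivalence in the group. One minor slip: in your restriction paragraph you write ``the compact generator $\SS[G/K]^{\gen H}$'' for $K \leq H$, but the compact generators of $\Spectra^{\gen H}$ are the $\Sigma^\infty_H(H/K)_+$; the argument goes through with $H/K$ in place of $G/K$ there.
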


\begin{notation}
We denote by $\tensoring$ the action on $\cM^{\gen G}$ of $\Spaces_*^{\gen G}$ (via that of $\Spectra^{\gen G}$).
\end{notation}

\begin{observation}
\label{obs.geom.fps.on.gen.G.objects.from.catl.fps}
It follows directly from the definitions that the diagram
\[ \begin{tikzcd}[column sep=2cm, row sep=1.5cm]
\cM^{\gen G}
\arrow{r}{\wEFgeomHfps \tensoring (-) }
\arrow{rd}[sloped, swap]{\Phi^H_\gen}
&
\cM^{\gen G}
\arrow{d}{(-)^H}
\\
&
\cM^{\gen \Weyl(H)}
\end{tikzcd} \]
commutes (see Definition S.\ref{strat:defn.isotropy.separation.sequence}).
\end{observation}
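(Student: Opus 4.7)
The plan is to reduce the statement to the corresponding identity for genuine $G$-spectra and then tensor up with $\cM$.

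First, I would recall that all three functors in the diagram are obtained from their $\Spectra^{\gen G}$ counterparts by applying $(-) \otimes \cM : \PrLSt \to \PrLSt$. Explicitly, by \Cref{notn.various.fixedpoints.on.genuine.G.objects.in.M}, we have
\[
(-)^H = (-)^H_{\Spectra^{\gen G}} \otimes \cM \qquad \text{and} \qquad \Phi^H_\gen = \Phi^{H}_{\gen,\Spectra^{\gen G}} \otimes \cM,
\]
and the $\Spaces_*^{\gen G}$-action on $\cM^{\gen G}$ is defined via the $\Spectra^{\gen G}$-module structure, so that $\wEFgeomHfps \tensoring (-) : \cM^{\gen G} \to \cM^{\gen G}$ is also the tensor product with $\cM$ of the corresponding endofunctor of $\Spectra^{\gen G}$.

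Next, I would invoke the classical identity for genuine $G$-spectra. The isotropy separation sequence (Definition S.\ref{strat:defn.isotropy.separation.sequence}) exhibits $\wEFgeomHfps$ as the idempotent whose induced localization on $\Spectra^{\gen G}$ realizes geometric $H$-fixed points after taking categorical $H$-fixed points. Concretely, this is the statement that the diagram
\[ \begin{tikzcd}[column sep=2cm, row sep=1.2cm]
\Spectra^{\gen G}
\arrow{r}{\wEFgeomHfps \tensoring (-)}
\arrow{rd}[sloped, swap]{\Phi^H_\gen}
&
\Spectra^{\gen G}
\arrow{d}{(-)^H}
\\
&
\Spectra^{\gen \Weyl(H)}
\end{tikzcd} \]
commutes in $\PrLSt$.

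Finally, I would apply the symmetric monoidal functor $(-) \otimes \cM : \PrLSt \to \PrLSt$ to the above diagram. Since this functor carries commuting triangles to commuting triangles and, by the preceding identifications, sends each vertex and edge to the corresponding vertex and edge of the target diagram, the result is exactly the asserted commutativity for $\cM^{\gen G}$. There is no serious obstacle here: the only thing one needs to check is the bookkeeping that the tensor-up of $\wEFgeomHfps \tensoring (-)$ on $\Spectra^{\gen G}$ agrees with the action of $\wEFgeomHfps$ on $\cM^{\gen G} = \Spectra^{\gen G} \otimes \cM$, which is immediate from the construction of the $\Spaces_*^{\gen G}$-action on $\cM^{\gen G}$.
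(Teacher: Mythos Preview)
Your proposal is correct and is precisely the unpacking of what the paper leaves implicit: the observation in the paper carries no separate proof beyond the phrase ``follows directly from the definitions,'' and your argument---reduce to the known triangle for $\Spectra^{\gen G}$ and then apply $(-)\otimes\cM$, using that all three functors are defined as such tensor-ups---is exactly the intended reasoning.
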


\section{The geometric stratification of genuine $G$-objects}
\label{section.stratn.of.tensor.products}

In this brief section, we introduce the \bit{geometric stratification} of genuine $G$-objects (\Cref{defn.geometric.stratification.of.genuine.G.objects}). Throughout it, we refer freely to the notions introduced in \cite{AMR-strat} (recall \Cref{rmk.use.strat.freely}).

\begin{observation}
\label{obs.tensored.up.stratn}
Let $\cX$ be a presentable stable $\infty$-category.  A closed subcategory $\cZ \in \Cls_\cX$ is precisely the data of an adjunction
\[ \begin{tikzcd}[column sep=2cm]
\cZ
\arrow[hook, transform canvas={yshift=0.9ex}]{r}{i_L}
\arrow[leftarrow, transform canvas={yshift=-0.9ex}]{r}[yshift=-0.2ex]{\bot}[swap]{y}
&
\cX
\end{tikzcd} \]
in $\PrLSt$ whose unit is an equivalence.  It follows that taking the tensor product in $\PrLSt$ with any presentable stable $\infty$-category $\cM$ determines a functor
\begin{equation}
\label{functor.tensor.with.M}
\begin{tikzcd}[row sep=0cm, column sep=1.5cm]
\Cls_\cX
\arrow{r}{- \otimes \cM}
&
\Cls_{\cX \otimes \cM}
\\
\rotatebox{90}{$\in$}
&
\rotatebox{90}{$\in$}
\\
\cZ
\arrow[maps to]{r}
&
\cZ \otimes \cM
\end{tikzcd}~.
\end{equation}
Observe further that the functor \Cref{functor.tensor.with.M} preserves colimits and finite products.  It follows that for any stratification $\pos \xra{\cZ_\bullet} \Cls_\cX$, postcomposition determines a stratification
\begin{equation}
\label{stratn.of.X.tensor.M}
\begin{tikzcd}[row sep=0cm, column sep=1.5cm]
\pos
\arrow{r}{\cZ_\bullet}
&
\Cls_\cX
\arrow{r}{- \otimes \cM}
&
\Cls_{\cX \otimes \cM}
\\
\rotatebox{90}{$\in$}
&
&
\rotatebox{90}{$\in$}
\\
p
\arrow[maps to]{rr}
&
&
\cZ_p \otimes \cM
\end{tikzcd}
\end{equation}
of $\cX \otimes \cM$: the factorizations guaranteed by the stratification condition define commutative squares in $\PrLSt$, and so persist upon tensoring with $\cM$.  Moreover, because the functor $\PrLSt \xra{- \otimes \cM} \PrLSt$ preserves colimits, for each $p \in \pos$ we may identify the $p\th$ stratum of the stratification \Cref{stratn.of.X.tensor.M} as
\[
(\cX \otimes \cM)_p
\simeq
\cX_p \otimes \cM
~.
\]
It follows immediately that for each $p \in \pos$ we may identify the $p\th$ geometric localization functor of the stratification \Cref{stratn.of.X.tensor.M} as
\[
\cX \otimes \cM
\xra{ \Phi_p \otimes \cM}
\cX_p \otimes \cM
~.
\]
\end{observation}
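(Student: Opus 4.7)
The plan is to verify each claim of the observation by unpacking the definitions of closed subcategory, stratification, and the tensor product in $\PrLSt$, and then transport the relevant structure through the functor $- \otimes \cM$.

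First I would reinterpret the data of a closed subcategory $\cZ \in \Cls_\cX$ as an adjunction $i_L \dashv y$ in $\PrLSt$ with unit an equivalence; this is simply a repackaging of the fact that closed inclusions are reflective and fully faithful. Since $- \otimes \cM \colon \PrLSt \to \PrLSt$ is a functor of $(\infty,2)$-categories (indeed it preserves all colimits because the tensor product on $\PrLSt$ is closed), it sends adjunctions to adjunctions and sends equivalences to equivalences. Thus $i_L \otimes \cM$ acquires a right adjoint whose unit remains an equivalence, which identifies $\cZ \otimes \cM$ as a closed subcategory of $\cX \otimes \cM$. This yields the functor $- \otimes \cM \colon \Cls_\cX \to \Cls_{\cX \otimes \cM}$ of (\ref{functor.tensor.with.M}).

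Next I would check that this functor preserves colimits and finite products. Colimits in $\Cls_\cX$ (with its ambient $\PrL$-structure) can be computed as colimits of the underlying presentable stable $\infty$-categories in $\PrLSt$, and $- \otimes \cM$ preserves colimits in $\PrLSt$ because $\PrLSt$ is a symmetric monoidal closed $\infty$-category. For finite products, I would appeal to the description of the product of closed subcategories as the intersection, which corresponds to the pullback (equivalently, since we are in a stable setting, to a limit diagram) in $\PrLSt$; one then verifies that the particular finite products appearing in the definition of a stratification (condition $(\star)$, involving factorization squares indexed by downward closed subposets of $\pos$) are preserved by $- \otimes \cM$ because they are finite limits of adjoint diagrams in $\PrLSt$ along right adjoints, which are preserved by the closed symmetric monoidal structure.

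Given these preservation properties, the postcomposed assignment $p \mapsto \cZ_p \otimes \cM$ automatically satisfies condition $(\star)$, because the factorization squares witnessing the stratification in $\Cls_\cX$ persist after applying $- \otimes \cM$. Finally, to identify the strata and geometric localization functors, I would use that the $p$-th stratum is computed as a particular colimit (a Verdier quotient) in $\PrLSt$ from the ambient $\cX$ and closed subcategories $\cZ_q$ for $q < p$; since $- \otimes \cM$ preserves this colimit, we obtain $(\cX \otimes \cM)_p \simeq \cX_p \otimes \cM$, and the geometric localization functor is then identified as $\Phi^p \otimes \cM$ by naturality of the adjoint description.

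The main obstacle will be the preservation of the products and factorization squares appearing in condition $(\star)$: although each individual step is an instance of the colimit/limit preservation properties of $- \otimes \cM$, one must carefully track that the commutative squares defining the stratification structure live in $\PrLSt$ (i.e.\! involve only left adjoints), so that tensoring with $\cM$ yields squares of the same form in $\Cls_{\cX \otimes \cM}$ without disturbing the adjoint structure that underpins the definition of a closed subcategory.
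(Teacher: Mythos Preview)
Your proposal is correct and follows essentially the same approach as the paper. Note that in the paper this is a self-justifying Observation with no separate proof environment: the reasoning you give (closed subcategories as adjunctions in $\PrLSt$ with invertible unit, condition $(\star)$ persisting because the factorization squares live in $\PrLSt$, and the strata identified via colimit-preservation of $-\otimes\cM$) is exactly the inline justification embedded in the statement itself.
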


\begin{remark}
In the situation of \Cref{obs.tensored.up.stratn}, if $\cM$ is compactly generated then by \Cref{obs.tensoring.with.cpctly.gend} we may identify the $p\th$ localization adjunction of the stratification \Cref{stratn.of.X.tensor.M} as 
\[ \begin{tikzcd}[column sep=3cm]
\cX \otimes \cM
\simeq
\Fun((\cM^\omega)^\op,\cX)
\arrow[transform canvas={yshift=0.9ex}]{r}{\Fun^\ex((\cM^\omega)^\op,\Phi_p)}
\arrow[hookleftarrow, transform canvas={yshift=-0.9ex}]{r}[yshift=-0.2ex]{\bot}[swap]{{\Fun^\ex((\cM^\omega)^\op,\rho^p)}}
&
\Fun((\cM^\omega)^\op,\cX_p)
\simeq
\cX_p \otimes \cM
\end{tikzcd}~. \]
In fact, in this case the entire gluing diagram of $\cX \otimes \cM$ is tensored up from that of $\cX$, in the sense that it is given by the composite
\[
\begin{tikzcd}[column sep=1.5cm]
\GD(\cX \otimes \cM)
:
\pos
\arrow{r}[description, yshift=-0.05cm]{\llax}{\GD(\cX)}
&
\PrSt
\arrow{r}{\Fun^\ex((\cM^\omega)^\op,-)}
&[1.5cm]
\PrSt
\end{tikzcd}
~.
\]
\end{remark}

\begin{observation}
\label{obs.tensoring.preserves.closed.ideals}
Let $\cX$ and $\cM$ be presentably symmetric monoidal stable $\infty$-categories.  Then, $\cX \otimes \cM$ is canonically a presentably symmetric monoidal stable $\infty$-category as well, and the functor \Cref{functor.tensor.with.M} admits a refinement
\[ \begin{tikzcd}[column sep=1.5cm]
\Cls_\cX
\arrow{r}{- \otimes \cM}
&
\Cls_{\cX \otimes \cM}
\\
\Idl_\cX
\arrow[hook]{u}
\arrow[dashed]{r}[swap]{-\otimes \cM}
&
\Idl_{\cX \otimes \cM}
\arrow[hook]{u}
\end{tikzcd}~. \]
\end{observation}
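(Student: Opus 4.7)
The plan is to establish the two assertions separately. For the first, that $\cX \otimes \cM$ is canonically a presentably symmetric monoidal stable $\infty$-category, I would simply invoke the fact that $\PrLSt$ is itself symmetric monoidal under $\otimes$, so that the tensor product of two commutative algebra objects in $\PrLSt$ (i.e.\! of two presentably symmetric monoidal stable $\infty$-categories) is canonically again such an algebra. This is standard and requires no further comment.

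For the second assertion, my approach is to use the characterization of closed ideals as smashing (co)localizations. Specifically, a closed subcategory $\cZ \in \Cls_\cX$ is a closed ideal of $\cX$ precisely when the adjunction $i_L : \cZ \rightleftarrows \cX : y$ lifts to an adjunction of $\cX$-modules in $\PrLSt$; equivalently, $\cZ \simeq \Mod_A(\cX)$ for some idempotent commutative algebra $A \in \CAlg(\cX)$ (with unit map $1_\cX \to A$ such that the induced morphism $A \otimes_\cX A \to A$ is an equivalence). Given this characterization, the functor $-\otimes \cM : \PrLSt \to \PrLSt$, being symmetric monoidal, sends commutative algebras to commutative algebras and carries adjunctions of modules to adjunctions of modules; thus it sends the monoidal adjunction $\cZ \rightleftarrows \cX$ to a monoidal adjunction $\cZ \otimes \cM \rightleftarrows \cX \otimes \cM$ of $(\cX \otimes \cM)$-modules, exhibiting $\cZ \otimes \cM$ as a closed ideal of $\cX \otimes \cM$.

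Concretely, if $\cZ \simeq \Mod_A(\cX)$ for an idempotent $A \in \CAlg(\cX)$, I would invoke the standard identification
\[
\Mod_A(\cX) \otimes \cM \simeq \Mod_{A \otimes 1_\cM}(\cX \otimes \cM),
\]
which follows from the compatibility of module categories with base change in $\CAlg(\PrLSt)$ (cf.\! Lurie's \emph{Higher Algebra}, Theorem 4.8.4.6). Since tensor products of idempotent algebras remain idempotent, the algebra $A \otimes 1_\cM$ is idempotent in $\cX \otimes \cM$ and hence presents a closed ideal thereof.

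Finally, the functoriality of the restricted assignment $-\otimes \cM : \Idl_\cX \to \Idl_{\cX \otimes \cM}$ is inherited from its functoriality on $\Cls$ already established in \Cref{obs.tensored.up.stratn}, since morphisms in $\Idl_\cX$ are the morphisms in $\Cls_\cX$ additionally compatible with the ambient $\cX$-action, and this extra compatibility is preserved under $-\otimes \cM$ by the same $\CAlg(\PrLSt)$-functoriality argument. I expect the main obstacle to be pinning down the characterization of ideals as smashing localizations precisely enough in the AMR-strat framework to make the preservation argument go through cleanly; once that bookkeeping is set up, the rest is a formal manipulation of the symmetric monoidal structure on $\PrLSt$.
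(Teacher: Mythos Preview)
The paper records this as an observation without proof. Your module-theoretic argument --- that being a closed ideal amounts to the adjunction $i_L \dashv y$ lifting to $\Mod_\cX(\PrLSt)$, and that the symmetric monoidal functor $(-) \otimes \cM$ on $\PrLSt$ then carries this to an adjunction in $\Mod_{\cX \otimes \cM}(\PrLSt)$ --- is correct and suffices.

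However, your ``equivalently, $\cZ \simeq \Mod_A(\cX)$ for some idempotent commutative algebra $A$'' is wrong. A closed ideal $\cZ$ consists of the $A$-\emph{acyclic} objects for such an $A$; it is the complementary \emph{open} localization $\cX/\cZ$ that is identified with $\Mod_A(\cX)$. For instance, with $\cX = \Spectra$ and $A = \SS[p^{-1}]$, the closed ideal is the subcategory of $p$-power-torsion spectra, which has no unit object and so cannot be $\Mod_B(\Spectra)$ for any algebra $B$. Consequently your ``concrete'' paragraph invoking the base-change identification for $\Mod_A(\cX) \otimes \cM$ does not apply to $\cZ$ itself. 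You should either drop that paragraph or recast it in terms of the open complement (showing that $(-) \otimes \cM$ carries the idempotent algebra presenting $\cX/\cZ$ to one presenting $(\cX \otimes \cM)/(\cZ \otimes \cM)$); your first argument already stands on its own.
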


\begin{definition}
\label{defn.geometric.stratification.of.genuine.G.objects}
The \bit{geometric stratification} of the presentable stable $\infty$-category $\cM^{\gen G}$ of genuine $G$-objects in $\cM$ is the composite functor
\[ \begin{tikzcd}[row sep=0cm, column sep=1.5cm]
\pos_G
\arrow{r}{\Spectra^{\gen G}_{^\leq \bullet}}
&
\Cls_{\Spectra^{\gen G}}
\arrow{r}{- \otimes \cM}
&
\Cls_{\cM^{\gen G}}
\\
\rotatebox{90}{$\in$}
&
&
\rotatebox{90}{$\in$}
\\
H
\arrow[maps to]{rr}
&
&
\Spectra^{\gen G}_{^\leq H} \otimes \cM
\end{tikzcd}~, \]
where the first functor is the symmetric monoidal geometric stratification of genuine $G$-spectra of Theorem S.\ref{strat:intro.thm.gen.G.spt} (cf.\! Definition S.\ref{strat:defn.geometric.prestratn.of.gen.G.spt}); the fact that this composite functor is a stratification follows from \Cref{obs.tensored.up.stratn}. In the case that $\cM$ is a presentably symmetric monoidal stable $\infty$-category, we use the same name to refer to the symmetric monoidal stratification
\begin{equation}
\label{sym.mon.geometric.stratn.in.general}
\pos_G
\xra{\Spectra^{\gen G}_{^\leq \bullet}}
\Idl_{\Spectra^{\gen G}}
\xra{- \otimes \cM}
\Idl_{\cM^{\gen G}}
\end{equation}
guaranteed by \Cref{obs.tensoring.preserves.closed.ideals}.
\end{definition}

\section{Some algebra and analysis of stable $\infty$-categories}
\label{section.stable.quotients}

In this section, we establish a number of technical results regarding the interplay between small and presentable stable $\infty$-categories, which we use in our proof that the gluing functors for the geometric stratification of genuine $G$-objects are proper Tate constructions (\Cref{prop.proper.Tate.from.genuine.G.objs}).

\begin{definition}
The \bit{stable quotient} of an exact functor
\[
\cA
\xlongra{F}
\cB
\]
between (small or large) stable $\infty$-categories is the cofiber
\[
\cB /^\St \cA
:=
\cofib \left( \cA \xlongra{F} \cB \right)
~,
\]
considered among (resp.\! small or large) stable $\infty$-categories. (Stable quotients always exist, by Observations \ref{obs.stable.quotient.only.depends.on.image} \and \ref{obs.construct.stable.quotient}.)
\end{definition}

\begin{warning}
We will be taking stable quotients of functors between \textit{presentable} (and in particular, large) stable $\infty$-categories.
\end{warning}

\begin{observation}
\label{obs.stable.quotient.only.depends.on.image}
The stable quotient of an exact functor between stable $\infty$-categories only depends on its image, because it is merely a condition (as opposed to additional data) for an exact functor among stable $\infty$-categories to be zero. In other words, to understand stable quotients it suffices to understand stable quotients by full stable subcategories. Likewise, it suffices to consider stable quotients by full stable subcategories that are closed under retracts.
\end{observation}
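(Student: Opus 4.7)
The plan is to exploit the universal property of the stable quotient $\cB/^\St \cA$ as a cofiber in the $\infty$-category of (small or large) stable $\infty$-categories and exact functors: for any target stable $\infty$-category $\cC$, the space of exact functors $\cB/^\St \cA \to \cC$ is equivalent to the fiber over the zero functor of the restriction map $\Fun^\ex(\cB,\cC) \to \Fun^\ex(\cA,\cC)$. The key point is that among exact functors between stable $\infty$-categories, being the zero functor is a \emph{property} rather than additional data: for any $H \in \Fun^\ex(\cA,\cC)$, the mapping space to or from the zero functor is either empty or contractible, because $0 \in \Fun^\ex(\cA,\cC)$ is both an initial and terminal object (pointwise so, because $\cC$ is stable).

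First I would factor $F$ as $\cA \to \cA' \hookrightarrow \cB$, where $\cA' \subseteq \cB$ denotes the smallest full stable subcategory containing the essential image of $F$ (obtained by closing that image under the zero object and finite (co)limits). For any exact $G : \cB \to \cC$, the vanishing $G \circ F \simeq 0$ is equivalent to the vanishing $G|_{\cA'} \simeq 0$: the implication $(\Leftarrow)$ is immediate, while $(\Rightarrow)$ follows because the full subcategory of $\cC$ on the objects that $G$ sends to $0$ is closed under finite (co)limits, hence contains $\cA'$ once it contains the essential image of $F$. Applying the universal property, I obtain a natural equivalence $\cB/^\St \cA \simeq \cB/^\St \cA'$, establishing the first claim.

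For the retract refinement, let $\cA'' \subseteq \cB$ denote the thick closure of $\cA'$, i.e.\ its closure under retracts. Any exact $G : \cB \to \cC$ vanishing on $\cA'$ automatically vanishes on $\cA''$, since a retract of a zero object in a stable $\infty$-category is again a zero object. The same universal property argument then gives $\cB/^\St \cA' \simeq \cB/^\St \cA''$, which combined with the previous step completes the proof.

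The only substantive point to verify is the assertion that vanishing of an exact functor is a property rather than additional data. This is what allows the universal property to be phrased purely in terms of the essential image of $F$ (and its closure under the relevant stable-categorical operations), and is the essential content of the observation; everything else is a formal manipulation of universal properties.
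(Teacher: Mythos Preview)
Your proposal is correct and follows exactly the reasoning the paper intends: the paper does not give a separate proof environment for this observation, and its entire justification is the parenthetical clause ``because it is merely a condition (as opposed to additional data) for an exact functor among stable $\infty$-categories to be zero''. Your argument is a faithful and careful unpacking of that clause via the universal property of the cofiber, the fact that $0$ is a zero object in $\Fun^\ex(\cA,\cC)$, and the closure of the vanishing locus under finite (co)limits and retracts.
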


\begin{observation}
\label{obs.construct.stable.quotient}
Suppose that
\[ \begin{tikzcd}
\cA
\arrow[hook]{r}{i}
&
\cB
\end{tikzcd} \]
is a fully faithful exact functor between small stable $\infty$-categories. Applying the functor
\[
\St
\xra{\Ind}
\PrLSt
~,
\]
we obtain a fully faithful functor
\[ \begin{tikzcd}[column sep=1.5cm]
\Ind(\cA)
\arrow[hook]{r}{\Ind(i) = i_!}
&
\Ind(\cB)
\end{tikzcd} \]
between presentable stable $\infty$-categories.  In fact, this is the inclusion of a closed subcategory: the functor $i_!$ preserves colimits, as does its right adjoint $i^*$. Hence, we obtain a recollement
\[ \begin{tikzcd}[column sep=1.5cm]
\Ind(\cA)
\arrow[hook, bend left=30]{r}[description]{i_!}
\arrow[leftarrow]{r}[transform canvas={yshift=0.1cm}]{\bot}[swap,transform canvas={yshift=-0.1cm}]{\bot}[description]{i^*}
\arrow[bend right=30, hook]{r}[description]{i_*}
&
\Ind(\cB)
\arrow[bend left=30]{r}[description, pos=0.5]{p_L}
\arrow[hookleftarrow]{r}[transform canvas={yshift=0.1cm}]{\bot}[swap,transform canvas={yshift=-0.1cm}]{\bot}[description]{\nu}
\arrow[bend right=30]{r}[description, pos=0.5]{p_R}
&
\Ind(\cB)
&[-1.8cm]
/ \Ind(\cA)
\end{tikzcd} \]
(see e.g.\! Definition S.\ref{strat:defn.recollement.in.intro}). Observe that the functor $p_L$ preserves compact objects (because the functor $i^*$ preserves filtered colimits). From here, it is straightforward to see that the composite
\[ \begin{tikzcd}
\cA
\arrow[hook]{r}{i}
&
\cB
\arrow{r}{p}
&
(\Ind(\cB) / \Ind(\cA))^\omega
\end{tikzcd} \]
is a cofiber sequence among small stable $\infty$-categories after idempotent completion, where $p$ denotes the restriction of $p_L$ to $\cB \subseteq \Ind(\cB)$, and thereafter that the stable quotient of $i$ itself is the full stable subcategory
\[
\cB /^\St \cA
=
p(\cB)
\subseteq
(\Ind(\cB) / \Ind(\cA))^\omega
~,
\]
the image of $p$ (which is automatically stable).\footnote{In particular, $(\Ind(\cB)/\Ind(\cA))^\omega$ is the idempotent completion of $\cB /^\St \cA$.} Moreover, by enlarging our Grothendieck universe, we can apply this same construction to exact functors between not-necessarily-small stable $\infty$-categories.
\end{observation}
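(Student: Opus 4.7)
The plan is to verify each assertion of the observation in sequence, ultimately reducing the description of the stable quotient to the standard correspondence between idempotent-complete small stable $\infty$-categories and compactly generated presentable stable $\infty$-categories.

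First, I would establish the recollement structure. The fully faithfulness of $i_! = \Ind(i)$ is standard, following from the fact that $\Ind$ preserves fully faithful functors (one can check this directly on mapping spaces, which are computed as filtered colimits of mapping spaces between the underlying compact objects). As a left adjoint, $i_!$ preserves colimits automatically, and its right adjoint $i^*$ (existing by the adjoint functor theorem) preserves filtered colimits because $i_!$ sends $\cA$ into the compact objects $\cB \subseteq \Ind(\cB)$ and hence preserves compactness; since $i^*$ also preserves finite limits (equivalently, finite colimits in the stable setting), it preserves all colimits. Consequently $i^*$ admits a further right adjoint $i_*$, and the triple $i_! \dashv i^* \dashv i_*$ realizes $\Ind(\cA)$ as a closed subcategory of $\Ind(\cB)$, producing the recollement with accompanying triple $p_L \dashv \nu \dashv p_R$ by the general theory of closed-subcategory inclusions in $\PrLSt$.

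Next I would verify that $p_L$ preserves compact objects, equivalently that $\nu$ preserves filtered colimits. Since $\nu$ is fully faithful, its essential image in $\Ind(\cB)$ is the full subcategory $\{x : i^* x \simeq 0\}$. Because $i^*$ itself preserves filtered colimits, this kernel is closed under filtered colimits in $\Ind(\cB)$, from which the claim follows.

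The main obstacle is identifying the stable quotient, and for this I would appeal to the equivalence (due to Lurie, and also Blumberg--Gepner--Tabuada) between idempotent-complete small stable $\infty$-categories and compactly generated presentable stable $\infty$-categories, given by $\Ind$ and $(-)^\omega$, under which cofiber sequences correspond to localization sequences. Since $p_L$ preserves compacts, the quotient $\Ind(\cB)/\Ind(\cA)$ is compactly generated, and its subcategory of compact objects $(\Ind(\cB)/\Ind(\cA))^\omega$ is the idempotent completion of the cofiber of $\cA \hookrightarrow \cB$ computed at the small level. Thus the composite $\cA \hookrightarrow \cB \to (\Ind(\cB)/\Ind(\cA))^\omega$ is a cofiber sequence after idempotent completion, as asserted. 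To identify the (non-idempotent-completed) stable quotient $\cB/^\St \cA$ itself with the image $p(\cB)$, I would observe that $\cB/^\St \cA$ is by definition the smallest full stable subcategory of $(\Ind(\cB)/\Ind(\cA))^\omega$ containing the image of $p$; since $p$ is exact, its essential image is automatically stable, and hence agrees with $\cB/^\St \cA$. The final remark about dropping the smallness hypothesis on $\cA$ and $\cB$ is then a routine matter of enlarging the ambient universe so that the previously-large $\cA$ and $\cB$ become small within it and reapplying the argument verbatim.
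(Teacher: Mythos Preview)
Your proposal is correct and follows essentially the same route as the paper; since this is stated as an Observation, the paper's own argument is the sketch embedded in the statement itself, and you have simply filled in the details the paper leaves implicit (notably, making explicit the appeal to the $\Ind/(-)^\omega$ equivalence between idempotent-complete small stable $\infty$-categories and compactly generated presentable stable $\infty$-categories for the ``straightforward'' step). One small expository quibble: your phrase ``$\cB/^\St\cA$ is by definition the smallest full stable subcategory of $(\Ind(\cB)/\Ind(\cA))^\omega$ containing the image of $p$'' is not literally the definition---the definition is as a cofiber---but what you need is that the canonical functor $\cB \to \cB/^\St\cA$ is essentially surjective (as in the Verdier quotient), which combined with the fully faithful embedding of $\cB/^\St\cA$ into its idempotent completion yields the identification with $p(\cB)$.
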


\begin{remark}
\label{remark.stable.quotient.by.presentable.subcategory.is.presentable.quotient}
Let $\cX$ be a presentable stable $\infty$-category and let $\cZ \subseteq \cX$ be a full presentable stable subcategory. Then, the stable and presentable quotients of $\cX$ by $\cZ$ coincide: the canonical morphism
\[
\cX /^\St \cZ
\longra
\cX / \cZ
\]
is an equivalence. Indeed, the presentable quotient satisfies the universal property of the stable quotient: writing
\[ \begin{tikzcd}[column sep=1.5cm]
\cZ
\arrow[hook, yshift=0.9ex]{r}{i}
\arrow[leftarrow, yshift=-0.9ex]{r}[yshift=-0.2ex]{\bot}[swap]{i^R}
&
\cX
\arrow[yshift=0.9ex]{r}{j^L}
\arrow[hookleftarrow, yshift=-0.9ex]{r}[yshift=-0.2ex]{\bot}[swap]{j}
&
\cX/\cZ
\end{tikzcd} \]
for the resulting diagram in $\Cat$ (see Definition S.\ref{strat:defn.presentable.quotient} and Observation S.\ref{strat:obs.right.orthogonal.of.full.presentable.stable.subcat.is.presentable.quotient}), given any stable $\infty$-category $\cC$ and any exact functor $\cX \xra{F} \cC$ such that $Fi \simeq 0$, the morphism
\[
F
\longra
F j j^L
\]
is an equivalence (because for each $X \in \cX$ the cofiber sequence $i i^R X \ra X \ra j j^L X$ is carried by $F$ to a cofiber sequence).
\end{remark}

\begin{definition}
Let $\cC$ be a stably symmetric monoidal $\infty$-category. A full stable subcategory $\cI \subseteq \cC$ is a \bit{thick ideal} if it is closed under retracts and contagious under the symmetric monoidal structure.
\end{definition}

\begin{notation}
Let $\cC$ be a stably symmetric monoidal $\infty$-category. Given a set $\{ K_s \in \cC \}_{s \in S}$ of objects of $\cC$, we write $\brax{ K_s }_{s \in S}^{\St,\otimes} \subseteq \cC$ for the thick ideal that they generate.
\end{notation}

\begin{observation}
\label{obs.ind.of.a.thick.ideal.is.a.closed.ideal}
Suppose that $\cC \in \CAlg(\St)$ is a stably symmetric monoidal $\infty$-category and $\cI \subseteq \cC$ is a thick ideal. Then, $\Ind(\cI) \subseteq \Ind(\cC)$ is an ideal (see Definition S.\ref{strat:defn.ideal.subcat}). Moreover, for any set of objects $\{K_s \in \cC \}_{s \in S}$ we have
\[
\Ind(\brax{K_s}^{\St,\otimes}_{s \in S} )
=
\brax{K_s}^\otimes_{s \in S}
\]
(see Notation S.\ref{strat:notn.for.ideal.generated.by.stuff}).
\end{observation}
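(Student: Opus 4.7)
The plan is to handle the two assertions in turn, using the presentably symmetric monoidal structure on $\Ind(\cC)$ (which exists because $\cC \in \CAlg(\St)$ has biexact tensor, so the Day/left-Kan-extended tensor on $\Ind(\cC)$ preserves colimits separately in each variable and restricts to the given tensor on $\cC \subseteq \Ind(\cC)^\omega$).

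For the first assertion, I would first observe that $\Ind(\cI) \subseteq \Ind(\cC)$ is closed, exactly as in \Cref{obs.construct.stable.quotient}: the fully faithful exact inclusion $\cI \hookrightarrow \cC$ induces a fully faithful colimit-preserving $\Ind(\cI) \hookrightarrow \Ind(\cC)$ whose right adjoint $i^*$ also preserves colimits (since $\cI \hookrightarrow \cC$ preserves compact objects). To upgrade this to the statement that $\Ind(\cI)$ is an \emph{ideal}, I need to check that for $X \in \Ind(\cC)$ and $Y \in \Ind(\cI)$, the tensor product $X \otimes Y$ lies in $\Ind(\cI)$. Writing $X \simeq \colim_i X_i$ with $X_i \in \cC$ compact and $Y \simeq \colim_j Y_j$ with $Y_j \in \cI$, I would use that $\otimes$ preserves colimits in each variable to get $X \otimes Y \simeq \colim_{i,j}(X_i \otimes Y_j)$; since $\cI$ is a thick ideal of $\cC$, each $X_i \otimes Y_j$ lies in $\cI$, so $X \otimes Y$ is a filtered colimit of objects in $\cI$ and hence lies in $\Ind(\cI)$.

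For the second assertion, I would prove the two inclusions using universal properties. The $\supseteq$ direction follows because $\Ind(\brax{K_s}^{\St,\otimes}_{s \in S})$ is, by the first part, a closed ideal of $\Ind(\cC)$ containing each $K_s$, and $\brax{K_s}^{\otimes}_{s \in S}$ is by definition the minimal such subcategory. For the $\subseteq$ direction, I would first check that $\brax{K_s}^{\St,\otimes}_{s \in S} \subseteq \brax{K_s}^{\otimes}_{s \in S}$: the latter, being a closed ideal of $\Ind(\cC)$, is closed under shifts, cofibers, retracts, and tensoring with arbitrary objects of $\cC \subseteq \Ind(\cC)$, so it contains the thick ideal generated in $\cC$ by the $K_s$. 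Then, since $\brax{K_s}^{\otimes}_{s \in S}$ is closed under all colimits in $\Ind(\cC)$ (in particular filtered ones), it contains the closure of $\brax{K_s}^{\St,\otimes}_{s \in S}$ under filtered colimits, which is exactly $\Ind(\brax{K_s}^{\St,\otimes}_{s \in S})$.

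I don't anticipate a serious obstacle; the only mild care required is confirming that the tensor on $\Ind(\cC)$ preserves colimits variable-wise (so that colimits of tensor products of compacts compute general tensor products) and tracking the distinction between the thick ideal generated inside $\cC$ and the closed ideal generated inside $\Ind(\cC)$. Both points are standard for presentably symmetric monoidal extensions of small symmetric monoidal stable $\infty$-categories.
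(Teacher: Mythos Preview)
Your argument is correct. The paper states this as an Observation and offers no proof at all, treating both claims as immediate from the definitions; your writeup supplies exactly the standard details (closedness via \Cref{obs.construct.stable.quotient}, the ideal property by writing tensors as filtered colimits of tensors of compacts, and the two inclusions from the universal properties of the respective ideal closures).
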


\section{The proper Tate construction}
\label{section.proper.tate.in.htpy.G.objects}

In this section, we introduce the proper Tate construction (\Cref{defn.proper.tate}) and study its basic features. In fact, we introduce a generalization of the proper Tate construction, which makes reference to a family of closed subgroups; this is no more difficult to study, and has the added benefit of recovering the ordinary Tate construction as a special case (\Cref{obs.proper.F.tate.recovers.tate} \and \Cref{rmk.proper.F.tate.recovers.tate.in.any.presentably.sym.mon}).

\begin{local}
In this section, we fix a family $\ms{F} \in \Down_{\pos_G}$ of closed subgroups of $G$ (i.e.\! a collection of closed subgroups of $G$ that is closed under subconjugacy), and we fix a presentably symmetric monoidal stable $\infty$-category $\cR$.
\end{local}

\begin{notation}
\label{notn.free.htpy.G.object.on.object.of.R.and.htpy.G.space}
Given an object $E \in \cR$ and a homotopy $G$-space $X \in \Spaces^{\htpy G}$, we write
\[
E^\htpy \brax{X}
:=
(\Sigma^\infty X_+ \otimes E )
\in
\Spectra^{\htpy G} \otimes \cR
\simeq
\cR^{\htpy G}
~.
\]
\end{notation}

\begin{warning}
We write $G/H$ both for the object of $\Spaces^{\gen G}$ and its image under the forgetful functor $\Spaces^{\gen G} \xra{U} \Spaces^{\htpy G}$. However, it will always be clear from context which object is being referred to.
\end{warning}

\begin{notation}
We write
\[
\cI^\htpy_\ms{F}
:=
\brax{ \uno_\cR^\htpy \brax{G/H} }^{\St,\otimes}_{H \in \ms{F}}
\subseteq
\cR^{\htpy G}
\]
for the thick ideal of $\cR^{\htpy G}$ generated by the objects $\{ \uno_\cR^\htpy \brax{G/H} \in \cR^{\htpy G} \}_{H \in \ms{F}}$.
\end{notation}

\begin{definition}
\label{defn.proper.tate}
For any closed subgroup $H \in \pos_G$, the \bit{$\ms{F}$-$H$-Tate construction} is the composite functor
\[
(-)^{\tate_\ms{F} H}
:
\cR^{\htpy G}
\xlongra{p}
\cR^{\htpy G} /^\St \cI_\ms{F}^\htpy
\xra{\ulhom(p(\uno_\cR^\htpy \brax{G/H}),-)}
\cR^{\htpy \Weyl(H)}
~,
\]
where $p$ denotes the canonical functor to the stable quotient. In the special case where $\ms{F} = (^{\not\geq} H)$, we simply write
\[
(-)^{\tate H}
:=
(-)^{\tate_{(^{\not\geq} H)} H}
\]
and refer to this functor as the \bit{proper $H$-Tate construction}.
\end{definition}

\begin{remark}
We refer the reader to \Cref{prop.proper.Tate.from.genuine.G.objs} for the relationship between the $\ms{F}$-$H$-Tate construction and genuine $G$-objects, and to \Cref{prop.proper.tate.formula} for an explicit formula for the proper $G$-Tate construction.
\end{remark}

\begin{remark}
If $G$ is a finite group that is not of prime-power order, then $(-)^{\tate G}$ is zero. This follows from the argument of \cite[Lemma 7.15(i)]{Kaledin-Mack} (see also \cite[Lemma II.6.7]{NS}).
\end{remark}

\begin{observation}
\label{obs.proper.F.tate.recovers.tate}
Suppose that $G$ is a finite group and that $\cR = \Mod_R$ for some commutative ring spectrum $R \in \CAlg(\Spectra)$. Then, the $\{ e \}$-$G$-Tate construction recovers the usual $G$-Tate construction
\[ (-)^{\tate_{\{ e \}} G} \simeq (-)^{{ \sf t} G} := \cofib \left( (-)_{\htpy G} \xra{\Nm_G} (-)^{\htpy G} \right) ~. \]
This is proved as \cite[Lemma I.3.8(iii)]{NS} in the case that $R = \SS$ (so that $\cR = \Mod_\SS(\Spectra) \simeq \Spectra$), and the same proof applies verbatim in general.
\end{observation}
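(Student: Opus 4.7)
The plan is to verify that the proof of \cite[Lemma I.3.8(iii)]{NS}, which handles $\cR = \Spectra$ (i.e., $R = \SS$), transfers verbatim to $\cR = \Mod_R$ for arbitrary $R \in \CAlg(\Spectra)$. The crux is $R$-linearity: every ingredient in the argument is intrinsically defined in terms of the rigidly-compactly generated presentably symmetric monoidal stable structure, so it is tensored up along the symmetric monoidal base change $\Spectra \xra{R \otimes_\SS (-)} \Mod_R$. I would proceed in two main steps.

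First, I would verify that the ordinary Tate construction $(-)^{{\sf t} G}$ annihilates the thick ideal $\cI^\htpy_{\{e\}} \subseteq \cR^{\htpy G}$. Since $(-)^{{\sf t} G}$ is exact and $\cI^\htpy_{\{e\}}$ is generated by $\uno_\cR \otimes G_+$ under retracts, finite colimits, and tensoring with $\cR^{\htpy G}$-objects, this reduces to checking $(Y \otimes G_+)^{{\sf t} G} \simeq 0$ for every $Y \in \cR^{\htpy G}$. The ``shear'' identification realizes $Y \otimes G_+$ as an induced object, on which the norm map is an equivalence; this computation is formal and manifestly $R$-linear. The universal property of the stable quotient (\Cref{remark.stable.quotient.by.presentable.subcategory.is.presentable.quotient} and the surrounding discussion) then yields a unique exact factorization $(-)^{{\sf t} G} \simeq \bar{T} \circ p$ with $\bar{T} : \cR^{\htpy G}/^{\St}\cI^\htpy_{\{e\}} \to \cR$.

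Second, I would produce a natural equivalence $\bar{T} \simeq \ulhom(p(\uno_\cR), p(-))$. Writing $p^R$ for the right adjoint to $p$ (arising from the recollement on $\cR^{\htpy G}$ at $\Ind(\cI^\htpy_{\{e\}})$), adjunction gives $\ulhom(p(\uno_\cR), p(E)) \simeq \ulhom_{\cR^{\htpy G}}(\uno_\cR, p^R p E)$. Applying $(-)^{\htpy G} = \ulhom(\uno_\cR, -)$ to the unit fiber sequence $i_* i^! E \to E \to p^R p E$ yields a fiber sequence
\[
\ulhom(\uno_\cR, i_* i^! E) \longra E^{\htpy G} \longra \ulhom(p(\uno_\cR), p(E))~,
\]
which I would compare term-by-term with the norm cofiber sequence $E_{\htpy G} \xra{\Nm_G} E^{\htpy G} \to E^{{\sf t} G}$. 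The heart of the argument is the natural equivalence $\ulhom(\uno_\cR, i_* i^! E) \simeq E_{\htpy G}$ that intertwines the map to $E^{\htpy G}$ with $\Nm_G$; NS establishes this via a universal object of $\cR^{\htpy G}$ representing the homotopy-orbits functor.

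The main obstacle is this final identification. Executing it amounts to auditing NS's construction and confirming that no step invokes properties of $\Spectra$ beyond those of a general rigidly-compactly generated presentably symmetric monoidal stable $\infty$-category. Since every operation involved (the norm map, the recollement, the self-duality of $G_+$) is defined intrinsically from the symmetric monoidal structure, the NS argument carries over to $\Mod_R$ without modification.
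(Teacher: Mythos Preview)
Your proposal is essentially aligned with the paper's treatment, which is simply to cite \cite[Lemma I.3.8(iii)]{NS} and assert that the argument carries over verbatim to $\Mod_R$; there is no separate proof to compare against. Your elaboration of why $R$-linearity makes the transfer work is correct in spirit.

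That said, there is one genuine wrinkle in your step two. You invoke a right adjoint $p^R$ to the projection $p : \cR^{\htpy G} \to \cR^{\htpy G}/^{\St}\cI^\htpy_{\{e\}}$ and a recollement ``on $\cR^{\htpy G}$ at $\Ind(\cI^\htpy_{\{e\}})$''. But $\cI^\htpy_{\{e\}}$ is a \emph{small} thick ideal inside the presentable $\cR^{\htpy G}$, so the projection $p$ to the stable quotient need not admit a right adjoint at this level, and $\Ind(\cI^\htpy_{\{e\}})$ is not a subcategory of $\cR^{\htpy G}$. The recollement you want lives one universe up, on $\Ind(\cR^{\htpy G})$, exactly as in \Cref{obs.construct.stable.quotient}; the hom in the stable quotient is then computed by embedding into $\Ind(\cR^{\htpy G})/\Ind(\cI^\htpy_{\{e\}})$ and using the adjunctions there. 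This is fixable, but it is not quite what you wrote.

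It is also worth noting that the paper's own \Cref{rmk.proper.F.tate.recovers.tate.in.any.presentably.sym.mon} singles out a different technical fulcrum for the NS argument: the equivalence
\[
\colim_{Y \in (\cR^{\htpy G}_{\Ind})_{/X}} Y \xlongra{\sim} X
\]
expressing every object as a filtered colimit of induced objects. This is how NS actually executes the identification you defer to them in your last paragraph, and it is slightly different in flavor from your direct fiber-sequence comparison (though the two are closely related once one unwinds the recollement at the Ind level).
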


\begin{remark}
\label{rmk.proper.F.tate.recovers.tate.in.any.presentably.sym.mon}
In fact, for a finite group $G$, the $\{ e \}$-$G$-Tate construction recovers the usual $G$-Tate construction for any presentably symmetric monoidal stable $\infty$-category $\cR$.\footnote{We do not need this fact, and so we do not prove it. Nevertheless, we include the present discussion in order to motivate \Cref{defn.H.tate}.} To verify this, it suffices to prove the analog of the first equivalence of \cite[Lemma I.3.8(iii)]{NS}, namely that for any object $X \in \cR^{\htpy G}$ the morphism
\[
\left( \colim_{Y \in (\cR^{\htpy G}_\Ind)_{/X}} Y \right)
\longra
X
\]
is an equivalence, where we write $\cR^{\htpy G}_\Ind \subseteq \cR^{\htpy G}$ for the stable subcategory generated by the image of the induction functor $\cR \xra{\Ind_e^G} \cR^{\htpy G}$. To see this, observe first that this subcategory is the stable envelope of the Kleisli $\infty$-category associated to the monadic adjunction
\[ \begin{tikzcd}[column sep=1.5cm]
\cR
\arrow[transform canvas={yshift=0.9ex}]{r}{\Ind_e^G}
\arrow[leftarrow, transform canvas={yshift=-0.9ex}]{r}[yshift=-0.2ex]{\bot}[swap]{\Res^G_e}
&
\cR^{\htpy G}
\end{tikzcd}~. \]
Hence, the claim follows from the spectrally-enriched analog of \Cref{lem.T.alg.is.colim.of.frees} and the fact that for any stable $\infty$-category $\cC$ and any spectrally-enriched $\infty$-category $\cI$ the diagram
\[ \begin{tikzcd}
\Fun^\ex(\Env(\cI),\cC)
\arrow{rr}{\sim}
\arrow{rd}[sloped, swap]{\colim_{\Env(\cI)}}
&
&
\ul{\Fun} ( \cI , \cC)
\arrow{ld}[sloped, swap]{\ul{\colim}_\cI}
\\
&
\cC
\end{tikzcd} \]
commutes (where $\Env(\cI)$ denotes the stable envelope of $\cI$ and the underlines signify the analogous enriched notions).
\end{remark}

\begin{lemma}
\label{lem.T.alg.is.colim.of.frees}
Let $\cC$ be an $\infty$-category and let $T \in \Alg(\Fun(\cC,\cC))$ be a monad on $\cC$. Let us write
\[ \begin{tikzcd}[column sep=1.5cm]
\cC
\arrow[transform canvas={yshift=0.9ex}]{r}{F}
\arrow[leftarrow, transform canvas={yshift=-0.9ex}]{r}[yshift=-0.2ex]{\bot}[swap]{U}
&
\Alg_T(\cC)
\end{tikzcd} \]
for the corresponding free/forget adjunction (so that $T \simeq UF$), and let us write
\[
{\sf Kl}_T(\cC) \subseteq \Alg_T(\cC)
\]
for the Kleisli $\infty$-category of $T$ (i.e.\! the full subcategory on the free $T$-algebras). Then, for any $T$-algebra $X \in \Alg_T(\cC)$ the canonical morphism
\begin{equation}
\label{morphism.from.colimit.of.free.T.algs}
\colim_{FY \in {\sf Kl}_T(\cC)_{/X}} FY
\longra
X
\end{equation}
in $\Alg_T(\cC)$ is an equivalence.
\end{lemma}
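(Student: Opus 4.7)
The assertion is that the inclusion $i\colon {\sf Kl}_T(\cC) \hookrightarrow \Alg_T(\cC)$ is \emph{dense}, in the sense that the identity of $\Alg_T(\cC)$ is the pointwise left Kan extension of $i$ along itself. My plan is to establish this via the monadic bar resolution. For $X \in \Alg_T(\cC)$, the adjunction $F \dashv U$ produces a canonical augmented simplicial object $B_\bullet^+\colon \Delta_+^\op \to \Alg_T(\cC)$ with $B_{-1} = X$ and $B_n = FT^nUX$ for $n \geq 0$, whose face and degeneracy maps are built from the monad multiplication $\mu$, unit $\eta$, and the structure map $\alpha\colon TUX \to UX$ of $X$. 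Each $B_n$ for $n \geq 0$ is free, and the augmentation $\beta_0\colon FUX \to X$ is the counit. After applying $U$, this becomes a split augmented simplicial object in $\cC$ (extra degeneracies $\eta_{T^nUX}$), hence an absolute colimit diagram. Because $U$ is monadic (in particular, conservative and preserving $U$-split geometric realizations), one concludes $X \simeq |B_\bullet|$ in $\Alg_T(\cC)$.

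Next, since each $B_n$ (for $n \geq 0$) is free and comes equipped with the canonical map $\beta_n\colon B_n \to X$, the resolution refines to a functor $\tilde B\colon \Delta^\op \to {\sf Kl}_T(\cC)_{/X}$ whose composite with the forgetful functor to $\Alg_T(\cC)$ recovers $B_\bullet$. Hence, to conclude $\colim_{{\sf Kl}_T(\cC)_{/X}} FY \simeq X$, it suffices to show that $\tilde B$ is cofinal. By the $\infty$-categorical Quillen Theorem A, this reduces to showing that for each $(FY, f) \in {\sf Kl}_T(\cC)_{/X}$ the comma $\infty$-category $\Delta^\op \times_{{\sf Kl}_T(\cC)_{/X}} ({\sf Kl}_T(\cC)_{/X})_{(FY,f)/}$ is weakly contractible. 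Unwinding the definitions via adjunction and the formula for $B_n$, its underlying simplicial space identifies with
\[
[n] \longmapsto \operatorname{Map}_{\cC_{/UX}}\bigl((Y, f^\flat),\, (T^{n+1}UX,\mu^n)\bigr),
\]
where $f^\flat\colon Y \to UX$ is the adjunct of $f$ and $\mu^n\colon T^{n+1}UX \to UX$ is the iterated multiplication.

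The key observation is that the splitting $\eta_{T^nUX}$ of $UB_\bullet^+$ in $\cC$ is compatible with the maps $\mu^n\colon T^{n+1}UX \to UX$ (by the unit axiom $\mu \circ \eta = \id$), so $UB_\bullet^+$ refines to a split augmented simplicial object in the slice $\cC_{/UX}$. Since absolute colimits are preserved by every functor, the corepresentable $\operatorname{Map}_{\cC_{/UX}}((Y, f^\flat), -)$ sends this split diagram to a split augmented simplicial space whose geometric realization is $\operatorname{Map}_{\cC_{/UX}}((Y, f^\flat), (UX, \id_{UX})) \simeq *$. This yields the required contractibility; cofinality follows, completing the proof. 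The main expected obstacle is the cofinality verification --- specifically, correctly identifying the comma $\infty$-category with the stated simplicial space and recognizing that the splitting in $\cC$ refines to one in $\cC_{/UX}$, which is what allows absoluteness of split colimits to force the contractibility.
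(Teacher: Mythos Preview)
Your proof is correct. Both your argument and the paper's pivot on the monadic bar resolution, but they deploy it differently.

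The paper's proof is a single commutative square
\[
\begin{tikzcd}
\colim_{FY \in {\sf Kl}_T(\cC)_{/X}} {|(FU)^{\bullet+1}(FY)|}
\arrow{r}
\arrow{d}[sloped, anchor=north]{\sim}
&
{|(FU)^{\bullet+1}X|}
\arrow{d}[sloped, anchor=south]{\sim}
\\
\colim_{FY \in {\sf Kl}_T(\cC)_{/X}} FY
\arrow{r}
&
X
\end{tikzcd}
\]
in which the vertical maps are equivalences by the bar resolution applied to each $FY$ (left) and to $X$ (right); the conclusion follows once one knows the top map is an equivalence, which is left implicit. Your route is instead to show directly that the bar resolution defines a \emph{cofinal} functor $\tilde B \colon \Delta^\op \to {\sf Kl}_T(\cC)_{/X}$, via Quillen's Theorem~A. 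The key step---identifying the comma categories with mapping simplicial spaces in $\cC_{/UX}$ and then recognizing that the splitting of $UB_\bullet^+$ lifts to the slice---is handled cleanly: since $\Delta_{-\infty}^\op$ has $[-1]$ as a terminal object, any split augmented simplicial object canonically refines to the slice over its augmentation target, so the extra degeneracies are automatically compatible with the maps to $UX$ and the corepresentable carries the diagram to a split (hence contractible) simplicial space.

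Your approach is more self-contained and makes explicit exactly the step the paper compresses. The paper's square, on the other hand, packages the argument very efficiently once one accepts the implicit equivalence of the top map (which, unpacked, amounts to a statement of the same flavor as your cofinality claim). Either way, the content is the bar resolution; you have chosen the route that exposes the mechanism rather than hiding it.
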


\begin{proof}
It suffices to observe that the morphism \Cref{morphism.from.colimit.of.free.T.algs} extends to a commutative diagram
\[ \begin{tikzcd}[row sep=1.5cm]
\colim_{F Y \in {\sf Kl}_T(\cC)_{/X}} { | (FU)^{\bullet+1}(FY) | }
\arrow{r}
\arrow{d}[sloped, anchor=north]{\sim}
&
{|(FU)^{\bullet+1}X|}
\arrow{d}[sloped, anchor=south]{\sim}
\arrow{ld}
\\
\colim_{FY \in {\sf Kl}_T(\cC)_{/X}} FY
\arrow{r}
&
X
\end{tikzcd} \]
in $\Alg_T(\cC)$.
\end{proof}

\begin{definition}
\label{defn.H.tate}
In view of \Cref{obs.proper.F.tate.recovers.tate} and \Cref{rmk.proper.F.tate.recovers.tate.in.any.presentably.sym.mon}, in the special case where $\ms{F} = \{e\}$, we simply write
\[
(-)^{\st H}
:=
(-)^{\tate_{\{e\}} H}
\]
and refer to this functor as the \bit{$H$-Tate construction}.
\end{definition}

\begin{notation}
Suppose that $G$ is a finite group. We write
\[
(-)_{\htpy G}
\xra{\Nm_G}
(-)^{\htpy G}
\xra{\sQ_G}
(-)^{\st G}
\]
for the cofiber sequence in $\Fun^\ex ( \cR^{\htpy G} , \cR)$ that defines the $G$-Tate construction.\footnote{We have chosen the notation ``$\sQ$'' to invoke the idea that this the map to the quotient (of homotopy $G$-fixedpoints by homotopy $G$-orbits).}
\end{notation}

\begin{observation}
\label{obs.various.properties.of.tate}
Let $G$ be a finite group and let $H \leq G$ be a normal subgroup. We record the following facts for future use.
\begin{enumerate}

\item

Parametrizing the norm map $(-)_{\htpy H} \xra{\Nm_H} (-)^{\htpy H}$ in $\Fun(\cR^{\htpy H},\cR)$ over the $\sB H$-bundle $\BG \ra \sB (G/H)$, we obtain a homotopy $(G/H)$-equivariant norm map, i.e.\! a morphism
\[
(-)_{\htpy H}
\xra{\Nm_H}
(-)^{\htpy H}
\]
in $\Fun(\cR^{\htpy G},\cR^{\htpy (G/H)})$. Hence, we obtain a cofiber sequence
\begin{equation}
\label{equivariant.cofiber.sequence.norm.and.tate}
(-)_{\htpy H}
\xra{\Nm_H}
(-)^{\htpy H}
\xra{\sQ_H}
(-)^{\st H}
\end{equation}
in $\Fun(\cR^{\htpy G},\cR^{\htpy (G/H)})$ lifting the analogous cofiber sequence in $\Fun(\cR^{\htpy G}, \cR)$ (because the forgetful functor $\cR^{\htpy (G/H)} \xra{\fgt} \cR$ is exact). In particular, we obtain a residual homotopy $(G/H)$-equivariant structure on the $H$-Tate construction. We use these facts without further comment.

\item\label{part.rlax.s.m.str.on.h.to.tate}

Consider the morphism
\begin{equation}
\label{h.to.tate.becomes.rlax.s.m}
(-)^{\htpy H}
\xra{\sQ_H}
(-)^{\st H}
\end{equation}
in $\Fun(\cR^{\htpy G},\cR^{\htpy (G/H)})$, the second morphism in the cofiber sequence \Cref{equivariant.cofiber.sequence.norm.and.tate}. Evidently, the source of the morphism \Cref{h.to.tate.becomes.rlax.s.m} is canonically right-laxly symmetric monoidal. Thereafter, there exists a canonical enhancement of the morphism \Cref{h.to.tate.becomes.rlax.s.m} to one of right-laxly symmetric monoidal functors, obtained by working fiberwise over $\sB(G/H)$ and applying \cite[Theorem I.3.1]{NS} (which guarantees that there is in fact a unique such enhancement when $H=G$).

\item\label{part.norm.maps.compose}

Suppose that $H \leq K \leq G$. Then, for any $E \in \cR^{\htpy G}$, by \cite[Proposition 4.2.2]{ambidex} we have the following natural commutative diagram in $\cR^{\htpy (G/K)}$:
\[ \begin{tikzcd}[row sep=1.5cm, column sep=1.5cm]
(E_{\htpy H})_{\htpy (K/H)}
\arrow{rrr}{\Nm_H(E)_{\htpy (K/H)}}
\arrow{ddd}[swap]{\Nm_{K/H}(E_{\htpy H})}
&
&
&
(E^{\htpy H})_{\htpy (K/H)}
\arrow{ddd}{\Nm_{K/H}(E^{\htpy H})}
\\
&
E_{\htpy K}
\arrow{lu}[sloped]{\sim}
\arrow{rd}[sloped]{\Nm_K(E)}
\\
&
&
E^{\htpy K}
\\
(E_{\htpy H})^{\htpy (K/H)}
\arrow{rrr}[swap]{\Nm_H(E)^{\htpy (K/H)}}
&
&
&
(E^{\htpy H})^{\htpy (K/H)}
\arrow{lu}[sloped]{\sim}
\end{tikzcd}
~.
 \]

\end{enumerate}
\end{observation}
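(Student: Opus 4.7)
The three parts are all essentially parametrized upgrades of already-known non-equivariant statements, so the overall strategy is uniform: realize everything as arising from a fibration $\BG \twoheadrightarrow \sB(G/H)$ with fiber $\sB H$, and then invoke the non-equivariant fact on fibers, using that each functor in sight is natural in the base.

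For part (1), I would use the equivalence $\cR^{\htpy G} \simeq \Fun(\sB(G/H), \cR^{\htpy H})$ (and $\cR^{\htpy(G/H)} \simeq \Fun(\sB(G/H),\cR)$) coming from the fibration $\BG \to \sB(G/H)$. The orbits and fixed-points functors $(-)_{\htpy H},(-)^{\htpy H}:\cR^{\htpy H}\to\cR$ are the left and right adjoints to the forgetful functor; postcomposition with $\sB(G/H)\to\pt$ is just these adjoints parametrized over $\sB(G/H)$, yielding functors $\cR^{\htpy G} \to \cR^{\htpy(G/H)}$. The non-equivariant norm map $\Nm_H \in \nat((-)_{\htpy H},(-)^{\htpy H})$ postcomposes to a natural transformation between the parametrized versions, which is the desired $(G/H)$-equivariant lift. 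Taking cofibers in $\Fun(\cR^{\htpy G}, \cR^{\htpy (G/H)})$ (computed pointwise, since the forgetful functor to $\Fun(\cR^{\htpy G},\cR)$ is conservative and exact) produces the $(G/H)$-equivariant cofiber sequence \eqref{equivariant.cofiber.sequence.norm.and.tate}.

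For part (2), the non-equivariant functor $(-)^{\htpy H}: \cR^{\htpy H} \to \cR$ is right-laxly symmetric monoidal as the right adjoint to the symmetric monoidal functor $\cR \to \cR^{\htpy H}$. The cofiber sequence $(-)_{\htpy H} \to (-)^{\htpy H} \to (-)^{\st H}$ then inherits a unique right-lax symmetric monoidal enhancement on the last term by \cite[Theorem I.3.1]{NS} (this is stated there for $H = G$, but the proof applies whenever we localize at the ideal generated by induced objects, as is the case for the $\{e\}$-$H$-Tate construction of \Cref{defn.H.tate}). To upgrade this to a right-lax symmetric monoidal morphism in $\Fun(\cR^{\htpy G}, \cR^{\htpy (G/H)})$, I would apply the same construction fiberwise over $\sB(G/H)$; the uniqueness clause in the cited theorem ensures that the resulting data glue into a single right-lax symmetric monoidal natural transformation. (In fact the right-lax symmetric monoidal refinement of $\sQ_H$ is characterized by this fiberwise condition, which will be enough for later uses.)

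For part (3), the inner rectangle of the diagram — the statement that the composite of norm maps for $H \leq K$ agrees with the norm map for $K$ — is exactly \cite[Proposition 4.2.2]{ambidex}, applied in $\cR$. I would then parametrize the entire argument over $\sB(G/K)$, using the tower of fibrations $\BG \to \sB(G/H) \to \sB(G/K)$: restricting to $H$ identifies $\cR^{\htpy G}$ with a $\sB(G/K)$-family of objects of $\cR^{\htpy H}$, and the constructions $(-)_{\htpy H}, (-)^{\htpy H}, (-)_{\htpy K}, (-)^{\htpy K}, (-)_{\htpy(K/H)}, (-)^{\htpy(K/H)}$ are all parametrized versions of their non-equivariant counterparts. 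The commutative diagram in $\cR$ then propagates to the asserted one in $\cR^{\htpy(G/K)}$.

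The genuinely substantive input is \cite[Theorem I.3.1]{NS} for part (2) and \cite[Proposition 4.2.2]{ambidex} for part (3); the one mild subtlety is checking that the parametrization procedure really does produce the stated data (rather than something weaker). This amounts to observing that in each case the construction under consideration is obtained as a colimit or limit over a parameter $\infty$-groupoid, and such limits/colimits commute with postcomposition into a functor $\infty$-category — so there is no obstruction beyond bookkeeping.
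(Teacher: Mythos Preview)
Your proposal is correct and follows essentially the same approach as the paper: this is recorded there as an observation with the justifications given inline (parametrize over the fibration $\BG \to \sB(G/H)$ for parts (1) and (2), invoke \cite[Theorem I.3.1]{NS} fiberwise for part (2), and cite \cite[Proposition 4.2.2]{ambidex} for part (3)), and you have simply elaborated on precisely those same points.
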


\section{The proper Tate construction from genuine $G$-objects}
\label{section.proper.tate.from.genuine.G.objects}

In \Cref{section.proper.tate.in.htpy.G.objects}, we introduced the $\ms{F}$-$H$-Tate construction for homotopy $G$-objects (\Cref{defn.proper.tate}). Our main goal in this section is to show that this functor can also be described in terms of genuine $G$-objects (\Cref{prop.proper.Tate.from.genuine.G.objs}); this description will be a key ingredient of our proof of \Cref{intro.thm.gluing.functors} in \Cref{section.gluing.functors.for.RgG}. We also use it to give an explicit formula for the proper $G$-Tate construction (\Cref{prop.proper.tate.formula}), which we state and prove directly after stating \Cref{prop.proper.Tate.from.genuine.G.objs}. We then make a number of preliminary observations before proving \Cref{prop.proper.Tate.from.genuine.G.objs} at the end of the section.

\begin{local}
In this section, we fix a compact Lie group $G$, a family $\ms{F} \in \Down_{\pos_G}$, and a rigidly-compactly generated presentably symmetric monoidal stable $\infty$-category $\cR$.
\end{local}

\begin{notation}
We define the subcategory
\[
\cI_\ms{F}
:=
\Spectra^{\gen G}_\ms{F} \otimes \cR
:=
\brax{ \Sigma^\infty_G(G/H)_+ }_{H \in \ms{F}} \otimes \cR
\subseteq
\Spectra^{\gen G} \otimes \cR
=:
\cR^{\gen G}
~.
\]
\end{notation}

\begin{observation}
\label{obs.IF.a.closed.ideal}
By Observations S.\ref{strat:obs.can.prestratn.of.SpgG.is.sm} \and \ref{obs.tensoring.preserves.closed.ideals}, the subcategory $\cI_\ms{F} \subseteq \cR^{\gen G}$ is a closed ideal.
\end{observation}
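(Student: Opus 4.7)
The plan is to chain together the two observations cited in the statement. First, I would invoke [AMR-strat, Observation \ref{strat:obs.can.prestratn.of.SpgG.is.sm}], which asserts that the canonical prestratification of $\Spectra^{\gen G}$ is symmetric monoidal. In particular, for each downward-closed family $\ms{F} \in \Down_{\pos_G}$, this upgrades the closed subcategory
\[
\Spectra^{\gen G}_\ms{F} := \brax{\Sigma^\infty_G(G/H)_+}_{H \in \ms{F}} \subseteq \Spectra^{\gen G}
\]
from being a mere closed subcategory to being a closed \emph{ideal}, i.e.\! it is additionally closed under tensoring with arbitrary genuine $G$-spectra. This is the key input that turns our base subcategory into an ideal rather than just a closed subcategory.

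Second, I would apply Observation \ref{obs.tensoring.preserves.closed.ideals}, which refines the tensoring functor $\Cls_\cX \xra{- \otimes \cM} \Cls_{\cX \otimes \cM}$ to a functor $\Idl_\cX \xra{- \otimes \cM} \Idl_{\cX \otimes \cM}$ whenever $\cX$ and $\cM$ are both presentably symmetric monoidal stable $\infty$-categories. Applying this with $\cX = \Spectra^{\gen G}$ and $\cM = \cR$ sends the closed ideal $\Spectra^{\gen G}_\ms{F} \in \Idl_{\Spectra^{\gen G}}$ to a closed ideal of $\Spectra^{\gen G} \otimes \cR$, which by the defining formula for $\cI_\ms{F}$ and $\cR^{\gen G}$ is precisely the asserted closed ideal $\cI_\ms{F} \subseteq \cR^{\gen G}$.

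There is no serious obstacle here: the statement is essentially the composition of the two enhancements just described (``closed subcategory is in fact an ideal'' at the level of $\Spectra^{\gen G}$, followed by ``ideals are preserved under tensoring with a presentably symmetric monoidal stable $\infty$-category''). The only small point worth spot-checking is that the notion of ideal produced by the symmetric monoidal prestratification of [AMR-strat] coincides with the notion of ideal used in Observation \ref{obs.tensoring.preserves.closed.ideals}; this is immediate from unwinding the definitions.
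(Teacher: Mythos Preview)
Your proposal is correct and takes essentially the same approach as the paper: the observation has no separate proof beyond citing the two referenced observations, and you have correctly unpacked how they compose (the symmetric monoidal prestratification makes $\Spectra^{\gen G}_\ms{F}$ a closed ideal, and tensoring with $\cR$ preserves closed ideals).
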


\begin{notation}
\label{notn.free.gen.G.object.on.object.of.R.and.gen.G.space}
Given an object $E \in \cR$ and a genuine $G$-space $X \in \Spaces^{\gen G}$, we write
\[
E \brax{X}
:=
(\Sigma^\infty_G X_+ \otimes E)
\in
\Spectra^{\gen G} \otimes \cR
=:
\cR^{\gen G}
~.
\]
\end{notation}

\begin{observation}
\label{obs.compatibility.of.free.objects.on.a.G.space}
\Cref{notn.free.gen.G.object.on.object.of.R.and.gen.G.space} is compatible with \Cref{notn.free.htpy.G.object.on.object.of.R.and.htpy.G.space}, in the sense that for any genuine $G$-space $X \in \Spaces^{\gen G}$ and any object $E \in \cR$ we have an equivalence
\[
U(E \brax{X})
\simeq
E^\htpy \brax{U(X)}
\]
in $\cR^{\htpy G}$ (due to the equivalence $U(\Sigma^\infty_G X_+) \simeq \Sigma^\infty U(X)_+$ in $\Spectra^{\htpy G}$).
\end{observation}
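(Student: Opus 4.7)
The plan is to unwind both sides of the claimed equivalence using the external-tensor-product structure on $\cR^{\gen G}$ and $\cR^{\htpy G}$, and then to invoke the stated compatibility of $U$ with $\Sigma^\infty_+$. The argument is almost entirely formal; essentially all of the content sits in one input.

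First, I would rewrite $E\brax{X} = \Sigma^\infty_G X_+ \otimes E$ as the image of the pair $(\Sigma^\infty_G X_+, E) \in \Spectra^{\gen G} \times \cR$ under the external product functor $\Spectra^{\gen G} \times \cR \to \Spectra^{\gen G} \otimes \cR = \cR^{\gen G}$, and similarly for $E^\htpy\brax{U(X)}$. Next, I would use that the forgetful functor $U : \cR^{\gen G} \to \cR^{\htpy G}$ is \emph{by definition} the morphism $(U : \Spectra^{\gen G} \to \Spectra^{\htpy G}) \otimes \id_\cR$ in $\PrLSt$. By the universal property of the tensor product in $\PrLSt$, any such tensored morphism acts on pure external tensors by the formula $U(Y \otimes E) \simeq U(Y) \otimes E$ for $Y \in \Spectra^{\gen G}$ and $E \in \cR$. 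Applying this with $Y = \Sigma^\infty_G X_+$, and then combining with the cited equivalence $U(\Sigma^\infty_G X_+) \simeq \Sigma^\infty U(X)_+$ in $\Spectra^{\htpy G}$, yields
\[
U(E\brax{X}) \simeq U(\Sigma^\infty_G X_+) \otimes E \simeq \Sigma^\infty U(X)_+ \otimes E = E^\htpy \brax{U(X)},
\]
as desired.

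The only nonformal ingredient is the compatibility $U(\Sigma^\infty_G X_+) \simeq \Sigma^\infty U(X)_+$, which is what the statement tacitly invokes. If I had to justify it in isolation, I would appeal to universal properties: both $U \circ \Sigma^\infty_{G,+}$ and $\Sigma^\infty_+ \circ U$ are colimit-preserving functors $\Spaces^{\gen G} \to \Spectra^{\htpy G}$ that agree tautologically on the generating orbits $G/H$, and since $\Spaces^{\gen G}$ is generated under colimits by these orbits, the two composites agree. So the ``hard part'' here is really only the mild bookkeeping of identifying the external-product notation with the tensor product of morphisms in $\PrLSt$.
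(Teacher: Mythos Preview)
Your proposal is correct and follows exactly the same logic as the paper's own justification. The paper treats this as a one-line observation, with the entire proof contained in the parenthetical remark ``(due to the equivalence $U(\Sigma^\infty_G X_+) \simeq \Sigma^\infty U(X)_+$ in $\Spectra^{\htpy G}$)''; you have simply spelled out the formal bookkeeping (the external tensor product and the fact that $U$ is $U \otimes \id_\cR$) that makes this parenthetical suffice.
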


\begin{observation}
\label{obs.IF.via.ideal.generators}
There is an identification
\[
\cI_\ms{F}
=
\brax{ \uno_\cR \brax{G/H} }^\otimes_{H \in \ms{F}}
\]
among ideals of $\cR^{\gen G}$.
\end{observation}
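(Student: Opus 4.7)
The plan is to prove both inclusions. For $\supseteq$, by \Cref{obs.IF.a.closed.ideal} the subcategory $\cI_\ms{F}$ is a closed ideal of $\cR^{\gen G}$, and for each $H \in \ms{F}$ it contains $\uno_\cR\brax{G/H} = \Sigma^\infty_G(G/H)_+ \otimes \uno_\cR$, the image of one of the defining generators of $\Spectra^{\gen G}_\ms{F}$ under the colimit-preserving symmetric monoidal functor $(-) \otimes \uno_\cR$. Hence $\cI_\ms{F}$ contains the closed ideal generated by these objects.

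For the reverse inclusion, write $\cI' := \brax{\uno_\cR\brax{G/H}}^\otimes_{H \in \ms{F}}$ and consider the colimit-preserving symmetric monoidal functor $F := (-) \otimes \uno_\cR : \Spectra^{\gen G} \to \cR^{\gen G}$. I first claim that the preimage $F^{-1}(\cI')$ is itself a closed ideal of $\Spectra^{\gen G}$: it is closed under colimits because $F$ preserves them and $\cI'$ is colimit-closed; it is closed under the $\Spectra^{\gen G}$-action because $F(A \otimes B) \simeq F(A) \otimes F(B)$ lies in $\cI'$ whenever $F(A)$ does, using that $\cI'$ is an ideal. Since $F^{-1}(\cI')$ contains every $\Sigma^\infty_G(G/H)_+$ with $H \in \ms{F}$, it contains the entire closed ideal $\Spectra^{\gen G}_\ms{F}$. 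In particular, $Z \otimes \uno_\cR \in \cI'$ for every $Z \in \Spectra^{\gen G}_\ms{F}$.

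To finish, I reduce checking $\cI_\ms{F} \subseteq \cI'$ to a generating set. By the universal property of the tensor product in $\PrLSt$, the presentable stable $\infty$-category $\Spectra^{\gen G}_\ms{F} \otimes \cR$ is generated under colimits by pure tensors $Z \otimes E$ with $Z \in \Spectra^{\gen G}_\ms{F}$ and $E \in \cR$; the fully faithful inclusion from \Cref{obs.IF.a.closed.ideal} identifies these with the corresponding pure tensors in $\cR^{\gen G}$. It therefore suffices to show each such $Z \otimes E$ lies in $\cI'$. Writing $Z \otimes E \simeq (Z \otimes \uno_\cR) \otimes (\uno_{\Spectra^{\gen G}} \otimes E)$ in $\cR^{\gen G}$, the factor $Z \otimes \uno_\cR$ lies in $\cI'$ by the previous paragraph, and closure of $\cI'$ under the $\cR^{\gen G}$-action finishes the job. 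The step worth most care is the generation of $\Spectra^{\gen G}_\ms{F} \otimes \cR$ by pure tensors and the corresponding interaction with the closed-ideal embedding into $\cR^{\gen G}$; this is a general feature of $\PrLSt$, but is the one place where a proof needs to invoke something beyond direct inspection of the generators.
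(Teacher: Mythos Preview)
Your proof is correct. The paper states this as an Observation without proof, treating it as essentially immediate from the definitions and from the fact (implicit in \Cref{obs.tensored.up.stratn} and \Cref{obs.tensoring.preserves.closed.ideals}) that the functor $(-)\otimes\cR$ carries the closed ideal generated by a set of objects to the closed ideal generated by the images. Your argument spells out exactly this: the $\supseteq$ direction is the containment of generators in a closed ideal, and the $\subseteq$ direction unpacks why pure tensors (which generate $\Spectra^{\gen G}_\ms{F}\otimes\cR$ under colimits) lie in the ideal generated by the $\uno_\cR\brax{G/H}$. One minor remark: for the preimage step you do not need $F^{-1}(\cI')$ to be a closed ideal in the full technical sense of the companion paper (presentable with a continuous inclusion), only that it is closed under colimits and under tensoring with $\Spectra^{\gen G}$, which is precisely what you verify and what suffices to conclude $\Spectra^{\gen G}_\ms{F}\subseteq F^{-1}(\cI')$.
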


\begin{definition}
\label{defn.F.H.geom.fps}
For any closed subgroup $H \in \pos_G$, the \bit{$\ms{F}$-$H$-geometric fixedpoints} functor is
\[
\Phi_{\ms{F}}^H
:
\cR^{\gen G}
\xlongra{p_L}
\cR^{\gen G} / \cI_\ms{F}
\xra{\ulhom_{\cR^{\gen G} / \cI_\ms{F}} ( p_L(\uno_\cR \brax{G/H}) , -)}
\cR^{\htpy \Weyl(H)}
~.
\]
\end{definition}

\begin{observation}
\Cref{defn.F.H.geom.fps} generalizes the functor $\cR^{\gen G} \xra{\Phi^H} \cR^{\htpy \Weyl(H)}$ of \Cref{notn.various.fixedpoints.on.genuine.G.objects.in.M}, as we now explain. Given a presentable stable $\infty$-category $\cC \in \PrLSt$ and a compact object $X \in \cC^\omega$, we obtain an adjunction
\[ \begin{tikzcd}[column sep=2cm]
\Spectra
\arrow[transform canvas={yshift=0.9ex}]{r}{(-) \otimes X}
\arrow[leftarrow, transform canvas={yshift=-0.9ex}]{r}[yshift=-0.2ex]{\bot}[swap]{\ulhom_\cC(X,-)}
&
\cC
\end{tikzcd} \]
in $\PrLSt$. When tensored with $\cR$, this gives an adjunction
\[ \begin{tikzcd}[column sep=2.5cm]
\cR
\arrow[transform canvas={yshift=0.9ex}]{r}{X \otimes (-)}
\arrow[leftarrow, transform canvas={yshift=-0.9ex}]{r}[yshift=-0.2ex]{\bot}[swap]{\ulhom_\cC(X,-) \otimes \cR}
&
\cC \otimes \cR
\end{tikzcd} \]
in $\Mod_\cR(\PrLSt)$, which yields a commutative triangle
\[ \begin{tikzcd}[column sep=2.5cm]
\cC \otimes \cR
\arrow{r}{\ulhom_\cC(X,-) \otimes \cR}
\arrow{rd}[sloped, swap]{\ulhom_{\cC \otimes \cR}(X \otimes \uno_\cR , - )}
&
\Spectra \otimes \cR
\arrow{d}[sloped, anchor=south]{\sim}
\\
&
\cR
\end{tikzcd}~. \]
We apply this in the case that $\cC = \Spectra^{\gen G} / \Spectra^{\gen G}_\ms{F}$ and $X = p_L(\Sigma^\infty_G(G/H)_+)$ (which is compact because $p_L$ preserves compact objects since $\nu$ preserves colimits). Using the fact that the functor $(-) \otimes \cR$ preserves colimits, we find that the $\ms{F}$-$H$-geometric fixedpoints functor may be described as the tensor product
\[
\left(
\Spectra^{\gen G}
\xlongra{p_L}
\Spectra^{\gen G} / \Spectra^{\gen G}_\ms{F}
\xra{ \ulhom_{\Spectra^{\gen G} / \Spectra^{\gen G}_\ms{F}} ( p_L ( \Sigma^\infty_G(G/H)_+ ) , - ) }
\Spectra^{\htpy \Weyl(H)}
\right)
\otimes
\cR
~.
\]
In particular, in the case that $\ms{F} = (^{\not\geq} H)$, we obtain an equivalence
\[
\Phi^H_{(^{\not\geq} H)}
\simeq
\Phi^H
\]
in $\Fun(\cR^{\gen G} , \cR^{\htpy \Weyl(H)})$. We use this fact without further comment.
\end{observation}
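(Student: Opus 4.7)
The plan is to verify the claim in two steps: first, I would exhibit $\Phi^H_\ms{F}$ as the tensor product with $\cR$ of the analogous composite formed purely within $\Spectra^{\gen G}$; then, in the special case $\ms{F} = (^{\not\geq} H)$, I would invoke the classical identification of that spectral composite with the geometric $H$-fixedpoints functor $\Phi^H$ on genuine $G$-spectra.

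For the first step, I would begin by observing that $p_L(\Sigma^\infty_G(G/H)_+) \in \Spectra^{\gen G}/\Spectra^{\gen G}_\ms{F}$ is compact, since its right adjoint $\nu$ (the fully faithful inclusion of the presentable quotient) preserves colimits. The next ingredient is a general fact: for any compact $X$ in a presentable stable $\infty$-category $\cC$, the adjunction $(X \otimes (-)) \dashv \ulhom_\cC(X,-)$ lies in $\PrLSt$, so tensoring with $\cR$ and using uniqueness of right adjoints produces a natural equivalence
\[
\ulhom_{\cC \otimes \cR}(X \otimes \uno_\cR, -) \simeq \ulhom_\cC(X,-) \otimes \cR
\]
of functors $\cC \otimes \cR \ra \cR$. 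Since $(-) \otimes \cR : \PrLSt \ra \PrLSt$ preserves colimits, the quotient $\cR^{\gen G}/\cI_\ms{F}$ is identified with $(\Spectra^{\gen G}/\Spectra^{\gen G}_\ms{F}) \otimes \cR$ (an instance of \Cref{obs.tensored.up.stratn}), and the $p_L$ appearing in the definition of $\Phi^H_\ms{F}$ is the tensor product with $\cR$ of the corresponding spectral $p_L$. Applying the compatibility above with $X = p_L(\Sigma^\infty_G(G/H)_+)$ then exhibits $\Phi^H_\ms{F}$ as the tensor product with $\cR$ of the analogous construction for genuine $G$-spectra.

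Specializing to $\ms{F} = (^{\not\geq} H)$, it suffices to verify the purely spectral statement that the composite
\[
\Spectra^{\gen G} \xlongra{p_L} \Spectra^{\gen G}/\Spectra^{\gen G}_{(^{\not\geq} H)} \xra{\ulhom(p_L(\Sigma^\infty_G(G/H)_+), -)} \Spectra^{\htpy \Weyl(H)}
\]
is the geometric $H$-fixedpoints functor $\Phi^H$. This is essentially content of the geometric stratification of $\Spectra^{\gen G}$ from \cite{AMR-strat}: by construction, the $H\th$ stratum of that stratification is $\Spectra^{\htpy \Weyl(H)}$ with unit $p_L(\Sigma^\infty_G(G/H)_+)$, and its $H\th$ geometric localization functor is $\Phi^H$. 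The main obstacle I anticipate is purely bookkeeping: matching the defining data of $\Phi^H_{(^{\not\geq} H)}$ (an internal hom out of $p_L(\Sigma^\infty_G(G/H)_+)$ in the presentable quotient) with the standard description of $\Phi^H$. Once this is checked at the level of $\Spectra^{\gen G}$, the claim for general $\cR$ follows formally from the tensor product compatibility established in the first step.
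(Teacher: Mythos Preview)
Your proposal is correct and follows essentially the same approach as the paper's own argument, which is contained within the body of the observation itself: both establish the general compatibility $\ulhom_{\cC \otimes \cR}(X \otimes \uno_\cR,-) \simeq \ulhom_\cC(X,-) \otimes \cR$ for compact $X$ via the tensored adjunction, apply it with $\cC = \Spectra^{\gen G}/\Spectra^{\gen G}_\ms{F}$ and $X = p_L(\Sigma^\infty_G(G/H)_+)$, and then specialize to $\ms{F} = (^{\not\geq} H)$. The only cosmetic difference is that you are slightly more explicit about the final identification at the spectral level, whereas the paper treats it as immediate from the definition of $\Phi^H$ on $\cR^{\gen G}$ as $\Phi^H \otimes \cR$.
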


\begin{prop}
\label{prop.proper.Tate.from.genuine.G.objs}
For any closed subgroup $H \leq G$, we have a canonical commutative triangle
\[ \begin{tikzcd}
\cR^{\htpy G}
\arrow[hook]{rr}{\beta}
\arrow{rd}[swap, sloped]{(-)^{\tate_\ms{F} H }}
&
&
\cR^{\gen G}
\arrow{ld}[sloped, swap]{\Phi_\ms{F}^H}
\\
&
\cR^{\htpy \Weyl(H)}
\end{tikzcd}~. \]
\end{prop}

We learned the following result from Akhil Mathew.

\begin{prop}
\label{prop.proper.tate.formula}
Assume that $G$ is a finite group, and fix any homotopy $G$-object $E \in \cR^{\htpy G}$ in $\cR$.
\begin{enumerate}

\item\label{proper.tate.as.colimit}

There is a canonical equivalence
\[
E^{\tate G}
\xlongla{\sim}
\colim_{n \in \NN} \left( ( S^{n \tilde{\rho}} \tensoring E )^{\htpy G} \right)
\]
in $\cR$, in which
\begin{itemize}

\item $\tensoring$ denotes the action of $\Spaces_*^{\htpy G}$ on $\cR^{\htpy G}$,

\item $\tilde{\rho}$ denotes the reduced (real) regular representation of $G$,

\item $S^V$ denotes the representation sphere corresponding to a representation $V$, and

\item the morphisms in the colimit are induced by the inclusion $\{0\} \ra \tilde{\rho}$.

\end{itemize}

\item\label{proper.tate.as.inversion.of.euler}

Suppose that $\cR = \Mod_R$ for some commutative ring spectrum $R \in \CAlg(\Spectra)$. Then, a complex orientation of $R$ (e.g.\! a $\ZZ$-algebra structure) determines an element
\[
e \in \pi_{-2(|G|-1)}(R^{\htpy G})
~,
\]
the Euler class of the reduced complex regular representation (via the complex orientation), with respect to which the canonical morphism $E^{\tate G} \la E^{\htpy G}$ in $\cR$ exhibits $E^{\tate G}$ as the localization at $e$; that is,
\[
E^{\tate G}
\simeq
E^{\htpy G}[e^{-1}]
~.
\]

\end{enumerate}
\end{prop}

\begin{proof}
We begin with part \Cref{proper.tate.as.colimit}. Observe the equivalences
\begin{equation}
\label{begin.formula.for.proper.tate.using.genuine}
E^{\tate G} \simeq \Phi^G \beta ( E )
\simeq
( \wEF_{^{\not\geq} G} \tensoring \beta(E) )^G
~,
\end{equation}
the first by \Cref{prop.proper.Tate.from.genuine.G.objs}. By \cite[Proposition 2.7]{MNN-indres}, we have an equivalence
\begin{equation}
\label{equivalence.of.wEF.for.proper.tate.from.MNN}
\wEF_{^{\not\geq} G} \simeq \colim_{n \in \NN} ( S^{n \tilde{\rho}} )
\end{equation}
in $\Spaces^{\gen G}_*$. Combining the equivalences \Cref{begin.formula.for.proper.tate.using.genuine} \and \Cref{equivalence.of.wEF.for.proper.tate.from.MNN}, we obtain the first equivalence in the composite equivalence
\begin{equation}
\label{proper.tate.is.colim.of.categorical.fixedpoints}
E^{\tate G}
\simeq
( ( \colim_{n \in \NN} (S^{n \tilde{\rho} } ) ) \tensoring \beta(E) )^G
\xlongla{\sim}
\colim_{n \in \NN} ( ( S^{n \tilde{\rho}} \tensoring \beta(E) )^G )
~.
\end{equation}
On the other hand, since $\Sigma^\infty_G S^{n \tilde{\rho}} \in \Spectra^{\gen G}$ is dualizable, we have an equivalence
\begin{equation}
\label{rep.sphere.dzbl.hence.commutes.with.beta}
S^{n \tilde{\rho}} \tensoring \beta(E)
\xlongra{\sim}
\beta(S^{n \tilde{\rho}} \tensoring E )
\end{equation}
in $\cR^{\gen G}$. Therefore, combining equivalences \Cref{proper.tate.is.colim.of.categorical.fixedpoints} \and \Cref{rep.sphere.dzbl.hence.commutes.with.beta}, we obtain an equivalence
\[
E^{\tate G}
\simeq
\colim_{n \in \NN} ( \beta ( S^{n \tilde{\rho}} \tensoring E )^G)
\simeq
\colim_{n \in \NN}  ( ( S^{n \tilde{\rho}} \tensoring E )^{\htpy G} )
~,
\]
as asserted.

Now, part \Cref{proper.tate.as.inversion.of.euler} follows from part \Cref{proper.tate.as.colimit} along with the observation that $2 \tilde{\rho}$ is precisely the reduced complex regular representation.
\end{proof}

\begin{notation}
We write
\[ \begin{tikzcd}[column sep=1.5cm]
\Ind(\cR^{\htpy G})
\arrow[transform canvas={yshift=0.9ex}]{r}{\colim}
\arrow[hookleftarrow, transform canvas={yshift=-0.9ex}]{r}[yshift=-0.2ex]{\bot}[swap]{i}
&
\cR^{\htpy G}
\end{tikzcd} \]
for the canonical adjunction resulting from the fact that $\cR^{\htpy G}$ admits filtered colimits: its right adjoint is the canonical fully faithful inclusion, and its left adjoint is given by taking colimits of filtered diagrams.
\end{notation}

\begin{observation}
Because the functor
\[
\cR^{\gen G}
\xlongra{U}
\cR^{\htpy G}
\]
preserves colimits (being a left adjoint) and the presentable stable $\infty$-category $\cR^{\gen G}$ is compactly generated (because both $\Spectra^{\gen G}$ and $\cR$ are), there exists a canonical factorization
\begin{equation}
\label{factor.U.through.U.tilde}
\begin{tikzcd}
\cR^{\gen G}
\arrow{rr}{U}
\arrow[dashed]{rd}[sloped, swap]{\tilde{U}}
&
&
\cR^{\htpy G}
\\
&
\Ind(\cR^{\htpy G})
\arrow{ru}[swap, sloped]{\colim}
\end{tikzcd}~,
\end{equation}
namely the functor
\[
\tilde{U}
:=
\Ind \left( (\cR^{\gen G})^\omega
\longhookra
\cR^{\gen G}
\xlongra{U}
\cR^{\htpy G} \right)
~.
\]
\end{observation}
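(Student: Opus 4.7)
The plan is to verify directly that the formula given for $\tilde{U}$ defines a functor whose postcomposition with $\colim$ recovers $U$, using the universal property of Ind-completion together with the compact generation of $\cR^{\gen G}$.

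First I would establish that $\tilde U$ is well-defined. Since both $\Spectra^{\gen G}$ and $\cR$ are compactly generated presentable stable $\infty$-categories, so is their tensor product $\cR^{\gen G}$ in $\PrLSt$, giving a canonical equivalence $\cR^{\gen G} \simeq \Ind((\cR^{\gen G})^\omega)$. Because $\cR^{\htpy G}$ is presentable, it admits all small filtered colimits, so the composite $(\cR^{\gen G})^\omega \hookrightarrow \cR^{\gen G} \xra{U} \cR^{\htpy G}$ out of an essentially small $\infty$-category extends by the universal property of Ind-completion to a functor $\tilde U : \cR^{\gen G} \simeq \Ind((\cR^{\gen G})^\omega) \to \Ind(\cR^{\htpy G})$ that preserves small filtered colimits and agrees with $U$ on compact objects.

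Next I would verify the equivalence $\colim \circ \tilde U \simeq U$. Both sides preserve small filtered colimits: the functor $U$ does because it is a morphism in $\PrLSt$ and hence preserves all colimits; the composite $\colim \circ \tilde U$ does because $\tilde U$ does by construction and $\colim$ is the left adjoint in the adjunction $\colim \dashv i$. Hence it suffices to compare the two functors after restriction to $(\cR^{\gen G})^\omega$. For $X \in (\cR^{\gen G})^\omega$, the object $\tilde U(X)$ is by construction the image of $U(X)$ under the canonical inclusion $\cR^{\htpy G} \hookrightarrow \Ind(\cR^{\htpy G})$; and $\colim$ of such a representable diagram recovers the object it represents, so $\colim \tilde U(X) \simeq U(X)$ canonically in $X$. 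Since $(\cR^{\gen G})^\omega$ generates $\cR^{\gen G}$ under filtered colimits, this natural equivalence propagates to all of $\cR^{\gen G}$.

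There is no substantial obstacle: the statement is a routine instance of the universal property of Ind-completion combined with the compact generation of $\cR^{\gen G}$ and the fact that $U$ preserves filtered colimits. The only point requiring mild care is a size issue, since $\cR^{\htpy G}$ is itself a large $\infty$-category so that $\Ind(\cR^{\htpy G})$ lives one Grothendieck universe higher, but this is handled in the standard way and does not affect the formal construction.
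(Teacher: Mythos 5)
Your proposal is correct and follows essentially the same route as the paper, which simply defines $\tilde{U}$ as $\Ind$ of the restriction of $U$ to compact objects and lets the factorization follow from compact generation of $\cR^{\gen G}$, the universal property of $\Ind$, and the fact that $U$ preserves colimits. Your explicit check of $\colim \circ \tilde{U} \simeq U$ on compacts and your remark about the size issue just spell out what the paper leaves implicit.
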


\begin{observation}
\label{obs.U.tilde.preserves.compact.objects}
By construction, the functor $\cR^{\gen G} \xra{\tilde{U}} \Ind(\cR^{\htpy G})$ preserves compact objects: given a compact object $E \in (\cR^{\gen G})^\omega \subseteq \cR^{\gen G}$, there is a canonical equivalence
\[
\tilde{U}(E)
\xlongra{\sim}
i(U(E))
~.
\]
\end{observation}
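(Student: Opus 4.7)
The plan is to unpack the definition of $\tilde{U}$ as an Ind-extension; the observation is essentially formal. By construction, $\tilde{U}$ equals $\Ind(F)$ where $F$ is the composite $(\cR^{\gen G})^\omega \longhookra \cR^{\gen G} \xra{U} \cR^{\htpy G}$, and $\tilde{U}$ is regarded as a functor out of $\cR^{\gen G} \simeq \Ind((\cR^{\gen G})^\omega)$ (using that $\cR^{\gen G}$ is compactly generated, which follows from the compact generation of both $\Spectra^{\gen G}$ and $\cR$). The universal property of $\Ind$ asserts that, for any functor $F : \cA \to \cB$ of (small, perhaps after universe enlargement) $\infty$-categories, the extension $\Ind(F) : \Ind(\cA) \to \Ind(\cB)$ restricts along the Yoneda embedding $\cA \longhookra \Ind(\cA)$ to the composite of $F$ with the Yoneda embedding $\cB \longhookra \Ind(\cB)$. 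Applied in our setting, this latter Yoneda embedding is precisely the functor $i$; evaluating at a compact object $E \in (\cR^{\gen G})^\omega$ produces the asserted canonical equivalence $\tilde{U}(E) \simeq i(U(E))$.

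It remains to deduce that $\tilde{U}$ preserves compact objects. Here one invokes the standard fact that for any idempotent-complete $\infty$-category $\cC$, the Yoneda embedding $\cC \longhookra \Ind(\cC)$ identifies $\cC$ with the full subcategory of compact objects. Since $\cR^{\htpy G}$ is presentable it is in particular idempotent complete, so $i$ identifies $\cR^{\htpy G}$ with $\Ind(\cR^{\htpy G})^\omega$. Consequently $i(U(E))$ is compact in $\Ind(\cR^{\htpy G})$ whenever $E \in (\cR^{\gen G})^\omega$, and by the previous paragraph $\tilde{U}$ therefore carries $(\cR^{\gen G})^\omega$ into $\Ind(\cR^{\htpy G})^\omega$. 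I do not anticipate any genuine obstacle: the entire argument is formal and amounts to unwinding the meaning of the symbol $\Ind$ appearing in the definition of $\tilde{U}$, together with the standard identification of compact objects in the Ind-category of an idempotent-complete $\infty$-category.
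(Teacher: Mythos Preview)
Your proposal is correct and matches the paper's reasoning precisely: the paper treats this as an observation that holds ``by construction'' without further proof, and your unpacking of $\tilde{U} := \Ind\bigl((\cR^{\gen G})^\omega \hookrightarrow \cR^{\gen G} \xrightarrow{U} \cR^{\htpy G}\bigr)$ together with the standard identification of compact objects in the Ind-completion of an idempotent-complete category is exactly the formal content the paper leaves implicit.
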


\begin{observation}
Both the source and target of the functor $\cR^{\gen G} \xra{\tilde{U}} \Ind(\cR^{\htpy G})$ are compactly generated. Combining this fact with \Cref{obs.U.tilde.preserves.compact.objects}, we see that it admits a colimit-preserving right adjoint, which we denote by
\[ \begin{tikzcd}[column sep=1.5cm]
\cR^{\gen G}
\arrow[transform canvas={yshift=0.9ex}]{r}{\tilde{U}}
\arrow[leftarrow, dashed, transform canvas={yshift=-0.9ex}]{r}[yshift=-0.2ex]{\bot}[swap]{\tilde{\beta}}
&
\Ind(\cR^{\htpy G})
\end{tikzcd}~. \]
\end{observation}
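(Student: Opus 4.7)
The plan is to invoke a standard presentability/adjoint functor theorem to produce $\tilde\beta$ and then verify colimit preservation by treating filtered and finite colimits separately.

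First I would verify that both $\cR^{\gen G}$ and $\Ind(\cR^{\htpy G})$ are compactly generated presentable stable $\infty$-categories. For the source, the equivalence $\cR^{\gen G} \simeq \Spectra^{\gen G} \otimes \cR$ in $\PrLSt$ realizes it as a tensor product in $\PrLSt$ of two compactly generated presentable stable $\infty$-categories (the rigidly-compactly generated hypothesis on $\cR$ supplies one factor, and $\Spectra^{\gen G}$ is compactly generated by the orbit spectra $\Sigma^\infty_G(G/H)_+$), which is again compactly generated with compact objects built from tensor products of compacts in the factors. For the target, $\Ind$ of any essentially small (stable) $\infty$-category is compactly generated (stable) by construction. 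The functor $\tilde U$ itself preserves small colimits, being the Ind-extension of an exact functor between small stable $\infty$-categories. Hence the adjoint functor theorem for presentable $\infty$-categories furnishes a right adjoint $\tilde\beta$.

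Next I would argue that $\tilde\beta$ preserves small colimits by treating filtered and finite colimits separately. Preservation of filtered colimits is equivalent to the left adjoint $\tilde U$ preserving compact objects, which is exactly the content of \Cref{obs.U.tilde.preserves.compact.objects}. Preservation of finite colimits is automatic from stability: any right adjoint preserves finite limits, and in a stable $\infty$-category finite limits and finite colimits coincide (a diagram of the shape in question is a fiber sequence if and only if it is a cofiber sequence), so $\tilde\beta$ is automatically exact. Combining these two facts yields preservation of all small colimits.

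The argument is essentially formal once the compact generation of the two $\infty$-categories and the preservation of compacts by $\tilde U$ are in hand; the latter is the only substantive input, and it has already been established. No obstacle is anticipated beyond carefully recording these standard facts.
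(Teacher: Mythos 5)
Your argument is correct and is essentially the paper's own: the observation is proved there in exactly this way, by noting that both $\cR^{\gen G}$ and $\Ind(\cR^{\htpy G})$ are compactly generated and that $\tilde{U}$ preserves colimits and compact objects, whence the right adjoint preserves filtered colimits, with finite colimits free from stability. The only cosmetic slip is calling $\tilde{U}$ the Ind-extension of a functor ``between small stable $\infty$-categories'' (the target $\cR^{\htpy G}$ is large), which does not affect the argument.
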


\begin{observation}
\label{obs.beta.is.beta.tilde.i}
Passing to right adjoints in the commutative diagram \Cref{factor.U.through.U.tilde}, we obtain a commutative diagram
\[
\begin{tikzcd}
\cR^{\gen G}
\arrow[hookleftarrow]{rr}{\beta}
&
&
\cR^{\htpy G}
\\
&
\Ind(\cR^{\htpy G})
\arrow[hookleftarrow]{ru}[sloped, swap]{i}
\arrow{lu}[sloped, swap]{\tilde{\beta}}
\end{tikzcd}~.
\]
\end{observation}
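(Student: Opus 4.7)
The plan is to invoke the functoriality of passage to right adjoints. By the construction just above the statement, we have three adjunctions in $\PrLSt$: the defining adjunction $U \dashv \beta$; the adjunction $\tilde{U} \dashv \tilde{\beta}$, whose existence is guaranteed by the adjoint functor theorem together with \Cref{obs.U.tilde.preserves.compact.objects} (both $\cR^{\gen G}$ and $\Ind(\cR^{\htpy G})$ are compactly generated, and $\tilde{U}$ preserves compact objects, so its right adjoint is colimit-preserving); and the tautological Ind-completion adjunction $\colim \dashv i$. Moreover, the commutative diagram \Cref{factor.U.through.U.tilde} exhibits the equivalence $U \simeq \colim \circ \tilde{U}$ of left adjoints.

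First I would observe that, because right adjoints are unique when they exist (and compose contravariantly), any equivalence between composable left adjoints induces an equivalence between the corresponding composites of right adjoints. Applying this principle to the equivalence $U \simeq \colim \circ \tilde{U}$ gives a canonical chain
\[
\beta
\;\simeq\;
U^R
\;\simeq\;
(\colim \circ \tilde{U})^R
\;\simeq\;
\tilde{U}^R \circ \colim^R
\;\simeq\;
\tilde{\beta} \circ i,
\]
which is precisely the desired commutative triangle. There is no real obstacle beyond this formal manipulation; in particular, fully faithfulness of the resulting composite $\tilde{\beta} \circ i$ follows from fully faithfulness of the two constituent right adjoints (both $\beta$ and $i$ are fully faithful by construction, and $\tilde{\beta}$ inherits fully faithfulness from the fact that $\tilde{U}$ is a localization onto its essential image), matching the fact that $\beta$ was already known to be fully faithful.
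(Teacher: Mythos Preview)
Your core argument is correct and is exactly the paper's approach: the observation is justified in the paper by the single phrase ``passing to right adjoints'', and your chain $\beta \simeq U^R \simeq (\colim \circ \tilde{U})^R \simeq \tilde{U}^R \circ \colim^R \simeq \tilde{\beta} \circ i$ is precisely that.

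One caution about your closing parenthetical. You assert that $\tilde{\beta}$ is fully faithful because ``$\tilde{U}$ is a localization onto its essential image''. This claim is neither made in the paper nor correct in general: $\tilde{U}$ is the Ind-extension of $U|_{(\cR^{\gen G})^\omega}$ and there is no reason for it to be a localization (indeed, in the diagram of the observation and in \Cref{figure.for.proof.of.proper.tate.from.genuine}, $\tilde{\beta}$ is drawn as a plain arrow, not a hook). Fortunately this side remark is unnecessary for the observation: the commutativity of the triangle is all that is being asserted, and the fully faithfulness of $\beta$ was already established before this point. You should simply drop that last sentence.
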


\begin{observation}
\label{obs.gen.G.objects.rigid.if.R.is}
The presentably symmetric monoidal stable $\infty$-category $\cR^{\gen G} := \Spectra^{\gen G} \otimes \cR$ is rigidly-compactly generated (since the functor $\St \xra{\Ind} \PrLSt$ is symmetric monoidal).
\end{observation}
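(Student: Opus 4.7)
The plan is to invoke the symmetric monoidality of $\Ind \colon \St \to \PrLSt$, as flagged in the observation itself. Recall that $\Spectra^{\gen G}$ is rigidly-compactly generated (this is classical; e.g.\! every compact genuine $G$-spectrum is dualizable because the generators $\Sigma^\infty_G(G/H)_+$ are self-dual up to a representation sphere shift), and $\cR$ is rigidly-compactly generated by hypothesis. So both admit descriptions $\Spectra^{\gen G} \simeq \Ind(\cA)$ and $\cR \simeq \Ind(\cB)$, where $\cA := (\Spectra^{\gen G})^\omega$ and $\cB := \cR^\omega$ are small idempotent-complete stably symmetric monoidal $\infty$-categories in which every object is dualizable.

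Next, I would apply symmetric monoidality of $\Ind$ to obtain a symmetric monoidal equivalence
\[
\cR^{\gen G}
\;:=\;
\Spectra^{\gen G} \otimes_{\PrLSt} \cR
\;\simeq\;
\Ind(\cA) \otimes_{\PrLSt} \Ind(\cB)
\;\simeq\;
\Ind \bigl( \cA \otimes_{\St} \cB \bigr)~.
\]
Since $\cA \otimes_{\St} \cB$ is a small idempotent-complete stably symmetric monoidal $\infty$-category, its image under $\Ind$ is compactly generated with compact objects identified with $\cA \otimes_\St \cB$ itself. It then remains to verify that every object of $\cA \otimes_\St \cB$ is dualizable: this subcategory is generated under finite colimits and retracts by the pure tensors $a \otimes b$ (for $a \in \cA$ and $b \in \cB$), each of which is dualizable with dual $a^\vee \otimes b^\vee$, and the full subcategory of dualizable objects in any stably symmetric monoidal $\infty$-category is closed under finite colimits and retracts. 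Finally, I would note that in any presentably symmetric monoidal stable $\infty$-category of the form $\Ind(\cI)$ with $\cI$ small and idempotent-complete, the dualizable objects coincide with the compact objects when every object of $\cI$ is dualizable (a dualizable object is always compact since the unit is, and conversely any compact object lies in $\cI$, whose objects are dualizable by hypothesis).

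The main content to verify is that $\Ind$ is symmetric monoidal on $\St$ with respect to the tensor products on $\St$ and $\PrLSt$ (a well-known result of Lurie), and the closure of the dualizable subcategory under finite colimits and retracts. Neither step is subtle; the whole argument is essentially formal once these two facts are in hand. Thus I do not anticipate any genuine obstacle — this is an observation precisely because it unwinds to a standard chain of formal identifications, and the hardest part is bookkeeping rather than mathematics.
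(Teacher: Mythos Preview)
Your proposal is correct and is precisely the unwinding the paper intends: the paper offers only the parenthetical hint that $\Ind\colon\St\to\PrLSt$ is symmetric monoidal, and your argument is the standard expansion of that hint. There is nothing to add.
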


\begin{observation}
\label{obs.U.tilde.sym.mon}
The functor $\cR^{\gen G} \xra{\tilde{U}} \Ind(\cR^{\htpy G})$ is canonically symmetric monoidal, because the composite $(\cR^{\gen G})^\omega \hookra \cR^{\gen G} \xra{U} \cR^{\htpy G}$ is symmetric monoidal by \Cref{obs.gen.G.objects.rigid.if.R.is}.
\end{observation}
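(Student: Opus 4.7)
The plan is to produce the symmetric monoidal structure on $\tilde{U}$ by Ind-extending a symmetric monoidal functor defined on compact objects. First I would recall that by \Cref{obs.gen.G.objects.rigid.if.R.is}, $\cR^{\gen G}$ is rigidly-compactly generated, so the full subcategory $(\cR^{\gen G})^\omega \hookra \cR^{\gen G}$ consists of dualizable objects and is in particular closed under the tensor product; hence it inherits a canonical symmetric monoidal structure from $\cR^{\gen G}$, and the inclusion is a symmetric monoidal functor.

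Next, I would note that the forgetful functor $\cR^{\gen G} \xra{U} \cR^{\htpy G}$ is symmetric monoidal, since it is obtained by tensoring in $\CAlg(\PrLSt)$ the symmetric monoidal forgetful functor $\Spectra^{\gen G} \xra{U} \Spectra^{\htpy G}$ with $\cR$. Composing with the symmetric monoidal inclusion of compact objects yields a symmetric monoidal functor
\[
(\cR^{\gen G})^\omega
\longhookra
\cR^{\gen G}
\xlongra{U}
\cR^{\htpy G}
~.
\]

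Finally, I would apply the Ind-completion functor. Since $\Ind : \Cat^{\ex} \to \PrLSt$ is symmetric monoidal, applying it to the symmetric monoidal functor displayed above produces a symmetric monoidal functor
\[
\cR^{\gen G}
\simeq
\Ind ( (\cR^{\gen G})^\omega )
\longra
\Ind ( \cR^{\htpy G} )
~,
\]
which by construction is $\tilde{U}$. No step here should present real difficulty; the only thing that needs to be checked carefully is that the canonical symmetric monoidal structure arising this way on $\tilde{U}$ agrees with the one inherited from the factorization \Cref{factor.U.through.U.tilde}, which follows from the uniqueness of symmetric monoidal left Kan extensions along a symmetric monoidal fully faithful inclusion of a rigid subcategory.
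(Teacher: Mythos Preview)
Your proposal is correct and follows exactly the same route as the paper: the observation's entire content is that the composite $(\cR^{\gen G})^\omega \hookra \cR^{\gen G} \xra{U} \cR^{\htpy G}$ is symmetric monoidal (using \Cref{obs.gen.G.objects.rigid.if.R.is}) and that $\tilde{U}$ is by definition its $\Ind$-extension. Your final compatibility check is unnecessary, since $\tilde{U}$ is \emph{defined} as $\Ind$ of that composite rather than merely characterized by the factorization \Cref{factor.U.through.U.tilde}, so there is nothing further to verify.
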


\begin{observation}
\label{obs.beta.tilde.is.RgG.linear}
Because $\cR^{\gen G}$ is rigidly-compactly generated by \Cref{obs.gen.G.objects.rigid.if.R.is} and $\tilde{U}$ is symmetric monoidal by \Cref{obs.U.tilde.sym.mon}, by \cite[Chapter 1, Lemma 9.3.6]{GR} the right adjoint $\tilde{\beta}$ is $\cR^{\gen G}$-linear: in other words, for any $E \in \cR^{\gen G}$ and $F \in \Ind(\cR^{\htpy G})$ we have the projection formula
\[
\tilde{\beta}(\tilde{U}(E) \otimes F)
\simeq
E \otimes \tilde{\beta}(F)
~.
\]
\end{observation}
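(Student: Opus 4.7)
The plan is to apply the standard projection-formula argument for a symmetric monoidal colimit-preserving functor whose source is rigidly-compactly generated. First I would construct the candidate projection-formula transformation and then verify it is an equivalence by reducing to the case of dualizable (equivalently, compact) objects in the source.

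To construct the map, I would use the counit of the adjunction $\tilde{U} \dashv \tilde{\beta}$ together with the symmetric monoidality of $\tilde{U}$ (\Cref{obs.U.tilde.sym.mon}) to form the composite
\[
\tilde{U}(E \otimes \tilde{\beta}(F))
\simeq
\tilde{U}(E) \otimes \tilde{U}(\tilde{\beta}(F))
\longra
\tilde{U}(E) \otimes F
~,
\]
whose adjoint is a natural map
\[
\theta_{E,F}
:
E \otimes \tilde{\beta}(F)
\longra
\tilde{\beta}(\tilde{U}(E) \otimes F)
\]
in $\cR^{\gen G}$.

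Next I would verify that $\theta_{E,F}$ is an equivalence whenever $E$ is compact. By \Cref{obs.gen.G.objects.rigid.if.R.is} such an $E$ is dualizable with some dual $E^\vee$, and since $\tilde{U}$ is symmetric monoidal, $\tilde{U}(E)$ is dualizable in $\Ind(\cR^{\htpy G})$ with dual $\tilde{U}(E^\vee)$. A Yoneda argument then gives, naturally in any $X \in \cR^{\gen G}$, a chain of equivalences
\[
\Map(X,\tilde{\beta}(\tilde{U}(E)\otimes F))
\simeq
\Map(\tilde{U}(X)\otimes \tilde{U}(E^\vee),F)
\simeq
\Map(X\otimes E^\vee,\tilde{\beta}(F))
\simeq
\Map(X,E\otimes \tilde{\beta}(F))
~,
\]
so $\theta_{E,F}$ is an equivalence.

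Finally, I would extend to all $E \in \cR^{\gen G}$ by a colimit argument: both the source and target of $\theta_{E,F}$ send colimits in the variable $E$ to colimits, using that $\otimes$ preserves colimits separately in each variable and that $\tilde{\beta}$ preserves colimits (which follows from the fact that $\tilde{U}$ preserves compact objects, as noted in \Cref{obs.U.tilde.preserves.compact.objects}). Since $\cR^{\gen G}$ is compactly generated, $\theta_{E,F}$ is an equivalence for all $E$, which is the desired projection formula. There is no real obstacle here; the only ingredients are rigid-compact generation of $\cR^{\gen G}$ (to promote compact to dualizable) and symmetric monoidality of $\tilde{U}$ (to pass this dualizability to $\Ind(\cR^{\htpy G})$), both of which have been established.
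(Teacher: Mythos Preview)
Your argument is correct and is essentially the standard proof of the projection formula for right adjoints of symmetric monoidal functors out of rigidly-compactly generated categories. The paper does not give its own proof of this observation but simply cites \cite[Chapter 1, Lemma 9.3.6]{GR}; what you have written is precisely (a version of) the proof of that cited lemma, specialized to the present situation. So there is no substantive difference in approach: you have unpacked the black-box reference, using exactly the two ingredients the paper highlights (rigid-compact generation of $\cR^{\gen G}$ and symmetric monoidality of $\tilde{U}$), together with the colimit-preservation of $\tilde{\beta}$ already recorded in the paper.
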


\begin{observation}
\label{obs.ideal.gend.by.Utilde.of.IF.is.a.closed.ideal.and.coaugmented.idempotents.agree}
By \Cref{obs.IF.a.closed.ideal} \and Corollary S.\ref{strat:cor.ideal.gend.by.image.of.closed.ideal.is.closed}, the ideal
\[
\brax{ \tilde{U} ( \cI_\ms{F}) }^\otimes
\subseteq
\Ind(\cR^{\htpy G} )
\]
is a closed ideal and moreover
\[
\nu \left( \uno_{\Ind(\cR^{\htpy G}) / \brax{ \tilde{U} ( \cI_\ms{F}) }^\otimes} \right)
\simeq
\tilde{U} ( \nu ( \uno_{\Spectra^{\gen G} / \cI_\ms{F}} ) )
~.
\]
\end{observation}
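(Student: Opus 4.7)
The plan is to verify the hypotheses of Corollary S.\ref{strat:cor.ideal.gend.by.image.of.closed.ideal.is.closed} and invoke it directly. That corollary transports a closed ideal along a suitable symmetric monoidal functor between rigidly-compactly generated presentably symmetric monoidal stable $\infty$-categories, simultaneously asserting (a) that the ideal generated by the image is again closed, and (b) that the coaugmented idempotent on the unit of the quotient is obtained by applying the functor to the analogous coaugmented idempotent on the source side. Both conclusions of the observation are therefore subsumed by a single application.

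First I would assemble the structural inputs on the source. The presentably symmetric monoidal stable $\infty$-category $\cR^{\gen G}$ is rigidly-compactly generated by \Cref{obs.gen.G.objects.rigid.if.R.is}, and $\cI_\ms{F} \subseteq \cR^{\gen G}$ is a closed ideal by \Cref{obs.IF.a.closed.ideal}. Next I would verify the hypotheses on the functor $\tilde{U} : \cR^{\gen G} \to \Ind(\cR^{\htpy G})$: it is symmetric monoidal by \Cref{obs.U.tilde.sym.mon}, preserves compact objects by \Cref{obs.U.tilde.preserves.compact.objects}, and is a left adjoint, hence colimit-preserving. Finally, I would note that $\Ind(\cR^{\htpy G})$ inherits a presentably symmetric monoidal stable structure under which $\tilde U$ is a morphism in $\CAlg(\PrLSt)$; that it is rigidly-compactly generated follows from the rigidly-compact generation of $\cR^{\htpy G}$ together with the fact that $\Ind$ preserves this property.

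With the hypotheses in place, Corollary S.\ref{strat:cor.ideal.gend.by.image.of.closed.ideal.is.closed} delivers both statements at once: $\brax{\tilde U(\cI_\ms{F})}^\otimes$ is closed in $\Ind(\cR^{\htpy G})$, and the natural comparison
\[
\tilde U \bigl( \nu ( \uno_{\cR^{\gen G}/\cI_\ms{F}} ) \bigr)
\longra
\nu \bigl( \uno_{\Ind(\cR^{\htpy G})/\brax{\tilde U(\cI_\ms{F})}^\otimes} \bigr)
\]
is an equivalence. The only substantive step is a bookkeeping check that the formulation in \cite{AMR-strat} matches the present notation, in particular that its ``ideal generated by the image'' coincides with $\brax{\tilde U(\cI_\ms{F})}^\otimes$ in the sense of Notation S.\ref{strat:notn.for.ideal.generated.by.stuff}; this is immediate from \Cref{obs.IF.via.ideal.generators} combined with the fact that $\tilde U$ preserves the generating objects $\uno_\cR\brax{G/H}$ (via \Cref{obs.compatibility.of.free.objects.on.a.G.space}). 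No genuinely new homotopical input is required beyond what has already been set up.
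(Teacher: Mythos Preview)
Your proposal is correct and takes essentially the same approach as the paper: the observation is justified in the paper simply by citing \Cref{obs.IF.a.closed.ideal} and Corollary S.\ref{strat:cor.ideal.gend.by.image.of.closed.ideal.is.closed}, and you have spelled out in detail the verification of the relevant hypotheses (rigid-compact generation of the source via \Cref{obs.gen.G.objects.rigid.if.R.is}, symmetric monoidality of $\tilde U$ via \Cref{obs.U.tilde.sym.mon}, etc.). One minor remark: your claim that $\Ind(\cR^{\htpy G})$ is rigidly-compactly generated may be superfluous, since the corollary likely only requires rigid-compact generation of the \emph{source} (this is what drives the projection formula for $\tilde\beta$, as in \Cref{obs.beta.tilde.is.RgG.linear}); in any case this does not affect the validity of your argument.
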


\begin{observation}
\label{obs.ideal.gend.by.Utilde.of.IF.is.Ind.of.IhF}
We have the string of identifications
\begin{align}
\label{use.identification.of.IF.as.image.of.SpgGF}
\brax{ \tilde{U}(\cI_\ms{F}) }^\otimes
&=
\brax{ \tilde{U} ( \uno_\cR \brax{ G / H } ) }^\otimes_{H \in \ms{F}}
\\
\label{use.that.Utilde.is.iU.on.cpcts}
&=
\brax{ iU ( \uno_\cR \brax{ G/H} ) }^\otimes_{H \in \ms{F}}
\\
\label{use.compatibility.of.genuine.and.htpy.free.objects}
&=
\brax{ i ( \uno_\cR^\htpy \brax{G/H} ) }^\otimes_{H \in \ms{F}}
\\
\label{use.that.ind.of.a.thick.ideal.has.same.generators}
&=
\Ind(\cI^\htpy_\ms{F})
\end{align}
among subcategories of $\Ind(\cR^{\htpy G})$, where identification \Cref{use.identification.of.IF.as.image.of.SpgGF} follows from \Cref{obs.IF.via.ideal.generators}, identification \Cref{use.that.Utilde.is.iU.on.cpcts} follows from \Cref{obs.U.tilde.preserves.compact.objects}, identification \Cref{use.compatibility.of.genuine.and.htpy.free.objects} follows from \Cref{obs.compatibility.of.free.objects.on.a.G.space}, and identification \Cref{use.that.ind.of.a.thick.ideal.has.same.generators} follows from \Cref{obs.ind.of.a.thick.ideal.is.a.closed.ideal}.
\end{observation}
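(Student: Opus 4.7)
The proof amounts to verifying the four successive identifications in the displayed chain, each of which follows directly from a previously established fact in the excerpt. The plan is simply to assemble these citations in the right order.

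For the first identification $\brax{\tilde{U}(\cI_\ms{F})}^\otimes = \brax{\tilde{U}(\uno_\cR\brax{G/H})}^\otimes_{H \in \ms{F}}$, I would invoke Observation \ref{obs.IF.via.ideal.generators}, which expresses $\cI_\ms{F}$ as the ideal generated by the family $\{\uno_\cR\brax{G/H}\}_{H \in \ms{F}}$. Since $\tilde{U}$ is symmetric monoidal by Observation \ref{obs.U.tilde.sym.mon} and is a left adjoint (hence preserves colimits), the closed ideal generated by the image $\tilde{U}(\cI_\ms{F})$ agrees with the closed ideal generated by the images of any ideal generating set; here that set is $\{\uno_\cR\brax{G/H}\}_{H \in \ms{F}}$.

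For the second identification, I would note that each $\uno_\cR\brax{G/H} = \Sigma^\infty_G(G/H)_+ \otimes \uno_\cR$ lies in the subcategory of compact objects $(\cR^{\gen G})^\omega$, since $\cR^{\gen G}$ is rigidly-compactly generated by Observation \ref{obs.gen.G.objects.rigid.if.R.is} and $\Sigma^\infty_G(G/H)_+$ is a compact generator of $\Spectra^{\gen G}$. Observation \ref{obs.U.tilde.preserves.compact.objects} then yields the equivalence $\tilde{U}(\uno_\cR\brax{G/H}) \simeq iU(\uno_\cR\brax{G/H})$, replacing the generators one at a time. The third identification is immediate from Observation \ref{obs.compatibility.of.free.objects.on.a.G.space}, which gives $U(\uno_\cR\brax{G/H}) \simeq \uno_\cR^\htpy\brax{G/H}$ in $\cR^{\htpy G}$.

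For the final identification $\brax{i(\uno_\cR^\htpy\brax{G/H})}^\otimes_{H \in \ms{F}} = \Ind(\cI^\htpy_\ms{F})$, I would unwind the definition of $\cI^\htpy_\ms{F}$ as the thick ideal of $\cR^{\htpy G}$ generated by $\{\uno_\cR^\htpy\brax{G/H}\}_{H \in \ms{F}}$ and then apply Observation \ref{obs.ind.of.a.thick.ideal.is.a.closed.ideal}, which asserts precisely that taking $\Ind$ of a thick ideal yields the closed ideal on the same generators (under the inclusion $i \colon \cR^{\htpy G} \hookrightarrow \Ind(\cR^{\htpy G})$).

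There is no genuine obstacle in this argument — every step is a direct citation — but the subtle point to keep straight is that \textbf{each} manipulation happens at the level of ideal generators rather than all of $\cI_\ms{F}$: one must justify (as in the first paragraph) that the symmetric monoidal colimit-preserving functor $\tilde{U}$ is compatible with the passage from a closed ideal to its generating set, so that we are free to compute on the small collection $\{\uno_\cR\brax{G/H}\}_{H \in \ms{F}}$ where the compactness, symmetric monoidality of $U$, and the $\tilde{U} \simeq iU$ comparison all apply cleanly.
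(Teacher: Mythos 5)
Your proposal is correct and follows essentially the same route as the paper, which justifies the four identifications by citing exactly the observations you invoke (\ref{obs.IF.via.ideal.generators}, \ref{obs.U.tilde.preserves.compact.objects}, \ref{obs.compatibility.of.free.objects.on.a.G.space}, \ref{obs.ind.of.a.thick.ideal.is.a.closed.ideal}). Your extra remark that the symmetric monoidal, colimit-preserving functor $\tilde{U}$ lets one pass from the closed ideal to its generating set is a reasonable spelling-out of a point the paper leaves implicit.
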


\begin{proof}[Proof of \Cref{prop.proper.Tate.from.genuine.G.objs}]
Our proof takes place within the context of the diagram in \Cref{figure.for.proof.of.proper.tate.from.genuine},
\begin{figure}[h]
\[ \begin{tikzcd}[row sep=1.5cm, column sep=1.5cm]
\cI^\htpy_\ms{F}
\arrow[hook]{r}
\arrow[hook]{d}
&
\cR^{\htpy G}
\arrow{r}{p}
\arrow[hook]{d}{i}
\arrow[bend left=40]{dddd}[pos=0.8]{\beta}
\arrow[leftarrow, bend right=5]{lddddd}[pos=0.6]{U}
&
\cR^{\htpy G} /^\St \cI^\htpy_\ms{F}
\arrow[hook]{d}{i'}
\\
\Ind(\cI^\htpy_\ms{F})
\arrow[hook, crossing over]{r}
&
\Ind(\cR^{\htpy G})
\arrow[crossing over]{r}{p'}
&
\Ind(\cR^{\htpy G}) / \Ind(\cI^\htpy_\ms{F})
\\[-1.5cm]
\rotatebox{90}{$\simeq$}
&
\rotatebox{90}{$\simeq$}
&
\rotatebox{90}{$\simeq$}
\\[-1.5cm]
\brax{\tilde{U}(\cI_\ms{F})}^\otimes
\arrow[hook, crossing over]{r}
&
\Ind(\cR^{\htpy G})
\arrow[crossing over, yshift=0.9ex]{r}{p_L}
\arrow[crossing over, hookleftarrow, yshift=-0.9ex]{r}[yshift=-0.2ex]{\bot}[swap]{\nu}
&
\Ind(\cR^{\htpy G}) \left/ \brax{\tilde{U}(\cI_\ms{F})}^\otimes \right.
\\
\cI_\ms{F}
\arrow[crossing over, hook]{r}
\arrow{u}
&
\cR^{\gen G}
\arrow[transform canvas={xshift=-0.9ex}]{u}{\tilde{U}}
\arrow[leftarrow, transform canvas={xshift=0.9ex}]{u}[sloped, anchor=south, yshift=-0.2ex]{\bot}[swap]{\tilde{\beta}}
\arrow[transform canvas={yshift=0.9ex}]{r}{p_L}
\arrow[hookleftarrow, transform canvas={yshift=-0.9ex}]{r}[yshift=-0.2ex]{\bot}[swap]{\nu}
&
\cR^{\gen G} / \cI_\ms{F}
\\
(\cR^{\gen G})^\omega
\arrow[hook]{ru}
\end{tikzcd} \]
\caption{The proof of \Cref{prop.proper.Tate.from.genuine.G.objs} takes places within this diagram.}
\label{figure.for.proof.of.proper.tate.from.genuine}
\end{figure}
in which
\begin{itemize}
\item the upper row is a stable quotient sequence,
\item the functor $i'$ is the fully faithful inclusion into $\Ind(\cR^{\htpy G} /^\St \cI^\htpy_\ms{F} ) \simeq \Ind(\cR^{\htpy G}) / \Ind(\cI^\htpy_\ms{F})$ (note that the functor $\St \xra{\Ind} \PrLSt$ preserves colimits),\footnote{By \Cref{remark.stable.quotient.by.presentable.subcategory.is.presentable.quotient}, the functor $i'$ can also be seen as the induced functor on stable quotients.}
\item the lower two rows are presentable quotient sequences in which the kernels are closed ideals by Observations \ref{obs.ideal.gend.by.Utilde.of.IF.is.a.closed.ideal.and.coaugmented.idempotents.agree} \and \ref{obs.IF.a.closed.ideal},
\item the back triangles commute by Observations \ref{obs.beta.is.beta.tilde.i} \and \ref{obs.U.tilde.preserves.compact.objects}, and
\item the equivalences in the middle row follow from \Cref{obs.ideal.gend.by.Utilde.of.IF.is.Ind.of.IhF}.
\end{itemize}
Namely, for any $E \in \cR^{\htpy G}$, we have natural equivalences
\begin{align}
\nonumber
E^{\tate_\ms{F} H}
& :=
\ulhom_{\cR^{\htpy G} /^\St \cI_\ms{F}^\htpy} ( p(\uno_\cR^\htpy \brax{G/H}), p E)
\\
\nonumber
& \simeq
\ulhom_{\Ind(\cR^{\htpy G}) / \Ind(\cI_\ms{F}^\htpy)} ( i ' p ( \uno_\cR^\htpy \brax{G/H} ) , i ' p E )
\\
\nonumber
& \simeq
\ulhom_{\Ind(\cR^{\htpy G}) / \Ind(\cI_\ms{F}^\htpy)} ( ( p' i ( \uno_\cR^\htpy \brax{G/H} ) ) , p' i E )
\\
\nonumber
& \simeq
\ulhom_{\Ind(\cR^{\htpy G})} ( i ( \uno^\htpy_\cR \brax{G/H} ) , \nu p_L i E )
\\
\label{use.that.free.gen.and.htpy.G.objects.on.a.G.space.are.compatible}
& \simeq
\ulhom_{\Ind(\cR^{\htpy G})} ( iU(\uno_\cR \brax{G/H} ) , \nu p_L i E )
\\
\label{use.that.uno.R.brax.G.mod.H.is.compact}
& \simeq
\ulhom_{\Ind(\cR^{\htpy G})} ( \tilde{U}(\uno_\cR \brax{G/H} ) , \nu p_L i E )
\\
\nonumber
& \simeq
\ulhom_{\Ind(\cR^{\htpy G})} ( \tilde{U}(\uno_\cR \brax{G/H} ) , (\nu p_L \uno_{\Ind(\cR^{\htpy G})} ) \otimes (iE ) )
\\
\label{use.that.U.tilde.preserves.relevant.coaugmented.idempotents}
& \simeq
\ulhom_{\Ind(\cR^{\htpy G})} ( \tilde{U}(\uno_\cR \brax{G/H} ) , ( \tilde{U} \nu p_L \uno_{\cR^{\gen G}} ) \otimes (iE ) )
\\
\nonumber
& \simeq
\ulhom_{\cR^{\gen G}} ( \uno_\cR \brax{G/H} , \tilde{\beta} ( ( \tilde{U} \nu p_L \uno_{\cR^{\gen G}} ) \otimes (iE ) ) )
\\
\label{use.projection.formula.for.beta.tilde}
& \simeq
\ulhom_{\cR^{\gen G}} ( \uno_\cR \brax{G/H} , ( \nu p_L \uno_{\cR^{\gen G}} ) \otimes ( \tilde{\beta} i E ) )
\\
\nonumber
& \simeq
\ulhom_{\cR^{\gen G}} ( \uno_\cR \brax{G/H} , ( \nu p_L \uno_{\cR^{\gen G}} ) \otimes ( \beta E ) )
\\
\nonumber
& \simeq
\ulhom_{\cR^{\gen G}} ( \uno_\cR \brax{G/H} , \nu p_L \beta E )
\\
\nonumber
& \simeq
\ulhom_{\cR^{\gen G} / \cI_\ms{F}} ( p_L ( \uno_\cR \brax{G/H}) , p_L \beta E )
\\
\nonumber
& =:
\Phi^H_\ms{F} \beta E
\end{align}
in $\cR^{\htpy \Weyl(H)}$, in which
\begin{itemize}
\item equivalence \Cref{use.that.free.gen.and.htpy.G.objects.on.a.G.space.are.compatible} follows from \Cref{obs.compatibility.of.free.objects.on.a.G.space},
\item equivalence \Cref{use.that.uno.R.brax.G.mod.H.is.compact} follows from the fact that $\uno_\cR \brax{G/H} \in \cR^{\gen G}$ is compact (because $\uno_\cR \in \cR$ and $\Sigma^\infty_G(G/H)_+ \in \Spectra^{\gen G}$ are compact),
\item equivalence \Cref{use.that.U.tilde.preserves.relevant.coaugmented.idempotents} follows from \Cref{obs.ideal.gend.by.Utilde.of.IF.is.a.closed.ideal.and.coaugmented.idempotents.agree}, and
\item equivalence \Cref{use.projection.formula.for.beta.tilde} follows from \Cref{obs.beta.tilde.is.RgG.linear}. \qedhere
\end{itemize}
\end{proof}

\section{Gluing functors for the geometric stratification of genuine $G$-objects}
\label{section.gluing.functors.for.RgG}

In this section, we prove \Cref{intro.thm.gluing.functors} as \Cref{thm.stratn.of.genuine.G.objects}.

\begin{local}
In this section, we fix a finite group $G$ and a rigidly-compactly generated presentably symmetric monoidal stable $\infty$-category $\cR$.
\end{local}

\begin{notation}
Given subgroups $K,H \subseteq G$, we define the subset
\[
\tilde{C}(H,K)
:=
\{ g \in G
:
H \subseteq gKg^{-1}
\subseteq 
\Normzer(H)
\}
\subseteq
G
\]
of those elements of $G$ that conjugate $K$ to lie between $H$ and the normalizer of $H$.
\end{notation}

\begin{observation}
Considering $G$ as a $(G,G)$-bimodule set by left and right multiplication, the subset $\tilde{C}(H,K) \subseteq G$ inherits a $(\Normzer(H),\Normzer(K))$-bimodule structure: that is, it is carried into itself by left multiplication by elements of $\Normzer(H)$ and by right multiplication by elements of $\Normzer(K)$. We use this fact without further comment.
\end{observation}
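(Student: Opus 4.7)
The plan is to unpack the definition of $\tilde{C}(H,K)$ and directly verify closure under the two actions; this is a short group-theoretic computation that essentially reduces to the fact that $\Normzer(H)$ normalizes itself and $\Normzer(K)$ centralizes the conjugacy class $gKg^{-1}$ as a subset.

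First, for closure under left multiplication by $\Normzer(H)$, fix $n \in \Normzer(H)$ and $g \in \tilde{C}(H,K)$; I would compute
\[
(ng) K (ng)^{-1} = n (gKg^{-1}) n^{-1}
\]
and check the two containments separately. The lower containment $H \subseteq n(gKg^{-1})n^{-1}$ follows by applying $n(-)n^{-1}$ to $H \subseteq gKg^{-1}$ together with $nHn^{-1} = H$ (which holds since $n \in \Normzer(H)$). The upper containment $n(gKg^{-1})n^{-1} \subseteq \Normzer(H)$ follows from $gKg^{-1} \subseteq \Normzer(H)$ together with the tautological identity $n \Normzer(H) n^{-1} = \Normzer(H)$ for $n \in \Normzer(H)$.

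Second, for closure under right multiplication by $\Normzer(K)$, fix $m \in \Normzer(K)$ and $g \in \tilde{C}(H,K)$; here the key observation is the identity
\[
(gm) K (gm)^{-1} = g(mKm^{-1})g^{-1} = gKg^{-1},
\]
where the second equality uses $m \in \Normzer(K)$. Thus the defining containments $H \subseteq gKg^{-1} \subseteq \Normzer(H)$ are unchanged, and $gm \in \tilde{C}(H,K)$.

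Finally, compatibility of the two actions (so as to genuinely define a bimodule structure) is immediate, since both actions are restrictions of the standard left and right multiplication of $G$ on itself, which manifestly commute by associativity. No step here is substantive; the only thing that could possibly cause confusion is keeping track of which containment uses which hypothesis on $n$, but both fall out cleanly from the conjugation identities above.
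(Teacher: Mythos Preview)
Your argument is correct. The paper itself offers no proof of this observation (it is stated and then used ``without further comment''), so your direct verification---checking each containment under conjugation by $n \in \Normzer(H)$ and noting that right multiplication by $m \in \Normzer(K)$ leaves $gKg^{-1}$ unchanged---is exactly the intended (and essentially unique) justification.
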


\begin{notation}
\label{notn.C.sub.H.upper.K}
We write
\[
C(H,K)
:=
\tilde{C}(H,K) / K
\]
for the set obtained by quotienting $\tilde{C}(H,K)$ by its right $K$-action.
\end{notation}

\begin{observation}
Of course, $C(H,K)$ naturally inherits a $(\Normzer(H),\Weyl(K))$-bimodule structure. Moreover, its left $H$-action is trivial, so that this descends to a $(\Weyl(H),\Weyl(K))$-bimodule structure.\footnote{See \Cref{lemma.identify.double.coset} for a more conceptual description of the double quotient $\Weyl(H) \backslash C(H,K) / \Weyl(K)$.} We use these facts without further comment.
\end{observation}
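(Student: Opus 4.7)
The plan is straightforward: this is essentially a bookkeeping exercise about bimodule actions descending through quotients, and the two claims should each fall out of one short verification.

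First I would handle the existence of the $(\Normzer(H), \Weyl(K))$-bimodule structure on $C(H,K)$. The preceding observation gives a $(\Normzer(H), \Normzer(K))$-bimodule structure on $\tilde{C}(H,K)$. Since $K$ is normal in $\Normzer(K)$, the right $\Normzer(K)$-action on $\tilde{C}(H,K)$ descends along the projection $\tilde{C}(H,K) \twoheadrightarrow \tilde{C}(H,K)/K = C(H,K)$ to a right $\Weyl(K) = \Normzer(K)/K$-action. The left $\Normzer(H)$-action commutes with the right $K$-action (they are the restrictions of two commuting actions of $G$ on itself), so it descends to a left $\Normzer(H)$-action on $C(H,K)$, and these two commuting actions give the desired bimodule structure.

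Next I would verify that $H \subseteq \Normzer(H)$ acts trivially on $C(H,K)$, which is the only nontrivial computational point. Fix $g \in \tilde{C}(H,K)$ and $h \in H$. By the definition of $\tilde{C}(H,K)$, we have $H \subseteq gKg^{-1}$, equivalently $g^{-1} H g \subseteq K$. Therefore $g^{-1} h g \in K$, so
\[
h \cdot gK = (hg)K = g(g^{-1}hg)K = gK
\]
in $C(H,K)$. Hence the restriction of the left $\Normzer(H)$-action to $H$ is trivial.

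Finally, since $H$ is normal in $\Normzer(H)$ and acts trivially, the left $\Normzer(H)$-action factors through a left $\Weyl(H) = \Normzer(H)/H$-action, which still commutes with the right $\Weyl(K)$-action. This gives the asserted $(\Weyl(H), \Weyl(K))$-bimodule structure on $C(H,K)$. The only step that requires any thought is the triviality of the $H$-action, and even this is immediate once one unpacks the defining condition of $\tilde{C}(H,K)$; so I do not anticipate any real obstacles.
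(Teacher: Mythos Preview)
Your proposal is correct. The paper states this as an \emph{Observation} and does not supply any argument (``Of course'', ``We use these facts without further comment''); your verification spells out exactly the details the authors left implicit, and the key computation---that $g \in \tilde{C}(H,K)$ implies $g^{-1}hg \in K$ for $h \in H$, so $hgK = gK$---is the natural one.
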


\begin{theorem}
\label{thm.stratn.of.genuine.G.objects}
For any morphism $H \leq K$ in $\pos_G$, the gluing functor
\[
\cR^{\htpy \Weyl(H)}
\xra{\Gamma^H_K}
\cR^{\htpy \Weyl(K)}
\]
of the geometric stratification \Cref{sym.mon.geometric.stratn.in.general} of $\cR^{\gen G}$ is given by the formula
\begin{equation}
\label{formula.for.gluing.functor}
E
\longmapsto
\bigoplus_{[g] \in \Weyl(H) \backslash C(H,K) / \Weyl(K)}
\Ind_{(\Normzer(H) \cap \Normzer(gKg^{-1})) / (gKg^{-1})}^{\Weyl(K)}
E^{\tate ( gKg^{-1} ) / H}
\end{equation}
(where we implicitly use the isomorphism $\Weyl(gKg^{-1}) \xra{\cong} \Weyl(K)$ given by conjugation by $g^{-1}$).
\end{theorem}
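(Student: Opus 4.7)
My plan is to express the gluing functor $\Gamma_H^K$ as $\Phi^K$ post-composed with the right-adjoint inclusion $\rho_H$ of the $H$-stratum into $\cR^{\gen G}$, reduce the computation to a Mackey-type double coset decomposition, and identify each surviving summand as a proper Tate construction via \Cref{prop.proper.Tate.from.genuine.G.objs}.

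First I would identify the inclusion $\rho_H \colon \cR^{\htpy \Weyl(H)} \hookrightarrow \cR^{\gen G}$ explicitly. Since the geometric stratification of $\cR^{\gen G}$ is obtained by tensoring that of $\Spectra^{\gen G}$ with $\cR$ (\Cref{defn.geometric.stratification.of.genuine.G.objects}), it suffices to handle the case $\cR = \Spectra$, where $\rho_H$ is described by Theorem S.\ref{strat:intro.thm.gen.G.spt}. Factoring $\Phi^K \colon \cR^{\gen G} \to \cR^{\htpy \Weyl(K)}$ as restriction to $\Normzer(H)$, followed by geometric $H$-fixedpoints for $H \triangleleft \Normzer(H)$, followed by the forgetful functor to homotopy $\Weyl(H)$-objects, and passing to right adjoints, $\rho_H$ becomes the composite of Borel completion $\beta$, the fully faithful $H$-stratum inclusion $\iota_H$ inside $\cR^{\gen \Normzer(H)}$, and coinduction along $\Normzer(H) \hookrightarrow G$.

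Next I would compute $\Phi^K \circ \rho_H$ by analyzing $\Phi^K$ applied to the coinduction in the third step. The standard Mackey double coset formula decomposes $\Phi^K$ of such a coinduced object as a sum over $[g] \in K \backslash G / \Normzer(H)$; a summand at $[g]$ survives $\Phi^K$ only when $gKg^{-1} \subseteq \Normzer(H)$, and for objects of the form $\iota_H \beta(E)$ it further vanishes unless $H \subseteq gKg^{-1}$, because $\iota_H \beta(E)$ has vanishing geometric fixedpoints at subgroups of $\Normzer(H)$ that do not contain $H$. The combined constraint $H \subseteq gKg^{-1} \subseteq \Normzer(H)$ is exactly $g \in \tilde C(H,K)$, and reindexing the surviving contributions over $\Weyl(H) \backslash C(H,K) / \Weyl(K)$ — with stabilizer $\Weyl_{\Weyl(H)}(gKg^{-1}/H) = (\Normzer(H) \cap \Normzer(gKg^{-1}))/(gKg^{-1})$ — gives each summand the form
\[
\Ind_{(\Normzer(H) \cap \Normzer(gKg^{-1}))/(gKg^{-1})}^{\Weyl(K)} \Phi^{gKg^{-1}/H}(\beta(E)).
\]
Now \Cref{prop.proper.Tate.from.genuine.G.objs}, applied inside $\cR^{\gen \Weyl(H)}$ with $\ms{F} = (^{\not\geq} gKg^{-1}/H)$, identifies $\Phi^{gKg^{-1}/H} \circ \beta$ with the proper $(gKg^{-1}/H)$-Tate construction $(-)^{\tate (gKg^{-1})/H}$, producing the formula in the theorem.

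The main obstacle I anticipate is the combinatorial bookkeeping in the middle step: precisely matching the index set $K \backslash G / \Normzer(H)$ coming out of the Mackey formula with $\Weyl(H) \backslash C(H,K) / \Weyl(K)$ as bimodule sets (cf.\ \Cref{notn.C.sub.H.upper.K}), and correctly identifying the stabilizer subgroups as Weyl groups inside $\Weyl(K)$ via conjugation by $g^{-1}$. Beyond this bookkeeping, the substantive input is \Cref{prop.proper.Tate.from.genuine.G.objs}, which is exactly the result that converts the residual genuine-equivariant geometric fixedpoints into the proper Tate construction of the theorem statement; the passage from $\cR = \Spectra$ to general $\cR$ is then automatic from the construction of the stratification.
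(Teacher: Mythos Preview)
Your overall strategy matches the paper's: write $\Gamma_H^K$ as $\Phi^K$ applied to $\coInd^G_{\Normzer(H)} \circ \rho^H_\gen \circ \beta_{\Weyl(H)}$, isolate the summands satisfying $H \subseteq gKg^{-1} \subseteq \Normzer(H)$, reindex over $\Weyl(H)\backslash C(H,K)/\Weyl(K)$ with stabilizer $(\Normzer(H)\cap\Normzer(gKg^{-1}))/(gKg^{-1})$, and finish with \Cref{prop.proper.Tate.from.genuine.G.objs}. All of those ingredients are exactly what the paper uses.

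The middle step is implemented differently. You appeal to a ``standard Mackey double coset formula'' for $\Phi^K$ of a coinduced object, plus two separate vanishing arguments. The paper instead writes $\Phi^K \simeq \ulhom_{\cR^{\gen G}}(\uno_\cR\brax{G/K}, (-)\otimes\uno_\cR\brax{\wEF_{^{\not\geq}K}})$ and uses the projection formula for the right adjoint $\coInd^G_{\Normzer(H)}\circ\rho^H_\gen$ (\Cref{obs.rho.g.H.followed.by.coInd.from.NH.to.G.is.RgG.linear}, which is where rigid-compact generation of $\cR$ enters) to pass the whole computation into $\cR^{\gen\Weyl(H)}$, arriving at $\ulhom_{\cR^{\gen\Weyl(H)}}(\uno_\cR\brax{(G/K)^H}, \beta_{\Weyl(H)}(E)\otimes\uno_\cR\brax{\wEF})$ for the family $\ms{F}$ of \Cref{local.define.the.special.family.for.gluing.functors}\Cref{item.define.the.special.family}. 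Your two vanishing conditions then become the content of \Cref{lemma.inclusion.from.CHK.to.GmodKupperH.becomes.iso} (which replaces $(G/K)^H$ by $C(H,K)$ after smashing with $\wEF$), and the orbit bookkeeping is \Cref{lemma.map.from.quotient.of.product.to.CHKg.is.an.iso} and \Cref{lemma.arbitrary.subgroup.between.H.and.NH}. Your route is workable, but the double coset formula you invoke is really for $\Res$ of coinduced objects, not $\Phi^K$; making your vanishing arguments precise would essentially reproduce these lemmas. One caution on your closing remark: the passage to general $\cR$ is \emph{not} automatic in the sense of tensoring up the gluing functors from $\Spectra$---Tate does not preserve colimits, so the argument must be run directly in $\cR^{\gen G}$, which is what the projection-formula approach does.
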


\begin{remark}
The set $C(H,K)$ is empty unless we have $H \leq K \leq \Normzer(H)$ in $\pos_G$. At the other extreme, if we have $H \subseteq K \subseteq G$ with $H$ normal in $K$ and $K$ normal in $G$, then $C(H,K) = G/K$. In this case, it is not hard to see that the formula \Cref{formula.for.gluing.functor} reduces to the formula
\[
E
\longmapsto
\Ind_{\Normzer(H)/K}^{G/K} E^{\tate (K/H)}
~.
\]
If furthermore $H$ is normal in $G$, then the formula \Cref{formula.for.gluing.functor} reduces to the even simpler formula
\[
E
\longmapsto
E^{\tate (K/H)}
~.
\]
\end{remark}

\begin{remark}
While the description of the functor $\Gamma^H_K$ given by \Cref{thm.stratn.of.genuine.G.objects} is fairly explicit, it has the drawback of making reference to specific representatives of objects of $\pos_G$ (i.e.\! conjugacy classes of subgroups of $G$). Here are two alternative descriptions that are more invariant.
\begin{enumerate}

\item If we replace the double quotient appearing in formula \Cref{formula.for.gluing.functor} with the \textit{groupoid} double quotient, we obtain a span
\[
\sB \Weyl(H)
\longla
C(H,K)_{\htpy (\Weyl(H) \times \Weyl(K))}
\longra
\sB \Weyl(K)
~,
\]
from which $\Gamma^H_K$ may be obtained as a sort of pull-push operation: ordinary pullback along the leftwards functor followed by the fiberwise proper Tate construction and the indicated induction.

\item A variant of the argument used to prove \Cref{prop.proper.Tate.from.genuine.G.objs} shows that $\Gamma^H_K$ can also be described as the composite
\[
\cR^{\htpy \Weyl(H)}
\xlongra{p}
\cR^{\htpy \Weyl(H)} /^\St \cI^\htpy_\ms{F}
\xra{ \ulhom \left( p \left( \uno_\cR \brax{ (G/H)^K } \right) , (-) \right) }
\cR^{\htpy \Weyl(K)}
~,
\]
where $\ms{F} \in \Down_{\pos_{\Weyl(H)}}$ denotes the family defined in \Cref{local.define.the.special.family.for.gluing.functors}\Cref{item.define.the.special.family}. (Lemmas \ref{lemma.inclusion.from.CHK.to.GmodKupperH.becomes.iso} \and \ref{lemma.map.from.quotient.of.product.to.CHKg.is.an.iso} below can be applied to give an explicit description of the object $p(\uno_\cR \brax{ (G/H)^K } ) \in \cR^{\htpy \Weyl(H)} /^\St \cI^\htpy_\ms{F}$.)

\end{enumerate}
\end{remark}

\begin{example}[genuine $\sA_4$-spectra]
\label{example.A.four.spectra}
Let $\sA_4$ denote the alternating group on four letters. We describe the reconstruction theorem for genuine $\sA_4$-spectra that results from combining \Cref{thm.stratn.of.genuine.G.objects} with Theorems S.\ref{strat:intro.thm.cosms}\ref{strat:intro.main.thm.macrocosm} and S.\ref{strat:intro.thm.gen.G.spt}.\footnote{For brevity, we assume familiarity with the latter results. We refer the reader to \S S.\ref{strat:subsection.examples.of.SpgG} for a number of simpler examples. (In particular, the present example has a number of similarities with Example S.\ref{strat:ex.gen.Sthree.spectra} of genuine $\sS_3$-spectra.)} The poset of conjugacy classes of closed subgroups of $\sA_4$ is
\[
\sP_{\sA_4}
=
\left\{
\begin{tikzcd}
e
\arrow{r}
\arrow{d}
&
\sC_2
\arrow{d}
\\
\sC_3
\arrow{rd}
&
\sV_4
\arrow{d}
\\
&
\sA_4
\end{tikzcd}
\right\}
=:
\left\{
\begin{tikzcd}
1
\arrow{r}
\arrow{d}
&
2
\arrow{d}
\\
3
\arrow{rd}
&
4
\arrow{d}
\\
&
12
\end{tikzcd}
\right\}
~,
\]
where $\sV_4 \cong \sC_2 \times \sC_2$ denotes the Klein four-group and we label strata by cardinality. By Observation S.\ref{strat:obs.NS.proper.tate.is.p.primary}, the gluing functors $\Gamma^{e}_{\sA_4}$ and $\Gamma^{\sC_2}_{\sA_4}$ vanish. Moreover, by Observation S.\ref{strat:H.to.K.gluing.is.zero.when.K.not.leq.NH}, the gluing functor $\Gamma^{\sC_3}_{\sA_4}$ also vanishes. Altogether, we obtain an equivalence
\[
\Spectra^{\gen \sA_4}
\underset{\sim}{\xlongra{\gd}}
\lim^\rlax
\left(
\begin{tikzcd}[row sep=1.5cm, column sep=1.5cm]
\Spectra^{\htpy \sA_4}
\arrow{r}{(-)^{\st \Cyclic_2}}
\arrow{d}[swap]{(-)^{\st \Cyclic_3}}
\arrow{rd}[sloped, swap]{(-)^{\tate \sV_4}}[sloped, xshift=0.2cm, yshift=0.5cm]{\Uparrow}
&
\Spectra^{\htpy \sC_2}
\arrow{d}{(-)^{\st \Cyclic_2}}
\\
\Spectra
&
\Spectra^{\htpy \sC_3}
\arrow{d}{(-)^{\st \Cyclic_3}}
\\
&
\Spectra
\end{tikzcd}
\right)
~.
\]
So, a genuine $\sA_4$-spectrum $E \in \Spectra^{\gen \sA_4}$ is equivalently specified by the data of
\begin{itemize}

\item the objects
\[
\hspace{-2.3cm}
E_1 := \Phi^{e} E \in \Spectra^{\htpy \sA_4}
~,
\qquad
E_2 := \Phi^{\sC_2} E \in \Spectra^{\htpy \Cyclic_2}
~,
\qquad
E_3 := \Phi^{\sC_3} E \in \Spectra
~,
\qquad
E_4 := \Phi^{\sV_4} E \in \Spectra^{\htpy \sC_3}
~,
\qquad
\text{and}
\qquad
E_{12} := \Phi^{\sA_4} E \in \Spectra
~,
\]

\item the morphisms
\[
E_2
\xra{\gamma_{1,2}}
E_1^{\st \Cyclic_2}
~,
\qquad
E_3
\xra{\gamma_{1,3}}
E_1^{\st \Cyclic_3}
~,
\qquad
E_1^{\tate \sV_4}
\xla{\gamma_{1,4}}
E_4
\xra{\gamma_{2,4}}
E_2^{\st \Cyclic_2}
~,
\qquad
\text{and}
\qquad
E_{12}
\xra{\gamma_{4,12}}
E_4^{\st \Cyclic_3}
~,
\]
and

\item a homotopy making the square
\[ \begin{tikzcd}
E_4
\arrow{r}{\gamma_{2,4}}
\arrow{d}[swap]{\gamma_{1,4}}
&
E_2^{\st \Cyclic_2}
\arrow{d}{\gamma_{1,2}^{\st \Cyclic_2}}
\\
E_1^{\tate \sV_4}
\arrow{r}
&
(E_1^{\st \Cyclic_2})^{\st \Cyclic_2}
\end{tikzcd} \]
commute (in which the lower horizontal morphism is the canonical one).

\end{itemize}
\end{example}

\begin{remark}
We prove \Cref{thm.stratn.of.genuine.G.objects} at the end of this section, after some preliminary work. These preliminaries decompose into three main logically distinct pieces, which are organized into environments as follows.
\begin{enumerate}

\item[\ref{local.define.the.special.family.for.gluing.functors}-\ref{lemma.inclusion.from.CHK.to.GmodKupperH.becomes.iso}:] The proof of \Cref{thm.stratn.of.genuine.G.objects} will begin by applying a projection formula to replace the family $\ms{F}_{^{\not\geq} K} \in \Down_{\pos_G}$ with a certain family $\ms{F} \in \Down_{\pos_{\Weyl(H)}}$. The relevance of the set $C(H,K)$ is that it is clearly a subset of $(G/K)^H$, and it turns out to consist of precisely those orbits that aren't annihilated by the functor $\wEF \wedge (-)_+$.

\item[\ref{observation.double.coset.space.of.CHK}-\ref{lemma.map.from.quotient.of.product.to.CHKg.is.an.iso}:] We decompose $C(H,K)$ into $(\Weyl(H),\Weyl(K))$-orbits by identifying $\Weyl(H) \backslash C(H,K) / \Weyl(K)$ with conjugacy classes of stabilizer subgroups in $\Weyl(H)$ (with respect to the left action). We then describe these orbits explicitly, in terms of elements $g \in G$.

\item[\ref{obs.function.from.NH.mod.L.to.NH.times.NL.mod.L}-\ref{lemma.arbitrary.subgroup.between.H.and.NH}:] We exhibit each $(\Weyl(H),\Weyl(K))$-orbit in $C(H,K)$ as being itself induced from an orbit: namely, it is induced up from $\Weyl_{\Normzer(H)}(gKg^{-1}) = (\Normzer(H) \cap \Normzer(gKg^{-1})) / (gKg^{-1})$.

\end{enumerate}
\end{remark}

\begin{local}
\label{local.define.the.special.family.for.gluing.functors}
We use the following notation through the remainder of this section.
\begin{enumerate}

\item We fix subgroups $H \subseteq K \subseteq G$.\footnote{It is clear that in proving \Cref{thm.stratn.of.genuine.G.objects} we may assume without real loss of generality that $H$ is contained in $K$, and doing so leads to some notational simplification.}

\item We write $\Normzer(H) \xra{p} \Weyl(H)$ for the quotient homomorphism.

\item\label{item.define.the.special.family}

We write
\[
\ms{F}
:=
\lim \left(
\begin{tikzcd}
&
&
\ms{F}_{^{\not\geq} K}
\\
\pos_{\Weyl(H)}
\arrow{r}[swap]{p^{-1}}
&
\pos_{\Normzer(H)}
\arrow{r}
&
\pos_G
\arrow[hookleftarrow]{u}
\end{tikzcd}
\right)
\cong
\left\{
L \subseteq \Weyl(H)
:
{\def\arraystretch{1}
\begin{array}{c}
K \textup{ is not subconjugate}
\\
\textup{to } p^{-1}(L) \textup{ in } G
\end{array}
}
\right\}
\in
\Down_{\pos_{\Weyl(H)}}
\]
for the indicated family of subgroups of $\Weyl(H)$.

\end{enumerate}
\end{local}

\begin{warning}
The functor $\pos_{\Normzer(H)} \ra \pos_G$ is not generally fully faithful.\footnote{For example, take $G = \sS_4$ to be the symmetric group on four letters, define elements $h,j \in G$ by $h = (12)(34)$ and $j = (13)(24)$ (using cycle notation), and define subgroups $H = \brax{h}$ and $J = \brax{j}$ of $G$. Then we have $J \subseteq \Normzer(H)$, and moreover $J \not\cong H$ in $\pos_{\Normzer(H)}$ but $J \cong H$ in $\pos_G$.} (Whenever it is fully faithful, the set $\Weyl(H) \backslash C(H,K) / \Weyl(K)$ is a singleton; see \Cref{lemma.identify.double.coset}.)
\end{warning}

\begin{observation}
\label{obs.inclusion.from.CHK.to.GmodKupperH}
The defining inclusion $\tilde{C}(H,K) \hookra G$ extends to a diagram
\[ \begin{tikzcd}
\tilde{C}(H,K)
\arrow[hook]{r}
\arrow[two heads]{d}
&
G
\arrow[two heads]{d}
\\
C(H,K)
\arrow[hook]{r}
\arrow[hook, dashed]{rd}
&
G/K
\\
&
(G/K)^H
\arrow[hook]{u}
\end{tikzcd} \]
of $(\Normzer(H),\Normzer(K))$-bimodule sets: the dashed factorization arises from the defining fact that if $gK \in C(H,K)$ then $H \subseteq gKg^{-1}$. In particular, we obtain a canonical inclusion
\begin{equation}
\label{inclusion.from.CHK.to.G.mod.K.Hfixed}
C(H,K)
\longhookra
(G/K)^H
\end{equation}
of $(\Weyl(H),\Weyl(K))$-sets. We regard this as a morphism of homotopy $\Weyl(K)$-objects in genuine $\Weyl(H)$-spaces.
\end{observation}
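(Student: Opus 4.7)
The plan is to verify each arrow of the diagram and its equivariance properties by direct computation. First, I would check that $\tilde{C}(H,K)$ is closed under left multiplication by $\Normzer(H)$: for $n \in \Normzer(H)$ and $g \in \tilde{C}(H,K)$, we have $(ng)K(ng)^{-1} = n(gKg^{-1})n^{-1}$, which contains $nHn^{-1} = H$ and is contained in $n \Normzer(H) n^{-1} = \Normzer(H)$. Closure under right multiplication by $\Normzer(K)$ is immediate, since $(gm)K(gm)^{-1} = g(mKm^{-1})g^{-1} = gKg^{-1}$ whenever $m \in \Normzer(K)$. Because $K \subseteq \Normzer(K)$, the right $K$-action preserves $\tilde{C}(H,K)$, so the quotient $C(H,K) := \tilde{C}(H,K)/K$ inherits a $(\Normzer(H), \Normzer(K))$-bimodule structure. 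The top square of the diagram then commutes by construction, with the vertical maps being restrictions of the quotient $G \twoheadrightarrow G/K$.

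Next, the induced map $C(H,K) \longhookra G/K$ is injective by construction: two elements of $\tilde{C}(H,K)$ with the same image in $G/K$ are already identified in $C(H,K)$. For the dashed factorization, fix $g \in \tilde{C}(H,K)$ and $h \in H$; the inclusion $H \subseteq gKg^{-1}$ yields $g^{-1}hg \in K$, and hence $hgK = g(g^{-1}hg)K = gK$, so $gK \in (G/K)^H$. The inclusion $(G/K)^H \hookra G/K$ is equivariant for the evident $(\Normzer(H), \Normzer(K))$-bimodule structure on $(G/K)^H$, so the dashed arrow is automatically equivariant (the other two arrows in the lower triangle are, and the map to $G/K$ is injective on the relevant domain). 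Since $H$ acts trivially on $(G/K)^H$ and the right $K$-action on $G/K$ is trivial, both actions descend along $\Normzer(H) \twoheadrightarrow \Weyl(H)$ and $\Normzer(K) \twoheadrightarrow \Weyl(K)$, producing the asserted inclusion of $(\Weyl(H), \Weyl(K))$-bimodule sets. Finally, a finite set equipped with a $(\Weyl(H), \Weyl(K))$-bimodule structure is tautologically an object of $\Fun(\sB \Weyl(K), \Spaces^{\gen \Weyl(H)})$ via the standard identification, so the concluding clause requires no additional argument. There is no substantive obstacle; the entire observation reduces to unpacking the defining conditions on $\tilde{C}(H,K)$.
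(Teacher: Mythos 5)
Your argument is correct and coincides with the paper's (essentially unstated) justification: the Observation is proved in the paper only by the parenthetical remark that $H \subseteq gKg^{-1}$ forces $hgK = gK$, together with the previously recorded facts that $\tilde{C}(H,K)$ is an $(\Normzer(H),\Normzer(K))$-subbimodule and that the actions descend to Weyl groups because the left $H$-action and right $K$-action are trivial — exactly the computations you spell out. No further commentary is needed.
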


\begin{lemma}
\label{lemma.inclusion.from.CHK.to.GmodKupperH.becomes.iso}
The morphism
\[
\wEF
\wedge
\Cref{inclusion.from.CHK.to.G.mod.K.Hfixed}_+
:=
\wEF
\wedge
\left( C(H,K) \longhookra (G/K)^H \right)_+
\]
in $(\Spaces^{\gen \Weyl(H)}_*)^{\htpy \Weyl(K)}$ is an equivalence.
\end{lemma}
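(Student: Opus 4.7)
The plan is to reduce the statement to a concrete group-theoretic comparison of fixed-point sets. Since equivalences in $(\Spaces^{\gen \Weyl(H)}_*)^{\htpy \Weyl(K)}$ are detected on underlying pointed genuine $\Weyl(H)$-spaces, and since $\wEF$ has $L$-fixed points equivalent to $S^0$ when $L \notin \ms{F}$ and contractible when $L \in \ms{F}$, the standard formula $(X \wedge Y)^L \simeq X^L \wedge Y^L$ for pointed $\Weyl(H)$-spaces reduces the claim to showing that the inclusion $C(H,K) \hookrightarrow (G/K)^H$ restricts to a bijection on $L$-fixed points for each $L \subseteq \Weyl(H)$ with $L \notin \ms{F}$.

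Next, I would translate both conditions into explicit containments in $G$. The condition $L \notin \ms{F}$ asserts precisely that $K$ is subconjugate to $p^{-1}(L)$ in $G$, so there exists some $a \in G$ with $a K a^{-1} \subseteq p^{-1}(L)$. Unwinding the $\Weyl(H)$-action on $(G/K)^H$ through lifts in $\Normzer(H)$, a coset $gK \in (G/K)^H$ is $L$-fixed if and only if $p^{-1}(L) \subseteq g K g^{-1}$. Combining these two containments produces the sandwich
\[
a K a^{-1} \subseteq p^{-1}(L) \subseteq g K g^{-1}
\]
inside the finite group $G$, and since $|a K a^{-1}| = |K| = |g K g^{-1}|$, all inclusions must be equalities, whence $p^{-1}(L) = g K g^{-1}$.

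Finally, I would extract the desired membership. The equality $g K g^{-1} = p^{-1}(L) \subseteq \Normzer(H)$ forces $g K g^{-1} \subseteq \Normzer(H)$, and $gK \in (G/K)^H$ already gives $H \subseteq g K g^{-1}$; together these imply $g \in \tilde{C}(H,K)$, so $gK \in C(H,K)$. Consequently the $L$-fixed subset of $(G/K)^H$ coincides with that of $C(H,K)$, yielding the required bijection (and hence the equivalence after smashing with $\wEF$).

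The argument is entirely elementary finite-group theory once the reduction to $L$-fixed points is in place, so I do not expect any serious obstacle. The only mildly delicate point is unwinding the pulled-back family $\ms{F}$ of \Cref{local.define.the.special.family.for.gluing.functors}\ref{item.define.the.special.family} into the concrete subconjugacy condition on $p^{-1}(L)$ that supplies the key element $a$; after that, the sandwich-collapse by orders does all the work.
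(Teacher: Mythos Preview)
Your proposal is correct and rests on the same finiteness argument as the paper's proof: both hinge on the sandwich $aKa^{-1} \subseteq (\text{something}) \subseteq gKg^{-1}$ collapsing by cardinality to force $gKg^{-1} \subseteq \Normzer(H)$. The only difference is organizational: the paper decomposes $(G/K)^H$ into $\Weyl(H)$-orbits and shows that each orbit outside $C(H,K)$ has stabilizer in $\ms{F}$ (and is therefore annihilated by $\wEF \wedge (-)_+$), whereas you check $L$-fixed points directly for each $L \notin \ms{F}$; these are dual formulations of the same verification, and neither buys anything the other does not.
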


\begin{proof}
We check that the composite functor
\[
\Orb_{\Weyl(H)}
\longhookra
\Spaces^{\gen \Weyl(H)}
\xra{\wEF \wedge (-)_+}
\Spaces^{\gen \Weyl(H)}_*
\]
annihilates (i.e.\! carries to the zero object) all $\Weyl(H)$-orbits in $(G/K)^H$ that do not lie in the image of the inclusion \Cref{inclusion.from.CHK.to.G.mod.K.Hfixed}.

Consider an arbitrary element $gK \in G/K$. Observe that its stabilizer under the $G$-action is $gKg^{-1}$. Moreover, it is $H$-fixed if and only if $H \subseteq gKg^{-1}$. In this case, the stabilizer of the element $gK \in (G/K)^H$ under the $\Weyl(H)$-action is
\[
(gKg^{-1} \cap \Normzer(H))/H
~.
\]
So, the orbit $\Weyl(H) \cdot gK \subseteq (G/K)^H$ is annihilated by the functor $\wEF \wedge(-)_+$ if (and only if) we have
\[
(gKg^{-1} \cap \Normzer(H))/H \in \ms{F} \in \Down_{\pos_{\Weyl(H)}}
~,
\]
which by definition is equivalent to the assertion that
\[
(gKg^{-1} \cap \Normzer(H))
\in
\ms{F}_{^{\not\geq} K}
\in
\Down_{\pos_G}
~,
\]
i.e.\! that $K$ is not subconjugate to $(gKg^{-1} \cap \Normzer(H))$ in $G$. So, it suffices to show that $gKg^{-1} \not\subseteq \Normzer(H)$ implies that $K \not\leq (gKg^{-1} \cap \Normzer(H))$. We show the contrapositive, namely that $K \leq (gKg^{-1} \cap \Normzer(H))$ implies that $gKg^{-1} \subseteq \Normzer(H)$.

Suppose there exists some $x \in G$ such that $xKx^{-1} \subseteq (gKg^{-1} \cap \Normzer(H))$. On the one hand, this implies that $xKx^{-1} \subseteq gKg^{-1}$, which because $K$ is finite implies that $xKx^{-1} = gKg^{-1}$. On the other hand, this implies that $xKx^{-1} \subseteq \Normzer(H)$. Combining these two implications yields the desired conclusion that $gKg^{-1} = xKx^{-1} \subseteq \Normzer(H)$.
\end{proof}

\begin{observation}
\label{observation.double.coset.space.of.CHK}
Carrying each element of $C(H,K)$ to its stabilizer under the left $\Weyl(H)$-action determines a function
\begin{equation}
\label{fxn.on.und.sets.from.CHK.to.PWH}
\begin{tikzcd}[row sep=0cm]
C(H,K)
\arrow{r}
&
\pos_{\Weyl(H)}
\\
\rotatebox{90}{$\in$}
&
\rotatebox{90}{$\in$}
\\
gK
\arrow[maps to]{r}
&
(gKg^{-1})/H
\end{tikzcd}
\end{equation}
on underlying sets. We note the following two properties of the function \Cref{fxn.on.und.sets.from.CHK.to.PWH}.
\begin{enumerate}

\item It takes values in those conjugacy classes of subgroups of $\Weyl(H)$ whose preimages in $\Normzer(H)$ are conjugate in $G$ to $K$.

\item It is invariant with respect to both the left $\Weyl(H)$-action and the right $\Weyl(K)$-action on $C(H,K)$.

\end{enumerate}
The function \Cref{fxn.on.und.sets.from.CHK.to.PWH} therefore admits a factorization
\begin{equation}
\label{factorizn.from.CHK.through.double.coset}
\begin{tikzcd}
C(H,K)
\arrow{r}
\arrow[two heads]{d}
&
\pos_{\Weyl(H)}
\\
\Weyl(H) \backslash C(H,K) / \Weyl(K)
\arrow[dashed]{r}
&
\pos_{\Weyl(H)} \times_{\pos_G} \{ K \}
\arrow[hook]{u}
\end{tikzcd}~.
\end{equation}
We use these facts without further comment.
\end{observation}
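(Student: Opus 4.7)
The statement to verify is essentially routine unpacking of definitions: we need to (a) confirm that the assignment $gK \mapsto (gKg^{-1})/H$ does compute the stabilizer in $\Weyl(H)$ of the element $gK \in C(H,K)$, (b) verify the two listed properties of the resulting function, and (c) confirm the factorization \Cref{factorizn.from.CHK.through.double.coset}. My plan is to proceed in exactly this order.

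For (a), I would fix $gK \in C(H,K)$ and compute the stabilizer by lifting to $\Normzer(H)$. The left action is $nH \cdot gK = (ng)K$, so $nH$ stabilizes $gK$ if and only if $n \in gKg^{-1}$. Thus the stabilizer in $\Normzer(H)$ is the intersection $\Normzer(H) \cap gKg^{-1}$. By the definition of $\tilde{C}(H,K)$, the condition $gK \in C(H,K)$ forces $gKg^{-1} \subseteq \Normzer(H)$, so this intersection is simply $gKg^{-1}$. Since $gK \in C(H,K)$ also forces $H \subseteq gKg^{-1}$, the image in $\Weyl(H) = \Normzer(H)/H$ is well-defined and equals $(gKg^{-1})/H$, as claimed.

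For (b), property (1) is immediate from the computation: the preimage in $\Normzer(H)$ of $(gKg^{-1})/H$ is $gKg^{-1}$, which is a $G$-conjugate of $K$. For property (2), I would check invariance action by action. For the left $\Weyl(H)$-action, replacing $gK$ by $mgK$ (for $m \in \Normzer(H)$) replaces the stabilizer subgroup by $(mgKg^{-1}m^{-1})/H$, which is $\Weyl(H)$-conjugate to $(gKg^{-1})/H$ via $mH$, hence yields the same class in $\pos_{\Weyl(H)}$. For the right $\Weyl(K)$-action, replacing $gK$ by $gkK$ (for $k \in \Normzer(K)$) gives stabilizer $(gkKk^{-1}g^{-1})/H = (gKg^{-1})/H$ on the nose, since $k$ normalizes $K$.

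For (c), the bi-invariance just proved is exactly what produces the dashed factorization through the double quotient $\Weyl(H) \backslash C(H,K) / \Weyl(K)$; the landing in the pullback $\pos_{\Weyl(H)} \times_{\pos_G} \{K\}$ is then a restatement of property (1), once one unpacks the pullback as the set of conjugacy classes $L \in \pos_{\Weyl(H)}$ whose preimage $p^{-1}(L) \subseteq \Normzer(H)$ is $G$-conjugate to $K$. The only real care point — and thus the closest thing to an obstacle — is distinguishing conjugacy in $\Weyl(H)$ from conjugacy in $G$ when checking that the image of \Cref{fxn.on.und.sets.from.CHK.to.PWH} is well-defined in $\pos_{\Weyl(H)}$ (not merely as a subgroup up to $\Normzer(H)$-conjugation); but since $\Weyl(H)$-conjugation on subgroups of $\Weyl(H)$ is induced from $\Normzer(H)$-conjugation on subgroups of $\Normzer(H)$ containing $H$, and since all the stabilizer subgroups we produce contain $H$ after lifting, this is automatic.
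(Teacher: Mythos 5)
Your verification is correct, and since the paper states this as an Observation with no written proof ("we use these facts without further comment"), your direct unpacking — computing the stabilizer as $(\Normzer(H)\cap gKg^{-1})/H = (gKg^{-1})/H$ using $H\subseteq gKg^{-1}\subseteq\Normzer(H)$, checking bi-invariance, and reading off the factorization — is exactly the intended argument (the same stabilizer computation appears in the paper's proof of \Cref{lemma.inclusion.from.CHK.to.GmodKupperH.becomes.iso}). No gaps.
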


\begin{lemma}
\label{lemma.identify.double.coset}
The factorization \Cref{factorizn.from.CHK.through.double.coset} is an isomorphism
\[
\Weyl(H) \backslash C(H,K) / \Weyl(K)
\xlongra{\cong}
\pos_{\Weyl(H)} \times_{\pos_G} \{ K \}
\]
among sets.
\end{lemma}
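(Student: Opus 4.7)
The statement is a concrete set-theoretic identification, and the proof should be a direct verification that the indicated factorization is both surjective and injective. Unpacking the data: the map sends an element $gK \in C(H,K)$ to the $\Weyl(H)$-stabilizer $(gKg^{-1})/H$ (a subgroup of $\Weyl(H)$), and we must show this descends to a bijection between $\Weyl(H)$-orbits (mod the right $\Weyl(K)$-action) on the source and conjugacy classes in $\pos_{\Weyl(H)}$ whose $p$-preimages are $G$-conjugate to $K$.

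\textbf{Surjectivity.} Given a conjugacy class $[L] \in \pos_{\Weyl(H)} \times_{\pos_G} \{K\}$, pick a representative subgroup $L \subseteq \Weyl(H)$. By definition of the fiber product, $p^{-1}(L) \subseteq \Normzer(H)$ is $G$-conjugate to $K$, so choose $g \in G$ with $gKg^{-1} = p^{-1}(L)$. Since $H \subseteq p^{-1}(L) \subseteq \Normzer(H)$ automatically (the first inclusion holds because any preimage under $p$ contains $\ker(p) = H$), we have $g \in \tilde{C}(H,K)$, and $gK$ maps to $(gKg^{-1})/H = L$, representing $[L]$.

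\textbf{Injectivity.} Suppose $gK, g'K \in C(H,K)$ have stabilizers that are $\Weyl(H)$-conjugate. Lift a conjugating element to $w \in \Normzer(H)$, so that $w(gKg^{-1})w^{-1} = g'Kg'^{-1}$ in $G$. Setting $k := g'^{-1}wg$, this rearranges to $kKk^{-1} = K$, i.e.\ $k \in \Normzer(K)$. Then $wg = g'k$, so in $G/K$ we obtain
\[
w \cdot (gK) \;=\; wgK \;=\; g'kK \;=\; (g'K) \cdot (kK),
\]
exhibiting $gK$ and $g'K$ as representing the same class in $\Normzer(H)\backslash C(H,K)/\Weyl(K) = \Weyl(H)\backslash C(H,K)/\Weyl(K)$ (the left $H$-action is trivial).

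\textbf{Main subtlety.} The only place one must be careful is confirming that the construction in surjectivity lands in $\tilde{C}(H,K)$ — that is, verifying the inclusions $H \subseteq gKg^{-1} \subseteq \Normzer(H)$ follow automatically from $gKg^{-1} = p^{-1}(L)$ for $L \subseteq \Weyl(H)$ — and, on the injectivity side, correctly bookkeeping the left/right actions and the fact that the $w$ chosen in $\Normzer(H)$ produces an element $k$ that genuinely lies in $\Normzer(K)$ (so that it defines a class in $\Weyl(K)$). Both are immediate once the actions are spelled out, so no substantive obstacle is anticipated.
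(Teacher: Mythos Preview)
Your proposal is correct and follows essentially the same approach as the paper: both prove surjectivity by choosing $g$ with $gKg^{-1} = p^{-1}(L)$ and noting this forces $g \in \tilde{C}(H,K)$, and prove injectivity by lifting a $\Weyl(H)$-conjugation to $w \in \Normzer(H)$ and extracting $k = g'^{-1}wg \in \Normzer(K)$ to exhibit the two cosets as equivalent in the double quotient. The paper presents the injectivity argument as a chain of six equivalent conditions, while you compress it into a direct construction, but the content is identical.
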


\begin{proof}
We first verify that the function \Cref{factorizn.from.CHK.through.double.coset} is surjective. For this, choose any subgroup $J \subseteq \Weyl(H)$ such that $g K g^{-1} = p^{-1}(J)$ for some $g \in G$. Then, the function \Cref{factorizn.from.CHK.through.double.coset} carries the equivalence class of the element $gK \in C(H,K)$ to the equivalence class of $J$ in $\pos_{\Weyl(H)} \times_{\pos_G} \{K\}$. So indeed, the function \Cref{factorizn.from.CHK.through.double.coset} is surjective.

We now verify that the function \Cref{factorizn.from.CHK.through.double.coset} is injective. For this, consider a pair of elements $g_1,g_2 \in \tilde{C}(H,K)$. Then, observe the equivalence of the conditions
\begin{enumerate}

\item\label{first.condtn.in.identifying.double.coset.space}
 $(g_1 K g_1^{-1})/H$ and $(g_2 K g_2^{-1})/H$ are conjugate in $\Weyl(H)$,

\item $g_1 K g_1^{-1}$ and $g_2 K g_2^{-1}$ are conjugate in $\Normzer(H)$,

\item there exists some $x \in \Normzer(H)$ such that $xg_1 K g_1^{-1}x^{-1} = g_2 K g_2^{-1}$,

\item there exists some $x \in \Normzer(H)$ such that $g_2^{-1} xg_1 \in \Normzer(K)$,

\item there exist some $x \in \Normzer(H)$ and $y \in \Normzer(K)$ such that $g_2 = x g_1 y$,

\end{enumerate}
and
\begin{enumerate}

\setcounter{enumi}{5}

\item\label{last.condtn.in.identifying.double.coset.space}
 $[g_1] = [g_2]$ in $\Weyl(H) \backslash C(H,K) / \Weyl(K)$.

\end{enumerate}
The equivalence $\Cref{first.condtn.in.identifying.double.coset.space} \Leftrightarrow \Cref{last.condtn.in.identifying.double.coset.space}$ implies that the function \Cref{factorizn.from.CHK.through.double.coset} is injective.
\end{proof}

\begin{observation}
\label{obs.fiber.over.elt.in.double.quotient.is.a.subbimodule}
For each element $[g] \in \Weyl(H) \backslash C(H,K) / \Weyl(K)$, the fiber
\[ \begin{tikzcd}
C(H,K)_{[g]}
\arrow[hook]{r}
\arrow{d}
&
C(H,K)
\arrow{d}
\\
\{ [g] \}
\arrow[hook]{r}
&
\Weyl(H) \backslash C(H,K) / \Weyl(K)
\end{tikzcd} \]
is a $(\Weyl(H),\Weyl(K))$-subbimodule of $C(H,K)$.
\end{observation}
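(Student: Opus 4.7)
The plan is to observe that this is essentially a tautology: the fiber over $[g]$ is by definition the preimage of $\{[g]\}$ under the quotient map $\pi \colon C(H,K) \twoheadrightarrow \Weyl(H) \backslash C(H,K) / \Weyl(K)$, and the defining property of this quotient map is precisely that it identifies elements lying in the same $(\Weyl(H),\Weyl(K))$-orbit.

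More concretely, I would argue as follows. For any $c \in C(H,K)_{[g]}$ and any $(w_H, w_K) \in \Weyl(H) \times \Weyl(K)$, the elements $c$ and $w_H \cdot c \cdot w_K$ lie in the same $(\Weyl(H),\Weyl(K))$-double coset of $C(H,K)$, hence have the same image under $\pi$. Therefore $\pi(w_H \cdot c \cdot w_K) = \pi(c) = [g]$, which means $w_H \cdot c \cdot w_K \in C(H,K)_{[g]}$. Since this holds for every choice of $(w_H, w_K)$, the subset $C(H,K)_{[g]} \subseteq C(H,K)$ is closed under the left $\Weyl(H)$-action and the right $\Weyl(K)$-action, so it inherits a $(\Weyl(H),\Weyl(K))$-bimodule structure from the ambient one on $C(H,K)$.

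There is no real obstacle here; the statement is simply unpacking what it means for $\pi$ to be the quotient by the two-sided action. The only mild subtlety worth flagging explicitly is the coherence point: the bimodule structure on $C(H,K)_{[g]}$ is inherited by restriction, so no additional data needs to be produced, and the compatibility axioms are automatic from those on $C(H,K)$.
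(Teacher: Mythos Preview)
Your proposal is correct and matches the paper's treatment: the paper records this as an Observation with no accompanying proof, precisely because it is the tautology you identify---the fiber of the double-quotient map over a point is by definition a double coset, hence closed under the bimodule action.
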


\begin{observation}
Fix an element $g \in \tilde{C}(H,K)$. We have a commutative diagram
\begin{equation}
\label{functions.from.product.of.subgroups.to.CtildeHK.and.CHK}
\begin{tikzcd}[column sep=2cm]
G
\arrow{r}{z \longmapsto zg}
&
G
\\
\Normzer(H) \cdot \Normzer(gKg^{-1})
\arrow[hook]{u}
\arrow[dashed, hook]{r}{xy \longmapsto xyg}
\arrow[two heads]{d}
&
\tilde{C}(H,K)
\arrow[hook]{u}
\arrow[two heads]{d}
\\
(\Normzer(H) \cdot \Normzer(gKg^{-1}))/(gKg^{-1})
\arrow[dashed, hook]{r}{xy(gKg^{-1}) \longmapsto xygK}
\arrow[dashed]{d}
&
C(H,K)
\arrow[two heads]{d}
\\
\{ [g] \}
\arrow[hook]{r}
&
\Weyl(H) \backslash C(H,K) / \Weyl(K)
\end{tikzcd}
\end{equation}
of sets, in which
\begin{itemize}

\item all functions are left $\Normzer(H)$-equivariant,

\item the left column is right $\Normzer(gKg^{-1})$-equivariant,

\item the right column is right $\Normzer(K)$-equivariant, and

\item the horizontal functions are globally right-equivariant with respect to the conjugation isomorphism $\Normzer(gKg^{-1}) \xra{g^{-1}(-)g} \Normzer(K)$.

\end{itemize}
Hence, by considering the sets in the left column of diagram \Cref{functions.from.product.of.subgroups.to.CtildeHK.and.CHK} as right $\Normzer(K)$-sets via the conjugation isomorphism $\Normzer(K) \xra{g(-)g^{-1}} \Normzer(gKg^{-1})$, we may consider the entire diagram \Cref{functions.from.product.of.subgroups.to.CtildeHK.and.CHK} as one of $(\Normzer(H),\Normzer(K))$-bimodule sets. In particular, we obtain an injective function
\begin{equation}
\label{function.from.product.mod.gKginverse.to.CHKg}
(\Normzer(H) \cdot \Normzer(gKg^{-1})) / (gKg^{-1})
\longhookra
C(H,K)_{[g]}
\end{equation}
of $(\Normzer(H),\Normzer(K))$-bimodule sets, which using \Cref{obs.fiber.over.elt.in.double.quotient.is.a.subbimodule} we consider as an injective function of $(\Weyl(H),\Weyl(K))$-bimodule sets.
\end{observation}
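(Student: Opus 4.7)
The plan is to verify the consistency of diagram \Cref{functions.from.product.of.subgroups.to.CtildeHK.and.CHK}, from which the injectivity claim can then be read off from the bottom row. I would organize the check as four steps: (i) the right-multiplication map $z \mapsto zg$ restricts to a map $\Normzer(H) \cdot \Normzer(gKg^{-1}) \to \tilde{C}(H,K)$; (ii) this restriction descends to the right quotients by $gKg^{-1}$ on the source and by $K$ on the target; (iii) the induced map lands in the fiber $C(H,K)_{[g]}$ over $[g]$; and (iv) the map is injective and is $(\Normzer(H),\Normzer(K))$-bimodule equivariant, where the source carries its right $\Normzer(K)$-action through the conjugation isomorphism $\Normzer(K) \xra{g(-)g^{-1}} \Normzer(gKg^{-1})$.

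For (i), given $x \in \Normzer(H)$ and $y \in \Normzer(gKg^{-1})$, I would compute
\[
(xyg) K (xyg)^{-1} = xy (gKg^{-1}) y^{-1} x^{-1} = x(gKg^{-1})x^{-1},
\]
using that $y$ normalizes $gKg^{-1}$. Since $g \in \tilde{C}(H,K)$ gives the sandwich $H \subseteq gKg^{-1} \subseteq \Normzer(H)$, and conjugation by $x \in \Normzer(H)$ preserves this sandwich, we obtain $xyg \in \tilde{C}(H,K)$. For (ii), I would write any $k \in gKg^{-1}$ as $k = gk'g^{-1}$ with $k' \in K$, so that $(xyk) \cdot g = (xyg) \cdot k'$, collapsing the right $gKg^{-1}$-orbits on the source to the right $K$-orbits on $\tilde{C}(H,K)$.

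For (iii), I would factor $xyg = x \cdot g \cdot (g^{-1}yg)$: conjugating the identity $y(gKg^{-1})y^{-1} = gKg^{-1}$ by $g^{-1}$ yields $g^{-1}yg \in \Normzer(K)$, so indeed $[xyg] = [g]$ in $\Weyl(H) \backslash C(H,K) / \Weyl(K)$. For (iv), injectivity falls out directly: $xygK = x'y'gK$ gives $g^{-1}(xy)^{-1}(x'y')g \in K$, equivalently $(xy)^{-1}(x'y') \in gKg^{-1}$. Left $\Normzer(H)$-equivariance is manifest, and right $\Normzer(K)$-equivariance reduces to the identity $((xy) \cdot gkg^{-1}) \cdot g = (xyg) \cdot k$ for $k \in \Normzer(K)$.

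The one genuinely subtle point is the twist by conjugation by $g$ used to define the right $\Normzer(K)$-action on the source; without this twist the candidate map would fail to be bimodule equivariant, and in particular the double-coset identification with $C(H,K)_{[g]}$ would be spoiled. All remaining verifications are routine algebraic manipulations, which is presumably why the author packages this as an observation rather than a formal lemma.
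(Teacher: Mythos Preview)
Your verification is correct and matches the paper's approach: the paper presents this as an Observation with no separate proof, treating the diagram and its listed equivariance properties as routine checks, and you have correctly unpacked exactly those checks (the restriction to $\tilde{C}(H,K)$, the descent to quotients, the factoring $xyg = x \cdot g \cdot (g^{-1}yg)$ showing the image lies in the fiber over $[g]$, and the injectivity and bimodule equivariance via the conjugation twist). There is nothing to add.
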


\begin{lemma}
\label{lemma.map.from.quotient.of.product.to.CHKg.is.an.iso}
For every $g \in \tilde{C}(H,K)$, the function \Cref{function.from.product.mod.gKginverse.to.CHKg} is an isomorphism among $(\Normzer(H),\Normzer(K))$-bimodule sets.
\end{lemma}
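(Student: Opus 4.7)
The plan is to leverage the equivalence of conditions already assembled in the proof of \Cref{lemma.identify.double.coset}. Injectivity of the function \Cref{function.from.product.mod.gKginverse.to.CHKg} is immediate from its construction via the commutative diagram \Cref{functions.from.product.of.subgroups.to.CtildeHK.and.CHK}, since distinct cosets in $(\Normzer(H)\cdot\Normzer(gKg^{-1}))/(gKg^{-1})$ lift to elements of $G$ whose right $K$-cosets already differ. So the entire content is surjectivity.

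For surjectivity, I would start from an arbitrary element $g'K \in C(H,K)_{[g]}$, so that $g' \in \tilde{C}(H,K)$ and $[g'] = [g]$ in $\Weyl(H) \backslash C(H,K)/\Weyl(K)$. By the equivalence of conditions established in the proof of \Cref{lemma.identify.double.coset} (specifically the implication going from condition (6) to condition (5)), there exist $x \in \Normzer(H)$ and $y \in \Normzer(K)$ such that $g' = xgy$. The key move is then to conjugate $y$ past $g$: set $y' := gyg^{-1}$, which lies in $g\Normzer(K)g^{-1} = \Normzer(gKg^{-1})$. Then $g' = x \cdot y' \cdot g$ with $x \in \Normzer(H)$ and $y' \in \Normzer(gKg^{-1})$, so the coset $xy'(gKg^{-1}) \in (\Normzer(H)\cdot\Normzer(gKg^{-1}))/(gKg^{-1})$ maps to $xy'gK = g'K$ under \Cref{function.from.product.mod.gKginverse.to.CHKg}. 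This exhibits $g'K$ in the image and completes surjectivity.

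I do not expect any genuine obstacle here; the lemma is essentially a bookkeeping exercise. The only subtlety worth flagging in the writeup is the bimodule-equivariance check: because diagram \Cref{functions.from.product.of.subgroups.to.CtildeHK.and.CHK} is already set up so that the right $\Normzer(K)$-action on the source is transported across the conjugation isomorphism $\Normzer(K) \xra{g(-)g^{-1}} \Normzer(gKg^{-1})$, the conjugation trick $y \mapsto gyg^{-1}$ used in the surjectivity argument is precisely the one compatible with the declared bimodule structures, so no further equivariance verification is needed beyond what \Cref{obs.fiber.over.elt.in.double.quotient.is.a.subbimodule} already supplies.
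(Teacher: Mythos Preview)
Your proposal is correct and takes essentially the same approach as the paper: both arguments reduce to showing that any $g'K \in C(H,K)_{[g]}$ can be written as $xy'gK$ with $x \in \Normzer(H)$ and $y' \in \Normzer(gKg^{-1})$. The only cosmetic difference is that the paper runs a self-contained chain of equivalent conditions landing directly in $\Normzer(gKg^{-1})$, whereas you cite the analogous chain from the proof of \Cref{lemma.identify.double.coset} (obtaining $g' = xgy$ with $y \in \Normzer(K)$) and then conjugate $y$ by $g$ to pass to $\Normzer(gKg^{-1})$.
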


\begin{proof}
It remains to show that the function \Cref{function.from.product.mod.gKginverse.to.CHKg} is surjective. For this, we fix an element $zK \in C(H,K)$, and study the condition
\begin{enumerate}

\item\label{first.condtn.for.being.in.CHKg}

$zK \in C(H,K)_{[g]}$.

\end{enumerate}
By definition, condition \Cref{first.condtn.for.being.in.CHKg} is equivalent to the condition
\begin{enumerate}

\setcounter{enumi}{1}

\item\label{second.condtn.for.being.in.CHKg}

$(zKz^{-1})/H$ is conjugate to $(gKg^{-1})/H$ in $\Weyl(H)$,

\end{enumerate}
which is equivalent to the condition
\begin{enumerate}

\setcounter{enumi}{2}

\item\label{third.condtn.for.being.in.CHKg}

$zKz^{-1}$ is conjugate to $gKg^{-1}$ in $\Normzer(H)$,

\end{enumerate}
which is equivalent to the condition
\begin{enumerate}

\setcounter{enumi}{3}

\item\label{fourth.condtn.for.being.in.CHKg}

there exists some $x \in \Normzer(H)$ such that $xzKz^{-1}x^{-1} = gKg^{-1}$.

\end{enumerate}
Because $xzKz^{-1}x^{-1} = (xzg^{-1})(gKg^{-1})(xzg^{-1})^{-1}$, condition \Cref{fourth.condtn.for.being.in.CHKg} is equivalent to the condition
\begin{enumerate}

\setcounter{enumi}{4}

\item\label{fifth.condtn.for.being.in.CHKg}

there exists some $x \in \Normzer(H)$ such that $xzg^{-1} \in \Normzer(gKg^{-1})$.

\end{enumerate}
Now, we claim that condition \Cref{fifth.condtn.for.being.in.CHKg} is equivalent to the condition
\begin{enumerate}

\setcounter{enumi}{5}

\item\label{sixth.condtn.for.being.in.CHKg}

$zg^{-1} \in \Normzer(H) \cdot \Normzer(gKg^{-1})$.

\end{enumerate}
On the one hand, we have that $\Cref{fifth.condtn.for.being.in.CHKg} \Rightarrow \Cref{sixth.condtn.for.being.in.CHKg}$ via left multiplication by $x^{-1} \in \Normzer(H)$. On the other hand, if $zg^{-1} \in \Normzer(H) \cdot \Normzer(gKg^{-1})$, then there exist elements $x^{-1} \in \Normzer(H)$ and $y \in \Normzer(gKg^{-1})$ such that $zg^{-1} = x^{-1} y$, which implies that $xzg^{-1} = y \in \Normzer(gKg^{-1})$. So indeed, $\Cref{sixth.condtn.for.being.in.CHKg} \Rightarrow \Cref{fifth.condtn.for.being.in.CHKg}$. Finally, condition \Cref{sixth.condtn.for.being.in.CHKg} is clearly equivalent to the condition
\begin{enumerate}

\setcounter{enumi}{6}

\item\label{seventh.condtn.for.being.in.CHKg}

$z \in \Normzer(H) \cdot \Normzer(gKg^{-1}) \cdot g \subseteq \tilde{C}(H,K)$.

\end{enumerate}
So, via the composite equivalence $\Cref{first.condtn.for.being.in.CHKg} \Leftrightarrow \Cref{seventh.condtn.for.being.in.CHKg}$ and by inspection of diagram \Cref{functions.from.product.of.subgroups.to.CtildeHK.and.CHK}, we find that every element $zK \in C(H,K)_{[g]}$ is indeed in the image of the function \Cref{function.from.product.mod.gKginverse.to.CHKg}.
\end{proof}

\begin{observation}
\label{obs.function.from.NH.mod.L.to.NH.times.NL.mod.L}
Fix a subgroup $L \subseteq G$ such that $H \subseteq L \subseteq \Normzer(H)$. We have a commutative diagram
\begin{equation}
\label{diagram.getting.map.to.be.induced.for.arbitrary.subgroup.L.between.H.and.NL}
\begin{tikzcd}[column sep=2.5cm]
\Normzer(H)
\arrow[hook]{r}{x \longmapsto x \cdot e = x}
\arrow[two heads]{d}
&
\Normzer(H) \cdot \Normzer(L)
\arrow[two heads]{d}
\\
\Normzer(H)/L
\arrow[dashed, hook]{r}[swap]{x L \longmapsto (x \cdot e ) L = xL}
&
(\Normzer(H) \cdot \Normzer(L)) / L
\end{tikzcd}
\end{equation}
of $(\Normzer(H),\Normzer(H) \cap \Normzer(L))$-bimodule sets. Furthermore, the left $H$-actions on both sets in the lower row of diagram \Cref{diagram.getting.map.to.be.induced.for.arbitrary.subgroup.L.between.H.and.NL} are trivial. So, we may consider the lower morphism of diagram \Cref{diagram.getting.map.to.be.induced.for.arbitrary.subgroup.L.between.H.and.NL} as a morphism
\begin{equation}
\label{the.map.to.be.induced.for.arbitrary.subgroup.L.between.H.and.NL.before.getting.WL.action}
\Normzer(H) / L
\longra
(\Normzer(H) \cdot \Normzer(L)) / L
\end{equation}
of $(\Weyl(H),(\Normzer(H) \cap \Normzer(L))/L)$-bimodule sets. Moreover, the $(\Weyl(H),(\Normzer(H) \cap \Normzer(L))/L)$-bimodule structure on $(\Normzer(H) \cdot \Normzer(L)) / L$ extends to a $(\Weyl(H),\Weyl(L))$-bimodule structure (via the inclusion $(\Normzer(H) \cap \Normzer(L))/L \subseteq \Normzer(L)/L =: \Weyl(L)$). Hence, via the adjunction
\[ \begin{tikzcd}[column sep=2.5cm]
\BiMod_{(\Weyl(H),(\Normzer(H) \cap \Normzer(L))/L)}
\arrow[transform canvas={yshift=0.9ex}]{r}{\Ind_{(\Normzer(H) \cap \Normzer(L))/L}^{\Weyl(L)}}
\arrow[leftarrow, transform canvas={yshift=-0.9ex}]{r}[yshift=-0.2ex]{\bot}[swap]{\Res_{(\Normzer(H) \cap \Normzer(L))/L}^{\Weyl(L)}}
&
\BiMod_{(\Weyl(H),\Weyl(L))}
\end{tikzcd}~,
\]
the morphism \Cref{the.map.to.be.induced.for.arbitrary.subgroup.L.between.H.and.NL.before.getting.WL.action} upgrades to a morphism
\[
\Normzer(H) / L
\longra
\Res_{(\Normzer(H) \cap \Normzer(L))/L}^{\Weyl(L)} ( (\Normzer(H) \cdot \Normzer(L)) / L )
\]
in $\BiMod_{(\Weyl(H),(\Normzer(H) \cap \Normzer(L))/L)}$, which corresponds to a morphism
\begin{equation}
\label{map.from.induction.to.NH.int.NL.mod.L}
\Ind_{(\Normzer(H) \cap \Normzer(L))/L}^{\Weyl(L)} ( \Normzer(H) / L )
\longra
(\Normzer(H) \cdot \Normzer(L)) / L
\end{equation}
in $\BiMod_{(\Weyl(H),\Weyl(L))}$.
\end{observation}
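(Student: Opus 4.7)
The statement is a construction together with a sequence of compatibility claims, so the plan is simply to verify these claims in the order they are asserted: commutativity and bimodule-equivariance of the square \Cref{diagram.getting.map.to.be.induced.for.arbitrary.subgroup.L.between.H.and.NL}; triviality of the left $H$-action on the bottom row; extension of the $(\Normzer(H) \cap \Normzer(L))/L$-action on $(\Normzer(H) \cdot \Normzer(L))/L$ to a $\Weyl(L)$-action; and finally application of the induction-restriction adjunction to produce \Cref{map.from.induction.to.NH.int.NL.mod.L}. None of these individually require more than direct inspection, but each asks the reader to track which subgroup acts on which set.

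First I would check the diagram. The top horizontal map $x \mapsto x = x \cdot e$ is well-defined because $e \in \Normzer(L)$, and commutativity with the vertical quotient maps is immediate. Left $\Normzer(H)$-equivariance of all four maps is by inspection; right $(\Normzer(H) \cap \Normzer(L))$-equivariance likewise follows since for $z$ in that intersection, right multiplication by $z$ preserves both $\Normzer(H)$ (because $z \in \Normzer(H)$) and $\Normzer(H) \cdot \Normzer(L)$ (because $z \in \Normzer(L)$). Next, for triviality of the left $H$-action on the lower row, the key ingredient is the containment $H \subseteq L$ together with $H$ being normal in $\Normzer(H)$: for $h \in H$ and $x \in \Normzer(H)$, $y \in \Normzer(L)$, one computes
\[
h \cdot (xy L) = (hx)y L = x(x^{-1}hx)yL,
\]
and since $x^{-1}hx \in H \subseteq L$ with $y \in \Normzer(L)$, we get $y^{-1}(x^{-1}hx)y \in L$, so $h \cdot (xyL) = xyL$. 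The case of $\Normzer(H)/L$ is the special case $y = e$. This legitimizes reading the lower row as a morphism of $(\Weyl(H), (\Normzer(H)\cap\Normzer(L))/L)$-bimodules, namely \Cref{the.map.to.be.induced.for.arbitrary.subgroup.L.between.H.and.NL.before.getting.WL.action}.

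For the extension of the right module structure, I would observe that right multiplication by $\Normzer(L)$ stabilizes $\Normzer(H) \cdot \Normzer(L)$, descends to a $\Weyl(L) = \Normzer(L)/L$-action on the quotient, commutes with the left $\Normzer(H)$-action, and restricts along $(\Normzer(H) \cap \Normzer(L))/L \hookrightarrow \Weyl(L)$ to the previously specified module structure. Finally, \Cref{map.from.induction.to.NH.int.NL.mod.L} is produced by passing the morphism \Cref{the.map.to.be.induced.for.arbitrary.subgroup.L.between.H.and.NL.before.getting.WL.action} through the unit of the induction-restriction adjunction; no further compatibility needs checking beyond what has already been verified. There is no real obstacle here — the content is purely bookkeeping — and the only mild subtlety is ensuring that one does not conflate the two right actions (the $(\Normzer(H) \cap \Normzer(L))/L$-action inherited from the diagram, and the extended $\Weyl(L)$-action) before the extension step.
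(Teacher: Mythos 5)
Your verification is correct and follows essentially the same route as the paper, which states this Observation without a separate argument precisely because its content is the bookkeeping you carry out; the one step with real content — triviality of the left $H$-action on the lower row, using $H \subseteq L$, normality of $H$ in $\Normzer(H)$, and $y \in \Normzer(L)$ — is checked correctly. The only nitpick is terminological: the adjunct morphism \Cref{map.from.induction.to.NH.int.NL.mod.L} is obtained from \Cref{the.map.to.be.induced.for.arbitrary.subgroup.L.between.H.and.NL.before.getting.WL.action} by applying $\Ind$ and composing with the \emph{counit} of the adjunction (not the unit), but this does not affect the argument.
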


\begin{lemma}
\label{lemma.arbitrary.subgroup.between.H.and.NH}
For any subgroup $L \subseteq G$ such that $H \subseteq L \subseteq \Normzer(H)$, the morphism \Cref{map.from.induction.to.NH.int.NL.mod.L} is an isomorphism.
\end{lemma}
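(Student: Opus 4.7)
The plan is to verify the claim directly on underlying sets, since both sides are $(\Weyl(H),\Weyl(L))$-bimodules and any bijection that intertwines the bimodule actions is automatically an isomorphism in $\BiMod_{(\Weyl(H),\Weyl(L))}$. Unwinding the definition of induction from Observation \ref{obs.function.from.NH.mod.L.to.NH.times.NL.mod.L}, the source of \Cref{map.from.induction.to.NH.int.NL.mod.L} is the balanced product
\[
(\Normzer(H)/L) \times_{(\Normzer(H) \cap \Normzer(L))/L} (\Normzer(L)/L),
\]
on which \Cref{map.from.induction.to.NH.int.NL.mod.L} is induced by the multiplication map $(x,y) \longmapsto xy$, which is well-defined on the balanced product precisely because multiplication by any element of $\Normzer(H) \cap \Normzer(L)$ cancels on the two sides.

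Surjectivity is immediate: any element of $(\Normzer(H)\cdot\Normzer(L))/L$ has the form $xyL$ with $x \in \Normzer(H)$ and $y \in \Normzer(L)$, which is the image of $(xL, yL)$. For injectivity, suppose $(x_1L, y_1L)$ and $(x_2L, y_2L)$ have the same image, so that $x_1y_1 = x_2y_2\ell$ for some $\ell \in L$. Then the element
\[
m := x_2^{-1}x_1 = y_2\ell\, y_1^{-1}
\]
lies in $\Normzer(H)$ (from the first expression) and in $\Normzer(L)$ (from the second, using $L \subseteq \Normzer(L)$), hence $m \in \Normzer(H) \cap \Normzer(L)$. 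Writing $\overline{m} \in (\Normzer(H) \cap \Normzer(L))/L$ for its class, we have $x_1L = x_2 m L = x_2 L \cdot \overline{m}$ and $y_1L = m^{-1}y_2\ell L = \overline{m}^{-1} \cdot y_2L$, so the balanced product relation gives $(x_1L, y_1L) = (x_2L, y_2L)$.

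This argument is entirely formal group theory and I expect no real obstacles; the only mild subtlety is checking that the balanced product description of $\Ind$ in this mixed bimodule setting matches the one implicit in Observation \ref{obs.function.from.NH.mod.L.to.NH.times.NL.mod.L}, which amounts to verifying that the residual $\Weyl(H)$-action on the left factor and the residual $\Weyl(L)$-action on the right factor both descend through the balanced product correctly. Both facts follow because the quotienting group $(\Normzer(H) \cap \Normzer(L))/L$ acts on $\Normzer(H)/L$ on the right (through its inclusion into $\Weyl(H)$ that commutes with the left $\Weyl(H)$-action) and on $\Normzer(L)/L = \Weyl(L)$ on the left (through its inclusion into $\Weyl(L)$ that commutes with the right $\Weyl(L)$-action).
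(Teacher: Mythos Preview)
Your proof is correct. Both your argument and the paper's establish the same bijection, but they organize the verification differently.

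The paper first invokes conservativity of the forgetful functor $\BiMod_{(\Weyl(H),\Weyl(L))} \to \RMod_{\Weyl(L)}$ to reduce to checking the map on right $\Weyl(L)$-sets, then uses a commutative square relating induction with the quotient functor $(-)/L$ to lift the problem to the level of $\Normzer(L)$-sets, where it becomes the standard isomorphism $\Ind_{\Normzer(H)\cap\Normzer(L)}^{\Normzer(L)}\Normzer(H)\cong (\Normzer(H)\times\Normzer(L))/(\Normzer(H)\cap\Normzer(L))\cong \Normzer(H)\cdot\Normzer(L)$. Your argument instead unpacks the balanced product description of induction \emph{after} quotienting by $L$ and verifies surjectivity and injectivity by hand. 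The computations are essentially the same group-theoretic manipulation (the element $m = x_2^{-1}x_1 = y_2\ell y_1^{-1}$ lying in the intersection), just carried out at different stages. The paper's route is a bit more structural and reusable; yours is more self-contained and avoids the auxiliary commutative square.
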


\begin{proof}
As the forgetful functor $\BiMod_{(\Weyl(H),\Weyl(L))} \ra \RMod_{\Weyl(L)}$ is conservative, it suffices to show that the morphism \Cref{map.from.induction.to.NH.int.NL.mod.L} becomes an isomorphism in $\RMod_{\Weyl(L)}$. Now, note that we have a commutative square
\[ \begin{tikzcd}[column sep=2.5cm, row sep=1.5cm]
\RMod_{\Normzer(H) \cap \Normzer(L)}
\arrow{r}{\Ind_{\Normzer(H) \cap \Normzer(L)}^{\Normzer(L)}}
\arrow{d}[swap]{(-)/L}
&
\RMod_{\Normzer(L)}
\arrow{d}{(-)/L}
\\
\RMod_{(\Normzer(H) \cap \Normzer(L))/L}
\arrow{r}[swap]{\Ind_{(\Normzer(H) \cap \Normzer(L))/L}^{\Weyl(L)}}
&
\RMod_{\Weyl(L)}
\end{tikzcd} \]
(which commutes because it clearly commutes upon passing to right adjoints). Therefore, it suffices to observe the isomorphism
\[
\Ind_{\Normzer(H) \cap \Normzer(L)}^{\Normzer(L)}
\Normzer(H)
\cong
( \Normzer(H) \times \Normzer(L) ) / (\Normzer(H) \cap \Normzer(L))
\cong
\Normzer(H) \cdot \Normzer(L)
\]
in $\RMod_{\Normzer(L)}$.
\end{proof}

\begin{observation}
\label{obs.rho.g.H.followed.by.coInd.from.NH.to.G.is.RgG.linear}
Consider the composite adjunction
\begin{equation}
\label{composite.adjunction.from.RgG.to.RgWH}
\begin{tikzcd}[column sep=2cm, row sep=0cm]
\cR^{\gen G}
\arrow[transform canvas={yshift=0.9ex}]{r}{\Res^G_{\Normzer (H)}}
\arrow[leftarrow, transform canvas={yshift=-0.9ex}]{r}[yshift=-0.2ex]{\bot}[swap]{\coInd^G_{\Normzer (H)}}
&
\cR^{\gen \Normzer (H)}
\arrow[transform canvas={yshift=0.9ex}]{r}{\Phi^H_\gen}
\arrow[hookleftarrow, transform canvas={yshift=-0.9ex}]{r}[yshift=-0.2ex]{\bot}[swap]{\rho^H_\gen}
&
\cR^{\gen \Weyl (H)}
\end{tikzcd}
\end{equation}
(compare with Observation S.\ref{strat:obs.geom.fixedpoints.by.any.other.name}). Because $\cR^{\gen G}$ is rigidly-compactly generated by \Cref{obs.gen.G.objects.rigid.if.R.is} and the composite left adjoint $\Phi^H_\gen \circ \Res^G_{\Normzer(H)}$ is symmetric monoidal, by \cite[Chapter 1, Lemma 9.3.6]{GR} the composite right adjoint $\coInd^G_{\Normzer(H)} \circ \rho^H_\gen$ is $\cR^{\gen G}$-linear: in other words, for any $E \in \cR^{\gen G}$ and $F \in \cR^{\gen \Weyl(H)}$ we have the projection formula
\[
\coInd^G_{\Normzer(H)} \left( \rho^H_\gen \left( \Phi^H_\gen \left( \Res^G_{\Normzer(H)} ( E ) \right) \otimes F \right) \right)
\simeq
E \otimes \coInd^G_{\Normzer(H)} \left( \rho^H_\gen ( F) \right)
~.
\]
\end{observation}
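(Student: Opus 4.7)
The plan is a direct application of \cite[Chapter 1, Lemma 9.3.6]{GR}, which asserts that any right adjoint to a symmetric monoidal colimit-preserving functor out of a rigidly-compactly generated presentably symmetric monoidal stable $\infty$-category is automatically linear over the source. The proof therefore reduces to verifying the two hypotheses of that lemma: that $\cR^{\gen G}$ is rigidly-compactly generated, and that the composite left adjoint $\Phi^H_\gen \circ \Res^G_{\Normzer(H)}$ upgrades to a symmetric monoidal morphism in $\PrLSt$.

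The first hypothesis is precisely \Cref{obs.gen.G.objects.rigid.if.R.is}. For the second, recall from \Cref{notn.various.fixedpoints.on.genuine.G.objects.in.M} that each of the functors $\Res^G_{\Normzer(H)}$ and $\Phi^H_\gen$ appearing in the statement is defined by tensoring the corresponding functor on genuine equivariant spectra with $\cR$. The functors
\[
\Spectra^{\gen G} \xra{\Res^G_{\Normzer(H)}} \Spectra^{\gen \Normzer(H)} \xra{\Phi^H_\gen} \Spectra^{\gen \Weyl(H)}
\]
are both symmetric monoidal morphisms in $\CAlg(\PrLSt)$ (this is standard, and is used throughout \cite{AMR-strat}; cf.\ the set-up for Observation S.\ref{strat:obs.geom.fixedpoints.by.any.other.name}). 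Since tensoring with $\cR \in \CAlg(\PrLSt)$ is itself a symmetric monoidal operation on $\PrLSt$, the induced functors $\Res^G_{\Normzer(H)} \otimes \cR$ and $\Phi^H_\gen \otimes \cR$ inherit symmetric monoidal structures, and their composite is symmetric monoidal as well.

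With both hypotheses in hand, \cite[Chapter 1, Lemma 9.3.6]{GR} supplies an $\cR^{\gen G}$-linear enhancement of the composite right adjoint $\coInd^G_{\Normzer(H)} \circ \rho^H_\gen$, where $\cR^{\gen \Weyl(H)}$ is regarded as an $\cR^{\gen G}$-module via the symmetric monoidal left adjoint $\Phi^H_\gen \circ \Res^G_{\Normzer(H)}$. Unwinding this $\cR^{\gen G}$-linearity on objects $E \in \cR^{\gen G}$ and $F \in \cR^{\gen \Weyl(H)}$ yields exactly the asserted projection formula. The only point requiring care is bookkeeping the distinction between functors in $\CAlg(\PrLSt)$ and $\PrLSt$; once this is made explicit the conclusion is formal, so I expect no substantive obstacle beyond verifying that the cited lemma applies in our setting.
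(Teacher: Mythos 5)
Your proposal is correct and follows essentially the same route as the paper: it verifies rigid-compact generation of $\cR^{\gen G}$ via \Cref{obs.gen.G.objects.rigid.if.R.is}, notes that $\Phi^H_\gen \circ \Res^G_{\Normzer(H)}$ is symmetric monoidal (being tensored up from the symmetric monoidal functors on genuine $G$-spectra), and then invokes \cite[Chapter 1, Lemma 9.3.6]{GR} to obtain the projection formula. The extra detail you supply on why the composite left adjoint is symmetric monoidal is a fine (and accurate) elaboration of what the paper leaves implicit.
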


\begin{proof}[Proof of \Cref{thm.stratn.of.genuine.G.objects}]
It is clear that we have an identification
\[
\Phi^K
\simeq
\ulhom_{\cR^{\gen G}} \left( \uno_\cR \brax{G/K} , (-) \otimes \uno_\cR \brax{\wEF_{^{\not\geq} K}} \right)
\]
in $\Fun ( \cR^{\gen G} , \cR^{\htpy \Weyl(K)})$ (see Definition S.\ref{strat:defn.geom.H.fps}). It then follows from Observation S.\ref{strat:obs.gluing.functors.for.SpgG} that we may identify the gluing functor $\Gamma^H_K$ as the composite
\[
\cR^{\htpy \Weyl(H)}
\xhookra{\beta_{\Weyl(H)}}
\cR^{\gen \Weyl(H)}
\xhookra{\rho_\gen^H}
\cR^{\gen \Normzer(H)}
\xra{\coInd_{\Normzer(H)}^G}
\cR^{\gen G}
\xra{ \ulhom_{\cR^{\gen G}} \left( \uno_\cR \brax{G/K} , (-) \otimes \uno_\cR \brax{\wEF_{^{\not\geq} K}} \right) }
\cR^{\htpy \Weyl(K)}
~.
\]
In other words, for any $E \in \cR^{\htpy \Weyl(H)}$ we have a natural equivalence
\begin{equation}
\label{gluing.functor.equivce.one}
\Gamma^H_K(E)
\simeq
\ulhom_{\cR^{\gen G}} \left( \uno_\cR \brax{G/K} , \coInd_{\Normzer(H)}^G ( \rho_\gen^H ( \beta_{\Weyl(H)} ( E ) ) ) \otimes \uno_\cR \brax{ \wEF_{^{\not\geq} K } } \right)
\end{equation}
in $\cR^{\htpy \Weyl(K)}$. By \Cref{obs.rho.g.H.followed.by.coInd.from.NH.to.G.is.RgG.linear}, we have an equivalence
\[
\coInd_{\Normzer(H)}^G ( \rho_\gen^H ( \beta_{\Weyl(H)} ( E ) ) ) \otimes \uno_\cR \brax{ \wEF_{^{\not\geq} K } }
\simeq
\coInd_{\Normzer(H)}^G \left( \rho_\gen^H \left( \beta_{\Weyl(H)} ( E ) \otimes \Phi^H_\gen \left( \Res_{\Normzer(H)}^G \left( \uno_\cR \brax{ \wEF_{^{\not\geq} K } } \right) \right) \right) \right)
\]
in $\cR^{\gen G}$. Therefore, by the composite adjunction \Cref{composite.adjunction.from.RgG.to.RgWH} we find that
\begin{align}
\nonumber
\Cref{gluing.functor.equivce.one}
& \simeq
\ulhom_{\cR^{\gen \Weyl(H)}} \left( \Phi^H_\gen \left( \Res^G_{\Normzer(H)} \left( \uno_\cR \brax{G/K} \right) \right) , \beta_{\Weyl(H)} ( E ) \otimes \Phi^H_\gen \left( \Res_{\Normzer(H)}^G \left( \uno_\cR \brax{ \wEF_{^{\not\geq} K } } \right) \right) \right)
\\
\label{gluing.functor.equivce.two}
& \simeq
\ulhom_{\cR^{\gen \Weyl(H)}} \left( \uno_\cR \brax{ (G/K)^H  } , \beta_{\Weyl(H)} ( E ) \otimes \uno_\cR \brax{ \wEF } \right)
\end{align}
(recall \Cref{local.define.the.special.family.for.gluing.functors}\Cref{item.define.the.special.family}). Thereafter, we find that 
\begin{align}
\label{use.idempotence.of.wEF.first.time}
\Cref{gluing.functor.equivce.two}
& \simeq
\ulhom_{\cR^{\gen \Weyl(H)}} \left( \uno_\cR \brax{\wEF} \otimes \uno_\cR \brax{ (G/K)^H  } , \beta_{\Weyl(H)} ( E ) \otimes \uno_\cR \brax{ \wEF } \right)
\\
\label{use.lemma.inclusion.from.CHK.to.GmodKupperH.becomes.iso}
& \simeq
\ulhom_{\cR^{\gen \Weyl(H)}} \left( \uno_\cR \brax{\wEF} \otimes \uno_\cR \brax{ C(H,K)  } , \beta_{\Weyl(H)} ( E ) \otimes \uno_\cR \brax{ \wEF } \right)
\\
\label{gluing.functor.equivce.three}
& \simeq
\ulhom_{\cR^{\gen \Weyl(H)}} \left( \uno_\cR \brax{ C(H,K)  } , \beta_{\Weyl(H)} ( E ) \otimes \uno_\cR \brax{ \wEF } \right)
~,
\end{align}
where equivalences \Cref{use.idempotence.of.wEF.first.time} and \Cref{gluing.functor.equivce.three} follow from the fact that $\wEF \in \Spaces^{\gen \Weyl(H)}_*$ is idempotent and equivalence \Cref{use.lemma.inclusion.from.CHK.to.GmodKupperH.becomes.iso} follows from \Cref{lemma.inclusion.from.CHK.to.GmodKupperH.becomes.iso}. Then, using \Cref{lemma.map.from.quotient.of.product.to.CHKg.is.an.iso} we obtain an equivalence
\begin{align}
\nonumber
\Cref{gluing.functor.equivce.three}
& \simeq
\ulhom_{\cR^{\gen \Weyl(H)}} \left( \uno_\cR \brax{ \coprod_{[g] \in \Weyl(H) \backslash C(H,K) / \Weyl(K)} C(H,K)_{[g]}  } , \beta_{\Weyl(H)} ( E ) \otimes \uno_\cR \brax{ \wEF } \right)
\\
\label{gluing.functor.equivce.four}
& \simeq
\bigoplus_{[g] \in \Weyl(H) \backslash C(H,K) / \Weyl(K)} \ulhom_{\cR^{\gen \Weyl(H)}} \left( \uno_\cR \brax{ C(H,K)_{[g]}  } , \beta_{\Weyl(H)} ( E ) \otimes \uno_\cR \brax{ \wEF } \right)
~.
\end{align}
To simplify our notation, we fix an element $[g] \in \Weyl(H) \backslash C(H,K) / \Weyl(K)$ and study the corresponding summand
\begin{equation}
\label{gluing.functor.equivce.five}
\ulhom_{\cR^{\gen \Weyl(H)}} \left( \uno_\cR \brax{ C(H,K)_{[g]}  } , \beta_{\Weyl(H)} ( E ) \otimes \uno_\cR \brax{ \wEF } \right)
\end{equation}
of \Cref{gluing.functor.equivce.four}. Then, we obtain equivalences
\begin{align}
\label{use.lemma.map.from.quotient.of.product.to.CHKg.is.an.iso}
\Cref{gluing.functor.equivce.five}
& \simeq
\ulhom_{\cR^{\gen \Weyl(H)}} \left( \uno_\cR \brax{ ( \Normzer(H) \cdot \Normzer( gKg^{-1}) ) / (gKg^{-1} ) } , \beta_{\Weyl(H)} ( E ) \otimes \uno_\cR \brax{ \wEF } \right)
\\
\label{use.lemma.arbitrary.subgroup.between.H.and.NH}
& \simeq
\ulhom_{\cR^{\gen \Weyl(H)}} \left( \uno_\cR \brax{ \Ind_{(\Normzer(H) \cap \Normzer(gKg^{-1}))/(gKg^{-1})}^{\Weyl(gKg^{-1})} ( \Normzer(H) / (gKg^{-1}) ) } , \beta_{\Weyl(H)} ( E ) \otimes \uno_\cR \brax{ \wEF } \right)
\\
\nonumber
& \simeq
\coInd_{(\Normzer(H) \cap \Normzer(gKg^{-1}))/(gKg^{-1})}^{\Weyl(gKg^{-1})}
\left(
\ulhom_{\cR^{\gen \Weyl(H)}} \left( \uno_\cR \brax{ \Normzer(H) / (gKg^{-1}) } , \beta_{\Weyl(H)} ( E ) \otimes \uno_\cR \brax{ \wEF } \right)
\right)
\\
\label{gluing.functor.equivce.six}
& \simeq
\Ind_{(\Normzer(H) \cap \Normzer(gKg^{-1}))/(gKg^{-1})}^{\Weyl(gKg^{-1})}
\left(
\ulhom_{\cR^{\gen \Weyl(H)}} \left( \uno_\cR \brax{ \Normzer(H) / (gKg^{-1}) } , \beta_{\Weyl(H)} ( E ) \otimes \uno_\cR \brax{ \wEF } \right)
\right)
~,
\end{align}
where equivalence \Cref{use.lemma.map.from.quotient.of.product.to.CHKg.is.an.iso} follows from \Cref{lemma.map.from.quotient.of.product.to.CHKg.is.an.iso} and equivalence \Cref{use.lemma.arbitrary.subgroup.between.H.and.NH} follows from \Cref{lemma.arbitrary.subgroup.between.H.and.NH}. To simplify our notation, we study the object
\begin{equation}
\label{gluing.functor.equivce.seven}
\ulhom_{\cR^{\gen \Weyl(H)}} \left( \uno_\cR \brax{ \Normzer(H) / (gKg^{-1} ) } , \beta_{\Weyl(H)} ( E ) \otimes \uno_\cR \brax{ \wEF } \right)
\end{equation}
of $\cR^{\htpy ( \Normzer(H) \cap \Normzer(gKg^{-1})) / (gKg^{-1})}$ (whose induction to $\Weyl(gKg^{-1})$ is \Cref{gluing.functor.equivce.six}). Specifically, we compute that
\begin{align}
\nonumber
\Cref{gluing.functor.equivce.seven}
& \simeq
\ulhom_{\cR^{\gen \Weyl(H)}} \left( \uno_\cR \brax{ \Weyl(H) /  ( (gKg^{-1}) / H ) } , \beta_{\Weyl(H)} ( E ) \otimes \uno_\cR \brax{ \wEF } \right)
\\
\label{gluing.functor.equivce.eight}
& =:
\Phi^{(gKg^{-1})/H}_\ms{F} \beta_{\Weyl(H)} (E)
~.
\end{align}
Finally, observe that $(gKg^{-1}) / H \in \pos_{\Weyl(H)} \backslash \ms{F}$ is a minimal element. Hence, by Observation S.\ref{strat:obs.geom.fixedpoints.by.any.other.name} we have an equivalence
\[
\Phi^{(gKg^{-1})/H}_\ms{F}
\simeq
\Phi^{(gKg^{-1}) / H}_{^{\not\geq} (gKg^{-1})/H}
=:
\Phi^{(gKg^{-1}) / H}
\]
in $\Fun ( \cR^{\gen \Weyl(H)} , \cR^{\htpy ( \Normzer(H) \cap \Normzer(gKg^{-1}) ) / (gKg^{-1}) } )$. Therefore, by \Cref{prop.proper.Tate.from.genuine.G.objs} we have equivalences
\[
\Cref{gluing.functor.equivce.eight}
\simeq
\Phi^{(gKg^{-1} ) / H} \beta_{\Weyl(H)} E
\simeq
E^{\tate (gKg^{-1}) / H}
~,
\]
completing the proof.
\end{proof}

\part{The Picard-graded $\Cyclic_{p^n}$-equivariant cohomology of a point}
\label{part.coh}

\section{The geometric stratification of genuine $\Cyclic_{p^n}$-$\ZZ$-modules}
\label{section.stratn.of.gen.Cpn.Z.mods}

In this section, we apply the material of \Cref{part.smstrat} to describe the symmetric monoidal \textit{geometric stratification} (see \Cref{defn.geometric.stratification.of.genuine.G.objects}) of the presentably symmetric monoidal stable $\infty$-category
\[
\Mod^{\gen \Cyclic_{p^n}}_\ZZ
:=
\Spectra^{\gen \Cyclic_{p^n}} \otimes \Mod_\ZZ
\simeq
\Mack_{\Cyclic_{p^n}}(\Mod_\ZZ)
\]
of genuine $\Cyclic_{p^n}$-$\ZZ$-modules (see \Cref{defn.gen.G.objects}\Cref{item.defn.gen.G.objects} and \Cref{obs.mackey.for.genuine.G.objects}), where $p$ is prime. This description (originally stated as \Cref{intro.thm.gen.Cpn.Z.mods}) is recorded as \Cref{thm.stratn.of.genuine.Cpn.Z.mods}; its consequences are unpacked more explicitly in \Cref{consequences.of.stratn.of.genuine.Cpn.Z.mods} (after its proof). We also describe categorical fixedpoints in these terms as \Cref{prop.fixedpts.res.and.trf.for.general.Cpn.Z.mods}.

\needspace{2\baselineskip}
\begin{notation}
\label{notn.fix.prime.and.n.et.al}
\begin{enumerate}
\item[]

\item\label{part.fix.prime.and.n} We fix a prime $p$ and a nonnegative integer $n \geq 0$; these determine a finite cyclic group $\Cyclic_{p^n}$ of order $p^n$.

\item\label{part.write.brax.n} To ease our notation, we use the identification $\pos_{\Cyclic_{p^n}} \cong [n]$ for the poset of subgroups of $\Cyclic_{p^n}$, and for any $s \in [n]$ we may use the identification $\Cyclic_{p^n} / \Cyclic_{p^s} \cong \Cyclic_{p^{n-s}}$.\footnote{We use the letter ``s'' because it stands for the word ``stratum''.}

\item

Given a genuine $\Cyclic_{p^n}$-$\ZZ$-module $E \in \Mod^{\gen \Cyclic_{p^n}}_\ZZ$, for any $s \in [n]$ we may simply write
\[
E_s
:=
\Phi^{\Cyclic_{p^s}}(E)
\in
\Mod^{\htpy \Cyclic_{p^{n-s}}}_\ZZ
\]
for its $\Cyclic_{p^s}$-geometric fixedpoints.

\item We fix a generator $\sigma \in \Cyclic_{p^n}$. For any $s \in [n]$, we also denote by $\sigma \in \Cyclic_{p^{n-s}}$ its image under the quotient homomorphism $\Cyclic_{p^n} \ra \Cyclic_{p^{n-s}}$.

\item\label{part.write.otimes.for.otimes.Z} In the interest of brevity, we simply write $\otimes := \otimes_\ZZ$ for the symmetric monoidal structure of $\Mod_\ZZ$.

\item\label{part.write.Z.for.trivial.action} For any $i \geq 0$, we simply write $\ZZ \in \Mod^{\htpy \Cyclic_{p^i}}_\ZZ$ for the trivial $\Cyclic_{p^i}$-action on $\ZZ \in \Mod_\ZZ$.

\end{enumerate}
\end{notation}

\begin{theorem}
\label{thm.stratn.of.genuine.Cpn.Z.mods}
The symmetric monoidal geometric stratification
\begin{equation}
\label{s.m.stratn.of.gen.Cpn.Z.mods}
[n]
\xra{\Spectra^{\gen \Cyclic_{p^n}}_{^\leq \bullet}}
\Idl_{\Spectra^{\gen \Cyclic_{p^n}}}
\xra{- \otimes \Mod_\ZZ}
\Idl_{\Mod^{\gen \Cyclic_{p^n}}_\ZZ}
\end{equation}
of $\Mod^{\gen \Cyclic_{p^n}}_\ZZ$ has the following features.
\begin{enumerate}

\item Its underlying stratification is strict.\footnote{That is, the stratification is convergent (as guaranteed by Theorem S.\ref{strat:intro.thm.cosms} because the poset $[n]$ is finite) and moreover the gluing functors compose strictly (as opposed to left-laxly). See \S S.\ref{strat:subsection.strict.stratns} for more discussion of this notion.}

\item Its symmetric monoidal gluing diagram is the functor
\[
[n]
\xra{ \GD^\otimes ( \Mod^{\gen \Cyclic_{p^n}}_\ZZ ) }
\CAlg^\rlax(\Cat)
\]
selecting the diagram
\begin{equation}
\label{s.m.gluing.diagram.of.genuine.Cpn.Z.mods}
\Mod_\ZZ^{\htpy \Cyclic_{p^n}}
\xra{(-)^{{\sf t} \Cyclic_p}}
\Mod_\ZZ^{\htpy \Cyclic_{p^{n-1}}}
\xra{(-)^{{\sf t} \Cyclic_p}}
\cdots
\xra{(-)^{{\sf t} \Cyclic_p}}
\Mod_\ZZ
~.
\end{equation}

\item All nontrivial composite gluing functors in its symmetric monoidal gluing diagram \Cref{s.m.gluing.diagram.of.genuine.Cpn.Z.mods} are zero.

\end{enumerate}
\end{theorem}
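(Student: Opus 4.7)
The plan is to deduce all three statements from the main gluing-functor formula of Part~I (\Cref{thm.stratn.of.genuine.G.objects}) together with a Tate-vanishing input for $\ZZ$-modules that will be furnished by Nikolaus--Scholze.

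I begin by specializing \Cref{thm.stratn.of.genuine.G.objects} to $G = \Cyclic_{p^n}$, which is abelian. For a subconjugacy relation $\Cyclic_{p^s} \leq \Cyclic_{p^t}$ (with $s<t$ in $[n]$), the Weyl groups are $\Weyl(\Cyclic_{p^s}) \cong \Cyclic_{p^{n-s}}$ and $\Weyl(\Cyclic_{p^t}) \cong \Cyclic_{p^{n-t}}$, every subgroup is normal, $C(\Cyclic_{p^s},\Cyclic_{p^t}) = \Cyclic_{p^n}/\Cyclic_{p^t}$, and (using that the $\Weyl(\Cyclic_{p^s})$-action is already transitive) the double-coset sum in \Cref{thm.stratn.of.genuine.G.objects} collapses to a single term and the induction becomes an equivalence. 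Combined with \Cref{obs.tensored.up.stratn}, this identifies the gluing functors of \Cref{s.m.stratn.of.gen.Cpn.Z.mods} as
\[
\Gamma_s^t \simeq (-)^{\tate \Cyclic_{p^{t-s}}} : \Mod_\ZZ^{\htpy \Cyclic_{p^{n-s}}} \longra \Mod_\ZZ^{\htpy \Cyclic_{p^{n-t}}}.
\]
For $t=s+1$, the only proper subgroup of $\Cyclic_p$ is trivial, so the proper Tate construction reduces to the ordinary Tate construction (\Cref{obs.proper.F.tate.recovers.tate}), giving $\Gamma_s^{s+1} \simeq (-)^{\st \Cyclic_p}$. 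Taking into account the residual $\Cyclic_{p^{n-s-1}}$-equivariance of \Cref{obs.various.properties.of.tate}, this establishes statement~(2).

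The heart of the proof is statement (3). Any nontrivial composite of gluing functors in the diagram \Cref{s.m.gluing.diagram.of.genuine.Cpn.Z.mods} has the form of an iterated $\Cyclic_p$-Tate construction applied to an object of $\Mod_\ZZ^{\htpy \Cyclic_{p^r}}$, with at least two successive applications. The input I require is the following $\ZZ$-linear Tate vanishing statement: for any $M \in \Mod_\ZZ^{\htpy \Cyclic_{p^2}}$, the iterated Tate construction $(M^{\st \Cyclic_p})^{\st \Cyclic_p}$ vanishes in $\Mod_\ZZ$, where the outer Tate is taken for the residual $\Cyclic_{p^2}/\Cyclic_p$-action. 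This is a consequence of the Tate orbit lemma / Tate vanishing results of Nikolaus--Scholze (\cite{NS}) applied to $\ZZ$-modules (equivalently, a refinement of Kaledin's prediction in \cite{Kaledin-Mack}); the key point is that $\ZZ$-Tate cohomology is dramatically simpler than $\SS$-Tate cohomology, so the double Tate is forced to vanish. This vanishing is exactly the expected obstacle and is where the role of $\Mod_\ZZ$ (versus $\Spectra$) is decisive. Since every nontrivial composite factors through such a double Tate, statement~(3) follows.

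Finally, statement~(1) follows formally from (2) and (3). For adjacent indices $t = s+1$ there is nothing to check. For $t-s \geq 2$, the lax comparison map in the gluing diagram goes from $\Gamma_s^t$ to an iterated composite of $(-)^{\st \Cyclic_p}$'s. By the identification in the first paragraph, $\Gamma_s^t \simeq (-)^{\tate \Cyclic_{p^{t-s}}}$; the same $\ZZ$-linear Tate vanishing used in (3) shows that the proper $\Cyclic_{p^{t-s}}$-Tate construction vanishes on $\Mod_\ZZ^{\htpy \Cyclic_{p^{n-s}}}$ for $t-s\geq 2$ (this is essentially the case $k = t-s$ of the vanishing, applied inductively via the recollement defining the proper Tate). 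So both the direct gluing functor and the iterated composite are zero, and the lax comparison is an equivalence of zero functors, yielding strictness.
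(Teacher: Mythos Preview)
Your overall strategy is correct, and your derivations of statements (2) and (3) are fine: specializing \Cref{thm.stratn.of.genuine.G.objects} to $G=\Cyclic_{p^n}$ does give $\Gamma_s^t \simeq (-)^{\tate \Cyclic_{p^{t-s}}}$, the case $t-s=1$ reduces to ordinary Tate, and the Nikolaus--Scholze vanishing $(M^{\st\Cyclic_p})^{\st\Cyclic_p}\simeq 0$ handles the composites.

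The gap is in your argument for statement (1). You need that $\Gamma_s^t=(-)^{\tate\Cyclic_{p^{t-s}}}$ itself vanishes when $t-s\geq 2$, and your justification---``applied inductively via the recollement defining the proper Tate''---does not supply an actual argument. The proper Tate construction is defined as a stable quotient by a thick ideal, not via a recollement, and there is no evident induction on $k=t-s$ that reduces $(-)^{\tate\Cyclic_{p^k}}$ to iterated ordinary Tate from the definition alone. The Nikolaus--Scholze input you cite concerns iterated $(-)^{\st\Cyclic_p}$, which is \emph{a priori} a different functor from $(-)^{\tate\Cyclic_{p^k}}$; bridging the two is exactly the nontrivial step.

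The paper supplies this bridge via \Cref{lem.projection.formula.categorical.then.geometric.is.geometric}, a projection formula specific to groups with totally ordered subgroup poset: for $K<H$ in $\Cyclic_{p^n}$, the canonical map $\Phi^{H/K}_\gen((-)^K)\to\Phi^H_\gen(-)$ is an equivalence. Combining this with \Cref{prop.proper.Tate.from.genuine.G.objs} gives
\[
E^{\tate\Cyclic_{p^k}}\simeq\Phi^{\Cyclic_{p^k}}(\beta E)\simeq\Phi^{\Cyclic_p}\bigl((\beta E)^{\Cyclic_{p^{k-1}}}\bigr)\simeq\Phi^{\Cyclic_p}\bigl(\beta(E^{\htpy\Cyclic_{p^{k-1}}})\bigr)\simeq\bigl(E^{\htpy\Cyclic_{p^{k-1}}}\bigr)^{\st\Cyclic_p}.
\]
For $k\geq 2$ the last expression has the form $\bigl((F)^{\htpy\Cyclic_p}\bigr)^{\st\Cyclic_p}$ with $F=E^{\htpy\Cyclic_{p^{k-2}}}$ carrying a residual $\Cyclic_{p^2}$-action, and \Cref{obs.tate.vanishing.for.Z.mods} (the NS input) gives that this vanishes. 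This single computation then yields all three statements at once. Your proof would be complete if you replaced the handwave with this projection-formula step.
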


\begin{lemma}
\label{lem.projection.formula.categorical.then.geometric.is.geometric}
Let $\cR$ be a rigidly-compactly generated presentably symmetric monoidal stable $\infty$-category, let $G = \Cyclic_{p^n}$, and let $K < H \leq G$ be any strict containment among subgroups of $G$. Then, the composite natural transformation in the diagram
\[ \begin{tikzcd}[row sep=1.5cm, column sep=1.5cm]
\cR^{\gen G}
\arrow[bend left]{r}[pos=0.58]{(-)^K}[swap, yshift=-0.4cm]{\Downarrow}
\arrow[bend right]{r}[swap, pos=0.54]{\Phi^K_\gen}
\arrow[bend right]{rd}[swap, sloped]{\Phi^H_\gen}
&
\cR^{\gen (G/K)}
\arrow{d}{\Phi^{H/K}_\gen}
\\
&
\cR^{\gen (G/H)}
\end{tikzcd} \]
(in which the bottom triangle commutes) is an equivalence in $\Fun(\cR^{\gen G} , \cR^{\gen G/H})$: in other words, for any $E \in \cR^{\gen G}$ the canonical morphism
\begin{equation}
\label{canonical.morphism.in.projection.formula.for.totally.ordered.poset.of.closed.subgroups}
\Phi^{H/K}_\gen \left( E^K \right)
\longra
\Phi^{H/K}_\gen \left( \Phi^K_\gen(E) \right)
\simeq
\Phi_\gen^H(E)
\end{equation}
is an equivalence.
\end{lemma}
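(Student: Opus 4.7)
My plan is to combine three ingredients: transitivity of categorical fixedpoints along $e \subseteq K \subseteq H$, the smash-with-$\wEF$ presentation of geometric fixedpoints recorded in \Cref{obs.geom.fps.on.gen.G.objects.from.catl.fps}, and a projection formula of the type appearing in \Cref{obs.beta.tilde.is.RgG.linear}. Fix the pointed $G$-space $A \in \Spaces^{\gen G}_*$ from \Cref{obs.geom.fps.on.gen.G.objects.from.catl.fps} for which $\Phi^H_\gen(E) \simeq (A \tensoring E)^H$, and the analogous pointed $G/K$-space $B \in \Spaces^{\gen G/K}_*$ satisfying $\Phi^{H/K}_\gen(F) \simeq (B \tensoring F)^{H/K}$.

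First I would use that categorical fixedpoints compose along the tower $e \subseteq K \subseteq H$, namely $(-)^H \simeq (-)^{H/K} \circ (-)^K$ as functors $\cR^{\gen G} \to \cR^{\gen G/H}$; this gives
\[ (A \tensoring E)^H \simeq ((A \tensoring E)^K)^{H/K}, \]
so it suffices to identify $(A \tensoring E)^K$ with $B \tensoring E^K$ in $\cR^{\gen G/K}$.

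For that, I would invoke the projection formula for the symmetric monoidal colimit-preserving inflation adjunction $\inf \dashv (-)^K$ between $\cR^{\gen G/K}$ and $\cR^{\gen G}$. Because $\cR^{\gen G/K}$ is rigidly-compactly generated by \Cref{obs.gen.G.objects.rigid.if.R.is}, the cited result \cite[Chapter 1, Lemma 9.3.6]{GR} (applied exactly as in \Cref{obs.beta.tilde.is.RgG.linear}) makes $(-)^K$ into an $\cR^{\gen G/K}$-linear functor; in particular the canonical comparison $X \tensoring E^K \to (\inf X \tensoring E)^K$ is an equivalence for all $X \in \cR^{\gen G/K}$ and $E \in \cR^{\gen G}$. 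Factoring the $\Spaces^{\gen G/K}_*$-action through $\Sigma^\infty_{G/K}(-) \otimes \uno_\cR$, the same formula applies with $X = B$ and reduces the required identification to a single space-level claim: $\inf(B) \simeq A$ in $\Spaces^{\gen G}_*$.

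Finally, this space-level claim is the one place where the hypothesis $G = \Cyclic_{p^n}$ enters. For any subgroup $L \leq G$ and $Y \in \Spaces^{\gen G/K}_*$ one has $(\inf Y)^L \cong Y^{LK/K}$, so $(\inf B)^L \simeq S^0$ precisely when $LK \geq H$ and is contractible otherwise. Since the subgroup lattice of $\Cyclic_{p^n}$ is totally ordered, $LK = \max(L, K)$; combined with the strict inequality $K < H$ (which rules out $K \geq H$), this forces $LK \geq H \Leftrightarrow L \geq H$, exactly the fixedpoint pattern of $A$. The main (and essentially only) obstacle is checking that the resulting equivalence $\Phi^{H/K}_\gen(E^K) \simeq \Phi^H_\gen(E)$ is natural in $E$ and coincides with the canonical comparison map \Cref{canonical.morphism.in.projection.formula.for.totally.ordered.poset.of.closed.subgroups}, rather than being a merely abstract equivalence; this should follow from tracing the units and counits of the $\inf \dashv (-)^K$ adjunction through the chain of equivalences above.
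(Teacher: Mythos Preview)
Your proposal is correct and follows essentially the same strategy as the paper: reduce via the projection formula for the inflation adjunction $\Res^{G/K}_G \dashv (-)^K$ (using that $\cR^{\gen(G/K)}$ is rigidly-compactly generated) to a space-level claim, then verify that claim by a fixedpoint check exploiting the total ordering of $\pos_G$ together with the strict inequality $K < H$. The only cosmetic difference is that the paper identifies both source and target as $\left(\left(\Res^{G/K}_G(\wEFgeomXfps{H/K}) \wedge X\right) \tensoring E\right)^H$ for $X = S^0$ and $X = \wEFgeomXfps{K}$ respectively, so that the canonical comparison is manifestly the map induced by $S^0 \to \wEFgeomXfps{K}$; this cleanly dispatches the naturality concern you flag at the end without any further tracing of adjunction units.
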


\begin{remark}
\label{rmk.projection.formula.for.Cpinfty}
In fact, \Cref{lem.projection.formula.categorical.then.geometric.is.geometric} applies to the rigidly-compactly generated presentably symmetric monoidal stable $\infty$-categories
\[
\Spectra^{\gen \Cyclic_{p^\infty}}
:=
\lim \left(
\cdots
\xra{\Res^{\Cyclic_{p^3}}_{\Cyclic_{p^2}}}
\Spectra^{\gen \Cyclic_{p^2}}
\xra{\Res^{\Cyclic_{p^2}}_{\Cyclic_{p}}}
\Spectra^{\gen \Cyclic_{p}}
\xra{\Res^{\Cyclic_{p}}_{e}}
\Spectra
\right)
\qquad
\textup{and}
\qquad
\Spectra^{\gen^{<_p} \TT}
:=
\brax{ \Sigma^\infty_\TT ( \TT / \Cyclic_{p^i})_+ }_{i \geq 0}
\subseteq
\Spectra^{\gen \TT}
~,
\]
both of which are stratified over the poset $\ZZ_{\geq 0}$ (under the requirement that the subgroups $K < H \leq G$ be closed): the key point is that this poset is totally ordered. Alternatively, these statements can be readily deduced from \Cref{lem.projection.formula.categorical.then.geometric.is.geometric} itself: for $\Spectra^{\gen \Cyclic_{p^\infty}}$ this is immediate, and for $\Spectra^{\gen^{<_p}\TT}$ it suffices to observe that the forgetful functor $\Spectra^{\gen^{<_p}\TT} \ra \Spectra^{\gen \Cyclic_{p^\infty}}$ is conservative.
\end{remark}

\begin{proof}[Proof of \Cref{lem.projection.formula.categorical.then.geometric.is.geometric}]
We begin by observing the commutative diagram
\[ \begin{tikzcd}[column sep=1.5cm, row sep=1.5cm]
\Spaces^{\gen (G/K)}
\arrow{r}{\Res^{G/K}_G}
\arrow{d}[swap]{\uno_\cR \brax{-}}
&
\Spaces^{\gen G}
\arrow{d}{\uno_\cR\brax{-}}
\\
\cR^{\gen (G/K)}
\arrow{r}[swap]{\Res^{G/K}_G}
&
\cR^{\gen G}
\end{tikzcd} \]
in $\CAlg(\PrL)$. We observe too that in the adjunction
\[ \begin{tikzcd}[column sep=1.5cm]
\cR^{\gen (G/K)}
\arrow[transform canvas={yshift=0.9ex}]{r}{\Res^{G/K}_G}
\arrow[leftarrow, transform canvas={yshift=-0.9ex}]{r}[yshift=-0.2ex]{\bot}[swap]{(-)^K}
&
\cR^{\gen G}
\end{tikzcd}~, \]
because $\cR^{\gen (G/K)}$ is rigidly-compactly generated by \Cref{obs.gen.G.objects.rigid.if.R.is} and the left adjoint $\Res^{G/K}_G$ is symmetric monoidal, by \cite[Chapter 1, Lemma 9.3.6]{GR} the right adjoint $(-)^K$ is $\cR^{\gen (G/K)}$-linear: in other words, for any $F \in \cR^{\gen (G/K)}$ and $E \in \cR^{\gen G}$ we have the projection formula
\[
F \otimes E^K
\simeq
\left( \Res^{G/K}_G(F) \otimes E \right)^K
~.
\]
Using these two observations, we respectively identify the source and target of the morphism \Cref{canonical.morphism.in.projection.formula.for.totally.ordered.poset.of.closed.subgroups} as
\begin{align}
\label{equivce.of.source.use.description.of.geom.fps}
\Phi^{H/K}_\gen \left( E^K \right)
& \simeq
\left( \wEFgeomXfps{H/K} \tensoring E^K \right)^{H/K}
\\
\nonumber
& \simeq
\left( \uno_\cR \brax{ \wEFgeomXfps{H/K} } \otimes E^K \right)^{H/K}
\\
\nonumber
& \simeq
\left( \left( \Res^{G/K}_G \left( \uno_\cR \brax{ \wEFgeomXfps{H/K} } \right) \otimes E \right) ^K \right)^{H/K}
\\
\nonumber
& \simeq
\left( \Res^{G/K}_G \left( \uno_\cR \brax{ \wEFgeomXfps{H/K} } \right) \otimes E \right)^H
\\
\nonumber
& \simeq
\left( \Res^{G/K}_G \left( \wEFgeomXfps{H/K} \right) \tensoring E \right)^H
\end{align}
and
\begin{align}
\label{equivce.of.target.use.description.of.geom.fps}
\Phi^{H/K}_\gen \left( \Phi^K_\gen ( E ) \right)
& \simeq
\left( \wEFgeomXfps{H/K} \tensoring \left( \wEFgeomXfps{K} \tensoring E \right)^K \right)^{H/K}
\\
\nonumber
& \simeq
\left( \uno_\cR \brax{ \wEFgeomXfps{H/K} } \otimes \left( \uno_\cR \brax{ \wEFgeomXfps{K} } \otimes E \right)^K \right)^{H/K}
\\
\nonumber
& \simeq
\left( \left( \Res^{G/K}_G \left( \uno_\cR \brax{ \wEFgeomXfps{H/K} } \right) \otimes \uno_\cR \brax{ \wEFgeomXfps{K} } \otimes E \right)^K \right)^{H/K}
\\
\nonumber
& \simeq
\left( \Res^{G/K}_G \left( \uno_\cR \brax{ \wEFgeomXfps{H/K} } \right) \otimes \uno_\cR \brax{ \wEFgeomXfps{K} } \otimes E \right)^H
\\
\nonumber
& \simeq
\left( \left( \Res^{G/K}_G \left( \wEFgeomXfps{H/K} \right) \wedge \wEFgeomXfps{K} \right) \tensoring E \right)^H
~,
\end{align}
where equivalences \Cref{equivce.of.source.use.description.of.geom.fps} \and \Cref{equivce.of.target.use.description.of.geom.fps} follow from \Cref{obs.geom.fps.on.gen.G.objects.from.catl.fps}. It is now clear that the morphism \Cref{canonical.morphism.in.projection.formula.for.totally.ordered.poset.of.closed.subgroups} itself may be obtained by applying the composite functor
\[
\Spaces^{\gen G}_*
\xra{(-) \tensoring E }
\cR^{\gen G}
\xra{(-)^H}
\cR^{\gen (G/H)}
\]
to the evident morphism
\begin{equation}
\label{morphism.in.pointed.gen.G.mod.K.spaces}
\Res^{G/K}_G \left( \wEFgeomXfps{H/K} \right)
\simeq
\Res^{G/K}_G \left( \wEFgeomXfps{H/K} \right) \wedge S^0
\longra
\Res^{G/K}_G \left( \wEFgeomXfps{H/K} \right) \wedge \wEFgeomXfps{K}
\end{equation}
in $\Spaces^{\gen G}_*$, which it therefore suffices to show is an equivalence. For this, fix an arbitrary closed subgroup $J \leq G$. Now, for any genuine $G/K$-space $X \in \Spaces^{\gen (G/K)}_*$ we have that
\[
\Res^{G/K}_G(X)^J
\simeq
X^{J / (K \cap J)}
\]
(note that $J / (K \cap J)$ is the image of the composite $J \hookra G \thra G/K$). Hence, it suffices to observe that if $J/(J \cap K) \geq H/K$ in $\pos_{G/K}$ then $J \geq K$, which follows from the fact that the poset $\pos_G$ is totally ordered.
\end{proof}

\begin{observation}
\label{obs.tate.vanishing.for.Z.mods}
For any $E \in \Mod^{\htpy \Cyclic_{p^2}}_\ZZ$, all three terms in the cofiber sequence
\[ \left(
E_{\htpy \Cyclic_p}
\xra{\Nm_{\Cyclic_p}(E)}
E^{\htpy \Cyclic_p}
\xra{\sQ_{\Cyclic_p}(E)}
E^{\st \Cyclic_p}
\right)^{\st \Cyclic_p}
\]
in $\Mod_\ZZ$ are zero; this follows from \cite[Footnote 9]{NS} (see also \cite[Lemma I.2.7]{NS}). Equivalently, the norm maps
\[
\hspace{-1.2cm}
(E_{\htpy \Cyclic_p})_{\htpy \Cyclic_p} \xra{\Nm_{\Cyclic_p}(E_{\htpy \Cyclic_p})} (E_{\htpy \Cyclic_p})^{\htpy \Cyclic_p}
~,
\qquad
(E^{\htpy \Cyclic_p})_{\htpy \Cyclic_p} \xra{\Nm_{\Cyclic_p}(E^{\htpy \Cyclic_p})} (E^{\htpy \Cyclic_p})^{\htpy \Cyclic_p}
~,
\qquad
\text{and}
\qquad
(E^{\st \Cyclic_p})_{\htpy \Cyclic_p} \xra{\Nm_{\Cyclic_p}(E^{\st \Cyclic_p})} (E^{\st \Cyclic_p})^{\htpy \Cyclic_p}
\]
are equivalences.
\end{observation}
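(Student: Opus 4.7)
The plan is to deduce the three vanishing statements from known Tate-vanishing results of Nikolaus--Scholze, and to treat the equivalent formulation in terms of norm equivalences as a formal consequence of the defining cofiber sequence.

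First, I would observe that, by \Cref{obs.various.properties.of.tate}\Cref{part.rlax.s.m.str.on.h.to.tate} (or rather its predecessor clause about $(G/H)$-equivariant norms), the three terms $E_{\htpy \Cyclic_p}$, $E^{\htpy \Cyclic_p}$, $E^{\st \Cyclic_p}$ naturally lie in $\Mod^{\htpy \Cyclic_p}_\ZZ$, where the residual $\Cyclic_p$-action comes from the quotient $\Cyclic_{p^2}/\Cyclic_p$, and the displayed cofiber sequence lifts to this $\infty$-category. Since $(-)^{\st \Cyclic_p}$ is exact, applying it yields a cofiber sequence in $\Mod_\ZZ$, so the vanishing of any two of the three Tate constructions in question implies the vanishing of the third. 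Thus it suffices to verify two of them directly.

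Next, I would invoke the Tate orbit lemma of Nikolaus--Scholze \cite[Lemma I.2.1]{NS}, which gives $(X_{\htpy \Cyclic_p})^{\st \Cyclic_p} \simeq 0$ for bounded-below $X \in \Spectra^{\htpy \Cyclic_{p^2}}$, together with \cite[Footnote 9]{NS} (and \cite[Lemma I.2.7]{NS}), which asserts that the boundedness hypothesis can be dropped when $X$ is an $H\ZZ$-module. Applied to $E$, this directly yields $(E_{\htpy \Cyclic_p})^{\st \Cyclic_p} \simeq 0$. Dually, the Tate fixed-point lemma \cite[Lemma I.2.2]{NS}, again in the unbounded $\ZZ$-linear form guaranteed by \cite[Footnote 9]{NS}, yields $(E^{\htpy \Cyclic_p})^{\st \Cyclic_p} \simeq 0$. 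The remaining vanishing $(E^{\st \Cyclic_p})^{\st \Cyclic_p} \simeq 0$ is then forced by the cofiber sequence argument of the previous paragraph.

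Finally, for the ``equivalently'' clause: for any $X \in \Mod^{\htpy \Cyclic_p}_\ZZ$, the defining cofiber sequence
\[
X_{\htpy \Cyclic_p} \xra{\Nm_{\Cyclic_p}(X)} X^{\htpy \Cyclic_p} \xra{\sQ_{\Cyclic_p}(X)} X^{\st \Cyclic_p}
\]
shows that $X^{\st \Cyclic_p} \simeq 0$ if and only if $\Nm_{\Cyclic_p}(X)$ is an equivalence. Specializing to $X = E_{\htpy \Cyclic_p}$, $X = E^{\htpy \Cyclic_p}$, and $X = E^{\st \Cyclic_p}$ successively converts the three Tate vanishings into the three asserted norm equivalences.

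The only real substantive step is the passage from Nikolaus--Scholze's bounded Tate orbit/fixed-point lemmas to their unbounded $\ZZ$-linear refinements; everything else is formal manipulation of the cofiber sequence. Since the unbounded refinement is precisely the content of \cite[Footnote 9]{NS}, the proof will amount to little more than citing that footnote and feeding $E$ into the resulting vanishing statements.
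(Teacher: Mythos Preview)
Your proposal is correct and matches the paper's approach exactly: the paper itself offers no argument beyond the citation of \cite[Footnote 9]{NS} and \cite[Lemma I.2.7]{NS}, and you have simply unpacked what that citation provides (the unbounded $\ZZ$-linear Tate orbit/fixed-point lemmas) together with the formal cofiber-sequence reductions.
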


\begin{observation}
\label{obs.if.tate.Cpr.vanishes.then.tate.Cprplusone.vanishes}
For any $E \in \Mod^{\htpy \Cyclic_{p^n}}_\ZZ$ and any $1 \leq s \leq n$, if $E^{\st \Cyclic_{p^s}} \simeq 0$ then $E^{\st \Cyclic_{p^i}} \simeq 0$ for all $s \leq i \leq n$. To see this, by induction it suffices to verify the case that $i=s+1$ (assuming that $s< n$, otherwise the assertion is vacuously true). And indeed, we have that $E^{\st \Cyclic_{p^{s+1}}} \simeq 0$ as a result of the diagram
\[ \begin{tikzcd}[row sep=1.5cm, column sep=0.5cm]
&
E_{\htpy \Cyclic_{p^{s+1}}}
\arrow{rr}{\Nm_{\Cyclic_{p^{s+1}}}(E)}
\arrow{ld}[sloped]{\sim}
&
&
E^{\htpy \Cyclic_{p^{s+1}}}
\\
(E_{\htpy \Cyclic_{p^s}})_{\htpy \Cyclic_p}
\arrow{rrd}[sloped, swap]{\Nm_{\Cyclic_p}(E_{\htpy \Cyclic_{p^s}})}[sloped]{\sim}
&
&
&
&
(E^{\htpy \Cyclic_{p^s}})^{\htpy \Cyclic_p}
\arrow{lu}[sloped]{\sim}
\\
&
&
(E_{\htpy \Cyclic_{p^s}})^{\htpy \Cyclic_p}
\arrow{rru}[sloped, swap]{\Nm_{\Cyclic_{p^s}}(E)^{\htpy \Cyclic_p}}[sloped]{\sim}
\end{tikzcd} \]
in $\Mod^{\htpy \Cyclic_{p^{n-s+1}}}_\ZZ$, which commutes by \Cref{obs.various.properties.of.tate}\Cref{part.norm.maps.compose} and in which the lower left morphism is an equivalence by \Cref{obs.tate.vanishing.for.Z.mods} (and the equivalence $E_{\htpy \Cyclic_{p^s}} \xra{\sim} (E_{\htpy \Cyclic_{p^{s-1}}})_{\htpy \Cyclic_p}$) and the lower right morphism is an equivalence by assumption.
\end{observation}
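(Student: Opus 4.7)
The plan is to reduce the statement to a single inductive step, then to exploit the transitivity of norms together with the prior Tate vanishing result. Since the range $s \leq i \leq n$ is finite and the base case $i = s$ is the hypothesis, it suffices by induction on $i$ to prove the following: if $1 \leq s < n$ and $E^{\st \Cyclic_{p^s}} \simeq 0$, then $E^{\st \Cyclic_{p^{s+1}}} \simeq 0$.

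To prove this implication, I would first recall that $E^{\st \Cyclic_{p^{s+1}}}$ is by definition the cofiber of the norm $\Nm_{\Cyclic_{p^{s+1}}}(E)$, so it suffices to show this norm is an equivalence. I would then invoke \Cref{obs.various.properties.of.tate}\Cref{part.norm.maps.compose} (transitivity of norms, equivalently the compatibility of the Beck--Chevalley norm with iterated fibrations), which factors $\Nm_{\Cyclic_{p^{s+1}}}(E)$ as the composite
\[
E_{\htpy \Cyclic_{p^{s+1}}} \simeq (E_{\htpy \Cyclic_{p^s}})_{\htpy \Cyclic_p} \xra{\Nm_{\Cyclic_p}(E_{\htpy \Cyclic_{p^s}})} (E_{\htpy \Cyclic_{p^s}})^{\htpy \Cyclic_p} \xra{\Nm_{\Cyclic_{p^s}}(E)^{\htpy \Cyclic_p}} (E^{\htpy \Cyclic_{p^s}})^{\htpy \Cyclic_p} \simeq E^{\htpy \Cyclic_{p^{s+1}}}.
\]
It then suffices to show both maps in this composite are equivalences.

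The second map is an equivalence because the hypothesis $E^{\st \Cyclic_{p^s}} \simeq 0$ is exactly the statement that $\Nm_{\Cyclic_{p^s}}(E)$ is an equivalence, and $(-)^{\htpy \Cyclic_p}$ preserves equivalences. For the first map, I would apply \Cref{obs.tate.vanishing.for.Z.mods}: writing $E_{\htpy \Cyclic_{p^s}} \simeq (E_{\htpy \Cyclic_{p^{s-1}}})_{\htpy \Cyclic_p}$ and regarding $E_{\htpy \Cyclic_{p^{s-1}}}$ as carrying a residual $\Cyclic_{p^{s+1}}/\Cyclic_{p^{s-1}} \cong \Cyclic_{p^2}$ action, the first map is precisely the norm $\Nm_{\Cyclic_p}(F_{\htpy \Cyclic_p})$ for $F = E_{\htpy \Cyclic_{p^{s-1}}} \in \Mod^{\htpy \Cyclic_{p^2}}_\ZZ$, which \Cref{obs.tate.vanishing.for.Z.mods} asserts is an equivalence. (When $s = 1$ we simply take $F = E$, which carries a $\Cyclic_{p^2}$-action since $n \geq 2$.)

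I anticipate that the main obstacle is bookkeeping rather than mathematical depth: one must be careful that the identification of $\Nm_{\Cyclic_{p^{s+1}}}(E)$ as a two-step composite of $\Cyclic_p$-norms is made at the correct level of equivariance (namely, in $\Mod^{\htpy \Cyclic_{p^{n-s-1}}}_\ZZ$) and with the correct residual actions on the intermediate terms. However, this is exactly what the equivariant enhancement of \Cref{obs.various.properties.of.tate}\Cref{part.norm.maps.compose} provides, so no genuinely new ingredient is required beyond the two cited observations and the combinatorial induction.
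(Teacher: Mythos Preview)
Your proposal is correct and follows essentially the same approach as the paper: reduce by induction to the step $i=s+1$, factor $\Nm_{\Cyclic_{p^{s+1}}}(E)$ via \Cref{obs.various.properties.of.tate}\Cref{part.norm.maps.compose} into the two intermediate norms, and conclude that each is an equivalence using the hypothesis and \Cref{obs.tate.vanishing.for.Z.mods} respectively. The only cosmetic difference is that the paper packages the factorization as a single commutative diagram, whereas you write it out as a linear composite; your bookkeeping remark about the residual equivariance is also slightly more explicit than the paper's treatment.
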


\begin{proof}[Proof of \Cref{thm.stratn.of.genuine.Cpn.Z.mods}]
We note that the assertions only make reference to the underlying stratification of the symmetric monoidal stratification \Cref{s.m.stratn.of.gen.Cpn.Z.mods}, so it suffices to verify them at that level. For each $s \in [n]$, its $s\th$ stratum is given by
\[
(\Mod^{\gen \Cyclic_{p^n}}_\ZZ)_s
:=
(\Spectra^{\gen \Cyclic_{p^n}} \otimes \Mod_\ZZ)_s
\simeq
(\Spectra^{\gen \Cyclic_{p^n}})_s \otimes \Mod_\ZZ
\simeq
\Spectra^{\htpy \Cyclic_{p^{n-s}}}
\otimes
\Mod_\ZZ
\simeq
\Mod^{\htpy \Cyclic_{p^{n-s}}}_\ZZ
~,
\]
where the first equivalence follows from \Cref{obs.tensored.up.stratn} and the second equivalence follows from Theorem S.\ref{strat:thm.geom.stratn.of.SpgG}. Thereafter, it follows from \Cref{prop.proper.Tate.from.genuine.G.objs} \and \Cref{obs.tensoring.with.cpctly.gend} (and the fact that $\Mod_\ZZ$ is compactly generated) that for each morphism $i \ra j$ in $[n]$ its corresponding gluing functor
\[
(\Mod^{\gen \Cyclic_{p^n}}_\ZZ)_i
\xlongra{\Gamma^i_j}
(\Mod^{\gen \Cyclic_{p^n}}_\ZZ)_j
\]
is given by the proper $\Cyclic_{p^{j-i}}$-Tate construction
\[
\Mod^{\htpy \Cyclic_{p^{n-i}}}_\ZZ
\xra{(-)^{\tate \Cyclic_{p^{j-i}}}}
\Mod^{\htpy \Cyclic_{p^{n-j}}}_\ZZ
~.
\]
Now, assuming $j-i \geq 1$, then for any $E \in \Mod^{\htpy \Cyclic_{p^{n-i}}}_\ZZ$ we have that
\begin{align}
\label{use.identification.of.proper.tate.for.genuine.G.objects}
E^{\tate \Cyclic_{p^{j-i}}}
& \simeq
\Phi^{\Cyclic_{p^{j-i}}} ( \beta E)
\\
\label{use.rewriting.of.large.geom.fps.as.mostly.catl.fps}
& \simeq
\Phi^{\Cyclic_p} \left( ( \beta E )^{\Cyclic_{p^{j-i-1}}} \right)
\\
\nonumber
& \simeq
\Phi^{\Cyclic_p} \left( \beta \left( E^{\htpy \Cyclic_{p^{j-i-1}}} \right) \right)
\\
\label{again.use.identification.of.proper.tate.for.genuine.G.objects}
& \simeq
\left( E^{\htpy \Cyclic_{p^{j-i-1}}} \right)^{\tate \Cyclic_p}
\\
\label{use.that.proper.tate.reduces.to.tate}
& \simeq
\left( E^{\htpy \Cyclic_{p^{j-i-1}}} \right)^{\st \Cyclic_p}
~,
\end{align}
where
\begin{itemize}
\item equivalences \Cref{use.identification.of.proper.tate.for.genuine.G.objects} \and \Cref{again.use.identification.of.proper.tate.for.genuine.G.objects} follow from \Cref{prop.proper.Tate.from.genuine.G.objs},
\item equivalence \Cref{use.rewriting.of.large.geom.fps.as.mostly.catl.fps} follows from \Cref{lem.projection.formula.categorical.then.geometric.is.geometric} when $j-i > 1$ and is trivially true when $j-i=1$, and
\item equivalence \Cref{use.that.proper.tate.reduces.to.tate} follows from \Cref{obs.proper.F.tate.recovers.tate}.
\end{itemize}
All three claims now follow from \Cref{obs.tate.vanishing.for.Z.mods}.
\end{proof}

\begin{notation}
\label{notn.zigzag.category}
We define the full subcategory
\[
\Zig_n
:=
\{ (i \ra j) \in \TwAr([n]) : j-i \leq 1 \}
\subseteq
\TwAr([n])
~,
\]
which we depict as
\[
\Zig_n
=
\left(
\begin{tikzcd}
(0 \ra 0)
\arrow{rd}
&
(1 \ra 1)
\arrow{d}
\arrow{rd}
&
\cdots
\arrow{d}
&
\cdots
\arrow{rd}
&
(n \ra n)
\arrow{d}
\\
&
(0 \ra 1)
&
(1 \ra 2)
&
\cdots
&
((n-1) \ra n)
\end{tikzcd} \right)~. \]
\end{notation}

\begin{remark}
When possible, we use the ``sawtooth'' depiction of $\Zig_n$ (as in \Cref{notn.zigzag.category}) and of diagrams indexed thereover: the columns will correspond to the strata of the geometric stratification of $\Mod^{\gen \Cyclic_{p^n}}_\ZZ$ of \Cref{thm.stratn.of.genuine.Cpn.Z.mods}. However, in order to depict natural transformations between diagrams indexed over $\Zig_n$ (such as in \Cref{prop.fixedpts.res.and.trf.for.general.Cpn.Z.mods}), we will use a more symmetric depiction.
\end{remark}

\begin{observation}
The inclusion
\[
\Zig_n
\longhookra
\TwAr([n])
\]
is initial. We use this fact without further comment.
\end{observation}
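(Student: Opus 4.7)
\smallskip

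\noindent\textbf{Proof proposal.} The plan is to apply the standard criterion that an inclusion $\iota \colon \cC \hookra \cD$ of $\infty$-categories is initial if and only if for every object $d \in \cD$, the slice $\cC \times_\cD \cD_{/d}$ is weakly contractible. Since both categories here are posets (in fact $1$-categories with no coincidences), weak contractibility of each slice can be verified combinatorially by identifying the slice with a connected zigzag-shaped poset.

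First I would unpack the definitions. A morphism $(a \ra b) \to (c \ra d)$ in $\TwAr([n])$ exists precisely when $c \leq a$ and $b \leq d$. So for a fixed target $(i \ra j) \in \TwAr([n])$, the slice $\TwAr([n])_{/(i \ra j)}$ is the poset of pairs $(i',j')$ with $i \leq i' \leq j' \leq j$, where $(i',j') \leq (i'',j'')$ iff $i'' \leq i'$ and $j' \leq j''$. Intersecting with $\Zig_n$ (i.e.\ imposing $j' - i' \leq 1$), the slice $\Zig_n \times_{\TwAr([n])} \TwAr([n])_{/(i \ra j)}$ consists of the diagonal objects $(k,k)$ for $i \leq k \leq j$ and the off-diagonal objects $(k,k+1)$ for $i \leq k \leq j-1$.

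Next I would enumerate non-identity morphisms in this slice. The conditions above rule out morphisms of the forms $(k,k) \to (k',k')$ (with $k \neq k'$), $(k,k+1) \to (k',k')$, and $(k,k+1) \to (k',k'+1)$ (with $k \neq k'$); the only remaining non-identity morphisms are
\[
(k,k) \ra (k,k+1) \qquad (i \leq k \leq j-1) \qquad \text{and} \qquad (k+1, k+1) \ra (k,k+1) \qquad (i \leq k \leq j-1).
\]
So the slice is precisely the zigzag
\[
(i,i) \ra (i,i+1) \la (i+1,i+1) \ra (i+1,i+2) \la \cdots \ra (j-1,j) \la (j,j),
\]
whose nerve is a $1$-dimensional simplicial set homeomorphic to a closed interval, hence contractible. (When $j = i$ the slice is a single point, trivially contractible.) This verifies the initiality criterion for every $(i \ra j)$, and no step poses any real obstacle --- the only thing to be careful about is tracking the direction conventions in $\TwAr([n])$, since a morphism of arrows reverses variance in the source coordinate while preserving it in the target coordinate.
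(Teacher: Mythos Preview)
Your argument is correct: applying the slice-contractibility criterion (Quillen's Theorem A) and identifying each slice with a zigzag-shaped poset is exactly the right verification, and your bookkeeping on the $\TwAr([n])$ conventions matches the paper's (as can be read off from the displayed picture of $\Zig_n$). The paper itself states this observation without proof, so there is no alternative argument to compare against; your writeup simply supplies the omitted routine check.
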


\begin{observation}
\label{consequences.of.stratn.of.genuine.Cpn.Z.mods}
We unpack the following consequences of \Cref{thm.stratn.of.genuine.Cpn.Z.mods}.

\begin{enumerate}

\item\label{macrocosm.consequences.of.stratn.of.genuine.Cpn.Z.mods}

At the macrocosm level, applying Theorems S.\ref{strat:macrocosm.thm} \and S.\ref{strat:thm.s.m.reconstrn} along with Observation S.\ref{strat:obs.strict.stratn.gives.macrocosm.regluing.over.sdP} to the strict symmetric monoidal stratification \Cref{s.m.stratn.of.gen.Cpn.Z.mods}, we obtain an identification
\begin{align*}
\Mod^{\gen \Cyclic_{p^n}}_\ZZ
& \simeq
\Glue^\otimes(\Mod^{\gen \Cyclic_{p^n}}_\ZZ)
:=
\lim^\rlax_{[n]} \left( \GD^\otimes( \Mod^{\gen \Cyclic_{p^n}}_\ZZ ) \right)
\simeq
\Gamma_{[n]^\op} \left( \GD^\otimes( \Mod^{\gen \Cyclic_{p^n}}_\ZZ )^\cocartdual \right)
\\
& \simeq
\lim \left(
\begin{tikzcd}[ampersand replacement=\&]
\Mod^{\htpy \Cyclic_{p^n}}_\ZZ
\arrow{rd}[sloped, swap]{(-)^{\st \Cyclic_p}}
\&
\Ar \left(\Mod^{\htpy \Cyclic_{p^{n-1}}}_\ZZ \right)
\arrow{d}{t}
\arrow{rd}[sloped, swap]{(-)^{\st \Cyclic_p} \circ s}
\&
\cdots
\arrow{d}{t}
\&
\cdots
\arrow{rd}[sloped, swap]{(-)^{\st \Cyclic_p} \circ s}
\&
\Ar \left( \Mod_\ZZ \right)
\arrow{d}{t}
\\
\&
\Mod^{\htpy \Cyclic_{p^{n-1}}}_\ZZ
\&
\Mod^{\htpy \Cyclic_{p^{n-2}}}_\ZZ
\&
\cdots
\&
\Mod_\ZZ
\end{tikzcd} \right)~,
\end{align*}
where the limit is taken in $\CAlg^\rlax(\Cat)$.

\item\label{microcosm.consequences.of.stratn.of.genuine.Cpn.Z.mods}

We now proceed to the microcosm level.

\begin{enumeratesub}

\item\label{microcosm.consequences.of.stratn.of.genuine.Cpn.Z.mods.gluing.diagrams}

By Observations S.\ref{strat:obs.stratn.strict.iff.all.objects.strict} \and S.\ref{strat:obs.strict.microcosm.gluing.diagram.iff.factors.from.sd.to.TwAr}, the fact that the stratification \Cref{s.m.stratn.of.gen.Cpn.Z.mods} is strict implies that for each object $E \in \Mod^{\gen \Cyclic_{p^n}}_\ZZ$ we have a natural diagram of equivalences
\[
E
\xlongra{\sim}
\lim_{\TwAr([n])}(\gd(E))
\xlongra{\sim}
\lim_{\Zig_n} ( \gd(E) )
~.
\]
In particular, the object $E \in \Mod^{\gen \Cyclic_{p^n}}_\ZZ$ is recorded by the data of its gluing diagram
\[ \left( \begin{tikzcd}
E_0
\arrow[maps to]{rd}
&
E_1
\arrow{d}{\gamma^E_{0,1}}
\arrow[maps to]{rd}
&
\cdots
\arrow{d}{\gamma^E_{1,2}}
&
\cdots
\arrow[maps to]{rd}
&
E_n
\arrow{d}{\gamma^E_{n-1,n}}
\\
&
(E_0)^{\st \Cyclic_p}
&
(E_1)^{\st \Cyclic_p}
&
\cdots
&
(E_{n-1})^{\st \Cyclic_p}
\end{tikzcd} \right)~. \]

\item\label{microcosm.consequences.of.stratn.of.genuine.Cpn.Z.mods.symm.mon.str}

The fact that the stratification \Cref{s.m.stratn.of.gen.Cpn.Z.mods} is symmetric monoidal implies that for any objects $E,F \in \Mod^{\gen \Cyclic_{p^n}}_\ZZ$, their tensor product $(E \otimes F) \in \Mod^{\gen \Cyclic_{p^n}}_\ZZ$ has $(E \otimes F)_s \simeq E_s \otimes F_s$ for all $s \in [n]$ and gluing morphisms the composites
\[
\gamma_{s-1,s}^{E \otimes F}
:
E_s \otimes F_s
\xra{\gamma_{s-1,s}^E \otimes \gamma_{s-1,s}^F}
(E_{s-1})^{\st \Cyclic_p} \otimes (F_{s-1})^{\st \Cyclic_p}
\longra
(E_{s-1} \otimes F_{s-1})^{\st \Cyclic_p}
~,
\]
where the second morphism arises from the fact that the functor
\[
\Mod^{\htpy \Cyclic_{p^{n-s+1}}}_\ZZ
\xra{(-)^{\st \Cyclic_p}}
\Mod^{\htpy \Cyclic_{p^{n-s}}}_\ZZ
\]
is right-laxly symmetric monoidal by \Cref{obs.various.properties.of.tate}\Cref{part.rlax.s.m.str.on.h.to.tate}.

\end{enumeratesub}

\item\label{nanocosm.consequences.of.stratn.of.genuine.Cpn.Z.mods}

At the nanocosm level, again by Observation S.\ref{strat:obs.strict.microcosm.gluing.diagram.iff.factors.from.sd.to.TwAr}, 
for each $E,F \in \Mod^{\gen \Cyclic_{p^n}}_\ZZ$ we have an equivalence
\begin{equation}
\label{nanocosm.equivce.for.g.Cpn.Z}
\ulhom_{\Mod^{\gen \Cyclic_{p^n}}_\ZZ} ( F , E )
\xlongra{\sim}
\lim_{(i \ra j) \in \Zig_n} \ulhom_{\Mod^{\htpy \Cyclic_{p^{n-j}}}_\ZZ} ( F_j , (E_i)^{\st \Cyclic_{p^{j-i}} } )
\end{equation}
in $\Mod_\ZZ$, where (by definition of $\Zig_n$) we have that $j-i$ is either $0$ or $1$. More explicitly, the diagram $\Zig_n \ra \Mod_\ZZ$ whose limit is the target of the equivalence \Cref{nanocosm.equivce.for.g.Cpn.Z} is
\[ \hspace{-2.5cm} \left( \begin{tikzcd}
\ulhom_{\Mod^{\htpy \Cyclic_{p^{n}}}_\ZZ} ( F_0 , E_0 )
\arrow{rd}
&
\ulhom_{\Mod^{\htpy \Cyclic_{p^{n-1}}}_\ZZ} ( F_1 , E_1 )
\arrow{d}
\arrow{rd}
&
\cdots
\arrow{d}
&
\cdots
\arrow{rd}
&
\ulhom_{\Mod_\ZZ} ( F_n , E_n )
\arrow{d}
\\
&
\ulhom_{\Mod^{\htpy \Cyclic_{p^{n-1}}}_\ZZ} ( F_1 , (E_0)^{\st \Cyclic_p} )
&
\ulhom_{\Mod^{\htpy \Cyclic_{p^{n-2}}}_\ZZ} ( F_2 , (E_1)^{\st \Cyclic_p} )
&
\cdots
&
\ulhom_{\Mod_\ZZ} ( F_n , (E_{n-1})^{\st \Cyclic_p} )
\end{tikzcd} \right)~, \]
where for all $1 \leq s \leq n$ the $s\th$ diagonal morphism is the composite
\[
\ulhom_{\Mod^{\htpy \Cyclic_{p^{n-s+1}}}_\ZZ} ( F_{s-1} , E_{s-1} )
\xra{(-)^{\st \Cyclic_p}}
\ulhom_{\Mod^{\htpy \Cyclic_{p^{n-s}}}_\ZZ} ( (F_{s-1})^{\st \Cyclic_p} , (E_{s-1})^{\st \Cyclic_p} )
\xra{\gamma^F_{s-1,s}}
\ulhom_{\Mod^{\htpy \Cyclic_{p^{n-s}}}_\ZZ} ( F_s , (E_{s-1})^{\st \Cyclic_p} )
\]
and the $s\th$ vertical morphism is
\[
\ulhom_{\Mod^{\htpy \Cyclic_{p^{n-s}}}_\ZZ} ( F_s , E_s )
\xra{\gamma^E_{s-1,s}}
\ulhom_{\Mod^{\htpy \Cyclic_{p^{n-s}}}_\ZZ} ( F_s , (E_{s-1})^{\st \Cyclic_p} )
~.
\]

\end{enumerate}
\end{observation}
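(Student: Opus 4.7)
The plan is to deduce each item by feeding Theorem \ref{thm.stratn.of.genuine.Cpn.Z.mods} into the successive reconstruction theorems from \cite{AMR-strat} at the macrocosm, microcosm, and nanocosm levels, and then simplifying the resulting limit diagrams using strictness together with the fact that the composite gluing functors all vanish.

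For part \ref{macrocosm.consequences.of.stratn.of.genuine.Cpn.Z.mods}, I will invoke the symmetric monoidal reconstruction theorem (Theorem S.\ref{strat:thm.s.m.reconstrn}, together with the macrocosm Theorem S.\ref{strat:macrocosm.thm}) applied to the symmetric monoidal geometric stratification \Cref{s.m.stratn.of.gen.Cpn.Z.mods}. This gives the identification with $\Glue^\otimes$, defined as a right-lax limit of the symmetric monoidal gluing diagram $\GD^\otimes(\Mod_\ZZ^{\gen \Cyclic_{p^n}})$. Because Theorem \ref{thm.stratn.of.genuine.Cpn.Z.mods} asserts that this stratification is strict, Observation S.\ref{strat:obs.strict.stratn.gives.macrocosm.regluing.over.sdP} lets me rewrite the right-lax limit over $[n]$ as an (honest) limit of sections of the cocartesian-dualized diagram over the subdivision $\sd([n]) = [n]^\op$. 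Unwinding this: sections over $\sd([n])$ assign an object to each vertex and an arrow in the next stratum to each edge $(s-1) < s$. The strata are the $\Mod^{\htpy \Cyclic_{p^{n-s}}}_\ZZ$ and the gluing functors are $(-)^{\st \Cyclic_p}$ by Theorem \ref{thm.stratn.of.genuine.Cpn.Z.mods}, which produces exactly the displayed diagram with the arrow categories $\Ar(\Mod^{\htpy \Cyclic_{p^{n-s}}}_\ZZ)$ mapping via $\ev_1$ and $(-)^{\st \Cyclic_p} \circ \ev_0$.

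For part \ref{microcosm.consequences.of.stratn.of.genuine.Cpn.Z.mods}, I use Observation S.\ref{strat:obs.stratn.strict.iff.all.objects.strict}, which characterizes strict stratifications by the property that each object's microcosm gluing diagram factors through $\TwAr(\pos)$ (rather than only the appropriate right-lax twisted arrow category), and Observation S.\ref{strat:obs.strict.microcosm.gluing.diagram.iff.factors.from.sd.to.TwAr}, which then identifies the reconstruction limit. Since all composites $\Gamma_{s-1,s} \circ \Gamma_{s-2,s-1}$ (and higher) are zero by Theorem \ref{thm.stratn.of.genuine.Cpn.Z.mods}, the values of $\gd(E)$ on any morphism $(i \ra j) \in \TwAr([n])$ with $j - i \geq 2$ are zero, so the limit over $\TwAr([n])$ is computed by the limit over the subposet $\Zig_n \subseteq \TwAr([n])$ on edges of length $\leq 1$; the initiality of this inclusion (relative to the relevant cofinality condition for limits over posets with a zero) is the routine check that should be noted here. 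This gives item \ref{microcosm.consequences.of.stratn.of.genuine.Cpn.Z.mods.gluing.diagrams}. Item \ref{microcosm.consequences.of.stratn.of.genuine.Cpn.Z.mods.symm.mon.str} then follows from the fact that the stratification is symmetric monoidal, so the microcosm reconstruction is compatible with tensor products; the final right-laxly symmetric monoidal structure map on $(-)^{\st \Cyclic_p}$ supplied by Observation \ref{obs.various.properties.of.tate}\ref{part.rlax.s.m.str.on.h.to.tate} is what produces the second arrow in the composite defining $\gamma^{E \otimes F}_{s-1,s}$.

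For part \ref{nanocosm.consequences.of.stratn.of.genuine.Cpn.Z.mods}, the nanocosm-level consequence of the reconstruction theorem computes mapping spectra in $\Mod^{\gen \Cyclic_{p^n}}_\ZZ$ as a limit of mapping spectra in the strata, indexed by $\TwAr([n])$, with $(i \ra j)$ contributing $\ulhom_{\Mod^{\htpy \Cyclic_{p^{n-j}}}_\ZZ}(F_j, \Gamma_i^j(E_i))$; by the same vanishing of long composites, this limit reduces to one over $\Zig_n$, and by Theorem \ref{thm.stratn.of.genuine.Cpn.Z.mods} the surviving gluing functors for $j - i = 1$ are $(-)^{\st \Cyclic_p}$. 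The only thing left is to identify the two classes of edge maps in the diagram: the vertical edges $(j \ra j) \to ((j-1) \ra j)$ are computed by postcomposition with the gluing morphism $\gamma^E_{s-1,s}$ in $E$, while the diagonal edges $(i \ra i) \to (i \ra (i+1))$ are computed by first applying the right-laxly monoidal structure map of $(-)^{\st \Cyclic_p}$ on mapping objects and then precomposing with $\gamma^F_{s-1,s}$ in $F$. The main obstacle will be bookkeeping: ensuring that the reindexing from $\TwAr([n])$ to $\Zig_n$ is genuinely a cofinality-cum-vanishing argument and that the variance on the $F$-side versus the $E$-side is correctly tracked in the nanocosm diagram. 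Once these are laid out, all three items follow directly from the reconstruction package in \cite{AMR-strat}.
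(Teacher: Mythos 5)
Your proposal is correct and follows essentially the same route as the paper: the statement is an unpacking of \Cref{thm.stratn.of.genuine.Cpn.Z.mods} via exactly the cited results of \cite{AMR-strat} (the macrocosm and symmetric monoidal reconstruction theorems plus the strictness observations), with the passage from $\TwAr([n])$ to $\Zig_n$ handled either by the initiality of the inclusion $\Zig_n \hookra \TwAr([n])$ (the paper's tacit route) or, as you note, by the vanishing of the values on arrows of length $\geq 2$ — both of which are valid here. The only quibble is your parenthetical identification ``$\sd([n]) = [n]^\op$'', which is not literally correct (the subdivision is a larger poset); this is harmless, since the regluing-over-$\sd(\pos)$ statement is cited from the companion paper and your explicit unwinding of the sections category into the displayed limit with the arrow categories $\Ar(\Mod^{\htpy \Cyclic_{p^{n-s}}}_\ZZ)$ is the right one.
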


\begin{remark}
In what follows, we record a number of basic facts about categorical fixedpoints of genuine $\Cyclic_{p^n}$-$\ZZ$-modules. In effect, we obtain these by applying the discussion of \S S.\ref{strat:subsection.categorical.fixedpoints} (regarding categorical fixedpoints of genuine $G$-spectra for an arbitrary finite group $G$). The present situation is simpler both because $\Cyclic_{p^n}$ is abelian and because we work $\ZZ$-linearly.
\end{remark}

\begin{observation}
\label{obs.inclusion.of.cat.fixedpts}
For any $0 \leq a < n$, the inclusion morphisms
\[
E^{\Cyclic_{p^{a+1}}}
\xra{\inc}
E^{\Cyclic_{p^a}}
\]
for $E \in \Mod^{\gen \Cyclic_{p^n}}_\ZZ$ define a natural transformation
\begin{equation}
\label{nat.trans.as.inclusion.of.categorical.fixedpoints}
\begin{tikzcd}[row sep=0.5cm, column sep=1.5cm]
&
\Mod^{\htpy \Cyclic_{p^{n-a-1}}}_\ZZ
\arrow{dd}{\triv}[swap, xshift=-0.8cm]{\rotatebox{50}{$\Leftarrow$}}
\\
\Mod^{\gen \Cyclic_{p^n}}_\ZZ
\arrow{ru}[sloped]{(-)^{\Cyclic_{p^{a+1}}}}
\arrow{rd}[sloped, swap]{(-)^{\Cyclic_{p^a}}}
\\
&
\Mod^{\htpy \Cyclic_{p^{n-a}}}_\ZZ
\end{tikzcd}~.
\end{equation}
Indeed, the natural transformation in diagram \Cref{nat.trans.as.inclusion.of.categorical.fixedpoints} is corepresented by the morphism
\[
\ZZ \brax{ \Cyclic_{p^n}/\Cyclic_{p^a} \longra \Cyclic_{p^n} / \Cyclic_{p^{a+1}} }
\]
in $\Mod^{\gen \Cyclic_{p^n}}_\ZZ$.
\end{observation}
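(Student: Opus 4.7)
The plan is to prove \Cref{obs.inclusion.of.cat.fixedpts} by corepresentability, following the hint given in the concluding sentence of the statement itself. The essential input is the Mackey-functor identification $\Mod^{\gen \Cyclic_{p^n}}_\ZZ \simeq \Mack_{\Cyclic_{p^n}}(\Mod_\ZZ)$ of \Cref{obs.mackey.for.genuine.G.objects}, which identifies categorical fixedpoints with evaluation on orbits (equipped with the residual Weyl actions).

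First, I would verify that for each $0 \leq b \leq n$ the free genuine $\Cyclic_{p^n}$-$\ZZ$-module $\ZZ\brax{\Cyclic_{p^n}/\Cyclic_{p^b}}$ corepresents categorical $\Cyclic_{p^b}$-fixedpoints together with their residual homotopy $\Weyl(\Cyclic_{p^b}) = \Cyclic_{p^{n-b}}$-action; concretely,
\[
\ulhom_{\Mod^{\gen \Cyclic_{p^n}}_\ZZ}\bigl(\ZZ\brax{\Cyclic_{p^n}/\Cyclic_{p^b}}, E\bigr) \simeq E^{\Cyclic_{p^b}}
\]
in $\Mod^{\htpy \Cyclic_{p^{n-b}}}_\ZZ$, naturally in $E$. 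This is immediate from the commutative triangle at the end of \Cref{obs.mackey.for.genuine.G.objects} specialized to $H = \Cyclic_{p^b}$, since $\ZZ\brax{-}$ carries $G$-orbits to the corepresenting free objects in $\Mod^{\gen \Cyclic_{p^n}}_\ZZ$.

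Next, I would apply $\ulhom(-,E)$ to the $\Cyclic_{p^n}$-equivariant quotient of orbits $q : \Cyclic_{p^n}/\Cyclic_{p^a} \twoheadrightarrow \Cyclic_{p^n}/\Cyclic_{p^{a+1}}$. The map $q$ is $\Cyclic_{p^n}$-equivariant but intertwines the left-translation Weyl actions on source and target only through the canonical surjection $\Cyclic_{p^{n-a}} \twoheadrightarrow \Cyclic_{p^{n-a-1}}$ --- which is precisely the restriction functor $\triv$ between the relevant homotopy module $\infty$-categories. Tensoring with $\ZZ$ produces a morphism $\ZZ\brax{q}$ in $\Mod^{\gen \Cyclic_{p^n}}_\ZZ$, and applying $\ulhom(-,E)$ combined with Step~1 yields a morphism $\triv\bigl(E^{\Cyclic_{p^{a+1}}}\bigr) \to E^{\Cyclic_{p^a}}$ in $\Mod^{\htpy \Cyclic_{p^{n-a}}}_\ZZ$, which agrees with the inclusion of \Cref{defn.inc.and.trf} by construction. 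Functoriality in $E$ then packages these maps into the claimed natural transformation of functors $\Mod^{\gen \Cyclic_{p^n}}_\ZZ \to \Mod^{\htpy \Cyclic_{p^{n-a}}}_\ZZ$.

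The only point requiring genuine care, rather than a real obstacle, is the bookkeeping of Weyl equivariance on the corepresenting side: the two objects $\ZZ\brax{\Cyclic_{p^n}/\Cyclic_{p^a}}$ and $\ZZ\brax{\Cyclic_{p^n}/\Cyclic_{p^{a+1}}}$ carry residual actions by the \emph{different} groups $\Cyclic_{p^{n-a}}$ and $\Cyclic_{p^{n-a-1}}$, and one must check that $\ZZ\brax{q}$ intertwines these through the canonical surjection. This reduces to the elementary observation that left multiplication by any $\sigma \in \Cyclic_{p^n}$ commutes with $q$ and descends compatibly to both Weyl quotients, which is automatic since $\Cyclic_{p^n}$ is abelian.
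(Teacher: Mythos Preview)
Your proposal is correct and follows exactly the approach indicated by the paper: the observation is not given a separate proof there, and its justification is precisely the corepresentability sentence you unpack, relying on \Cref{obs.mackey.for.genuine.G.objects} and the definition of $\inc$ in \Cref{defn.inc.and.trf}. Your attention to the Weyl-equivariance bookkeeping is appropriate and handled correctly.
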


\begin{observation}
\label{obs.transfer.of.cat.fixedpts}
For any $0 \leq a < n$, the transfer morphisms
\[
E^{\Cyclic_{p^a}}
\xlongra{\trf}
E^{\Cyclic_{p^{a+1}}}
\]
for $E \in \Mod^{\gen \Cyclic_{p^n}}_\ZZ$ define a natural transformation 
\begin{equation}
\label{nat.trans.as.transfer.of.categorical.fixedpoints}
\begin{tikzcd}[row sep=0.5cm, column sep=1.5cm]
&
\Mod^{\htpy \Cyclic_{p^{n-a-1}}}
\arrow{dd}{\triv}[swap, xshift=-0.8cm]{\rotatebox{50}{$\Rightarrow$}}
\\
\Mod^{\gen \Cyclic_{p^n}}_\ZZ
\arrow{ru}[sloped]{(-)^{\Cyclic_{p^{a+1}}}}
\arrow{rd}[sloped, swap]{(-)^{\Cyclic_{p^a}}}
\\
&
\Mod^{\htpy \Cyclic_{p^{n-a}}}
\end{tikzcd}~.
\end{equation}
Indeed, the natural transformation in diagram \Cref{nat.trans.as.transfer.of.categorical.fixedpoints} is corepresented by the morphism
\begin{equation}
\label{morphism.corepresenting.transfer.in.gCpn.Z.mods}
\ZZ \brax{ \Cyclic_{p^n}/\Cyclic_{p^{a+1}} }
\longra
\ZZ \brax{ \Cyclic_{p^n}/\Cyclic_{p^{a}} }
\end{equation}
in $\Mod^{\gen \Cyclic_{p^n}}_\ZZ$ obtained by applying the functor $\Mod^{\gen \Cyclic_{p^{a+1}}}_\ZZ \xra{\Ind_{\Cyclic_{p^{a+1}}}^{\Cyclic_{p^n}}} \Mod^{\gen \Cyclic_{p^n}}_\ZZ$ to the morphism
\[
\ZZ
\simeq
\ZZ \brax{ \Cyclic_{p^{a+1}} / \Cyclic_{p^{a+1}} }
\longra
\ZZ \brax{ \Cyclic_{p^{a+1}} / \Cyclic_{p^{a}} }
\simeq
\coInd_{\Cyclic_{p^a}}^{\Cyclic_{p^{a+1}}} ( \ZZ \brax{ \Cyclic_{p^a} / \Cyclic_{p^a} } )
\]
corresponding to the identity morphism
\[
\Res_{\Cyclic_{p^a}}^{\Cyclic_{p^{a+1}}} ( \ZZ \brax{  \Cyclic_{p^{a+1}} / \Cyclic_{p^{a+1}} } )
\longra
\ZZ \brax{ \Cyclic_{p^a} / \Cyclic_{p^a} }
\]
in $\Mod^{\gen \Cyclic_{p^a}}_\ZZ$.
\end{observation}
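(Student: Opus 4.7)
The plan is to leverage the Mackey functor description $\Mod^{\gen \Cyclic_{p^n}}_\ZZ \simeq \Mack_{\Cyclic_{p^n}}(\Mod_\ZZ)$ from \Cref{obs.mackey.for.genuine.G.objects}, under which $E^{\Cyclic_{p^a}}$ is literally the value of $E$ on the orbit $\Cyclic_{p^n}/\Cyclic_{p^a} \in \Burn_{\Cyclic_{p^n}}$, with its residual $\Weyl(\Cyclic_{p^a}) \cong \Cyclic_{p^{n-a}}$-action coming from the automorphisms of this orbit in $\Burn_{\Cyclic_{p^n}}$. The enriched Yoneda lemma then identifies the fixedpoint functors as
\[
(-)^{\Cyclic_{p^a}} \simeq \ulhom_{\Mod^{\gen \Cyclic_{p^n}}_\ZZ}\!\left(\ZZ\brax{\Cyclic_{p^n}/\Cyclic_{p^a}},\,-\right),
\]
and similarly for $\Cyclic_{p^{a+1}}$. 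Accordingly, any natural transformation between these (with the appropriate equivariance) is determined by a morphism in $\Mod^{\gen \Cyclic_{p^n}}_\ZZ$ between the corepresenting objects going in the reverse direction.

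First, I would construct the transfer natural transformation directly from the Mackey functor structure. The $\Cyclic_{p^n}$-map $q : \Cyclic_{p^n}/\Cyclic_{p^a} \to \Cyclic_{p^n}/\Cyclic_{p^{a+1}}$ determines a morphism in $\Burn_{\Cyclic_{p^n}}$ via the span $(\Cyclic_{p^n}/\Cyclic_{p^a}) \xla{\id} (\Cyclic_{p^n}/\Cyclic_{p^a}) \xra{q} (\Cyclic_{p^n}/\Cyclic_{p^{a+1}})$; evaluating $E$ on this span yields the transfer $E^{\Cyclic_{p^a}} \xra{\trf} E^{\Cyclic_{p^{a+1}}}$, obviously natural in $E$. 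The equivariance depicted in diagram \Cref{nat.trans.as.transfer.of.categorical.fixedpoints} is then automatic from the fact that this span is equivariant for the action of $\Weyl(\Cyclic_{p^{a+1}}) \cong \Cyclic_{p^{n-a-1}}$ (acting on both orbits through the quotient $\Cyclic_{p^{n-a}} \twoheadrightarrow \Cyclic_{p^{n-a-1}}$ in the source). By the enriched Yoneda lemma, this natural transformation is corepresented by the unique morphism $\ZZ\brax{\Cyclic_{p^n}/\Cyclic_{p^{a+1}}} \to \ZZ\brax{\Cyclic_{p^n}/\Cyclic_{p^a}}$ in $\Mod^{\gen \Cyclic_{p^n}}_\ZZ$ corresponding to this span under the equivalence $\Env^\idem(\Burn_{\Cyclic_{p^n}}) \simeq ((\Mod^{\gen \Cyclic_{p^n}}_\ZZ)^\omega)^\op$ from \Cref{obs.mackey.for.genuine.G.objects}.

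It then remains to identify this corepresenting morphism with the one described in the observation, obtained by applying $\Ind_{\Cyclic_{p^{a+1}}}^{\Cyclic_{p^n}}$ to the unit map $\ZZ \to \ZZ\brax{\Cyclic_{p^{a+1}}/\Cyclic_{p^a}}$. Under the Mackey functor description, $\Ind_{\Cyclic_{p^{a+1}}}^{\Cyclic_{p^n}}$ corresponds to pushforward of finite $G$-sets along $\Cyclic_{p^{a+1}} \hookra \Cyclic_{p^n}$, which sends $\Cyclic_{p^{a+1}}/\Cyclic_{p^b} \mapsto \Cyclic_{p^n}/\Cyclic_{p^b}$ for $b \leq a+1$; thus the induced morphism is visibly the one corresponding to the above span. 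The main obstacle is notational: one must carefully track the residual Weyl group actions through the equivalences and adjunctions, and verify that the $\Cyclic_{p^{n-a-1}}$-equivariance of the Burnside span matches the equivariance extracted from the induced morphism via Yoneda. Once the bookkeeping is correctly set up, however, the agreement is essentially tautological, following from the compatibility of the Mackey functor picture with the Ind-Res adjunction together with the standard description of pushforwards of transitive $G$-sets.
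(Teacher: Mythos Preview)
Your proposal is correct and matches the paper's approach. In fact, the paper presents this as an \emph{Observation} with no separate proof: the statement itself is the justification, relying on the Mackey-functor description of \Cref{obs.mackey.for.genuine.G.objects} (under which the transfer is by definition evaluation on the ``wrong-way'' span determined by $\Cyclic_{p^n}/\Cyclic_{p^a} \to \Cyclic_{p^n}/\Cyclic_{p^{a+1}}$, exactly as you describe) together with the enriched Yoneda identification of the corepresenting morphism. Your unpacking is precisely the reasoning the paper leaves implicit.

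One small imprecision: the equivalence you cite, $\Env^\idem(\Burn_{\Cyclic_{p^n}}) \simeq ((\Mod^{\gen \Cyclic_{p^n}}_\ZZ)^\omega)^\op$, is not quite right as stated---\Cref{obs.mackey.for.genuine.G.objects} gives this for $\Spectra^{\gen G}$, and the compact objects of $\Mod^{\gen \Cyclic_{p^n}}_\ZZ$ are a $\ZZ$-linearized version. This does not affect your argument, since the corepresenting objects $\ZZ\brax{\Cyclic_{p^n}/\Cyclic_{p^a}}$ and the relevant morphism between them are still determined by the span in $\Burn_{\Cyclic_{p^n}}$ in the way you indicate.
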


\begin{definition}
\label{defn.htrf}
The \bit{homotopy transfer} (\bit{for $\Cyclic_p$}) is the morphism
\[
\id_{\Mod^{\htpy \Cyclic_p}_\ZZ}
\xra{\htrf_{\Cyclic_p}}
\triv
\circ
(-)^{\htpy \Cyclic_p}
\]
in $\Fun^\ex(\Mod^{\htpy \Cyclic_p}_\ZZ,\Mod^{\htpy \Cyclic_p}_\ZZ)$ that is corepresented by the morphism
\begin{equation}
\label{morphism.coreping.htrf}
\ZZ
\longra
\ZZ^\htpy \brax{ \Cyclic_p / e }
~,
\end{equation}
i.e.\! the morphism $U\Cref{morphism.corepresenting.transfer.in.gCpn.Z.mods}$ in the case that $n=1$ and $a=0$. We may simply write
\[
\htrf
:=
\htrf_{\Cyclic_p}(E)
\]
for its component at an object $E \in \Mod^{\htpy \Cyclic_p}_\ZZ$ (in line with the notation introduced in \Cref{defn.inc.and.trf}).
\end{definition}

\begin{observation}
\label{obs.htrf.as.a.composite}
The morphism \Cref{morphism.coreping.htrf} is between discrete objects in $\Mod^{\htpy \Cyclic_p}_\ZZ$ (i.e.\! abelian groups with $\Cyclic_p$-action) which are cyclic as discrete $\ZZ[\Cyclic_p]$-modules, and it is characterized by the fact that it carries $1 \in \ZZ$ to the element
\[
N
:=
(1 + \sigma + \cdots + \sigma^{p-1} )
\in
\ZZ^\htpy \brax{\Cyclic_p/e}
\]
(see \Cref{defn.norm.elt}). So, for any object $E \in \Mod^{\htpy \Cyclic_p}_\ZZ$, the morphism $\ulhom_{\Mod^{\htpy \Cyclic_p}_\ZZ}(\Cref{morphism.coreping.htrf},E)$ is the composite
\[
E
\longra
E_{\htpy \Cyclic_p}
\xra{\Nm_{\Cyclic_p}(E)}
E^{\htpy \Cyclic_p}
~.
\]
\end{observation}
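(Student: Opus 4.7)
The plan is to unpack \Cref{morphism.coreping.htrf} using the concrete description of the morphism \Cref{morphism.corepresenting.transfer.in.gCpn.Z.mods} from \Cref{obs.transfer.of.cat.fixedpts}, then apply internal hom into an arbitrary homotopy $\Cyclic_p$-module $E$ and identify the resulting map via the standard chain-level formula for the norm. In outline: (i) identify the image of $1 \in \ZZ$ under \Cref{morphism.coreping.htrf}; (ii) identify the source and target of $\ulhom(\Cref{morphism.coreping.htrf}, E)$ as $E$ and $\triv(E^{\htpy \Cyclic_p})$ respectively; (iii) use the norm cofiber sequence to factor the resulting map.

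For step (i), recall from \Cref{obs.transfer.of.cat.fixedpts} (specialized to $n=1$, $a=0$ and then pushed forward along $U$) that \Cref{morphism.coreping.htrf} is obtained by adjointness from the identity of $\ZZ \in \Mod_\ZZ$ under the $\Res \dashv \coInd$ adjunction between $\Mod^{\htpy \Cyclic_p}_\ZZ$ and $\Mod_\ZZ$. Explicitly, $\ZZ^\htpy \brax{\Cyclic_p/e} \simeq \coInd_e^{\Cyclic_p}(\ZZ)$, and the unit $\eta_\ZZ : \ZZ \to \coInd_e^{\Cyclic_p}(\Res_e^{\Cyclic_p}(\ZZ))$ sends $1$ to the function $g \mapsto g \cdot 1 = 1$, which upon the canonical identification $\coInd_e^{\Cyclic_p}(\ZZ) \cong \ZZ^\htpy\brax{\Cyclic_p/e}$ corresponds exactly to $N = 1 + \sigma + \cdots + \sigma^{p-1}$. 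Since both objects are discrete and cyclic as $\ZZ[\Cyclic_p]$-modules (being $\pi_0$ of $\Sigma^\infty(\Cyclic_p/\Cyclic_p)_+ \otimes \ZZ$ and $\Sigma^\infty(\Cyclic_p/e)_+ \otimes \ZZ$ respectively), this determines the morphism uniquely.

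For steps (ii) and (iii), recall that for finite $G$ (here $G = \Cyclic_p$) the object $\ZZ^\htpy\brax{\Cyclic_p/e} \simeq \Ind_e^{\Cyclic_p}(\ZZ)$ is self-dual in $\Mod^{\htpy \Cyclic_p}_\ZZ$ (Wirthmüller/ambidexterity), so there are canonical equivalences
\[
\ulhom_{\Mod^{\htpy \Cyclic_p}_\ZZ}(\ZZ^\htpy\brax{\Cyclic_p/e}, E) \simeq E
\quad\text{and}\quad
\ulhom_{\Mod^{\htpy \Cyclic_p}_\ZZ}(\ZZ, E) \simeq \triv(E^{\htpy \Cyclic_p}) .
\]
Under the first identification, the corepresented morphism in question is the map $E \to \triv(E^{\htpy \Cyclic_p})$ given on underlying $\ZZ$-modules by $x \mapsto \sum_{g \in \Cyclic_p} g \cdot x$, since precomposition with the map $\ZZ \to \ZZ^\htpy\brax{\Cyclic_p/e}$ sending $1 \mapsto N$ evaluates a $\Cyclic_p$-equivariant map $\phi : \ZZ^\htpy\brax{\Cyclic_p/e} \to E$ at the element $N$. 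The classical chain-level formula for the norm map (cf.\ the construction that produces $\Nm_{\Cyclic_p}$ from Tate's vanishing diagonal) states that $\Nm_{\Cyclic_p}(E) : E_{\htpy \Cyclic_p} \to E^{\htpy \Cyclic_p}$ is precisely $[x] \mapsto \sum_{g} g \cdot x$, so the composite with the canonical quotient $E \to E_{\htpy \Cyclic_p}$ agrees with the map just computed.

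The only potentially tricky point is to ensure that the self-duality identifications used in step (ii) are naturally compatible with the coaugmentation $\ZZ \to \ZZ^\htpy\brax{\Cyclic_p/e}$ corresponding to $1 \mapsto N$; this is essentially the content of the projection formula for $\Ind_e^{\Cyclic_p} \simeq \coInd_e^{\Cyclic_p}$ applied to the unit of $\Res \dashv \coInd$, and is a routine (though notationally unpleasant) verification.
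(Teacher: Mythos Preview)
Your proposal is correct and essentially matches the paper's treatment: the paper states this as a self-evident Observation with the reasoning embedded in the statement (identify that $1 \mapsto N$, then recognize multiplication by $N$ as the composite of the quotient to orbits followed by the norm). Your version is more detailed; the only minor point is that invoking Wirthm\"uller/ambidexterity is heavier than needed, since the identification $\ulhom_{\Mod^{\htpy \Cyclic_p}_\ZZ}(\ZZ[\Cyclic_p], E) \simeq \Res^{\Cyclic_p}_e E$ follows directly from the free--forget adjunction, and the $\triv$ in your expression for $\ulhom_{\Mod^{\htpy \Cyclic_p}_\ZZ}(\ZZ, E)$ is slightly misplaced (that hom is just $E^{\htpy \Cyclic_p} \in \Mod_\ZZ$).
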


\begin{observation}
\label{obs.describe.corepresenting.map.for.transfer.for.Cpn}
Fix any $0 \leq a < n$. We claim that for any $s \in [n]$, applying the functor
\[
\Mod^{\gen \Cyclic_{p^n}}_\ZZ \xra{\Phi^{\Cyclic_{p^s}}} \Mod^{\htpy \Cyclic_{p^{n-s}}}_\ZZ
\]
to the morphism \Cref{morphism.corepresenting.transfer.in.gCpn.Z.mods} gives
\begin{itemize}

\item the morphism
\[
0
\longra
0
\]
when $s>a+1$;

\item the morphism
\[
\ZZ^\htpy \brax{ \Cyclic_{p^{n-s}} / \Cyclic_{p^{a+1-s}} }
\longra
0
\]
when $s=a+1$; and

\item the morphism
\begin{equation}
\label{morphism.coreping.hfps.of.htrf}
\ZZ^\htpy \brax{ \Cyclic_{p^{n-s}} / \Cyclic_{p^{a+1-s}} }
\longra
\ZZ^\htpy \brax{ \Cyclic_{p^{n-s}} / \Cyclic_{p^{a-s}} }
\end{equation}
when $s \leq a$.

\end{itemize}
The only nontrivial case to verify is when $s \leq a$. Then, the morphism \Cref{morphism.coreping.hfps.of.htrf} is between discrete objects in $\Mod^{\htpy \Cyclic_{p^{n-s}}}_\ZZ$ (i.e.\! abelian groups with $\Cyclic_{p^{n-s}}$-action) which are cyclic as discrete $\ZZ[\Cyclic_{p^{n-s}}]$-modules, and it is characterized by the fact that it carries the element $1 \in \ZZ^\htpy \brax{ \Cyclic_{p^{n-s}} / \Cyclic_{p^{a+1-s}}}$ to the element
\[
N
:=
(1 + \sigma + \cdots + \sigma^{p-1} )
\in
\ZZ^\htpy \brax{\Cyclic_{p^{n-s}}/\Cyclic_{p^{a-s}}}
~.
\]
So by \Cref{obs.htrf.as.a.composite}, the morphism \Cref{morphism.coreping.hfps.of.htrf} corepresents the natural morphism
\begin{equation}
\label{the.composite.that.is.htrf}
E^{\htpy \Cyclic_{p^{a-s}}}
\xra{\htrf_{\Cyclic_p}(E)^{\htpy \Cyclic_{p^{a-s}}}}
E^{\htpy \Cyclic_{p^{a+1-s}}}
\end{equation}
in $\Mod^{\htpy \Cyclic_{p^{n-a}}}_\ZZ$ for any object $E \in \Mod^{\htpy \Cyclic_{p^{n-s}}}_\ZZ$.
\end{observation}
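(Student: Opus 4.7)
The plan is to prove each case of the claim by directly computing the geometric $\Cyclic_{p^s}$-fixedpoints of the free genuine $\Cyclic_{p^n}$-$\ZZ$-modules comprising the source and target of \Cref{morphism.corepresenting.transfer.in.gCpn.Z.mods}, tracking the induced transfer, and then invoking \Cref{obs.htrf.as.a.composite} to identify the corepresented natural morphism.

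First, since $\Phi^{\Cyclic_{p^s}}$ is symmetric monoidal and for any finite $\Cyclic_{p^n}$-set $X$ satisfies $\Phi^{\Cyclic_{p^s}}(\Sigma^\infty_G X_+) \simeq \Sigma^\infty X^{\Cyclic_{p^s}}_+$ (with $\Cyclic_{p^{n-s}} \cong \Cyclic_{p^n}/\Cyclic_{p^s}$ acting on $X^{\Cyclic_{p^s}}$ via the Weyl group identification), one obtains
\[
\Phi^{\Cyclic_{p^s}}(\ZZ\brax{X}) \simeq \ZZ^\htpy\brax{X^{\Cyclic_{p^s}}}
\]
in $\Mod^{\htpy \Cyclic_{p^{n-s}}}_\ZZ$. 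For $X = \Cyclic_{p^n}/\Cyclic_{p^j}$, the abelianness of $\Cyclic_{p^n}$ forces the stabilizer of each coset to be $\Cyclic_{p^j}$, so $X^{\Cyclic_{p^s}}$ equals $X$ (identified as the $\Cyclic_{p^{n-s}}$-set $\Cyclic_{p^{n-s}}/\Cyclic_{p^{j-s}}$) when $s \leq j$ and is empty when $s > j$. This immediately yields the claimed sources and targets, settling the cases $s > a+1$ and $s = a+1$.

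For the remaining case $s \leq a$, I would use that $\Phi^{\Cyclic_{p^s}}$, being symmetric monoidal, preserves dualizability data and hence carries the transfer of the $p$-fold cover $\Cyclic_{p^n}/\Cyclic_{p^a} \to \Cyclic_{p^n}/\Cyclic_{p^{a+1}}$ to the transfer of its $\Cyclic_{p^s}$-fixed cover $\Cyclic_{p^{n-s}}/\Cyclic_{p^{a-s}} \to \Cyclic_{p^{n-s}}/\Cyclic_{p^{a+1-s}}$. Both source and target of the resulting morphism are discrete $\ZZ[\Cyclic_{p^{n-s}}]$-modules cyclic over the group ring (generated by $1$), and the transfer sends each coset to the sum of its preimages under the cover. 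Choosing a lift $\sigma$ of a generator of $\Cyclic_{p^{a+1-s}}/\Cyclic_{p^{a-s}} \cong \Cyclic_p$, the preimage of $e\Cyclic_{p^{a+1-s}}$ is exactly $\{\sigma^i\Cyclic_{p^{a-s}} : 0 \leq i < p\}$, whence $1 \mapsto N = 1 + \sigma + \cdots + \sigma^{p-1}$, as asserted. Applying \Cref{obs.htrf.as.a.composite} (which identifies the morphism $\ZZ \to \ZZ^\htpy\brax{\Cyclic_p/e}$ sending $1$ to $N$ as corepresenting $\htrf_{\Cyclic_p}$), and using that the relevant $\Cyclic_{p^{n-s}}$-action factors through the quotient $\Cyclic_{p^{n-s}} \to \Cyclic_{p^{a+1-s}}/\Cyclic_{p^{a-s}}$, then identifies the corepresented natural morphism as $\htrf_{\Cyclic_p}(E)^{\htpy \Cyclic_{p^{a-s}}}$.

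The principal subtlety is the compatibility used in the second step: that $\Phi^{\Cyclic_{p^s}}$ carries the transfer of a finite $\Cyclic_{p^n}$-cover to the transfer of its $\Cyclic_{p^s}$-fixed cover. While this is a standard consequence of the symmetric monoidality of $\Phi^{\Cyclic_{p^s}}$ together with the intrinsic construction of transfers from dualizability data, a careful formulation is needed; once this is granted, the rest is an elementary enumeration of preimages in a $p$-fold cover of cosets.
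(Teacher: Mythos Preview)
Your proposal is correct and follows essentially the same approach as the paper. The paper treats this as an Observation with the justification woven into the statement itself (noting that only the case $s \leq a$ is nontrivial, identifying the morphism by where it sends the generator $1$, and invoking \Cref{obs.htrf.as.a.composite}); your write-up simply fills in the implicit details, using the standard fact that $\Phi^{\Cyclic_{p^s}}(\ZZ\brax{X}) \simeq \ZZ^\htpy\brax{X^{\Cyclic_{p^s}}}$ on objects and that symmetric monoidal functors preserve transfers via dualizability data.
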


\begin{notation}
\label{notn.htpy.trf}
For any $0 \leq s \leq a < n$ and any $E \in \Mod^{\htpy \Cyclic_{p^{n-s}}}_\ZZ$, we may simply write
\[
\htrf
:=
\htrf_{\Cyclic_p}(E)^{\htpy \Cyclic_{p^{a-s}}}
\]
for the morphism \Cref{the.composite.that.is.htrf}.
\end{notation}

\begin{observation}
\label{obs.nullhtpy.of.htrf.followed.by.Q}
For any $i \geq 0$ and any $E \in \Mod^{\htpy \Cyclic_{p^{i+1}}}_\ZZ$, we have a canonical commutative diagram
\[
\begin{tikzcd}[row sep=1.5cm, column sep=1.5cm]
E
\arrow{d}
\arrow{rd}[sloped]{\htrf_{\Cyclic_p}(E)}
\\
E_{\htpy \Cyclic_p}
\arrow{r}[swap]{\Nm_{\Cyclic_p}(E)}
\arrow{d}
&
E^{\htpy \Cyclic_p}
\arrow{d}{\sQ_{\Cyclic_p}(E)}
\\
0
\arrow{r}
&
E^{\st \Cyclic_p}
\end{tikzcd}
\]
in $\Mod^{\htpy \Cyclic_{p^i}}_\ZZ$, in which the upper triangle commutes by \Cref{obs.htrf.as.a.composite} and the square is the defining pushout.
\end{observation}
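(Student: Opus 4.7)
The plan is to simply paste together two already-established facts: the definition of $\htrf_{\Cyclic_p}(E)$ and the defining cofiber sequence for the Tate construction. Both pieces are identified in the statement itself (``the upper triangle commutes by \Cref{obs.htrf.as.a.composite} and the square is the defining pushout''), so the only work is to verify that everything is natural in $E$ and equivariant with respect to the residual $\Cyclic_{p^{i+1}}/\Cyclic_p \cong \Cyclic_{p^i}$-action.

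First, by \Cref{obs.htrf.as.a.composite}, the morphism $\htrf_{\Cyclic_p}(E) \colon E \longrightarrow E^{\htpy \Cyclic_p}$ is \emph{by construction} the composite of the canonical morphism $E \longrightarrow E_{\htpy \Cyclic_p}$ (adjoint to the identity on $E$) with the norm $\Nm_{\Cyclic_p}(E)$. This gives a canonical filler for the upper triangle. All three morphisms here are natural in $E \in \Mod^{\htpy \Cyclic_{p^{i+1}}}_\ZZ$ and carry a canonical residual $\Cyclic_{p^i}$-equivariance obtained by parametrizing over the $\sB \Cyclic_p$-bundle $\sB \Cyclic_{p^{i+1}} \to \sB \Cyclic_{p^i}$, as in \Cref{obs.various.properties.of.tate}.

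Second, the lower-right square is a pushout by the very definition of the $\Cyclic_p$-Tate construction: it is the image under evaluation at $E$ of the pushout square defining $(-)^{\st \Cyclic_p}$ as the cofiber of $\Nm_{\Cyclic_p}$, considered in the functor $\infty$-category $\Fun(\Mod^{\htpy \Cyclic_{p^{i+1}}}_\ZZ, \Mod^{\htpy \Cyclic_{p^i}}_\ZZ)$ where it is naturally $\Cyclic_{p^i}$-equivariant by \Cref{obs.various.properties.of.tate}. Pasting this pushout square underneath the upper triangle produces the asserted commutative diagram in $\Mod^{\htpy \Cyclic_{p^i}}_\ZZ$.

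Since both constituent pieces are canonical and natural in $E$, there is no obstacle; the observation is essentially a formal consequence of unwinding \Cref{defn.htrf} along with the definition of $(-)^{\st \Cyclic_p}$. No step requires genuine computation.
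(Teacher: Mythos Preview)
Your proposal is correct and matches the paper's own justification exactly: the paper treats this as an observation with no separate proof, embedding the entire argument in the statement itself (``the upper triangle commutes by \Cref{obs.htrf.as.a.composite} and the square is the defining pushout''). You have simply unpacked those two clauses, together with the equivariance from \Cref{obs.various.properties.of.tate}, which is precisely what the paper intends.
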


\begin{prop}
\label{prop.fixedpts.res.and.trf.for.general.Cpn.Z.mods}
Fix any genuine $\Cyclic_{p^n}$-$\ZZ$-module $E \in \Mod^{\gen \Cyclic_{p^n}}_\ZZ$.
\begin{enumerate}

\item\label{prop.fixedpts.res.and.trf.for.general.Cpn.Z.mods.the.fixedpts}

For any $0 \leq a \leq n$, its $\Cyclic_{p^a}$-fixedpoints $E^{\Cyclic_{p^a}} \in \Mod^{\htpy \Cyclic_{p^{n-a}}}_\ZZ$ is the limit of the diagram
\[
\Zig_a
\xra{\sD_a(E)}
\Mod^{\htpy \Cyclic_{p^{n-a}}}_\ZZ
\]
given by
\begin{equation}
\label{zigzag.diagram.whose.limit.is.genuine.Cpa.fixedpoints}
\begin{tikzcd}[row sep=2cm, column sep=1.5cm]
(E_0)^{\htpy \Cyclic_{p^a}}
\arrow{rd}[sloped, swap]{\sQ_{\Cyclic_p}(E_0)^{\htpy \Cyclic_{p^{a-1}}}}
&
(E_1)^{\htpy \Cyclic_{p^{a-1}}}
\arrow{rd}[sloped, swap]{\sQ_{\Cyclic_p}(E_1)^{\htpy \Cyclic_{p^{a-2}}}}
\arrow{d}[swap, pos=0.2]{(\gamma_{0,1}^E)^{\htpy \Cyclic_{p^{a-1}}}}
&
\cdots
\arrow{d}[swap, pos=0.2]{(\gamma_{1,2}^E)^{\htpy \Cyclic_{p^{a-2}}}}
&
\cdots
\arrow{rd}[sloped, swap]{\sQ_{\Cyclic_p}(E_{a-1})^{\htpy \Cyclic_{p^0}}}
&
(E_a)^{\htpy \Cyclic_{p^0}}
\arrow{d}[swap, pos=0.2]{(\gamma_{a-1,a}^E)^{\htpy \Cyclic_{p^0}}}
\\
&
((E_0)^{\st \Cyclic_p})^{\htpy \Cyclic_{p^{a-1}}}
&
((E_1)^{\st \Cyclic_p})^{\htpy \Cyclic_{p^{a-2}}}
&
\cdots
&
((E_{a-1})^{\st \Cyclic_p})^{\htpy \Cyclic_{p^0}}
\end{tikzcd}
~.
\end{equation}
Moreover, for any $0 \leq a < n$ the functors $\sD_a(E)$ and $\sD_{a+1}(E)$ participate in a commutative square
\begin{equation}
\label{square.relating.DaE.and.DaplusoneE}
\begin{tikzcd}[row sep=1.5cm, column sep=1.5cm]
\Zig_a
\arrow[hook]{r}{\Zig_{i \mapsto i}}
\arrow{d}[swap]{\sD_a(E)}
&
\Zig_{a+1}
\arrow{d}{\sD_{a+1}(E)}
\\
\Mod^{\htpy \Cyclic_{p^{n-a}}}
\arrow{r}[swap]{(-)^{\htpy \Cyclic_p}}
&
\Mod^{\htpy \Cyclic_{p^{n-a-1}}}
\end{tikzcd}
\end{equation}
(in which the functor $\Zig_{i \mapsto i}$ is the evident fully faithful inclusion).

\item\label{prop.fixedpts.res.and.trf.for.general.Cpn.Z.mods.the.restriction}

\newcommand{\septtfromcdotsforhtpical}{0.8cm}
\newcommand{\sepcdotsfromcdotsforhtpical}{1.5cm}

For any $0 \leq a < n$, the inclusion morphism
\[
E^{\Cyclic_{p^{a+1}}}
\xra{\inc}
E^{\Cyclic_{p^a}}
\]
in $\Mod^{\htpy \Cyclic_{p^{n-a}}}_\ZZ$ is the limit of the morphism
\begin{equation}
\label{general.description.of.inclusion.map.via.zigzags}
\begin{tikzcd}[row sep=1cm, column sep=-0.2cm]
(E_0)^{\htpy \Cyclic_{p^{a+1}}}
\arrow{rd}
\arrow{dd}[swap, pos=0.7]{\hinc}
&[\septtfromcdotsforhtpical]
&[\sepcdotsfromcdotsforhtpical]
\cdots
\arrow{ld}
\arrow{rd}
\arrow{dd}[swap, pos=0.7]{\hinc}
&[\septtfromcdotsforhtpical]
&
(E_a)^{\htpy \Cyclic_{p^1}}
\arrow{ld}
\arrow{rd}
\arrow{dd}[swap, pos=0.7]{\hinc}
&
&
(E_{a+1})^{\htpy \Cyclic_{p^0}}
\arrow{ld}
\arrow{dd}
\\
&
\cdots
\arrow{dd}[swap, pos=0.4]{\hinc}
&
&
((E_{a-1})^{\st \Cyclic_p})^{\htpy \Cyclic_{p^1}}
\arrow{dd}[swap, pos=0.4]{\hinc}
&
&
((E_a)^{\st \Cyclic_p})^{\htpy \Cyclic_{p^0}}
\arrow{dd}
\\
(E_0)^{\htpy \Cyclic_{p^a}}
\arrow{rd}
&
&
\cdots
\arrow{ld}
\arrow{rd}
&
&
(E_a)^{\htpy \Cyclic_{p^0}}
\arrow{ld}
\arrow{rd}
&
&
0
\arrow{ld}
\\
&
\cdots
&
&
((E_{a-1})^{\st \Cyclic_p})^{\htpy \Cyclic_{p^0}}
&
&
0
\end{tikzcd}
\end{equation}
in $\Fun ( \Zig_{a+1} , \Mod^{\htpy \Cyclic_{p^{n-a}}} )$ in which the nontrivial non-vertical morphisms are as in diagram \Cref{zigzag.diagram.whose.limit.is.genuine.Cpa.fixedpoints}.\footnote{Recall from \Cref{notn.htpy.inc} that $\hinc$ denotes an inclusion morphism among homotopy fixedpoints.}\footnote{That is, it arises from the functoriality of limits for the diagram
\[ \begin{tikzcd}[row sep=1.5cm, column sep=1.5cm, ampersand replacement=\&]
\Zig_{a}
\arrow{d}[swap]{\sD_{a}(E)}
\arrow[hook]{r}{\Zig_{i \mapsto i}}
\&
\Zig_{a+1}
\arrow{d}{\sD_{a+1}(E)}
\\
\Mod^{\htpy \Cyclic_{p^{n-a}}}_\ZZ
\arrow{r}{(-)^{\htpy \Cyclic_p}}[swap, xshift=-0.5cm, yshift=-0.5cm]{\rotatebox{30}{$\Leftarrow$}}
\arrow{d}[swap]{\id}
\&
\Mod^{\htpy \Cyclic_{p^{n-a-1}}}_\ZZ
\arrow{ld}[sloped, swap]{\triv}
\\
\Mod^{\htpy \Cyclic_{p^{n-a}}}_\ZZ
\end{tikzcd} \]
consisting of the commutative square \Cref{square.relating.DaE.and.DaplusoneE} and the counit of the adjunction $\triv \adj (-)^{\htpy \Cyclic_p}$.}

\item\label{prop.fixedpts.res.and.trf.for.general.Cpn.Z.mods.the.transfer}

For any $0 \leq a < n$, the transfer morphism
\[
E^{\Cyclic_{p^a}}
\xra{\trf}
E^{\Cyclic_{p^{a+1}}}
\]
in $\Mod^{\htpy \Cyclic_{p^{n-a}}}_\ZZ$ is the limit of the morphism
\begin{equation}
\label{general.description.of.trasfer.map.via.zigzags}
\begin{tikzcd}[row sep=1cm, column sep=-0.2cm]
(E_0)^{\htpy \Cyclic_{p^{a+1}}}
\arrow{rd}
&[\septtfromcdotsforhtpical]
&[\sepcdotsfromcdotsforhtpical]
\cdots
\arrow{ld}
\arrow{rd}
&[\septtfromcdotsforhtpical]
&
(E_a)^{\htpy \Cyclic_{p^1}}
\arrow{ld}
\arrow{rd}
&
&
(E_{a+1})^{\htpy \Cyclic_{p^0}}
\arrow{ld}
\\
&
\cdots
&
&
((E_{a-1})^{\st \Cyclic_p})^{\htpy \Cyclic_{p^1}}
&
&
((E_a)^{\st \Cyclic_p})^{\htpy \Cyclic_{p^0}}
\\
(E_0)^{\htpy \Cyclic_{p^a}}
\arrow{rd}
\arrow{uu}[pos=0.3]{\htrf}
&
&
\cdots
\arrow{ld}
\arrow{rd}
\arrow{uu}[pos=0.3]{\htrf}
&
&
(E_a)^{\htpy \Cyclic_{p^0}}
\arrow{ld}
\arrow{rd}
\arrow{uu}[pos=0.3]{\htrf}
&
&
0
\arrow{ld}
\arrow{uu}
\\
&
\cdots
\arrow{uu}[pos=0.6]{\htrf}
&
&
((E_{a-1})^{\st \Cyclic_p})^{\htpy \Cyclic_{p^0}}
\arrow{uu}[pos=0.6]{\htrf}
&
&
0
\arrow{uu}
\end{tikzcd}
\end{equation}
in $\Fun ( \Zig_{a+1} , \Mod^{\htpy \Cyclic_{p^{n-a}}})$, in which the nontrivial non-vertical morphisms are as in diagram \Cref{zigzag.diagram.whose.limit.is.genuine.Cpa.fixedpoints} and in which the second to rightmost square commutes by \Cref{obs.nullhtpy.of.htrf.followed.by.Q}.\footnote{That is, it arises from the functoriality of limits for a diagram
\[ \begin{tikzcd}[ampersand replacement=\&, row sep=1.5cm, column sep=1.5cm]
\Zig_a
\arrow[hook]{r}{\Zig_{i \mapsto i}}
\arrow{d}[swap]{\sD_a(E)}
\&
\Zig_{a+1}
\arrow{d}{\sD_{a+1}(E)}
\arrow{ld}[sloped]{(\Zig_{i \mapsto i})_* ( \sD_a(E)) }[sloped, swap, xshift=0.1cm, yshift=-0.5cm]{\Downarrow}
\\
\Mod^{\htpy \Cyclic_{p^{n-a}}}_\ZZ
\&
\Mod^{\htpy \Cyclic_{p^{n-a-1}}}_\ZZ
\arrow{l}{\triv}
\end{tikzcd} \]
in which upper triangle is a right Kan extension (i.e.\! extension by zero).}

\end{enumerate}
\end{prop}

\begin{proof}
Part \Cref{prop.fixedpts.res.and.trf.for.general.Cpn.Z.mods.the.fixedpts} is a special case of \Cref{consequences.of.stratn.of.genuine.Cpn.Z.mods}\Cref{nanocosm.consequences.of.stratn.of.genuine.Cpn.Z.mods} (with the commutative square \Cref{square.relating.DaE.and.DaplusoneE} following from its naturality in $F \in \Mod^{\gen \Cyclic_{p^n}}_\ZZ$). Part \Cref{prop.fixedpts.res.and.trf.for.general.Cpn.Z.mods.the.restriction} then follows by applying Observation S.\ref{strat:obs.restriction.between.catl.fixedpts}, and part \Cref{prop.fixedpts.res.and.trf.for.general.Cpn.Z.mods.the.transfer} follows by applying Observations S.\ref{strat:obs.transfer.between.catl.fixedpts}, \ref{obs.transfer.of.cat.fixedpts}, \and \ref{obs.describe.corepresenting.map.for.transfer.for.Cpn}.
\end{proof}

\section{The Picard group of genuine $\Cyclic_{p^n}$-$\ZZ$-modules}
\label{section.Pic.of.gen.Cpn.Z.mods}

In this section, we prove the first part of \Cref{intro.thm.Pic.of.gen.Cpn.Z.mods}: namely, we use our symmetric monoidal stratification of $\Mod^{\gen \Cyclic_{p^n}}_\ZZ$ (as described in \Cref{section.stratn.of.gen.Cpn.Z.mods}) to compute its Picard group (under the assumption that $p$ is odd). In order to help the reader appreciate the flow of the computation, we state its output as quickly as possible as \Cref{thm.picard.group}, and then proceed to unpack it; its proof appears at the end of the section.

\begin{local}
Through \Cref{section.Cpn.eqvrt.cohomology} (i.e.\! for the remainder of the paper except for \Cref{section.homological.algebra}), we assume that the prime $p$ that was fixed in \Cref{notn.fix.prime.and.n.et.al}\Cref{part.fix.prime.and.n} is odd.
\end{local}

\begin{notation}
Given a symmetric monoidal $\infty$-category $\cC$, we write $\pi_0 ( \iota_0 ( \cC ) )$ for the commutative monoid of equivalence classes of objects of $\cC$. We make no notational distinction between an object of $\cC$ and its equivalence class in $\pi_0 (\iota_0 ( \cC ) )$.
\end{notation}

\begin{definition}
The \bit{Picard group} of a symmetric monoidal $\infty$-category $\cC$ is the maximal subgroup
\[
\Pic(\cC)
\subseteq
\pi_0 ( \iota_0 ( \cC ) )
~,
\]
i.e.\! the abelian group of those equivalence classes of objects $c \in \cC$ such that there exist an object $c' \in \cC$ and an equivalence $c \otimes c' \simeq \uno_\cC$. We refer to an object of $\cC$ whose equivalence class lies in $\Pic(\cC)$ as a \bit{Picard element}.
\end{definition}

\begin{theorem}
\label{thm.picard.group}
The abelian group homomorphism
\begin{equation}
\label{abgrp.hom.to.Pic.of.gen.Cpn.Z.mods}
\ZZ^{\oplus (n+1)}
\oplus
\left(
\bigoplus_{s=1}^n
(\ZZ/p^{n-s+1})^\times
\right)
\xra{L^\bullet}
\Pic(\Mod^{\gen \Cyclic_{p^n}}_\ZZ)
\end{equation}
of \Cref{notn.factorize.K.bullet.to.L.bullet} is surjective, and descends to an isomorphism
\[
\ZZ^{\oplus (n+1)}
\oplus
\left(
\bigoplus_{s=1}^n
(\ZZ/p^{n-s+1})^\times / \{ \pm 1 \}
\right)
\xlongra{\cong}
\Pic(\Mod^{\gen \Cyclic_{p^n}}_\ZZ)
~.
\]
\end{theorem}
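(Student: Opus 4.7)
The plan hinges on the reconstruction \Cref{thm.stratn.of.genuine.Cpn.Z.mods}, which presents any $L \in \Mod^{\gen \Cyclic_{p^n}}_\ZZ$ by its geometric fixedpoints $L_s := \Phi^{\Cyclic_{p^s}}(L) \in \Mod^{\htpy \Cyclic_{p^{n-s}}}_\ZZ$ and its gluing maps $\gamma^L_{s-1,s} \colon L_s \to (L_{s-1})^{\st \Cyclic_p}$, with no higher coherences and stratum-wise symmetric monoidal structure (see \Cref{consequences.of.stratn.of.genuine.Cpn.Z.mods}). Hence $L \in \Pic(\Mod^{\gen \Cyclic_{p^n}}_\ZZ)$ forces each $L_s \in \Pic(\Mod^{\htpy \Cyclic_{p^{n-s}}}_\ZZ)$ together with an invertibility condition on the gluing data to be pinned down.

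The first step is to compute $\Pic(\Mod^{\htpy \Cyclic_{p^{n-s}}}_\ZZ) \cong \ZZ$, generated by $\Sigma \ZZ$. An invertible Borel-equivariant object has invertible underlying $\ZZ$-module, hence is a shift $\Sigma^{\beta_s}\ZZ$; the space of Borel $\Cyclic_{p^{n-s}}$-refinements of this shift is $\Map(\sB \Cyclic_{p^{n-s}}, \sB\{\pm 1\})$, which is contractible because $p$ is odd. Thus each $L_s \simeq \Sigma^{\beta_s}\ZZ$ (with trivial action), yielding the vector $\vec{\beta}\in \ZZ^{\oplus(n+1)}$.

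The central step is the analysis of gluing maps. After fixing identifications $L_s \simeq \Sigma^{\beta_s}\ZZ$, each $\gamma^L_{s-1,s}$ corresponds to an element of $\pi_{\beta_s-\beta_{s-1}}((\ZZ^{\st \Cyclic_p})^{\htpy \Cyclic_{p^{n-s}}})$, and the key claim is that $L$ is invertible if and only if each such element is a homogeneous unit of the graded ring $A_s := \pi_*((\ZZ^{\st \Cyclic_p})^{\htpy \Cyclic_{p^{n-s}}})$. The forward direction uses the stratum-wise tensor formula: via the right-lax monoidal structure on Tate (\Cref{obs.various.properties.of.tate}), the gluing map of $L \otimes L^{-1}$ is the product of those of $L$ and $L^{-1}$, and it must match the (invertible) gluing map of the unit. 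Conversely, given homogeneous units, an explicit $L^{-1}$ is built with strata $\Sigma^{-\beta_s}\ZZ$ and gluing maps given by the multiplicative inverses, and invertibility is verified stratum-wise. By \Cref{lem.compute.htpy.ring.of.htpy.of.tate}, the homogeneous units of $A_s$ form $\ZZ \times (\ZZ/p^{n-s+1})^\times$; the $\ZZ$ factor records the degree and is absorbed into $\vec{\beta}$, while $(\ZZ/p^{n-s+1})^\times$ gives the $\vec{\gamma}$ contribution. This establishes surjectivity of $L^\bullet \colon \tilde{\PP} \twoheadrightarrow \Pic(\Mod^{\gen \Cyclic_{p^n}}_\ZZ)$.

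Finally, the kernel is determined by the automorphism ambiguity $\Aut(\Sigma^{\beta_s}\ZZ) = \{\pm 1\}$ in the identifications of the first step. A sign vector $(\epsilon_0,\dots,\epsilon_n) \in \{\pm 1\}^{n+1}$ preserves $\vec{\beta}$ and modifies each $\gamma^L_{s-1,s}$ by $\epsilon_s \epsilon_{s-1}$. Since the homomorphism $\{\pm 1\}^{n+1} \twoheadrightarrow \{\pm 1\}^n$, $(\epsilon_0,\ldots,\epsilon_n)\mapsto(\epsilon_0\epsilon_1,\ldots,\epsilon_{n-1}\epsilon_n)$, is surjective, every componentwise sign pattern on $\vec{\gamma}$ is realized by such an ambiguity, so $L^\bullet$ descends to the claimed isomorphism $\PP \xrightarrow{\cong} \Pic(\Mod^{\gen \Cyclic_{p^n}}_\ZZ)$. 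The main obstacle will be the central claim of the third paragraph: rigorously establishing the equivalence between invertibility of a reconstructed object and the homogeneous-unit condition on its gluing maps, which demands careful bookkeeping of the right-lax monoidal structure on the Tate construction when constructing the inverse $L^{-1}$.
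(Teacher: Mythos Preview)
Your approach is essentially the same as the paper's: both identify the strata of a Picard element as shifts of $\ZZ$ (using that $p$ is odd), parametrize the gluing maps by elements of $\pi_*((\ZZ^{\st\Cyclic_p})^{\htpy\Cyclic_{p^{n-s}}})$, and argue that invertibility corresponds to the gluing maps being homogeneous units.

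There are two organizational differences worth noting. First, what you flag as the ``main obstacle'' (proving that $L$ is invertible iff its gluing maps are units) is handled more slickly in the paper: rather than constructing an explicit inverse, the paper observes that the kernel of the monoid map $K^\bullet\colon\MM\to\pi_0\iota_0(\Mod^{\gen\Cyclic_{p^n}}_\ZZ)$ lies inside the subgroup $\tilde{\PP}$ of invertible elements (this is \Cref{lemma.presentations.of.unit.object.in.gen.Cpn.Z.mods}); since the image of $K^\bullet$ contains $\Pic$, any preimage of a Picard element must itself be invertible in $\MM$, giving surjectivity of $L^\bullet$ for free. Second, your sign-ambiguity argument for the kernel is the right intuition but is not quite complete as stated: to know that \emph{every} equivalence $K^{(\vec 0,\vec\gamma)}\simeq K^{(\vec 0,\vec 1)}$ arises from reidentifying the strata by signs, you need $\pi_0$ of the relevant $\Zig_n$-limit of hom-spaces to coincide with the limit of $\pi_0$'s, which requires the vanishing of $\pi_1$ of the lower terms $(\ZZ^{\st\Cyclic_p})^{\htpy\Cyclic_{p^{n-s}}}$. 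The paper makes this explicit in the proof of \Cref{lemma.presentations.of.unit.object.in.gen.Cpn.Z.mods} via the nanocosm formula and \Cref{obs.htpy.ring.of.Z.tCp.hCpnminusr}.
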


\begin{observation}
\label{obs.picard.elts.must.be.shifts.of.Z.at.each.stratum}
For any $s \in [n]$, the functors
\[
\Mod^{\gen \Cyclic_{p^n}}_\ZZ
\xra{\Phi^{\Cyclic_{p^s}}}
\Mod^{\htpy \Cyclic_{p^{n-s}}}_\ZZ
\xra{\fgt}
\Mod_\ZZ
\]
are symmetric monoidal, and therefore carry Picard elements to Picard elements. We use this to make some basic deductions regarding Picard elements of $\Mod^{\gen \Cyclic_{p^n}}_\ZZ$.

First of all, every Picard element of $\Mod_\ZZ$ is equivalent to $\Sigma^\alpha \ZZ$ for some $\alpha \in \ZZ$. Moreover, as a result of the equivalences $\Aut_{\Mod_\ZZ}(\Sigma^\alpha \ZZ) \simeq \Aut_{\Mod_\ZZ}(\ZZ) \simeq \{ \pm 1 \}$ and the fact that $p$ is odd, there are no nontrivial $\Cyclic_{p^{n-s}}$-actions on $\Sigma^\alpha \ZZ \in \Mod_\ZZ$ for any $\alpha \in \ZZ$. So, every Picard element of $\Mod_\ZZ^{\htpy \Cyclic_{p^{n-s}}}$ is equivalent to $\Sigma^\alpha \ZZ$ for some $\alpha \in \ZZ$, where we simply write $\ZZ \in \Mod_\ZZ^{\htpy \Cyclic_{p^{n-s}}}$ for the trivial $\Cyclic_{p^{n-s}}$-action on $\ZZ \in \Mod_\ZZ$ (as specified by \Cref{notn.fix.prime.and.n.et.al}\Cref{part.write.Z.for.trivial.action}).

So, given any Picard element $E \in \Mod^{\gen \Cyclic_{p^n}}_\ZZ$, for each $s \in [n]$ there must exist an equivalence $\Phi^{\Cyclic_s} E \simeq \Sigma^{\alpha_s} \ZZ$ for some $\alpha_s \in \ZZ$. Choosing such equivalences, by \Cref{thm.stratn.of.genuine.Cpn.Z.mods} (and \Cref{consequences.of.stratn.of.genuine.Cpn.Z.mods}\Cref{microcosm.consequences.of.stratn.of.genuine.Cpn.Z.mods}\Cref{microcosm.consequences.of.stratn.of.genuine.Cpn.Z.mods.gluing.diagrams}), this Picard element is recorded by the data of a gluing diagram
\begin{equation}
\label{gluing.diagram.for.K.alpha.gamma}
\left( \begin{tikzcd}
\Sigma^{\alpha_0} \ZZ
\arrow[maps to]{rd}
&
\Sigma^{\alpha_0 + \alpha_1} \ZZ
\arrow{d}{\gamma_1}
\arrow[maps to]{rd}
&
\cdots
\arrow{d}{\gamma_2}
&
\cdots
\arrow[maps to]{rd}
&
\Sigma^{\alpha_0 + \alpha_1 + \cdots + \alpha_n} \ZZ
\arrow{d}{\gamma_n}
\\
&
\Sigma^{\alpha_0} \ZZ^{\st \Cyclic_p}
&
\Sigma^{\alpha_0 + \alpha_1} \ZZ^{\st \Cyclic_p}
&
\cdots
&
\Sigma^{\alpha_0 + \alpha_1 + \cdots + \alpha_{n-1}} \ZZ^{\st \Cyclic_p}
\end{tikzcd} \right)~.
\end{equation}
For all $1 \leq s \leq n$, we consider the $s\th$ gluing morphism of the gluing diagram \Cref{gluing.diagram.for.K.alpha.gamma} as an element
\begin{align*}
\gamma_s
&\in
\pi_0 \ulhom_{\Mod^{\htpy \Cyclic_{p^{n-s}}}_\ZZ} ( \Sigma^{\alpha_0 + \alpha_1 + \cdots + \alpha_s} \ZZ , \Sigma^{\alpha_0 + \alpha_1 + \cdots + \alpha_{s-1}} \ZZ^{\st \Cyclic_p} )
\\
&\cong
\pi_0 \ulhom_{\Mod^{\htpy \Cyclic_{p^{n-s}}}_\ZZ} ( \Sigma^{\alpha_s} \ZZ , \ZZ^{\st \Cyclic_p} )
\\
& \cong
\pi_0 \ulhom_{\Mod_\ZZ} ( \Sigma^{\alpha_s} \ZZ , (\ZZ^{\st \Cyclic_p})^{\htpy \Cyclic_{p^{n-s}}} )
\\
& \cong
\pi_{\alpha_s} ( (\ZZ^{\st \Cyclic_p})^{\htpy \Cyclic_{p^{n-s}}} )
~.
\end{align*}
\end{observation}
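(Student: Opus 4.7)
The plan is to establish each assertion in the observation in turn, reducing everything to the symmetric monoidal reconstruction \Cref{thm.stratn.of.genuine.Cpn.Z.mods} together with the classification of Picard objects in $\Mod_\ZZ$. First, I would record that any symmetric monoidal functor carries Picard elements to Picard elements (since $F(c) \otimes F(c') \simeq F(\uno) \simeq \uno$ whenever $c \otimes c' \simeq \uno$), and observe that both $\Phi^{\Cyclic_{p^s}}$ and the forgetful functor $\Mod_\ZZ^{\htpy \Cyclic_{p^{n-s}}} \to \Mod_\ZZ$ are symmetric monoidal --- the former by \Cref{thm.stratn.of.genuine.Cpn.Z.mods}, the latter by construction.

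Next, I would establish that every Picard element of $\Mod_\ZZ$ is of the form $\Sigma^\alpha \ZZ$ and has automorphism group $\{\pm 1\}$. A Picard element $L$ is dualizable, hence perfect; its endomorphism ring spectrum coincides with the unit $\ZZ$, so $\pi_* L$ is concentrated in a single degree $\alpha$ where it is free of rank one, yielding $L \simeq \Sigma^\alpha \ZZ$. The automorphism statement then follows from $\pi_0 \End_{\Mod_\ZZ}(\ZZ)^\times = \ZZ^\times = \{\pm 1\}$.

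The one substantive step is passing to $\Pic(\Mod_\ZZ^{\htpy \Cyclic_{p^{n-s}}})$. A Picard element there has underlying Picard element $\Sigma^\alpha \ZZ \in \Mod_\ZZ$, so amounts to endowing $\Sigma^\alpha \ZZ$ with a $\Cyclic_{p^{n-s}}$-action; such actions are parametrized by the based mapping space from $\sB \Cyclic_{p^{n-s}}$ into $\sB \Aut_{\Mod_\ZZ}(\Sigma^\alpha\ZZ) \simeq \sB(\ZZ/2)$. Since $p$ is odd, $\Cyclic_{p^{n-s}}$ admits no nontrivial homomorphism to $\ZZ/2$, and $\sB(\ZZ/2)$ is a $1$-type so higher obstructions vanish; hence this mapping space is contractible and the action is necessarily trivial. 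This is the only place where the oddness hypothesis is genuinely used, and it is the main obstacle were one attempting this without that hypothesis.

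Finally, for $E \in \Pic(\Mod^{\gen \Cyclic_{p^n}}_\ZZ)$, each $\Phi^{\Cyclic_{p^s}} E$ is Picard in the $s\th$ stratum, hence equivalent to $\Sigma^{\beta_s} \ZZ$ with trivial action; reindexing via $\alpha_0 := \beta_0$ and $\alpha_s := \beta_s - \beta_{s-1}$ for $s \geq 1$ gives $\Phi^{\Cyclic_{p^s}} E \simeq \Sigma^{\alpha_0 + \alpha_1 + \cdots + \alpha_s} \ZZ$. The microcosm description of the stratification (\Cref{consequences.of.stratn.of.genuine.Cpn.Z.mods}\Cref{microcosm.consequences.of.stratn.of.genuine.Cpn.Z.mods}\Cref{microcosm.consequences.of.stratn.of.genuine.Cpn.Z.mods.gluing.diagrams}) then directly supplies the gluing diagram \Cref{gluing.diagram.for.K.alpha.gamma}, and the identification of $\gamma_s$ as an element of $\pi_{\alpha_s}((\ZZ^{\st\Cyclic_p})^{\htpy \Cyclic_{p^{n-s}}})$ reduces to three standard manipulations: cancelling the common suspension $\Sigma^{\alpha_0+\cdots+\alpha_{s-1}}$ on source and target, applying the adjunction $\triv \dashv (-)^{\htpy \Cyclic_{p^{n-s}}}$ to move the hom out of the trivially-acted $\Sigma^{\alpha_s} \ZZ$ into $\Mod_\ZZ$, and identifying $\pi_0 \ulhom_{\Mod_\ZZ}(\Sigma^{\alpha_s} \ZZ, W) \cong \pi_{\alpha_s} W$.
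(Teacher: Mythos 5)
Your proposal is correct and follows essentially the same route as the paper's own inline argument: symmetric monoidality of $\Phi^{\Cyclic_{p^s}}$ and the forgetful functor, the classification of $\Pic(\Mod_\ZZ)$, triviality of $\Cyclic_{p^{n-s}}$-actions on $\Sigma^\alpha\ZZ$ because $p$ is odd, and then the microcosm gluing description to produce the diagram and identify $\gamma_s$. The only slip is the claim that the mapping space from $\sB\Cyclic_{p^{n-s}}$ to $\sB\Aut_{\Mod_\ZZ}(\Sigma^\alpha\ZZ)\simeq \sB(\ZZ/2)$ is contractible --- it is in fact equivalent to $\sB(\ZZ/2)$, not a point --- but only its connectedness (i.e.\ that $\Cyclic_{p^{n-s}}$ admits no nontrivial homomorphism to $\ZZ/2$) is used, so the conclusion stands.
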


\begin{notation}
\label{notn.potential.picard.elt.K}
Given a pair of elements
\[
\vec{\alpha}
=
(\alpha_0,\ldots, \alpha_n)
\in
\ZZ^{\oplus (n+1)}
\qquad
\text{and}
\qquad
\vec{\gamma}
=
\left( \gamma_s \in \pi_{\alpha_s} ( ( \ZZ^{\st \Cyclic_p} )^{\htpy \Cyclic_{p^{n-s}}} )
\right)_{1 \leq s \leq n}
~,
\]
we write
\[
K^{(\vec{\alpha},\vec{\gamma})} \in \pi_0 ( \iota_0 ( \Mod^{\gen \Cyclic_{p^n}}_\ZZ) )
\]
for the equivalence class of genuine $\Cyclic_{p^n}$-$\ZZ$-module corresponding to the gluing diagram \Cref{gluing.diagram.for.K.alpha.gamma} via \Cref{thm.stratn.of.genuine.Cpn.Z.mods} (and \Cref{consequences.of.stratn.of.genuine.Cpn.Z.mods}\Cref{microcosm.consequences.of.stratn.of.genuine.Cpn.Z.mods}\Cref{microcosm.consequences.of.stratn.of.genuine.Cpn.Z.mods.gluing.diagrams}). Evidently, such pairs $(\vec{\alpha},\vec{\gamma})$ may be equivalently considered as elements of the product
\[
\left( \alpha_0 \in \ZZ , (\gamma_s \in \pi_{\alpha_s}  ( ( \ZZ^{\st \Cyclic_p} )^{\htpy \Cyclic_{p^{n-s}}} ) )_{1 \leq s \leq n} \right)
\in
\ZZ
\times
\prod_{s=1}^n
\pi_* ( ( \ZZ^{\st \Cyclic_p} )^{\htpy \Cyclic_{p^{n-s}}} )_\homog
\]
(where the subscript denotes the restriction to homogeneous elements), through which identification this construction assembles as a function
\[
\ZZ
\times
\prod_{s=1}^n
\pi_* ( ( \ZZ^{\st \Cyclic_p} )^{\htpy \Cyclic_{p^{n-s}}} )_\homog
\xlongra{K^\bullet}
\pi_0 ( \iota_0 ( \Mod^{\gen \Cyclic_{p^n}}_\ZZ ) )
\]
between sets.
\end{notation}

\begin{observation}
\label{obs.multiplicativity.of.K.bullet}
By \Cref{obs.various.properties.of.tate}\Cref{part.rlax.s.m.str.on.h.to.tate}, for each $1 \leq s \leq n$ the object $(\ZZ^{\st \Cyclic_p})^{\htpy \Cyclic_{p^{n-s}}} \in \Mod_\ZZ$ admits a canonical lift to an object of $\CAlg(\Mod_\ZZ)$, so that the graded abelian group $\pi_*((\ZZ^{\st \Cyclic_p})^{\htpy \Cyclic_{p^{n-s}}})$ acquires the structure of a graded-commutative ring. Thereafter, the source of the function
\[
\ZZ
\times
\prod_{s=1}^n
\pi_* ( ( \ZZ^{\st \Cyclic_p} )^{\htpy \Cyclic_{p^{n-s}}} )_\homog
\xlongra{K^\bullet}
\pi_0 ( \iota_0 ( \Mod^{\gen \Cyclic_{p^n}}_\ZZ ) )
\]
obtained in \Cref{notn.potential.picard.elt.K} naturally inherits the structure of a monoid. By \Cref{thm.stratn.of.genuine.Cpn.Z.mods} (and \Cref{consequences.of.stratn.of.genuine.Cpn.Z.mods}\Cref{microcosm.consequences.of.stratn.of.genuine.Cpn.Z.mods}\Cref{microcosm.consequences.of.stratn.of.genuine.Cpn.Z.mods.symm.mon.str}), with respect to this structure the function $K^\bullet$ is a monoid homomorphism. We use these facts without further comment.
\end{observation}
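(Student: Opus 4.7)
The plan is to build on the microcosm consequence of the symmetric monoidal stratification (Theorem \ref{thm.stratn.of.genuine.Cpn.Z.mods}, unpacked as \ref{consequences.of.stratn.of.genuine.Cpn.Z.mods}\ref{microcosm.consequences.of.stratn.of.genuine.Cpn.Z.mods}\ref{microcosm.consequences.of.stratn.of.genuine.Cpn.Z.mods.symm.mon.str}), which explicitly describes the tensor product of two objects of $\Mod^{\gen \Cyclic_{p^n}}_\ZZ$ in terms of gluing data. My strategy is to first pin down the algebra structure on each $(\ZZ^{\st \Cyclic_p})^{\htpy \Cyclic_{p^{n-s}}}$ by constructing it as the image of $\ZZ$ under a right-laxly symmetric monoidal composite, then describe the resulting monoid structure on the source of $K^\bullet$, and finally verify by unwinding definitions that $K^\bullet$ carries tensor product of source elements to tensor product in $\Mod^{\gen \Cyclic_{p^n}}_\ZZ$.

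For the first step, the functor
\[
\Mod^{\htpy \Cyclic_{p^{n-s+1}}}_\ZZ \xra{(-)^{\st \Cyclic_p}} \Mod^{\htpy \Cyclic_{p^{n-s}}}_\ZZ \xra{(-)^{\htpy \Cyclic_{p^{n-s}}}} \Mod_\ZZ
\]
is right-laxly symmetric monoidal: the first functor by \Cref{obs.various.properties.of.tate}\Cref{part.rlax.s.m.str.on.h.to.tate}, and the second as the right adjoint of the symmetric monoidal functor $\triv$. Applying this composite to the trivial algebra $\ZZ$ thus lifts $(\ZZ^{\st \Cyclic_p})^{\htpy \Cyclic_{p^{n-s}}}$ to $\CAlg(\Mod_\ZZ)$, so that its homotopy groups form a graded-commutative ring. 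The source of $K^\bullet$ then acquires an evident monoid structure as the product of the additive monoid $\ZZ$ with the multiplicative monoids of homogeneous elements of each graded-commutative ring $\pi_*((\ZZ^{\st \Cyclic_p})^{\htpy \Cyclic_{p^{n-s}}})$.

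The bulk of the work is to identify, for any $(\vec{\alpha},\vec{\gamma})$ and $(\vec{\alpha}',\vec{\gamma}')$, the tensor product $K^{(\vec{\alpha},\vec{\gamma})} \otimes K^{(\vec{\alpha}',\vec{\gamma}')}$ with $K^{(\vec{\alpha}+\vec{\alpha}',\vec{\gamma}\cdot\vec{\gamma}')}$. At stratum $s$ this is immediate: $\Sigma^{\alpha_0+\cdots+\alpha_s}\ZZ \otimes \Sigma^{\alpha_0'+\cdots+\alpha_s'}\ZZ \simeq \Sigma^{(\alpha_0+\alpha_0')+\cdots+(\alpha_s+\alpha_s')}\ZZ$. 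At the level of gluing maps, the microcosm formula presents $\gamma^{K^{(\vec{\alpha},\vec{\gamma})} \otimes K^{(\vec{\alpha}',\vec{\gamma}')}}_{s-1,s}$ as the composite of $\gamma_s \otimes \gamma_s'$ with the right-lax structure map of $(-)^{\st \Cyclic_p}$. After absorbing the common suspensions this reduces to the morphism
\[
\Sigma^{\alpha_s}\ZZ \otimes \Sigma^{\alpha_s'}\ZZ \xlongra{\gamma_s \otimes \gamma_s'} \ZZ^{\st \Cyclic_p} \otimes \ZZ^{\st \Cyclic_p} \longra (\ZZ \otimes \ZZ)^{\st \Cyclic_p} \simeq \ZZ^{\st \Cyclic_p}
\]
in $\Mod^{\htpy \Cyclic_{p^{n-s}}}_\ZZ$, which is precisely the recipe for the ring product $\gamma_s \cdot \gamma_s' \in \pi_{\alpha_s+\alpha_s'}((\ZZ^{\st \Cyclic_p})^{\htpy \Cyclic_{p^{n-s}}})$ induced by the right-lax symmetric monoidal structure on $(-)^{\st \Cyclic_p} \circ \triv$.

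The one point requiring care (and the main, though modest, obstacle) is confirming that the identification in the previous paragraph genuinely matches the ring multiplication rather than a twisted variant. I expect this to follow from a clean diagram chase: the multiplication on $(\ZZ^{\st \Cyclic_p})^{\htpy \Cyclic_{p^{n-s}}}$ is by construction assembled from exactly the same right-lax structure map of $(-)^{\st \Cyclic_p}$ applied to the unit algebra $\ZZ$, and the further passage through $(-)^{\htpy \Cyclic_{p^{n-s}}}$ is what converts a morphism in $\Mod^{\htpy \Cyclic_{p^{n-s}}}_\ZZ$ into an element of $\pi_*$. Once this compatibility is in hand, multiplicativity of $K^\bullet$ in the $\ZZ$-coordinate is automatic from additivity of suspension under tensor, completing the verification.
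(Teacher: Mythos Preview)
Your proposal is correct and follows essentially the same approach as the paper: the paper treats this as a brief observation justified inline by the cited results (the right-lax symmetric monoidal structure from \Cref{obs.various.properties.of.tate}\Cref{part.rlax.s.m.str.on.h.to.tate} and the microcosm description of the tensor product from \Cref{consequences.of.stratn.of.genuine.Cpn.Z.mods}\Cref{microcosm.consequences.of.stratn.of.genuine.Cpn.Z.mods}\Cref{microcosm.consequences.of.stratn.of.genuine.Cpn.Z.mods.symm.mon.str}), and you have simply unpacked those citations in more detail. Your explicit verification that the composite gluing map computes the ring product is exactly the content the paper is pointing to.
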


\begin{observation}
\label{obs.htpy.ring.of.Z.tCp.hCpnminusr}
By \Cref{lem.compute.htpy.ring.of.htpy.of.tate}, for each $1 \leq s \leq n$ we have an isomorphism
\[
\pi_* ( ( \ZZ^{\st \Cyclic_p})^{\htpy \Cyclic_{p^{n-s}}} )
\cong
(\ZZ/p^{n-s+1})[(c_{n-s+1})^\pm]
\]
of graded-commutative rings, where $|c_{n-s+1}| = -2$.
\end{observation}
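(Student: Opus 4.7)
The plan is to appeal to \Cref{lem.compute.htpy.ring.of.htpy.of.tate}, whose proof lives in \Cref{section.homological.algebra} and proceeds by an explicit chain-level model; here I sketch that strategy. Set $G = \Cyclic_{p^{n-s+1}}$ and $H = \Cyclic_p \subseteq G$, so that the object in question is $(\ZZ^{\st H})^{\htpy G/H}$ for the trivial $H$-action on $\ZZ$, equipped with its residual (also trivial) $G/H \cong \Cyclic_{p^{n-s}}$-action. The ring structure is inherited from the right-lax symmetric monoidal enhancement of the Tate construction (\Cref{obs.various.properties.of.tate}\Cref{part.rlax.s.m.str.on.h.to.tate}).

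First I would model $\ZZ^{\st H}$ by the classical Tate resolution: the doubly-infinite periodic complex of rank-one free $\ZZ[H]$-modules, with alternating differentials $N := 1 + \sigma + \cdots + \sigma^{p-1}$ and $\sigma - 1$ (see \Cref{defn.norm.elt}). Its $\ZZ[H]$-fixedpoints are the doubly-infinite periodic $\ZZ$-complex with alternating differentials $p$ and $0$, whose homology is $(\ZZ/p)$ in each even degree and $0$ in odd degrees, recovering $\pi_*(\ZZ^{\st H}) \cong (\ZZ/p)[t^{\pm}]$ with $|t| = -2$. The ring structure, under the identification with the cup product on Tate cohomology of $H$, is standard.

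Second, I would take $\htpy G/H$-fixedpoints by tensoring this chain-level model over $\ZZ$ with the standard periodic resolution of $\ZZ$ over $\ZZ[G/H]$ (again alternating between the norm element and $\sigma' - 1$ for a generator $\sigma'$ of $G/H$). The resulting double complex carries a filtration whose $E_2$-page is $H^*(G/H;(\ZZ/p)[t^{\pm}])$, and its total complex is quasi-isomorphic to a $c$-periodized version of the cochain complex computing $H^*(G;\ZZ) \cong \ZZ[c]/(p^{n-s+1} c)$ with $|c| = -2$; passing to homology yields $(\ZZ/p^{n-s+1})[c_{n-s+1}^{\pm}]$. The jump in torsion exponent from $\ZZ/p$ to $\ZZ/p^{n-s+1}$ is the classical Bockstein extension for the integral cohomology of $\Cyclic_{p^{n-s+1}}$.

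The main obstacle will be tracking multiplicativity through these two successive chain-level constructions: one must verify that the periodicity class $c_{n-s+1}$ read off from the total complex agrees, up to a unit in $(\ZZ/p^{n-s+1})^\times$, with the image of the cup-product periodicity generator, so that the isomorphism holds as graded-commutative rings and not merely as graded abelian groups. A cleaner but less hands-on alternative would be to identify $(\ZZ^{\st H})^{\htpy G/H}$ conceptually with $\ZZ^{\tate G}$ (for trivial $G$-action) via an iterated-Tate / Greenlees--May argument, using the Tate vanishing of \Cref{obs.tate.vanishing.for.Z.mods} to see that the relevant intermediate norm maps are equivalences, and then quote the classical computation $\pi_*(\ZZ^{\tate \Cyclic_{p^k}}) \cong (\ZZ/p^k)[c^{\pm}]$ with $k = n-s+1$.
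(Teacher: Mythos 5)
Your main route has a genuine gap at the second step: you treat the residual $\Cyclic_{p^{n-s}}$-action on $\ZZ^{\st \Cyclic_p}$ as trivial and then hope to recover the answer from the $E_2$-page $H^*(\Cyclic_{p^{n-s}};(\ZZ/p)[t^{\pm}])$ plus ``classical Bockstein extensions.'' Only the action on homotopy groups is trivial; as an object of $\Mod^{\htpy \Cyclic_{p^{n-s}}}_\ZZ$, $\ZZ^{\st \Cyclic_p}$ is \emph{not} the constant diagram on its underlying $\ZZ$-module. If it were, then (since every object of $\Mod_\ZZ$ is formal, the underlying object splits as a sum of shifted copies of $\ZZ/p$, so $p \cdot \id$ is nullhomotopic on it) every homotopy group of the homotopy fixedpoints would be killed by $p$, so no extensions assembling $\ZZ/p$'s into $\ZZ/p^{n-s+1}$ could exist -- and in fact $\pi_0$ would be an infinite $\FF_p$-vector space rather than $\ZZ/p^{n-s+1}$ (for $s<n$; for $s=n$ the statement is trivial). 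The entire content of the computation lies in the nontrivial Borel-equivariant structure, which your trivial-action double complex discards; the ``Bockstein extension'' you invoke is exactly the thing that has to be proved, and it cannot be read off your $E_2$-page. The paper's \Cref{lem.compute.htpy.ring.of.htpy.of.tate} handles this by modelling $\ZZ^{\st \Cyclic_p}$ \emph{with} its residual action by the complex $\ttT^0_{n-s}$ of \Cref{notation.all.chain.level.stuff}, a $2$-periodic complex of free $\ZZ[\Cyclic_{p^{n-s}}]$-modules with differentials alternating $1-\sigma$ and $pN$ (this is the content of \Cref{lem.ABX.htpy.invariantly}); strict fixedpoints -- legitimate because these complexes are adapted to homotopy fixedpoints, \Cref{lemma.ABX.are.adapted} -- give the complex $\ttT^{n-s}_0$ with differentials alternating $0$ and $p^{n-s+1}$, whence $\ZZ/p^{n-s+1}$ in each even degree.

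Your ``cleaner alternative'' is essentially the right idea and is in fact very close to what the paper does inside the proof of \Cref{lem.ABX.htpy.invariantly}: the norm maps on the induced pieces are equivalences by \Cref{obs.tate.vanishing.for.Z.mods} (and \Cref{obs.if.tate.Cpr.vanishes.then.tate.Cprplusone.vanishes}), and comparing cofiber sequences identifies $(\ZZ^{\st \Cyclic_p})^{\htpy \Cyclic_{p^{n-s}}} \simeq \ZZ^{\st \Cyclic_{p^{n-s+1}}}$, whose homotopy is classical Tate cohomology of the cyclic group. Two caveats, though. First, your notation $\ZZ^{\tate G}$ means, in this paper, the \emph{proper} Tate construction (\Cref{defn.proper.tate}), which for $\ZZ$-modules is $((-)^{\htpy \Cyclic_{p^{n-s}}})^{\st \Cyclic_p}$ and vanishes whenever $n-s+1 \geq 2$ (\Cref{thm.stratn.of.genuine.Cpn.Z.mods}); you mean the ordinary Tate construction $\ZZ^{\st \Cyclic_{p^{n-s+1}}}$ of \Cref{defn.H.tate}. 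The order in which Tate and homotopy fixedpoints are taken is precisely the distinction the whole stratification turns on, so this is not a harmless slip. Second, the statement is one of graded-commutative \emph{rings}, and neither of your routes secures multiplicativity: for the alternative you would still have to check that the cofiber-comparison equivalence is multiplicative. The paper avoids all such bookkeeping with a rigidity argument: by \Cref{obs.various.properties.of.tate}\Cref{part.rlax.s.m.str.on.h.to.tate} there is a ring map from $\pi_*(\ZZ^{\htpy \Cyclic_{p^{n-s+1}}}) \cong \ZZ[c']/(p^{n-s+1}c')$, and the ring structure on $(\ZZ/p^{n-s+1})[c^{\pm}]$ is the unique one lifting the resulting graded module structure. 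Incorporating that last step (or an equivalent multiplicative comparison) would make your alternative route a complete argument.
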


\begin{notation}
\label{notn.introduce.monoid.M}
For each $1 \leq s \leq n$, we define the commutative submonoid
\[
\MM_s
:=
\{ (\alpha ,\gamma) \in \ZZ \oplus \ZZ/p^{n-s+1} : \textup{if } \alpha \textup{ is odd then } \gamma = 0 \}
\subseteq
\left( \ZZ \oplus \ZZ/p^{n-s+1} \right)
:=
\left( (\ZZ,+) \oplus (\ZZ/p^{n-s+1} , ~\cdot~ ) \right)
\]
of the indicated commutative monoid (namely the direct sum of $\ZZ$ (considered as a commutative monoid via addition) and $\ZZ/p^{n-s+1}$ (considered as a commutative monoid via multiplication)). Projection to the first summand gives the commutative monoid $\MM_s$ the structure of a graded-commutative monoid. 
We also define the commutative monoid
\[
\MM
:=
\ZZ \oplus \left( \bigoplus_{s=1}^n \MM_s \right)
~.
\]
Via the injection
\[
\begin{tikzcd}[row sep=0cm]
\MM
:=
&[-1.8cm]
\ZZ \oplus \left( \bigoplus_{s=1}^n \MM_s \right)
&[-1.8cm]
\subseteq
\ZZ \oplus \left( \bigoplus_{s=1}^n \ZZ \oplus (\ZZ/p^{n-s+1})^\times \right)
\cong
&[-1cm]
\ZZ^{\oplus (n+1)} \oplus
\left(
\bigoplus_{s=1}^n
(\ZZ/p^{n-s+1})^\times
\right)
\\
&
\rotatebox{90}{$\in$}
&
&
\rotatebox{90}{$\in$}
\\
&
(\alpha_0 , (\alpha_1,\gamma_1) , \ldots,(\alpha_n,\gamma_n))
\arrow[maps to]{rr}
&
&
((\alpha_0,\ldots,\alpha_n),(\gamma_1,\ldots,\gamma_n))
\end{tikzcd}
~,
\]
we denote an element in $\MM$ as a pair of vectors 
$
(\vec{\alpha},\vec{\gamma}) 
$.
We denote the identity element in this group as $( \vec{0} , \vec{1})$.

\end{notation}

\begin{observation}
\label{obs.identify.monoids.of.homogenous.elts}
For each $1 \leq s \leq n$, by \Cref{obs.htpy.ring.of.Z.tCp.hCpnminusr} we have an isomorphism
\[ \begin{tikzcd}[row sep=0cm]
\MM_s
\arrow{r}{\cong}
&
\pi_* ( ( \ZZ^{\st \Cyclic_p} )^{\htpy \Cyclic_{p^{n-s}}} )_\homog
\\
\rotatebox{90}{$\in$}
&
\rotatebox{90}{$\in$}
\\
(\alpha,\gamma)
\arrow[mapsto]{r}
&
(c_{n-s+1})^{- \frac{\alpha}{2}} \gamma
\end{tikzcd} \]
of graded-commutative monoids, where we take the convention that $(c_{n-s+1})^{- \frac{\alpha}{2}} := 0$ whenever $\alpha$ is odd. Thereafter, we obtain a likewise isomorphism
\[ \begin{tikzcd}[row sep=0cm]
\MM
\arrow{r}{\cong}
&
{\displaystyle
\ZZ
\times
\prod_{s=1}^n
\pi_* ( ( \ZZ^{\st \Cyclic_p} )^{\htpy \Cyclic_{p^{n-s}}} )_\homog
}
\\
\rotatebox{90}{$\in$}
&
\rotatebox{90}{$\in$}
\\
(\vec{\alpha},\vec{\gamma})
\arrow[maps to]{r}
&
\left( \alpha_0 , (c_n)^{- \frac{\alpha_1}{2}} \gamma_1 , \ldots, (c_1)^{- \frac{\alpha_n}{2}} \gamma_n \right)
\end{tikzcd} \]
of monoids. We employ both of these isomorphisms -- and in particular the fact that all monoids under consideration are in fact commutative -- without further comment.
\end{observation}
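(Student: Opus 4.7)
The plan is to deduce both isomorphisms directly from the ring-level identification of \Cref{obs.htpy.ring.of.Z.tCp.hCpnminusr}, by unpacking what the graded-commutative monoid of homogeneous elements looks like in an explicit Laurent-polynomial ring. First, recall that \Cref{obs.htpy.ring.of.Z.tCp.hCpnminusr} provides an isomorphism
\[
\pi_*((\ZZ^{\st\Cyclic_p})^{\htpy\Cyclic_{p^{n-s}}})
\cong
(\ZZ/p^{n-s+1})[(c_{n-s+1})^\pm]
\]
of graded-commutative rings, with $|c_{n-s+1}|=-2$. In particular, the degree-$\alpha$ summand is $(\ZZ/p^{n-s+1})\cdot (c_{n-s+1})^{-\alpha/2}$ if $\alpha$ is even and is $0$ if $\alpha$ is odd.

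The first step is to describe $\pi_*((\ZZ^{\st\Cyclic_p})^{\htpy\Cyclic_{p^{n-s}}})_\homog$ as a graded-commutative monoid. A homogeneous element is a pair $(x,\alpha)$ with $x$ in the degree-$\alpha$ part; multiplication is given by ring multiplication, and grading is additive. When $\alpha$ is even, this set is canonically in bijection with $\ZZ/p^{n-s+1}$, while when $\alpha$ is odd it consists of the single element $0$. Assembling over all $\alpha$ and observing that multiplying by $0$ yields $0$, one sees that this monoid is exactly $\MM_s$ of \Cref{notn.introduce.monoid.M}: the pair $(\alpha,\gamma)\in\MM_s$ records the degree $\alpha$ and the coefficient $\gamma\in\ZZ/p^{n-s+1}$, subject to the constraint that $\gamma=0$ when $\alpha$ is odd. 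The explicit assignment $(\alpha,\gamma)\mapsto (c_{n-s+1})^{-\alpha/2}\gamma$ (with the convention that $(c_{n-s+1})^{-\alpha/2}:=0$ for $\alpha$ odd) is then immediately seen to be grading-preserving, to be a bijection on each degree (it is the identity on coefficients for $\alpha$ even and the unique map to $0$ for $\alpha$ odd), and to be multiplicative: for $\alpha_1,\alpha_2$ both even one has $(c_{n-s+1})^{-\alpha_1/2}\gamma_1\cdot(c_{n-s+1})^{-\alpha_2/2}\gamma_2 = (c_{n-s+1})^{-(\alpha_1+\alpha_2)/2}\gamma_1\gamma_2$, and if either $\alpha_i$ is odd then both sides are $0$ since $\gamma_i=0$ by the defining constraint of $\MM_s$.

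For the second isomorphism, take the direct sum over $s$. The source $\MM=\ZZ\oplus\bigoplus_{s=1}^n\MM_s$ (with the identifications of \Cref{notn.introduce.monoid.M}) corresponds summand by summand to $\ZZ\times\prod_{s=1}^n\pi_*((\ZZ^{\st\Cyclic_p})^{\htpy\Cyclic_{p^{n-s}}})_\homog$: the $\ZZ$-factor maps identically to itself (recording the integer $\alpha_0$), and each $\MM_s$ maps via the isomorphism just established. The assignment $(\vec\alpha,\vec\gamma)\mapsto(\alpha_0,(c_n)^{-\alpha_1/2}\gamma_1,\ldots,(c_1)^{-\alpha_n/2}\gamma_n)$ is therefore a bijection of monoids, as asserted.

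No step here is a genuine obstacle; the only thing to be careful about is the bookkeeping of homogeneous elements across odd and even degrees, which is precisely the constraint built into the definition of $\MM_s$, and the fact that $p$ is odd is used implicitly via \Cref{obs.htpy.ring.of.Z.tCp.hCpnminusr}.
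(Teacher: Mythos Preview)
Your proposal is correct and takes essentially the same approach as the paper: the statement is presented there as an observation with no proof, since it is immediate from the ring isomorphism of \Cref{obs.htpy.ring.of.Z.tCp.hCpnminusr} once one unpacks what homogeneous elements of a Laurent-polynomial ring on a degree-$(-2)$ generator look like. Your careful verification of bijectivity and multiplicativity (including the odd-degree case) is exactly the argument the paper leaves implicit.
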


\begin{observation}
\label{obs.describe.maxl.subgp.of.the.monoid}
For any $1 \leq s \leq n$, the inclusion
\[ \begin{tikzcd}[row sep=0cm]
\ZZ \oplus (\ZZ/p^{n-s+1})^\times
\arrow[hook]{r}
&
\MM_s
\arrow{r}{\cong}
&
\pi_* ( ( \ZZ^{\st \Cyclic_p} )^{\htpy \Cyclic_{p^{n-s}}} )_\homog
\\
\rotatebox{90}{$\in$}
&
\rotatebox{90}{$\in$}
&
\rotatebox{90}{$\in$}
\\
(\beta , \gamma)
\arrow[maps to]{r}
&
(2 \beta , \gamma)
\arrow[maps to]{r}
&
(c_{n-s+1})^{-\beta} \gamma
\end{tikzcd} \]
is that of the subgroup of invertible elements. Thereafter, the inclusion
\[
\hspace{-2cm}
\begin{tikzcd}[row sep=0cm]
&[-1.5cm]
\ZZ^{\oplus (n+1)}
\oplus
\left(
\bigoplus_{s=1}^n
(\ZZ/p^{n-s+1})^\times
\right)
\arrow[hook]{r}
&
\MM
\arrow{r}{\cong}
&
\ZZ \times \prod_{s=1}^n \pi_* ( ( \ZZ^{\st \Cyclic_p} )^{\htpy \Cyclic_{p^{n-s}}} )_\homog
\\
&
\rotatebox{90}{$\in$}
&
\rotatebox{90}{$\in$}
&
\rotatebox{90}{$\in$}
\\
(\vec{\beta},\vec{\gamma})
:=
&
((\beta_0,\ldots,\beta_n),(\gamma_1,\ldots,\gamma_n))
\arrow[maps to]{r}
&
((\beta_0,2\beta_1,\ldots,2\beta_n),(\gamma_1,\ldots,\gamma_n))
\arrow[maps to]{r}
&
\left( \beta_0 , (c_n)^{-\beta_1} \gamma_1 , \ldots, ( c_1 )^{-\beta_n} \gamma_n \right)
\end{tikzcd} \]
is likewise that of the subgroup of invertible elements.
\end{observation}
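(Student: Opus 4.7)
The plan is to directly compute the group of units of each monoid $\MM_s$ by unpacking its definition, and then deduce the assertion for $\MM$ from the elementary fact that units of a finite direct sum of commutative monoids are the direct sum of the units. There is no real obstacle; the argument is essentially a one-line verification once the monoid structure is spelled out.

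Fix $1 \leq s \leq n$. The identity element of $\MM_s$ is $(0,1)$, which indeed lies in $\MM_s$ since $0$ is even. An element $(\alpha,\gamma) \in \MM_s$ is invertible iff there exists $(\alpha',\gamma') \in \MM_s$ with $(\alpha + \alpha', \gamma \gamma') = (0,1)$. The first coordinate forces $\alpha' = -\alpha$, and the second forces $\gamma \in (\ZZ/p^{n-s+1})^\times$ with $\gamma' = \gamma^{-1}$. If $\alpha$ were odd, then $\alpha'= -\alpha$ would also be odd, so membership of $(\alpha',\gamma')$ in $\MM_s$ would force $\gamma' = 0$, contradicting $\gamma \gamma' = 1$; hence $\alpha$ must be even, say $\alpha = 2\beta$. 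Conversely, for any $(2\beta,\gamma)$ with $\gamma \in (\ZZ/p^{n-s+1})^\times$ the element $(-2\beta,\gamma^{-1}) \in \MM_s$ is a two-sided inverse. So the units of $\MM_s$ are precisely the image of the claimed inclusion, and the identification under the isomorphism of \Cref{obs.identify.monoids.of.homogenous.elts} sends $(\beta,\gamma)$ to $(c_{n-s+1})^{-\beta}\gamma$ as stated.

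For the global claim, recall that in any direct sum of commutative monoids an element is invertible iff each coordinate is invertible, so
\[
\MM^\times
=
\ZZ \oplus \bigoplus_{s=1}^n \MM_s^\times
=
\ZZ \oplus \bigoplus_{s=1}^n \bigl( \ZZ \oplus (\ZZ/p^{n-s+1})^\times \bigr)
=
\ZZ^{\oplus (n+1)} \oplus \bigoplus_{s=1}^n (\ZZ/p^{n-s+1})^\times
\]
(using that $(\ZZ,+)$ is already a group, so equal to its own unit subgroup). Assembling the fiberwise inclusions from the previous paragraph with this identification yields precisely the displayed formula $(\vec\beta,\vec\gamma) \mapsto \bigl(\beta_0, (c_n)^{-\beta_1}\gamma_1,\ldots,(c_1)^{-\beta_n}\gamma_n\bigr)$, completing the verification.
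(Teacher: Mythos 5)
Your verification is correct and matches the paper's (implicit) reasoning: the statement is recorded as an Observation with no written proof precisely because it amounts to the elementary unit computation you carry out — an element $(\alpha,\gamma)\in\MM_s$ is invertible iff $\alpha$ is even and $\gamma\in(\ZZ/p^{n-s+1})^\times$, and units of a direct sum of commutative monoids are computed factorwise. Nothing further is needed.
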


\begin{notation}
\label{notn.P.tilde.and.P}
We define the abelian groups $\KK$, $\tilde{\PP}$, and $\PP$ according to the commutative diagram
\[
\begin{tikzcd}[row sep=1.5cm]
\KK :=
&[-2.4cm]
&[-2.3cm]
&[-1cm]
0^{\oplus (n+1)} \oplus \left( \bigoplus_{s=1}^n \{ \pm 1 \}^\times \right)
\arrow{rr}{\cong}
\arrow[hook]{d}
&
&
\left\{ \left( 0 \in \ZZ , \left( \pm 1 \in \pi_0  ( ( \ZZ^{\st \Cyclic_p} )^{\htpy \Cyclic_{p^{n-s}}} ) \right) \right)  \right\}_{1 \leq s \leq n}
\arrow[hook]{d}
\\
&
\tilde{\PP} :=
&
&
\ZZ^{\oplus (n+1)}
\oplus
\left(
\bigoplus_{s=1}^n
(\ZZ/p^{n-s+1})^\times
\right)
\arrow[hook]{r}
\arrow[two heads]{d}
&
\MM
\arrow{r}{\cong}
&
\ZZ
\times
\prod_{s=1}^n
\pi_* ( ( \ZZ^{\st \Cyclic_p} )^{\htpy \Cyclic_{p^{n-s}}} )_\homog
\\
&
&
\PP :=
&
\ZZ^{\oplus (n+1)}
\oplus
\left(
\bigoplus_{s=1}^n
(\ZZ/p^{n-s+1})^\times / \{ \pm 1 \}
\right)
\end{tikzcd}~,\footnote{Actually, $\tilde{\PP}$ and $\PP$ were already implicitly introduced in the statement of \Cref{thm.picard.group}.}
\]
in which the middle horizontal inclusion is that of the subgroup of invertible elements by \Cref{obs.describe.maxl.subgp.of.the.monoid} and the left vertical composite is an exact sequence among abelian groups.
\end{notation}

\begin{notation}
\label{notn.factorize.K.bullet.to.L.bullet}
We write
\begin{equation}
\label{square.defining.L.as.a.factorization.to.Pic}
\begin{tikzcd}[column sep=3.5cm]
\tilde{\PP}
\arrow[hook]{d}
\arrow[dashed]{r}{L^\bullet := ((\vec{\beta},\vec{\gamma}) \longmapsto L^{(\vec{\beta},\vec{\gamma})})}
&
\Pic(\Mod^{\gen \Cyclic_{p^n}}_\ZZ)
\arrow[hook]{d}
\\
\MM
\arrow{r}[swap]{K^\bullet := ((\vec{\alpha},\vec{\gamma}) \longmapsto K^{(\vec{\alpha},\vec{\gamma})})}
&
\pi_0 ( \iota_0 ( \Mod^{\gen \Cyclic_{p^n}}_\ZZ ) )
\end{tikzcd}
\end{equation}
for the induced homomorphism on subgroups of invertible elements.
\end{notation}

\begin{lemma}
\label{lemma.presentations.of.unit.object.in.gen.Cpn.Z.mods}
The kernel of the commutative monoid homomorphism
\[
\MM
\xlongra{K^\bullet}
\pi_0 (\iota_0 ( \Mod^{\gen \Cyclic_{p^n}}_\ZZ ) )
\]
is the subgroup $\KK \subseteq \MM$.
\end{lemma}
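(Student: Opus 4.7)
The plan is to first pin down the gluing diagram of the unit object $\uno = \ul{\ZZ} \in \Mod^{\gen \Cyclic_{p^n}}_\ZZ$ and then analyze when two gluing diagrams with matching strata present equivalent objects. Since each functor $\Phi^{\Cyclic_{p^s}}$ is symmetric monoidal, we have $\Phi^{\Cyclic_{p^s}}(\uno) \simeq \ZZ$ in $\Mod^{\htpy \Cyclic_{p^{n-s}}}_\ZZ$. By the symmetric monoidality of the reconstruction (\Cref{consequences.of.stratn.of.genuine.Cpn.Z.mods}\Cref{microcosm.consequences.of.stratn.of.genuine.Cpn.Z.mods}\Cref{microcosm.consequences.of.stratn.of.genuine.Cpn.Z.mods.symm.mon.str}) together with \Cref{obs.various.properties.of.tate}\Cref{part.rlax.s.m.str.on.h.to.tate}, the $s\th$ gluing morphism of $\uno$ is the right-lax unit map $\ZZ \to \ZZ^{\st \Cyclic_p}$, i.e., the canonical composite $\ZZ \to \ZZ^{\htpy \Cyclic_p} \to \ZZ^{\st \Cyclic_p}$, which corresponds to the multiplicative identity $1 \in \ZZ/p^{n-s+1}$ under the ring isomorphism of \Cref{obs.htpy.ring.of.Z.tCp.hCpnminusr} and the identification of \Cref{obs.identify.monoids.of.homogenous.elts}. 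Hence $\uno = K^{(0,(1,\ldots,1))}$.

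For the inclusion $\KK \subseteq \ker(K^\bullet)$: given any $\vec\gamma \in \{\pm 1\}^n$, define automorphisms $\epsilon_s \in \Aut_{\Mod^{\htpy \Cyclic_{p^{n-s}}}_\ZZ}(\ZZ) = \{\pm 1\}$ inductively by $\epsilon_0 := 1$ and $\epsilon_s := \epsilon_{s-1}\gamma_s$. The commutativity of each square
\[
\begin{tikzcd}[row sep=1cm, column sep=1.3cm]
\ZZ \arrow{r}{\gamma_s} \arrow{d}[swap]{\epsilon_s} & \ZZ^{\st \Cyclic_p} \arrow{d}{\epsilon_{s-1}^{\st \Cyclic_p}} \\
\ZZ \arrow{r}[swap]{1} & \ZZ^{\st \Cyclic_p}
\end{tikzcd}
\]
then reduces to the identity $\epsilon_s = \epsilon_{s-1}\gamma_s$, since $\epsilon_{s-1}^{\st \Cyclic_p}$ acts by scalar multiplication by $\epsilon_{s-1}$. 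By the reconstruction of \Cref{thm.stratn.of.genuine.Cpn.Z.mods}, this assembles into an equivalence $K^{(0,\vec\gamma)} \simeq \uno$ in $\Mod^{\gen \Cyclic_{p^n}}_\ZZ$.

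Conversely, suppose $K^{(\vec\alpha,\vec\gamma)} \simeq \uno$. Applying $\Phi^{\Cyclic_{p^s}}$ gives $\Sigma^{\alpha_0 + \cdots + \alpha_s}\ZZ \simeq \ZZ$ for each $s$, forcing $\vec\alpha = 0$. With all strata equal to $\ZZ$, the fact that $p$ is odd (see \Cref{obs.picard.elts.must.be.shifts.of.Z.at.each.stratum}) means every stratum-wise component of an equivalence is an automorphism $\epsilon_s \in \{\pm 1\}$; compatibility with the above squares forces $\gamma_s = \epsilon_{s-1}^{-1}\epsilon_s \in \{\pm 1\}$, so $(\vec 0,\vec\gamma) \in \KK$.

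The most delicate step is the identification of the unit's gluing maps: one must confirm that the right-lax structure map $\uno \to \uno^{\st \Cyclic_p}$ of $(-)^{\st \Cyclic_p}$ corresponds to the multiplicative identity of the commutative ring $\pi_*((\ZZ^{\st \Cyclic_p})^{\htpy \Cyclic_{p^{n-s}}})$, which receives its ring structure from precisely this right-lax symmetric monoidal enhancement. Once this identification is in place, the rest is a straightforward bookkeeping argument concerning $\{\pm 1\}$-automorphism orbits on gluing diagrams.
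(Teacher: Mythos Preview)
Your argument is correct in substance and somewhat more direct than the paper's, but it contains one genuine error of identification that you should fix: the unit object $\uno_{\Mod^{\gen \Cyclic_{p^n}}_\ZZ}$ is \emph{not} the constant Mackey functor $\ul{\ZZ}$. Indeed, by \Cref{prop.describe.gluing.diagram.of.Z.underline} the geometric $\Cyclic_{p^s}$-fixedpoints of $\ul{\ZZ}$ are $\tau_{\geq 0}\ZZ^{\st \Cyclic_p}$ for $s \geq 1$, not $\ZZ$. Fortunately, your actual argument for why $\uno = K^{(\vec 0,\vec 1)}$ does not use this misidentification at all---it correctly deduces the gluing diagram of $\uno$ from the symmetric monoidal structure on the reconstruction (the unit in the right-lax limit has strata the units and gluing maps the lax unit maps). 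So simply delete the clause ``$= \ul{\ZZ}$'' and the argument stands.

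On the comparison: the paper proceeds by computing the full $\ZZ$-module $\ulhom(K^{(\vec 0,\vec\gamma)},K^{(\vec 0,\vec 1)})$ via the nanocosm formula \Cref{nanocosm.equivce.for.g.Cpn.Z}, then uses the vanishing of $\pi_1$ on the lower row (from \Cref{obs.htpy.ring.of.Z.tCp.hCpnminusr}) to interchange $\pi_0$ with the limit, and finally reads off the answer from conservativity of the geometric fixedpoints. Your approach bypasses the hom-space computation entirely: for $\KK \subseteq \ker$ you build the equivalence by hand, and for $\ker \subseteq \KK$ you unpack a given equivalence into its stratum-wise components $\epsilon_s \in \Aut(\ZZ) = \{\pm 1\}$ and read off the relation $\gamma_s \equiv \epsilon_{s-1}^{-1}\epsilon_s$ from commutativity of the gluing squares in $\pi_0$. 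This is cleaner because you never need to argue that $\pi_0$ commutes with the $\Zig_n$-limit---you only extract a single relation from a single commuting square at a time. The paper's route has the minor advantage of actually computing $\pi_0$ of the hom-space, but for the purposes of this lemma your argument is the more efficient one.
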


\begin{proof}
First of all, it is clear that we have an equivalence
\[
K^{(\vec{0},\vec{1})}
\simeq
\uno_{\Mod^{\gen \Cyclic_{p^n}}_\ZZ}
\]
in ${\Mod^{\gen \Cyclic_{p^n}}_\ZZ}$. To prove the claim, we will show that for any $(\vec{\alpha},\vec{\gamma}) \in \MM$ there exists an equivalence
\begin{equation}
\label{possible.equivalence.from.K.alpha.gamma.to.unit}
K^{(\vec{\alpha},\vec{\gamma})}
\longra
K^{(\vec{0},\vec{1})}
\end{equation}
in $\Mod^{\gen \Cyclic_{p^n}}_\ZZ$ if and only if
\[
\alpha_s = 0
\in
\ZZ
\]
for all $0 \leq s \leq n$ and
\[
\gamma_s = \pm 1
\in
\pi_{\alpha_s} ( ( \ZZ^{\st \Cyclic_p})^{\htpy \Cyclic_{p^{n-s}}} )
=
\pi_0 ( ( \ZZ^{\st \Cyclic_p})^{\htpy \Cyclic_{p^{n-s}}} )
\]
for all $1 \leq s \leq n$.

First of all, observe that for each $s \in [n]$ there exists an equivalence
\[
\Phi^{\Cyclic_{p^s}} K^{(\vec{\alpha},\vec{\gamma})}
\simeq
\Sigma^{\alpha_0 + \cdots + \alpha_s} \ZZ
\longra
\ZZ
\simeq
\Phi^{\Cyclic_{p^s}} K^{(\vec{0},\vec{1})}
\]
in $\Mod^{\htpy \Cyclic_{p^{n-s}}}_\ZZ$ if and only if $\alpha_0 + \cdots + \alpha_s = 0$. Hence, in order for there to exist an equivalence \Cref{possible.equivalence.from.K.alpha.gamma.to.unit}, it must indeed be the case that $\alpha_s = 0$ for all $0 \leq s \leq n$. So we assume this, and proceed.

Now, by \Cref{thm.stratn.of.genuine.Cpn.Z.mods} (and \Cref{consequences.of.stratn.of.genuine.Cpn.Z.mods}\Cref{nanocosm.consequences.of.stratn.of.genuine.Cpn.Z.mods}), we have an equivalence
\[
\ulhom_{\Mod^{\gen \Cyclic_{p^n}}_\ZZ} ( K^{(\vec{0},\vec{\gamma})} , K^{(\vec{0},\vec{1})} )
\xlongra{\sim}
\lim_{(i \ra j) \in \Zig_n} \ulhom_{\Mod^{\htpy \Cyclic_{p^{n-j}}}_\ZZ} ( \ZZ , \ZZ^{\st \Cyclic_{p^{j-i}} } )
\]
in $\Mod_\ZZ$; more explicitly, the limit is of the diagram $\Zig_n \ra \Mod_\ZZ$ given by
\begin{equation}
\label{zigzag.diagram.whose.limit.is.homs.between.potential.picard.elts}
\hspace{-1cm}
\left( \begin{tikzcd}[column sep=0.5cm]
\ulhom_{\Mod^{\htpy \Cyclic_{p^{n}}}_\ZZ} ( \ZZ , \ZZ )
\arrow{rd}[sloped, swap]{\gamma_1}
&
\ulhom_{\Mod^{\htpy \Cyclic_{p^{n-1}}}_\ZZ} ( \ZZ , \ZZ )
\arrow{d}{1}
\arrow{rd}[sloped, swap]{\gamma_2}
&
\cdots
\arrow{d}{1}
&
\cdots
\arrow{rd}[sloped, swap]{\gamma_n}
&
\ulhom_{\Mod_\ZZ} ( \ZZ , \ZZ )
\arrow{d}{1}
\\
&
\ulhom_{\Mod^{\htpy \Cyclic_{p^{n-1}}}_\ZZ} ( \ZZ , \ZZ^{\st \Cyclic_p} )
&
\ulhom_{\Mod^{\htpy \Cyclic_{p^{n-2}}}_\ZZ} ( \ZZ , \ZZ^{\st \Cyclic_p} )
&
\cdots
&
\ulhom_{\Mod_\ZZ} ( \ZZ , \ZZ^{\st \Cyclic_p} )
\end{tikzcd} \right)~,
\end{equation}
where for all $1 \leq s \leq n$ we slightly abuse notation by writing
\[
\ulhom_{\Mod^{\htpy \Cyclic_{p^{n-s+1}}}_\ZZ} ( \ZZ , \ZZ )
\xra{\gamma_s}
\ulhom_{\Mod^{\htpy \Cyclic_{p^{n-s}}}_\ZZ} ( \ZZ , \ZZ^{\st \Cyclic_p} )
\]
for the composite
\[
\ulhom_{\Mod^{\htpy \Cyclic_{p^{n-s+1}}}_\ZZ} ( \ZZ , \ZZ )
\xra{(-)^{\st \Cyclic_p}}
\ulhom_{\Mod^{\htpy \Cyclic_{p^{n-s}}}_\ZZ} ( \ZZ^{\st \Cyclic_p} , \ZZ^{\st \Cyclic_p} )
\xra{\gamma_s}
\ulhom_{\Mod^{\htpy \Cyclic_{p^{n-s}}}_\ZZ} ( \ZZ , \ZZ^{\st \Cyclic_p} )
~.
\]
Evidently, we may rewrite the diagram \Cref{zigzag.diagram.whose.limit.is.homs.between.potential.picard.elts} more simply as
\begin{equation}
\label{rewrite.zigzag.diagram.whose.limit.is.homs.between.potential.picard.elts}
\left(
\begin{tikzcd}
\ZZ^{\htpy \Cyclic_{p^n}}
\arrow{rd}[sloped, swap]{\gamma_1}
&
\ZZ^{\htpy \Cyclic_{p^{n-1}}}
\arrow{d}{1}
\arrow{rd}[sloped, swap]{\gamma_2}
&
\cdots
\arrow{d}{1}
&
\cdots
\arrow{rd}[sloped, swap]{\gamma_n}
&
\ZZ
\arrow{d}{1}
\\
&
(\ZZ^{\st \Cyclic_p})^{\htpy \Cyclic_{p^{n-1}}}
&
(\ZZ^{\st \Cyclic_p})^{\htpy \Cyclic_{p^{n-2}}}
&
\cdots
&
\ZZ^{\st \Cyclic_p}
\end{tikzcd}
\right)
~.
\end{equation}
Hence, we obtain the composite isomorphism
\begin{align}
\nonumber
\pi_0 ( \ulhom_{\Mod^{\gen \Cyclic_{p^n}}_\ZZ} ( K^{(\vec{0},\vec{\gamma})} , K^{(\vec{0},\vec{1})} ) )
& \xlongra{\cong}
\pi_0 ( \lim_{\Zig_n} \Cref{rewrite.zigzag.diagram.whose.limit.is.homs.between.potential.picard.elts} )
\\
\label{use.vanishing.pi.one.of.terms.on.lower.row}
& \xlongra{\cong}
\lim_{\Zig_n} ( \pi_0 \Cref{rewrite.zigzag.diagram.whose.limit.is.homs.between.potential.picard.elts} )
\\
\label{use.pi.zero.of.htpy.fixedpoints.and.also.tate.computation}
& \cong
\lim
\left( \begin{tikzcd}[ampersand replacement=\&]
\ZZ
\arrow{rd}[sloped, swap]{\gamma_1}
\&
\ZZ
\arrow{d}{1}
\arrow{rd}[sloped, swap]{\gamma_2}
\&
\cdots
\arrow{d}{1}
\&
\cdots
\arrow{rd}[sloped, swap]{\gamma_n}
\&
\ZZ
\arrow{d}{1}
\\
\&
\ZZ/p^n
\&
\ZZ/p^{n-1}
\&
\cdots
\&
\ZZ/p
\end{tikzcd} \right)
\end{align}
among abelian groups, in which
\begin{itemize}

\item isomorphism \Cref{use.vanishing.pi.one.of.terms.on.lower.row} follows from the fact that all of the $\ZZ$-modules on the lower row of diagram \Cref{rewrite.zigzag.diagram.whose.limit.is.homs.between.potential.picard.elts} have vanishing $\pi_1$ by \Cref{obs.htpy.ring.of.Z.tCp.hCpnminusr}, and

\item isomorphism \Cref{use.pi.zero.of.htpy.fixedpoints.and.also.tate.computation} follows from the evident isomorphisms $\pi_0(\ZZ^{\htpy \Cyclic_{p^{n-s}}}) \cong \ZZ$ for all $s \in [n]$ as well as the isomorphisms $\pi_0 (( \ZZ^{\st \Cyclic_p} )^{\htpy \Cyclic_{p^{n-s}}}) \cong \ZZ / p^{n-s+1}$ for all $1 \leq s \leq n$ of \Cref{obs.htpy.ring.of.Z.tCp.hCpnminusr}.

\end{itemize}
By \Cref{thm.stratn.of.genuine.Cpn.Z.mods} (and \Cref{consequences.of.stratn.of.genuine.Cpn.Z.mods}\Cref{macrocosm.consequences.of.stratn.of.genuine.Cpn.Z.mods}), the functor
\[
\Mod^{\gen \Cyclic_{p^n}}_\ZZ
\xra{ \prod_{s \in [n]} \Phi^{\Cyclic_{p^s}} }
\prod_{s \in [n]}
\Mod^{\htpy \Cyclic_{p^{n-s}}}_\ZZ
\]
is conservative.  Hence, an element of $\pi_0 ( \ulhom_{\Mod^{\gen \Cyclic_{p^n}}_\ZZ} ( K^{(\vec{0},\vec{\gamma})} , K^{(\vec{0},\vec{1})} ) )$ is an isomorphism in $\ho(\Mod^{\gen \Cyclic_{p^n}}_\ZZ)$ if and only if its image under the composite homomorphism
\[
\pi_0 ( \ulhom_{\Mod^{\gen \Cyclic_{p^n}}_\ZZ} ( K^{(\vec{0},\vec{\gamma})} , K^{(\vec{0},\vec{1})} ) )
\xlongra{\cong}
\pi_0 ( \lim_{\Zig_n} ( \Cref{rewrite.zigzag.diagram.whose.limit.is.homs.between.potential.picard.elts} ) )
\longra
\prod_{s \in [n]} \ZZ
\]
(in which the last morphism is the projection to the upper factors in the diagram appearing in isomorphism \Cref{use.pi.zero.of.htpy.fixedpoints.and.also.tate.computation}) lies in the subset $\prod_{s \in [n]} \ZZ^\times \subseteq \prod_{s \in [n]} \ZZ$. From here, we see inductively that there exists an element of $\pi_0 ( \ulhom_{\Mod^{\gen \Cyclic_{p^n}}_\ZZ} ( K^{(\vec{0},\vec{\gamma})} , K^{(\vec{0},\vec{1})} ) )$ that is an isomorphism in $\ho(\Mod^{\gen \Cyclic_{p^n}}_\ZZ)$ if and only if $\gamma_s = \pm 1$ for all $1 \leq s \leq n$.
\end{proof}

\begin{proof}[Proof of \Cref{thm.picard.group}]
By \Cref{obs.picard.elts.must.be.shifts.of.Z.at.each.stratum}, the image of the commutative monoid homomorphism $K^\bullet$ contains the Picard group. Moreover, by \Cref{lemma.presentations.of.unit.object.in.gen.Cpn.Z.mods} its kernel is a subgroup of $\MM$ (i.e.\! it only contains invertible elements). Together, these two facts imply that the abelian group homomorphism $L^\bullet$ is surjective (and in fact that the commutative square \Cref{square.defining.L.as.a.factorization.to.Pic} is a pullback). Appealing again to \Cref{lemma.presentations.of.unit.object.in.gen.Cpn.Z.mods} completes the proof.
\end{proof}

\section{From virtual $\Cyclic_{p^n}$-representations to Picard genuine $\Cyclic_{p^n}$-$\ZZ$-modules}
\label{section.fxn.reps.to.Pic}

In this section, we describe the composite abelian group homomorphism
\[
\RO(\Cyclic_{p^n})
\xra{V \longmapsto \SS^V}
\Pic(\Spectra^{\gen \Cyclic_{p^n}})
\xra{(-) \otimes \ZZ}
\Pic(\Mod^{\gen \Cyclic_{p^n}}_\ZZ)
\]
in terms of the identification of the target $\Pic(\Mod^{\gen \Cyclic_{p^n}}_\ZZ)$ given by \Cref{thm.picard.group}. More precisely, in \Cref{thm.from.reps.to.Pic} (which is proved at the end of the section, and which was originally stated as the second part of \Cref{intro.thm.Pic.of.gen.Cpn.Z.mods}) we describe in these terms its values on irreducible $\Cyclic_{p^n}$-representations, which freely generate $\RO(\Cyclic_{p^n})$ (as an abelian group).

\begin{definition}
We use the term \bit{$\Cyclic_{p^n}$-representation} to mean a finite-dimensional real orthogonal $\Cyclic_{p^n}$-representation (or equivalently a homomorphism $\Cyclic_{p^n} \ra \sO(d)$ for some $d \geq 0$). Isomorphism classes of $\Cyclic_{p^n}$-representations form a commutative monoid under direct sum, whose group completion we denote by $\RO(\Cyclic_{p^n})$. We refer to the elements of $\RO(\Cyclic_{p^n})$ as \bit{virtual representations}.
\end{definition}

\begin{notation}
For any virtual $\Cyclic_{p^n}$-representation $V \in \RO(\Cyclic_{p^n})$ we write
\[
\ZZ^V
:=
\SS^V \otimes \ZZ
\in
\Pic(\Mod^{\gen \Cyclic_{p^n}}_\ZZ)
\]
for the corresponding Picard element of $\Mod^{\gen \Cyclic_{p^n}}_\ZZ$. Moreover, for any representation $\Cyclic_{p^n} \xra{\rho} \sO(d)$ we write
\[
\ZZ^\rho
:=
\ZZ^{(\RR^d,\rho)}
\in
\Pic(\Mod^{\gen \Cyclic_{p^n}}_\ZZ)
~.
\]
\end{notation}

\begin{notation}
\label{notn.for.elts.of.tilde.P}
Recall the abelian group
\[
\tilde{\PP}
:=
\ZZ^{\oplus (n+1)}
\oplus
\left(
\bigoplus_{s=1}^n
(\ZZ/p^{n-s+1})^\times
\right)
\]
of \Cref{notn.P.tilde.and.P}.
\begin{enumerate}

\item We express elements of $\tilde{\PP}$ as ordered pairs, according to the indicated direct sum decomposition.

\item We write
\[
\{ \vec{e}_0 , \vec{e}_1 , \ldots , \vec{e}_n \} \subseteq \ZZ^{ \oplus (n+1)} \subseteq \tilde{\PP}
\]
for the standard basis of the torsionfree subgroup of $\tilde{\PP}$.

\item For any $1 \leq s \leq n$, we implicitly consider the inclusion
\[
(\ZZ/p^{n-s+1})^\times
\subseteq
\left(
\bigoplus_{s=1}^n
(\ZZ/p^{n-s+1})^\times
\right)
\subseteq
\tilde{\PP}
~,
\]
and in doing so, for $\gamma \in (\ZZ/p^{n-s+1})^\times$ we simply denote by $\gamma$ the element $(1,\dots,1,\gamma,1,\dots,1)~\in~ \tilde{\PP}$ in which the $s\th$ coordinate is $\gamma$ and every other coordinate is $1$.

\item We write
\[
\vec{1}
\in
\left(
\bigoplus_{s=1}^n
(\ZZ/p^{n-s+1})^\times
\right)
\subseteq
\tilde{\PP}
\]
for the identity element of the torsion subgroup of $\tilde{\PP}$.

\end{enumerate}
So for instance, given an element $\gamma \in (\ZZ/p^n)^\times$, we write
\[
(3\vec{e}_0 - 5 \vec{e}_2 , \gamma )
:=
( (3 , 0 , -5 , 0 , \ldots , 0 ) , ( \gamma , 1 , \ldots , 1 ) )
\in
\ZZ^{\oplus (n+1)}
\oplus
\left(
\bigoplus_{s=1}^n
(\ZZ/p^{n-s+1})^\times
\right)
=:
\tilde{\PP}
~.
\]
\end{notation}

\begin{observation}
Because $p$ is odd, every $\Cyclic_{p^n}$-representation is orientable, and moreover every irreducible $\Cyclic_{p^n}$-representation has dimension 1 or 2. We use these facts without further comment.
\end{observation}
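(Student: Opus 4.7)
The plan is to establish both claims via the classification of irreducible real representations of a finite cyclic group. First I would recall that the irreducible complex representations of $\Cyclic_m$ are the one‑dimensional characters $\chi_j : \sigma \mapsto e^{2\pi i j / m}$ for $j \in \ZZ/m$. The irreducible real representations then arise from either a self‑conjugate character $\chi_j$ (requiring $2j \equiv 0 \pmod m$, giving a real one‑dimensional representation), or a conjugate pair $\{\chi_j,\chi_{-j}\}$ with $\chi_j \neq \overline{\chi_j}$, whose realification is a two‑dimensional rotation representation.

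Next I would specialize to $m = p^n$ with $p$ odd. The congruence $2j \equiv 0 \pmod{p^n}$ has only the solution $j \equiv 0$, because $2$ is a unit mod $p^n$. Hence the only real one‑dimensional irreducible is the trivial representation, and all other irreducibles are the two‑dimensional rotation representations $\rho_j$ indexed by conjugate pairs $\{j,-j\}$ with $1 \leq j < p^n$ and $j \neq -j$. This establishes the dimension claim that every irreducible has dimension $1$ or $2$.

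For orientability, it suffices to verify that the generator $\sigma \in \Cyclic_{p^n}$ acts with determinant $+1$ in each irreducible summand, since any representation decomposes as a direct sum of irreducibles and $\det$ is multiplicative under direct sum. In the trivial representation this is immediate, and in each two‑dimensional rotation representation $\rho_j$ the generator acts by an element of $\SO(2)$, which has determinant $+1$. Hence every $\Cyclic_{p^n}$‑representation lands in the special orthogonal group and is thus orientable. There is no real obstacle here; both statements are elementary consequences of the representation theory of cyclic groups of odd prime power order, and the only point worth flagging is the use of $p$ being odd in ruling out a sign representation (which would otherwise exist when $p^n$ is even via $j = p^n/2$).
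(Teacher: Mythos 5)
Your proposal is correct; the paper states these facts without proof, and your argument via the standard classification of real irreducibles of a cyclic group (only the trivial character is self-conjugate when $2$ is a unit mod $p^n$, all other irreducibles being realified rotation characters) is precisely the standard justification being implicitly invoked. As a small remark, orientability also follows in one line from the fact that a group of odd order admits no nontrivial homomorphism to $\{\pm 1\}$, so the composite $\Cyclic_{p^n} \to \sO(d) \xra{\det} \{\pm 1\}$ is trivial.
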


\needspace{2\baselineskip}
\begin{notation}
\label{notn.for.Cpn.irreps}
\begin{enumerate}
\item[]

\item We write
\[
\Cyclic_{p^n}
\xra{\rho_\triv}
\SO(1)
\]
for the unique 1-dimensional $\Cyclic_{p^n}$-representation.

\item For brevity, we use the identification
\[
\SO(2) \cong \sU(1)
~.
\]

\item\label{item.define.rho.j}

For each $j \in \{ 1 , \ldots, p^n-1 \}$, we write
\[
\Cyclic_{p^n}
\xlongra{\rho_j}
\sU(1)
\]
for the 2-dimensional $\Cyclic_{p^n}$-representation characterized by the fact that $\rho_j(\sigma) = e^{2\pi i j / p^n}$, where $\sigma \in \Cyclic_{p^n}$ is the standard generator.  

\item\label{item.define.k.and.gamma.from.j}

For each $j \in \{ 1 , \ldots, p^n-1 \}$, we write $\nu(j) := \nu_p(j)$ for the $p$-adic valuation of $j$, and 
\[
\gamma(j)
:=
\frac{j}{p^{\nu(j)}}
~.\footnote{In other words, the positive integers $\nu(j)$ and $\gamma(j)$ are characterized by the fact that $j = p^{\nu(j)} \cdot \gamma(j)$ where $\gamma(j)$ is coprime to $p$.}
\]
We note that $0 \leq \nu(j) < n$, and we often implicitly consider $\gamma(j) \in (\ZZ / p^{n-\nu(j)})^\times$.

\end{enumerate}
\end{notation}

We prove the following result at the end of this subsection.
\begin{theorem}
\label{thm.from.reps.to.Pic}
The Picard elements of $\Mod^{\gen \Cyclic_{p^n}}_\ZZ$ determined by the irreducible $\Cyclic_{p^n}$-representations are given by the formulas
\[
\ZZ^{\rho_\triv} = L^{(\vec{e}_0,\vec{1})}
\qquad
\text{and}
\qquad
\ZZ^{\rho_j} = L^{(2\vec{e}_0 - \vec{e}_{\nu(j)+1}, \gamma(j))}
~.
\]
\end{theorem}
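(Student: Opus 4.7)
The proof splits along whether the representation is trivial. For $\rho_\triv$, we have $\SS^{\rho_\triv}=\Sigma\SS$, so $\ZZ^{\rho_\triv}\simeq\Sigma\uno$ in $\Mod^{\gen \Cyclic_{p^n}}_\ZZ$. By \Cref{lemma.presentations.of.unit.object.in.gen.Cpn.Z.mods} we have $\uno=L^{(\vec{0},\vec{1})}$, and a single suspension affects only the integer coordinate $\beta_0$, giving $\Sigma\uno=L^{(1,\vec{1})}$.

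For $\rho_j$ with $k:=k(j)$ and $\gamma:=\gamma(j)$, I would first compute geometric fixedpoints. Since $\rho_j$ has kernel $\Cyclic_{p^k}$ and factors through a faithful $2$-dimensional representation of $\Cyclic_{p^{n-k}}$, the fixed subspace $\rho_j^{\Cyclic_{p^s}}$ equals $\RR^2$ for $s\le k$ and $\{0\}$ for $s>k$. Hence $\Phi^{\Cyclic_{p^s}}\ZZ^{\rho_j}\simeq\Sigma^{2}\ZZ$ for $s\le k$ and $\simeq\ZZ$ for $s>k$; in the notation of \Cref{notn.potential.picard.elt.K} this gives $\alpha_0=2$, $\alpha_{k+1}=-2$, and $\alpha_s=0$ otherwise, which via \Cref{obs.identify.monoids.of.homogenous.elts} corresponds to $\beta_0=2$ and $\beta_{k+1}=-1$, i.e.\! the element $2-\vec{\beta}_{k+1}$ of \Cref{notn.for.elts.of.tilde.P}.

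It then remains to identify the gluing maps $\gamma^{\ZZ^{\rho_j}}_s$. For $s\ne k+1$ the adjacent strata carry the same shift, and the monoidal compatibility of the stratification (\Cref{consequences.of.stratn.of.genuine.Cpn.Z.mods}\Cref{microcosm.consequences.of.stratn.of.genuine.Cpn.Z.mods}\Cref{microcosm.consequences.of.stratn.of.genuine.Cpn.Z.mods.symm.mon.str}) reduces the gluing to a shift of the unit gluing, giving $1\in(\ZZ/p^{n-s+1})^\times$. The lone nontrivial gluing at $s=k+1$ lies in $\pi_{-2}\bigl((\ZZ^{\st\Cyclic_p})^{\htpy \Cyclic_{p^{n-k-1}}}\bigr)\cong\ZZ/p^{n-k}$, and I would compute it in two steps: first reduce to the faithful case by inflating along $q\colon\Cyclic_{p^n}\twoheadrightarrow\Cyclic_{p^{n-k}}$ (so that $\SS^{\rho_j}\simeq q^*\SS^{\bar\rho_\gamma}$, where inflation moves the sole nontrivial stratum from position $1$ to position $k+1$ while preserving the element in the relevant Tate ring, which is intrinsic to the local data at that stratum); then, over $\Cyclic_{p^{n-k}}$, reduce to the tautological rep $\bar\rho_1$ via the automorphism $[\gamma]\in\operatorname{Aut}(\Cyclic_{p^{n-k}})$, which acts on the torsion summand of the Picard group by multiplication by $\gamma$. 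The gluing for $\bar\rho_1$ is then identified with the generator $c_{n-k}$ of \Cref{obs.htpy.ring.of.Z.tCp.hCpnminusr}.

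The main obstacle is this last identification. The cleanest approach is an Euler-class computation: the cofiber sequence $S(\bar\rho_1)_+\to\SS^0\to\SS^{\bar\rho_1}$ in pointed genuine $\Cyclic_{p^{n-k}}$-spaces exhibits the $s=1$ gluing map of $\ZZ^{\bar\rho_1}$ as the image of the Euler class of $\bar\rho_1$ in the Tate ring, which by \Cref{lem.compute.htpy.ring.of.htpy.of.tate} is a generator (hence $c_{n-k}$ up to a unit). The overall sign ambiguity is absorbed by the quotient $(\ZZ/p^{n-k})^\times/\{\pm1\}$ appearing in the target of the homomorphism of \Cref{thm.picard.group}, so the $\gamma$-dependence is tracked precisely through the $[\gamma]$-action on the Picard group.
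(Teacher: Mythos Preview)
Your overall strategy differs from the paper's. The paper does not reduce to the faithful case or invoke automorphisms of $\Cyclic_{p^{n-k}}$: it works directly with $\rho_j$, presents $\tilde{\sC}(S^{\rho_j}) \in \Mod^{\naive \Cyclic_{p^n}}_\ZZ$ by an explicit diagram of chain complexes (built from the equivariant CW structure on the unit sphere $S(\rho_j)$), and applies $\Psi_\ZZ$ via \Cref{obs.genuine.Cpn.Z.module.from.naive.one} to read off the entire gluing diagram at once. The gluings at $s \ne k+1$ then come from identity structure maps in that naive $\Cyclic_{p^n}$-$\ZZ$-module, not from the ``monoidal compatibility'' you cite (which does not by itself force those gluings to be $1$). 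The value $\gamma \cdot c_{n-k}$ at $s = k+1$ drops out of a single explicit quasi-isomorphism of chain complexes, with the factor $\gamma$ appearing as the degree-$0$ component $\gamma N$ of a chain map.

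Your Euler-class argument has a genuine gap. Saying the Euler class of $\bar\rho_1$ is ``a generator, hence $c_{n-k}$ up to a unit'' determines nothing: in $\pi_{-2}\bigl((\ZZ^{\st\Cyclic_p})^{\htpy\Cyclic_{p^{n-k-1}}}\bigr) \cong \ZZ/p^{n-k}$ every additive generator is a unit multiple of $c_{n-k}$, and the $\{\pm 1\}$ quotient in $\PP$ absorbs only a sign, not an arbitrary element of $(\ZZ/p^{n-k})^\times$. To salvage your approach you would need to (i) pin down the Euler class of $\bar\rho_1$ as exactly $c_{n-k}$ (for instance by checking that the cycle-level definition of $c'_{n-k}$ in \Cref{notn.chern.class.elts} agrees with the first Chern class for the canonical complex orientation of $\ZZ$, and then pushing through $\sQ_{\Cyclic_p}$), and (ii) actually prove that the automorphism $[\gamma]$ of $\Cyclic_{p^{n-k}}$ scales the relevant torsion coordinate of $\Pic$ by exactly $\gamma$ rather than by some other function of $\gamma$. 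Both steps require chain-level input of the same flavor as the paper's direct computation, so your reductions buy conceptual clarity but not a shorter or more elementary proof.
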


\begin{definition}
We define the $\infty$-categories of \bit{naive $\Cyclic_{p^n}$-spectra} and \bit{naive $\Cyclic_{p^n}$-$\ZZ$-modules} to be
\[
\Spectra^{\naive \Cyclic_{p^n}}
:=
\Fun ( \Orb_{\Cyclic_{p^n}}^\op , \Spectra)
\qquad
\text{and}
\qquad
\Mod^{\naive \Cyclic_{p^n}}_\ZZ
:=
\Fun ( \Orb_{\Cyclic_{p^n}}^\op , \Mod_\ZZ)
~.
\]
\end{definition}

\begin{observation}
\label{obs.naive.is.tensored.up}
We have equivalences
\[
\Spectra^{\naive \Cyclic_{p^n}}
\simeq
\Spaces^{\gen \Cyclic_{p^n}}_* \otimes \Spectra
\qquad
\text{and}
\qquad
\Mod^{\naive \Cyclic_{p^n}}_\ZZ
\simeq
\Spaces^{\gen \Cyclic_{p^n}}_* \otimes \Mod_\ZZ
\]
in $\PrL$ (in fact in $\CAlg(\PrL)$).
\end{observation}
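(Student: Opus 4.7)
The plan is to deduce both equivalences from a single general fact about pointed presheaf $\infty$-categories: for any small $\infty$-category $\cC$ and any pointed presentable $\infty$-category $\cE \in \PrL_*$, the universal property of pointed presheaves gives a canonical equivalence
\[
\Fun(\cC^\op,\Spaces_*) \otimes \cE \xlongra{\sim} \Fun(\cC^\op,\cE)
\]
in $\PrL$. Applied to $\cC = \Orb_{\Cyclic_{p^n}}$ (which is small), this specializes to the two asserted equivalences upon taking $\cE = \Spectra$ and $\cE = \Mod_\ZZ$ respectively, recalling that $\Spaces^{\gen \Cyclic_{p^n}}_* := \Fun(\Orb_{\Cyclic_{p^n}}^\op,\Spaces_*)$ and that $\Spectra^{\naive \Cyclic_{p^n}}$ and $\Mod^{\naive \Cyclic_{p^n}}_\ZZ$ are by definition the functor $\infty$-categories on the right.

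To derive the cited universal property, I would begin with the standard equivalence $P(\cC) \otimes \cD \simeq \Fun(\cC^\op,\cD)$ in $\PrL$ for any presentable $\cD$ (e.g.\ \cite[Proposition 4.8.1.17]{HA}), then pass to the pointed versions by using that pointed objects are modules over $\Spaces_*$ in $\PrL$, i.e.\ $\PrL_* \simeq \Mod_{\Spaces_*}(\PrL)$. Specifically, $\Fun(\cC^\op,\Spaces_*) \simeq P(\cC) \otimes \Spaces_*$, and for any $\cE \in \PrL_*$ one then computes
\[
\Fun(\cC^\op,\Spaces_*) \otimes_{\Spaces} \cE \simeq P(\cC) \otimes_{\Spaces} \Spaces_* \otimes_{\Spaces_*} \cE \simeq P(\cC) \otimes_{\Spaces} \cE \simeq \Fun(\cC^\op,\cE),
\]
where the final step uses that $\cE$ is already pointed so the tensor over $\Spaces$ coincides with the tensor over $\Spaces_*$ after absorbing $\Spaces_*$.

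For the parenthetical enhancement to $\CAlg(\PrL)$, I would invoke the fact that the forgetful functor $\CAlg(\PrL) \ra \PrL$ creates tensor products, together with the observation that each of the $\infty$-categories involved carries its canonical (Day convolution) presentably symmetric monoidal structure: $\Spaces_*^{\gen \Cyclic_{p^n}}$ is Day convolution from the cartesian symmetric monoidal structure on $\Orb_{\Cyclic_{p^n}}$ (given by product of $\Cyclic_{p^n}$-sets), $\Spectra$ and $\Mod_\ZZ$ carry their standard symmetric monoidal structures, and $\Spectra^{\naive \Cyclic_{p^n}}$ and $\Mod^{\naive \Cyclic_{p^n}}_\ZZ$ carry the corresponding Day convolution structures; by construction, the displayed equivalences are symmetric monoidal.

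The only nontrivial input is the pointed analogue of the universal property of presheaves; all other steps are essentially formal manipulations in $\PrL$ and $\CAlg(\PrL)$. There is no real obstacle — the observation is stated precisely because it is used freely in the sequel, and its content is a standard unwinding of definitions.
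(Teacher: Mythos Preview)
Your argument for the underlying $\PrL$ equivalences is correct and is exactly the sort of unwinding the paper has in mind; the paper itself gives no proof for this observation, treating it as immediate from the standard identification $\Fun(\cC^\op,\Spaces)\otimes\cD\simeq\Fun(\cC^\op,\cD)$ together with the idempotence of $\Spaces_*$ in $\PrL$.

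One correction to the symmetric monoidal portion: $\Orb_{\Cyclic_{p^n}}$ does \emph{not} have products (the product of two orbits is a disjoint union of orbits, not a single orbit), so there is no cartesian monoidal structure on $\Orb_{\Cyclic_{p^n}}$ from which to Day-convolve. The symmetric monoidal structure on $\Spaces^{\gen\Cyclic_{p^n}}_*\simeq\Fun(\Orb_{\Cyclic_{p^n}}^\op,\Spaces_*)$ is the \emph{pointwise} smash product (coming from the cartesian structure on $\Spaces^{\gen\Cyclic_{p^n}}$, which is pointwise under Elmendorf), and likewise the structures on $\Spectra^{\naive\Cyclic_{p^n}}$ and $\Mod^{\naive\Cyclic_{p^n}}_\ZZ$ are pointwise. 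With this correction, the $\CAlg(\PrL)$ enhancement follows because the equivalence $\Fun(\cC^\op,\Spaces)\otimes\cD\simeq\Fun(\cC^\op,\cD)$ is symmetric monoidal for the cartesian structure on the left factor and the pointwise structure on the right, and this persists after tensoring with $\Spaces_*$.
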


\begin{notation}
We define the functors $\Psi_\SS$, $\Psi_\ZZ$, $\tilde{\sC}(-)$, and $\tilde{\ZZ}\brax{-}$ as those appearing in the commutative diagram
\[ \begin{tikzcd}[column sep=1.5cm]
\Spaces^{\gen \Cyclic_{p^n}}_*
\arrow{r}{\Sigma^\infty}
\arrow{rd}[swap, sloped]{\Sigma^\infty_{\Cyclic_{p^n}}}
\arrow[bend left]{rr}[sloped]{\tilde{\sC}(-)}
\arrow[bend right=50]{rrd}[sloped, swap]{\tilde{\ZZ}\brax{-}}
&
\Spectra^{\naive \Cyclic_{p^n}}
\arrow{r}{(-) \otimes \ZZ}
\arrow{d}{\Psi_\SS}
&
\Mod^{\naive \Cyclic_{p^n}}_\ZZ
\arrow{d}{\Psi_\ZZ}
\\
&
\Spectra^{\gen \Cyclic_{p^n}}
\arrow{r}[swap]{(-) \otimes \ZZ}
&
\Mod^{\gen \Cyclic_{p^n}}_\ZZ
\end{tikzcd} \]
in $\PrL$, in which the functors $\Psi_\SS$ and $\Psi_\ZZ$ respectively arise from the adjunctions
\[
\begin{tikzcd}[column sep=1.5cm]
\PrL
\arrow[transform canvas={yshift=0.9ex}]{r}{(-) \otimes \Spectra}
\arrow[leftarrow, transform canvas={yshift=-0.9ex}]{r}[yshift=-0.2ex]{\bot}[swap]{\fgt}
&
\Mod_{\Spectra}(\PrL)
\end{tikzcd}
\qquad
\text{and}
\qquad
\begin{tikzcd}[column sep=1.5cm]
\PrL
\arrow[transform canvas={yshift=0.9ex}]{r}{(-) \otimes \Mod_\ZZ}
\arrow[leftarrow, transform canvas={yshift=-0.9ex}]{r}[yshift=-0.2ex]{\bot}[swap]{\fgt}
&
\Mod_{\Mod_\ZZ}(\PrL)
\end{tikzcd}
\]
using \Cref{obs.naive.is.tensored.up}.
\end{notation}

\needspace{2\baselineskip}
\begin{notation}
\begin{enumerate}
\item[]

\item

For any $\infty$-category $\cC$, we depict an object $X \in \Fun(\Orb_{\Cyclic_{p^n}}^\op , \cC)$ by the diagram
\begin{equation}
\label{typical.object.of.Fun.Orb.Cpn.op.to.C}
\begin{tikzcd}
X_n
\arrow{r}{\delta_n}
\arrow[loop below, distance=0.5cm]{}{e}
&
X_{n-1}
\arrow{r}{\delta_{n-1}}
\arrow[loop below, distance=0.5cm]{}{\Cyclic_p}
&
\cdots
\arrow{r}{\delta_2}
&
X_1
\arrow{r}{\delta_1}
\arrow[loop below, distance=0.5cm]{}{\Cyclic_{p^{n-1}}}
&
X_0
\arrow[loop below, distance=0.5cm]{}{\Cyclic_{p^n}}
\end{tikzcd}
~,
\end{equation}
in which 
\begin{itemize}

\item for every $s \in [n]$, we write $X_s := X(( \Cyclic_{p^n} / \Cyclic_{p^s} )^\circ) \in \cC$,

\item the curved arrows schematically depict actions of Weyl groups, and

\item for every $1 \leq s \leq n$, the morphism $\delta_s$ denotes the value of $X$ on the distinguished morphism
\begin{equation}
\label{morphism.in.Orb.Cpn.op.opposite.to.quotient}
(\Cyclic_{p^n} / \Cyclic_{p^s})^\circ
\longra
(\Cyclic_{p^n} / \Cyclic_{p^{s-1}} )^\circ
\end{equation}
in $\Orb_{\Cyclic_{p^n}}^\op$ opposite to the evident quotient homomorphism among quotient groups of $\Cyclic_{p^n}$.

\end{itemize}

\item

We take the convention that if the $s\th$ horizontal morphism (counting from the left) in a diagram such as \Cref{typical.object.of.Fun.Orb.Cpn.op.to.C} is unlabeled, then it corresponds to the identity morphism on underlying objects and moreover the $\Cyclic_{p^s}$-action on its target is pulled back from the $\Cyclic_{p^{s-1}}$-action on its source via the quotient homomorphism.

\item

Given an object $X \in \Fun ( \Orb_{\Cyclic_{p^n}}^\op , \cC)$ as depicted in diagram \Cref{typical.object.of.Fun.Orb.Cpn.op.to.C}, for any $1 \leq s \leq n$ we denote by
\[ \begin{tikzcd}
&
(X_{s-1})^{\htpy \Cyclic_p}
\arrow{rd}
\\
X_s
\arrow[dashed]{ru}[sloped]{\tilde{\delta}_s}
\arrow{rr}[swap]{\delta_s}
&
&
X_{s-1}
\end{tikzcd} \]
the canonical factorization (determined by the fact that the morphism \Cref{morphism.in.Orb.Cpn.op.opposite.to.quotient} is $\Cyclic_p$-equivariant with respect to the trivial $\Cyclic_p$-action on its source), a morphism in $\cC^{\htpy \Cyclic_{p^{n-s}}} := \Fun ( \sB \Cyclic_{p^{n-s}} , \cC )$.

\end{enumerate}
\end{notation}

\begin{observation}
\label{obs.genuine.Cpn.Z.module.from.naive.one}
The functor
\[
\Mod^{\naive \Cyclic_{p^n}}_\ZZ
\xra{\Psi_\ZZ}
\Mod^{\gen \Cyclic_{p^n}}_\ZZ
\]
carries the object \Cref{typical.object.of.Fun.Orb.Cpn.op.to.C} (with $\cC = \Mod_\ZZ$) to the genuine $\Cyclic_{p^n}$-$\ZZ$-module whose gluing diagram via \Cref{thm.stratn.of.genuine.Cpn.Z.mods} (and \Cref{consequences.of.stratn.of.genuine.Cpn.Z.mods}\Cref{microcosm.consequences.of.stratn.of.genuine.Cpn.Z.mods}\Cref{microcosm.consequences.of.stratn.of.genuine.Cpn.Z.mods.gluing.diagrams}) is
\[
\left(
\begin{tikzcd}
X_0
\arrow[maps to]{rdd}
&
X_1
\arrow{d}{\tilde{\delta}_1}
\arrow[maps to]{rdd}
&
\cdots
&
\cdots
\arrow[maps to]{rdd}
&
X_n
\arrow{d}{\tilde{\delta}_n}
\\
&
(X_0)^{\htpy \Cyclic_p}
\arrow{d}{\sQ_{\Cyclic_p}(X_0)}
&
\cdots
&
\cdots
&
(X_{n-1})^{\htpy \Cyclic_p}
\arrow{d}{\sQ_{\Cyclic_p}(X_{n-1})}
\\
&
(X_0)^{\st \Cyclic_p}
&
(X_1)^{\st \Cyclic_p}
&\cdots
&
(X_{n-1})^{\st \Cyclic_p}
\end{tikzcd}
\right)
~.
\]
Indeed, this follows from tom Dieck splitting along with the defining equivalence $\Psi_\ZZ \circ \tilde{\sC}(-) \simeq \tilde{\ZZ}\brax{-}$.
\end{observation}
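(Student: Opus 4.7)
The plan is to verify both components of the gluing diagram of $\Psi_\ZZ(X)$ (in the sense of \Cref{consequences.of.stratn.of.genuine.Cpn.Z.mods}\Cref{microcosm.consequences.of.stratn.of.genuine.Cpn.Z.mods}\Cref{microcosm.consequences.of.stratn.of.genuine.Cpn.Z.mods.gluing.diagrams}): namely that the $s\th$ stratum $\Phi^{\Cyclic_{p^s}}(\Psi_\ZZ(X))$ is identified with $X_s$, and that the gluing morphism $\gamma^{\Psi_\ZZ(X)}_{s-1,s}$ is identified with the composite $\sQ_{\Cyclic_p}(X_{s-1}) \circ \tilde{\delta}_s$. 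Since $\Psi_\ZZ$, $\Phi^{\Cyclic_{p^s}}$, and each evaluation functor $X \mapsto X_s$ preserves colimits, I would reduce both identifications to the compact generators $\tilde{\sC}((\Cyclic_{p^n}/\Cyclic_{p^t})^\circ_+)$ for $t \in [n]$ of the compactly generated stable $\infty$-category $\Mod^{\naive \Cyclic_{p^n}}_\ZZ \simeq \Fun(\Orb_{\Cyclic_{p^n}}^\op, \Mod_\ZZ)$.

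For the strata, the two functors $\Phi^{\Cyclic_{p^s}} \circ \Psi_\ZZ$ and $X \mapsto X_s$ are colimit-preserving functors $\Mod^{\naive \Cyclic_{p^n}}_\ZZ \to \Mod_\ZZ^{\htpy \Cyclic_{p^{n-s}}}$. On a generator $\tilde{\sC}((\Cyclic_{p^n}/\Cyclic_{p^t})^\circ_+)$, the first reduces (via the defining equivalence $\Psi_\ZZ \circ \tilde{\sC}(-) \simeq \tilde{\ZZ}\brax{-}$) to $\Phi^{\Cyclic_{p^s}}(\ZZ\brax{\Cyclic_{p^n}/\Cyclic_{p^t}})$, which by the geometric-fixedpoints formula $\Phi^H(\Sigma^\infty_G Y_+) \simeq \Sigma^\infty Y^H_+$ (a basic consequence of tom Dieck splitting) equals $\ZZ^\htpy\brax{(\Cyclic_{p^n}/\Cyclic_{p^t})^{\Cyclic_{p^s}}}$; the second evaluates (by Yoneda) to the same object, now with matching $\Cyclic_{p^{n-s}}$-action. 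Functoriality in $t$ then upgrades this pointwise agreement on generators to a natural equivalence of functors.

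For the gluing morphisms, the stratification supplies a natural transformation $\Phi^{\Cyclic_{p^s}} \Rightarrow (\Phi^{\Cyclic_{p^{s-1}}})^{\st \Cyclic_p}$ of functors $\Mod^{\gen \Cyclic_{p^n}}_\ZZ \to \Mod_\ZZ^{\htpy \Cyclic_{p^{n-s}}}$, which by the right-lax symmetric monoidal structure of \Cref{obs.various.properties.of.tate}\Cref{part.rlax.s.m.str.on.h.to.tate} canonically factors through $(\Phi^{\Cyclic_{p^{s-1}}})^{\htpy \Cyclic_p}$ via $\sQ_{\Cyclic_p}$. Precomposing with $\Psi_\ZZ$ and applying the previous strata identifications yields a natural transformation $(X \mapsto X_s) \Rightarrow (X \mapsto (X_{s-1})^{\htpy \Cyclic_p})$ of colimit-preserving functors, which I would identify with $\tilde{\delta}_s$ by a final generators check: on a corepresentable $X = \tilde{\sC}((\Cyclic_{p^n}/\Cyclic_{p^t})^\circ_+)$ both natural transformations evaluate to the morphism $\ZZ^\htpy\brax{(\Cyclic_{p^n}/\Cyclic_{p^t})^{\Cyclic_{p^s}}} \to \ZZ^\htpy\brax{(\Cyclic_{p^n}/\Cyclic_{p^t})^{\Cyclic_{p^{s-1}}}}$ induced by the distinguished morphism $(\Cyclic_{p^n}/\Cyclic_{p^s})^\circ \to (\Cyclic_{p^n}/\Cyclic_{p^{s-1}})^\circ$ in $\Orb_{\Cyclic_{p^n}}^\op$. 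The main obstacle is this final verification: the critical conceptual input is that the stratification's gluing natural transformation factors through homotopy fixedpoints (which preserve colimits) rather than directly into the Tate construction (which does not), so that the reduction to compact generators is genuinely available; without this factorization through $(-)^{\htpy \Cyclic_p}$, one could not use naturality and colimit-preservation to bootstrap from generators to arbitrary $X$.
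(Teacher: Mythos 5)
Your identification of the strata, and the general architecture (reduce along the defining equivalence $\Psi_\ZZ \circ \tilde{\sC}(-) \simeq \tilde{\ZZ}\brax{-}$ to the orbit generators, where tom Dieck splitting computes geometric fixedpoints of $\ZZ\brax{\Cyclic_{p^n}/\Cyclic_{p^t}}$), is consistent with the paper's terse justification. The genuine gap is in your treatment of the gluing morphisms. You assert that the gluing natural transformation $\Phi^{\Cyclic_{p^s}} \Rightarrow \big(\Phi^{\Cyclic_{p^{s-1}}}(-)\big)^{\st \Cyclic_p}$ on all of $\Mod^{\gen \Cyclic_{p^n}}_\ZZ$ canonically factors through $\big(\Phi^{\Cyclic_{p^{s-1}}}(-)\big)^{\htpy \Cyclic_p}$ via $\sQ_{\Cyclic_p}$, citing \Cref{obs.various.properties.of.tate}\Cref{part.rlax.s.m.str.on.h.to.tate}. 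No such factorization exists, even objectwise: by the reconstruction theorem (\Cref{thm.stratn.of.genuine.Cpn.Z.mods} and \Cref{consequences.of.stratn.of.genuine.Cpn.Z.mods}) one may take $n=1$ and build a genuine $\Cyclic_p$-$\ZZ$-module with $E_0 = \ZZ$, $E_1 = \Sigma^2 \ZZ$, and gluing map a generator of $\pi_2(\ZZ^{\st \Cyclic_p}) \cong \ZZ/p$ (inverse representation spheres $\ZZ^{-\rho_j}$ are of this shape, by \Cref{thm.from.reps.to.Pic}); this map cannot lift along $\sQ_{\Cyclic_p}$ because $\pi_2(\ZZ^{\htpy \Cyclic_p}) = 0$ by \Cref{lem.compute.htpy.ring.of.htpy.of.tate}. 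The factorization you want is only available after restricting along $\Psi_\ZZ$ -- but that is essentially the content of the observation being proved, so as written your argument assumes the crux, and right-lax symmetric monoidality of $(-)^{\htpy \Cyclic_p} \to (-)^{\st \Cyclic_p}$ does not supply it.

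A secondary inaccuracy: $X \mapsto (X_{s-1})^{\htpy \Cyclic_p}$ is not colimit-preserving ($\ZZ$ is not perfect over $\ZZ[\Cyclic_p]$), so your stated reason that the generators check suffices is also off. Both problems are repairable, and the repair shows the factorization through homotopy fixedpoints is not what makes the reduction work: since $X \mapsto X_s \simeq \Phi^{\Cyclic_{p^s}}\Psi_\ZZ(X)$ preserves colimits and is left Kan extended from the dense subcategory of orbit generators, natural transformations out of it into \emph{any} functor (colimit-preserving or not, e.g.\ $X \mapsto (X_{s-1})^{\st \Cyclic_p}$) are determined by their restrictions to the orbits. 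So it suffices to compare the gluing transformation of $\Psi_\ZZ$ with $\sQ_{\Cyclic_p}(X_{s-1}) \circ \tilde{\delta}_s$ as natural transformations on the orbit category itself. The remaining, and real, work -- which is where tom Dieck splitting enters, in line with the paper's hint -- is to compute the gluing morphism of $\ZZ\brax{\Cyclic_{p^n}/\Cyclic_{p^t}}$ naturally in the orbit (including naturality for all orbit-category morphisms, i.e.\ the residual Weyl actions, not just the maps $\delta_s$) and identify it with the asserted composite; your proposal gestures at this final verification but does not carry it out, and it is precisely there that the factorization through homotopy fixedpoints has to be \emph{established} for objects in the image of $\Psi_\ZZ$ rather than invoked.
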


\begin{proof}[Proof of \Cref{thm.from.reps.to.Pic}]
It is clear that
\[
\ZZ^{\rho_\triv}
:=
\SS^{\rho_\triv}
\otimes 
\ZZ
=
\Sigma \ZZ
=
L^{(\vec{e}_0,\vec{1})}
\in
\Pic(\Mod^{\gen \Cyclic_{p^n}}_\ZZ)
~.
\]
So, we fix an element $j \in \{ 1 , \ldots , p^n - 1\}$ and turn our attention to the corresponding Picard element
\[
\ZZ^{\rho_j} \in \Pic(\Mod^{\gen \Cyclic_{p^n}}_\ZZ)~.
\]
For convenience, we introduce the following notation.
\begin{itemize}

\item We simply write $\rho := \rho_j$, $\nu := \nu(j)$, and $\gamma := \gamma(j)$. So by definition, $\ZZ^\rho := \ZZ^{\rho_j}$.

\item We respectively write $S^\rho \in \Spaces^{\gen \Cyclic_{p^n}}_*$ and $S(\rho) \in \Spaces^{\gen \Cyclic_{p^n}}$ for the representation sphere and unit sphere of $\rho$.\footnote{That is, $S^\rho$ denotes the one-point compactification of $\RR^2$ and $S(\rho)$ denotes its unit circle, both considered as genuine $\Cyclic_{p^n}$-spaces (the former pointed).} So by definition, $\ZZ^\rho := \tilde{\ZZ}\brax{S^\rho}$.

\item We define $\tau \in \Cyclic_{p^{n-\nu}}$ to be the unique element such that $\tau^\gamma = \sigma$.\footnote{Explicitly, we set $\tau := \sigma^{\gamma^{-1}}$, where $\gamma^{-1}$ denotes the multiplicative inverse of the unit $\gamma$ in the commutative ring $\ZZ/p^{n-\nu}$.}

\end{itemize}

Now, we begin by observing the equivalence
\begin{equation}
\label{S.to.the.rho.is.suspension.of.S.of.rho}
S^\rho
\simeq
\Sigma S(\rho)
\end{equation}
in $\Spaces^{\gen \Cyclic_{p^n}}_*$,\footnote{This follows from the fact that at the level of topological spaces with $\Cyclic_{p^n}$-action, the representation sphere is the \textit{unreduced} suspension of the unit sphere.} as well as the equivalence
\begin{equation}
\label{CW.presentation.of.unit.sphere.in.rho}
S(\rho)
\simeq
\colim \left(
\begin{tikzcd}
\Cyclic_{p^n} / \Cyclic_{p^\nu}
\arrow[yshift=0.9ex]{r}{\id}
\arrow[yshift=-0.9ex]{r}[swap]{\tau}
&
\Cyclic_{p^n} / \Cyclic_{p^\nu}
\end{tikzcd}
\right)
\end{equation}
in $\Spaces^{\gen \Cyclic_{p^n}}$.\footnote{Indeed, the homomorphism $\Cyclic_{p^n} \xra{\rho} \SO(2) \cong \sU(1)$ factors through the subgroup $\mu_{p^{n-\nu}} \subseteq \sU(1)$ of $(p^{n-\nu})\th$ roots of unity according to the formula $\sigma \mapsto e^{2\pi i\gamma/p^{n-\nu}}$, and the unit circle has an evident $\mu_{p^{n-\nu}}$-CW complex structure that affords its description as the coequalizer
\[
\colim \left(
\begin{tikzcd}[ampersand replacement=\&, column sep=1.5cm]
\mu_{p^{n-\nu}} / e
\arrow[yshift=0.9ex]{r}{\id}
\arrow[yshift=-0.9ex]{r}[swap]{e^{2\pi i / p^{n-\nu}}}
\&
\mu_{p^{n-\nu}} / e
\end{tikzcd}
\right)
\in \Spaces^{\gen \mu_{p^{n-\nu}}}
~.
\]
} We use these to give an explicit presentation of the object $\tilde{\sC}(S^\rho) \in \Mod^{\naive \Cyclic_{p^n}}_\ZZ$. Namely, we define an object $\tilde{\ttC}(S^\rho) \in \Fun ( \Orb_{\Cyclic_{p^n}}^\op , \Ch_\ZZ)$ as follows. First of all, we define the object
\[
\ttY
:=
\left(
\cdots
\longra
0
\longra
\ZZ [ \Cyclic_{p^{n-\nu}} ]
\xra{1 - \tau}
\ZZ [ \Cyclic_{p^{n-\nu}} ]
\xra{\sigma \mapsto 1}
\uwave{\ZZ}
\longra
0 
\longra
\cdots
\right)
\in
\Ch_{\ZZ[\Cyclic_{p^{n-\nu}}]}
\simeq
\Fun ( \sB \Cyclic_{p^{n-\nu}} , \Ch_\ZZ )
\]
(where in degree 0 we endow $\ZZ$ with the trivial $\Cyclic_{p^{n-\nu}}$-action). Moreover, we simply write
\[
\uwave{\ZZ}
:=
\left( \cdots \longra 0 \longra \uwave{\ZZ} \longra 0 \longra \cdots \right)
\in
\Ch_\ZZ
~.
\]
Now, consider the morphism
\[
\uwave{\ZZ}
\xlongra{\ttz}
\ttY
\]
in $\Ch_\ZZ$ that is the identity in degree 0, which is evidently $\Cyclic_{p^{n-\nu}}$-equivariant with respect to the trivial $\Cyclic_{p^{n-\nu}}$-action on the source. We then define the functor
\[
\tilde{\ttC}(S^{\rho})
:
\Orb_{\Cyclic_{p^n}}^\op
\longra
(\sB \Cyclic_{p^{n-\nu}})^\lcone
\xlongra{\ttz}
\Ch_\ZZ
~,
\]
where the first morphism is the right adjoint retraction onto the full subcategory on the objects
\[
(\Cyclic_{p^n} / \Cyclic_{p^n})^\circ , (\Cyclic_{p^n} / \Cyclic_{p^\nu})^\circ \in \Orb_{\Cyclic_{p^n}}^\op
~.\footnote{So we may depict $\tilde{\ttC}(S^\rho) \in \Fun ( \Orb_{\Cyclic_{p^n}}^\op , \Ch_\ZZ )$ as the diagram
\[ \begin{tikzcd}[ampersand replacement=\&]
\uwave{\ZZ}
\arrow{r}
\arrow[loop below, distance=0.5cm]{}{e}
\&
\uwave{\ZZ}
\arrow{r}
\arrow[loop below, distance=0.5cm]{}{\Cyclic_p}
\&
\cdots
\arrow{r}
\&
\uwave{\ZZ}
\arrow{r}{\ttz}
\arrow[loop below, distance=0.5cm]{}{\Cyclic_{p^{n-\nu-1}}}
\&
\ttY
\arrow{r}
\arrow[loop below, distance=0.5cm]{}{\Cyclic_{p^{n-\nu}}}
\&
\cdots
\arrow{r}
\&
\ttY
\arrow[loop below, distance=0.5cm]{}{\Cyclic_{p^n}}
\end{tikzcd}
~.
\]
}
\]
It is clear from the equivalences \Cref{S.to.the.rho.is.suspension.of.S.of.rho} \and \Cref{CW.presentation.of.unit.sphere.in.rho} that $\tilde{\ttC}(S^\rho)$ is indeed a presentation of $\tilde{\sC}(S^\rho)$, i.e.\! that we have the assignment
\[ \begin{tikzcd}[row sep=0cm]
\Fun ( \Orb_{\Cyclic_{p^n}}^\op , \Ch_\ZZ )
\arrow{r}
&
\Fun ( \Orb_{\Cyclic_{p^n}}^\op , \Mod_\ZZ )
\\
\rotatebox{90}{$\in$}
&
\rotatebox{90}{$\in$}
\\
\tilde{\ttC}(S^{\rho})
\arrow[maps to]{r}
&
\tilde{\sC}(S^{\rho})
\end{tikzcd}~. \]

In order to proceed, we introduce the following additional notation.
\begin{itemize}

\item We write $Y := \Pi_\infty(\ttY) \in \Mod_\ZZ^{\htpy \Cyclic_{p^{n-\nu}}}$ for the underlying object of $\ttY \in \Ch_{\ZZ[\Cyclic_{p^{n-\nu}}]}$.

\item We write $\ZZ \xra{z} Y$ for the underlying morphism in $\Mod_\ZZ$ of the morphism $\uwave{\ZZ} \xra{\ttz} \ttY$ in $\Ch_\ZZ$, which is likewise $\Cyclic_{p^{n-\nu}}$-equivariant with respect to the trivial $\Cyclic_{p^{n-\nu}}$-action on the source.\footnote{So, we may depict $\tilde{\sC}(S^\rho) \in \Mod^{\naive \Cyclic_{p^n}}_\ZZ$ as the diagram
\[ \begin{tikzcd}[ampersand replacement=\&]
\ZZ
\arrow{r}
\arrow[loop below, distance=0.5cm]{}{e}
\&
\ZZ
\arrow{r}
\arrow[loop below, distance=0.5cm]{}{\Cyclic_p}
\&
\cdots
\arrow{r}
\&
\ZZ
\arrow{r}{z}
\arrow[loop below, distance=0.5cm]{}{\Cyclic_{p^{n-\nu-1}}}
\&
Y
\arrow{r}
\arrow[loop below, distance=0.5cm]{}{\Cyclic_{p^{n-\nu}}}
\&
\cdots
\arrow{r}
\&
Y
\arrow[loop below, distance=0.5cm]{}{\Cyclic_{p^n}}
\end{tikzcd}
~.
\]
}

\end{itemize}
Now, applying \Cref{obs.genuine.Cpn.Z.module.from.naive.one}, we find that the genuine $\Cyclic_{p^n}$-$\ZZ$-module
\[
\ZZ^\rho
:=
\tilde{\ZZ}\brax{S^\rho}
\simeq
\Psi_\ZZ ( \tilde{\sC}(S^\rho))
\in
\Mod^{\gen \Cyclic_{p^n}}_\ZZ
\]
has gluing diagram
\[
\left(
\begin{tikzcd}
Y
\arrow[maps to]{rdd}
&
Y
\arrow{d}{\tilde{\id}}
\arrow[maps to]{rdd}
&
\cdots
&
Y
\arrow[maps to]{rdd}
&
\ZZ
\arrow{d}{\tilde{z}}
\arrow[maps to]{rdd}
&
\ZZ
\arrow{d}{\tilde{\id}}
\arrow[maps to]{rdd}
&
\cdots
&
\ZZ
\arrow[maps to]{rdd}
&
\ZZ
\arrow{d}{\tilde{\id}}
\\
&
Y^{\htpy \Cyclic_p}
\arrow{d}{\sQ_{\Cyclic_p}(Y)}
&
\cdots
&
\cdots
&
Y^{\htpy \Cyclic_p}
\arrow{d}{\sQ_{\Cyclic_p}(Y)}
&
\ZZ^{\htpy \Cyclic_p}
\arrow{d}{\sQ_{\Cyclic_p}(\ZZ)}
&
\cdots
&
\cdots
&
\ZZ^{\htpy \Cyclic_p}
\arrow{d}{\sQ_{\Cyclic_p}(\ZZ)}
\\
&
Y^{\st \Cyclic_p}
&
Y^{\st \Cyclic_p}
&
\cdots
&
Y^{\st \Cyclic_p}
&
\ZZ^{\st \Cyclic_p}
&
\ZZ^{\st \Cyclic_p}
&
\cdots
&
\ZZ^{\st \Cyclic_p}
\end{tikzcd}
\right)
~.
\]
Noting the equivalence $Y \simeq \Sigma^2 \ZZ \in \Mod^{\htpy \Cyclic_{p^{n-\nu}}}_\ZZ$, we find that
\[
\ZZ^\rho
=
L^{(2\vec{e}_0-\vec{e}_{\nu+1},\sQ_{\Cyclic_p}(Y) \circ \tilde{z} )}
\in
\Pic(\Mod^{\gen \Cyclic_{p^n}}_\ZZ)
~.
\]
So it remains to show that
\[
\sQ_{\Cyclic_p}(Y) \circ \tilde{z}
=
\gamma
\in
(\ZZ/p^{n-\nu})^\times
~.
\]
For this, we define a quasi-isomorphism
\[
\ttY
\xlongra{\approx}
\ttY'
\]
in $\Ch_{\ZZ[\Cyclic_{p^{n-\nu}}]}$ as
\[ \begin{tikzcd}
\cdots
\arrow{r}
&
0
\arrow{r}
\arrow{d}
&
\ZZ[\Cyclic_{p^{n-\nu}}]
\arrow{r}{1-\tau}
\arrow{d}{1}
&
\ZZ[\Cyclic_{p^{n-\nu}}]
\arrow{r}{\sigma \mapsto 1}
\arrow{d}{1+\tau + \cdots + \tau^{\gamma-1}}
&
\uwave{\ZZ}
\arrow{r}
\arrow{d}{\gamma N}
&
0
\arrow{r}
\arrow{d}
&
\cdots
\\
\cdots
\arrow{r}
&
0
\arrow{r}
&
\ZZ[\Cyclic_{p^{n-\nu}}]
\arrow{r}[swap]{1-\sigma}
&
\ZZ[\Cyclic_{p^{n-\nu}}]
\arrow{r}[swap]{N}
&
\uwave{\ZZ[\Cyclic_{p^{n-\nu}}]}
\arrow{r}[swap]{1-\sigma}
&
\ZZ[\Cyclic_{p^{n-\nu}}]
\arrow{r}[swap]{N}
&
\cdots
\end{tikzcd}~. \]
From this, we see that the composite morphism
\[
\uwave{\ZZ}
\xlongra{\ttz}
\ttY
\xlongra{\approx}
\ttY'
\]
in $\Ch_{\ZZ[\Cyclic_{p^{n-\nu}}]}$ (where the source is endowed with the trivial $\Cyclic_{p^{n-\nu}}$-action) selects the element
\[
\gamma \cdot c_{n-\nu}
\in
\pi_{-2} ( \ZZ^{\htpy \Cyclic_{p^{n-\nu}}})
\cong
\pi_0 \ulhom_{\Mod_\ZZ} ( \ZZ , \Sigma^2 \ZZ^{\htpy \Cyclic_{p^{n-\nu}}} )
\cong
\pi_0 \ulhom_{\Mod^{\htpy \Cyclic_{p^{n-\nu}}}_\ZZ} ( \ZZ , \Sigma^2 \ZZ )
~,
\]
which proves the claim.
\end{proof}

\section{The constant Mackey functor at $\ZZ$}
\label{section.Z.underline}

In this section, we describe the constant Mackey functor at $\ZZ$ (i.e.\! the coefficients for equivariant cohomology) in terms of our stratification; this is given as \Cref{prop.describe.gluing.diagram.of.Z.underline}. We also record a resulting identification of $\THH(\FF_p)$ as \Cref{cor.identify.THH.FP}.

\begin{notation}
\label{notn.constant.mackey.functor.at.Z}
We write
\[
\ul{\ZZ}
\in
\Mack_{\Cyclic_{p^n}}(\Ab)
\subseteq
\Mack_{\Cyclic_{p^n}}(\Mod_\ZZ)
\simeq
\Mod_\ZZ^{\gen \Cyclic_{p^n}}
\]
for the constant Mackey functor at $\ZZ$, considered as a genuine $\Cyclic_{p^n}$-$\ZZ$-module (using \Cref{obs.mackey.for.genuine.G.objects}): its categorical fixedpoints for any subgroup $\Cyclic_{p^s} \leq \Cyclic_{p^n}$ are
\[
\ul{\ZZ}^{\Cyclic_{p^s}}
:=
\ZZ
\in
\Mod^{\htpy \Cyclic_{p^{n-s}}}_\ZZ
\]
(i.e.\! the abelian group $\ZZ \in \Ab \subseteq \Mod_\ZZ$ equipped with the trivial $\Cyclic_{p^{n-s}}$-action), and for any morphism
\[
\Cyclic_{p^n}/\Cyclic_{p^s}
\longra
\Cyclic_{p^n}/\Cyclic_{p^t}
\]
in $\Orb_{\Cyclic_{p^n}}$ its corresponding restriction and transfer maps are respectively
\[
\ul{\ZZ}^{\Cyclic_{p^t}}
:=
\ZZ
\xlongra{1}
\ZZ
=:
\ul{\ZZ}^{\Cyclic_{p^s}}
\qquad
\text{and}
\qquad
\ul{\ZZ}^{\Cyclic_{p^s}}
:=
\ZZ
\xra{p^{t-s}}
\ZZ
=:
\ul{\ZZ}^{\Cyclic_{p^t}}
~.\footnote{The factor $p^{t-s}$ arises as the index $|\Cyclic_{p^t} : \Cyclic_{p^r}|$.}
\]
For convenience, more generally for any $i \geq 0$ we also simply write
\[
\ul{\ZZ} \in \Mack_{\Cyclic_{p^i}}(\Ab) \subseteq \Mod^{\gen \Cyclic_{p^i}}_\ZZ
\]
for the constant Mackey functor at $\ZZ$.
\end{notation}

\begin{observation}
For every $s \in [n]$, there is an evident identification
\[ \begin{tikzcd}[row sep=0cm]
\Mod^{\gen \Cyclic_{p^n}}_\ZZ
\arrow{r}{(-)^{\Cyclic_{p^s}}}
&
\Mod^{\gen \Cyclic_{p^{n-s}}}_\ZZ
\\
\rotatebox{90}{$\in$}
&
\rotatebox{90}{$\in$}
\\
\ul{\ZZ}
\arrow[maps to]{r}
&
\ul{\ZZ}
\end{tikzcd}~. \]
We use this fact without further comment.
\end{observation}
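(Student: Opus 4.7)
The plan is to verify the asserted identification directly at the level of the Mackey-functor model from \Cref{obs.mackey.for.genuine.G.objects}. Under the equivalence $\Mod^{\gen \Cyclic_{p^n}}_\ZZ \simeq \Mack_{\Cyclic_{p^n}}(\Mod_\ZZ)$, the categorical $\Cyclic_{p^s}$-fixedpoints functor is realized (up to the isomorphism $\Weyl(\Cyclic_{p^s}) \cong \Cyclic_{p^{n-s}}$) as restriction along the induction functor $\Ind_{\Cyclic_{p^s}}^{\Cyclic_{p^n}} \colon \Burn_{\Cyclic_{p^{n-s}}} \to \Burn_{\Cyclic_{p^n}}$. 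Since $\ul{\ZZ}$ lies in the subcategory $\Mack_{\Cyclic_{p^n}}(\Ab) \subseteq \Mack_{\Cyclic_{p^n}}(\Mod_\ZZ)$, so does $\ul{\ZZ}^{\Cyclic_{p^s}}$, and the task reduces to comparing two ordinary (abelian-group-valued) Mackey functors.

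First I would evaluate on orbits: for any $t \in [n-s]$, one has the isomorphism $\Ind_{\Cyclic_{p^s}}^{\Cyclic_{p^n}}(\Cyclic_{p^{n-s}}/\Cyclic_{p^t}) \cong \Cyclic_{p^n}/\Cyclic_{p^{s+t}}$ of finite $\Cyclic_{p^n}$-sets, so the value of $\ul{\ZZ}^{\Cyclic_{p^s}}$ at $\Cyclic_{p^{n-s}}/\Cyclic_{p^t}$ is $\ul{\ZZ}(\Cyclic_{p^n}/\Cyclic_{p^{s+t}}) = \ZZ$, matching the value of the constant Mackey functor $\ul{\ZZ} \in \Mack_{\Cyclic_{p^{n-s}}}(\Ab)$ at the same orbit. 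Next I would check that restrictions and transfers agree: the induction functor on $\Burn$ carries a restriction span between orbits in $\Cyclic_{p^{n-s}}$ of index $p^{t'-t}$ to a restriction span in $\Cyclic_{p^n}$ of the same index (on which $\ul{\ZZ}$ evaluates as the identity on $\ZZ$), and similarly carries a transfer of index $p^{t'-t}$ to a transfer of the same index (on which $\ul{\ZZ}$ evaluates as multiplication by $p^{t'-t}$). Both outcomes coincide with the description of $\ul{\ZZ} \in \Mack_{\Cyclic_{p^{n-s}}}(\Ab)$ recorded in \Cref{notn.constant.mackey.functor.at.Z}.

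The only mild bookkeeping point will be to confirm that the residual Weyl-group actions on $\ul{\ZZ}(\Cyclic_{p^n}/\Cyclic_{p^{s+t}})$ induced by the induction from $\Cyclic_{p^{n-s}}$ are trivial; but this is automatic, since every subgroup of $\Cyclic_{p^n}$ is normal (so each Weyl group is a quotient of $\Cyclic_{p^n}$), and these quotient groups act trivially on the value $\ZZ$ of the constant Mackey functor at every orbit. With this in hand, the two Mackey functors agree on objects of and morphisms in $\Burn_{\Cyclic_{p^{n-s}}}$, yielding the desired identification. The identification is in this sense truly ``evident'', and no deeper input is required.
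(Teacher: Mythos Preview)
Your proposal is correct in substance and is exactly the natural way to unpack the word ``evident''; the paper itself offers no argument beyond that word. One small terminological point: the functor $\Burn_{\Cyclic_{p^{n-s}}} \to \Burn_{\Cyclic_{p^n}}$ you use is not induction $\Ind_{\Cyclic_{p^s}}^{\Cyclic_{p^n}}$ (whose source would be $\Burn_{\Cyclic_{p^s}}$) but rather inflation along the quotient $\Cyclic_{p^n} \twoheadrightarrow \Cyclic_{p^{n-s}}$, which the paper writes as $\Res^{\Cyclic_{p^{n-s}}}_{\Cyclic_{p^n}}$ (see the commutative square recording compatibility of the Mackey equivalence with categorical fixedpoints in \S\ref{section.defn.gen.G.objects}). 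Your orbit-level formula $\Cyclic_{p^{n-s}}/\Cyclic_{p^t} \mapsto \Cyclic_{p^n}/\Cyclic_{p^{s+t}}$ and the subsequent checks on restrictions, transfers, and Weyl actions are all correct for this inflation functor, so the argument stands once the label is fixed.
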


\begin{notation}
Let $\cC$ be a stable $\infty$-category equipped with a t-structure. We write $\cC \xra{\tau_{\geq 0}} \cC$ for the connective cover functor, and for any object $E \in \cC$ we write
\[
\tau_{\geq 0} E
\xra{\varepsilon_{\geq 0}(E)}
E
\]
for the canonical morphism to it from its connective cover. For simplicity, we write $\varepsilon_{\geq 0} := \varepsilon_{\geq 0}(E)$ or even $\varepsilon := \varepsilon_{\geq 0}$ when the meaning is clear from context.
\end{notation}

\begin{prop}
\label{prop.describe.gluing.diagram.of.Z.underline}
Via the geometric stratification of $\Mod^{\gen \Cyclic_{p^n}}_\ZZ$ of \Cref{thm.stratn.of.genuine.Cpn.Z.mods} (and \Cref{consequences.of.stratn.of.genuine.Cpn.Z.mods}\Cref{microcosm.consequences.of.stratn.of.genuine.Cpn.Z.mods}\Cref{microcosm.consequences.of.stratn.of.genuine.Cpn.Z.mods.gluing.diagrams}), the object $\ul{\ZZ} \in \Mod^{\gen \Cyclic_{p^n}}_\ZZ$ is recorded by the data of the gluing diagram
\[ \begin{tikzcd}
\ZZ
\arrow[maps to]{rd}
&
\tau_{\geq 0} \ZZ^{\st \Cyclic_p}
\arrow[maps to]{rd}
\arrow{d}{\varepsilon}
&
\tau_{\geq 0} \ZZ^{\st \Cyclic_p}
\arrow[maps to]{rd}
\arrow{d}{\varepsilon}
&
\cdots
&
\cdots
\arrow[maps to]{rd}
&
\tau_{\geq 0} \ZZ^{\st \Cyclic_p}
\arrow{d}{\varepsilon}
\\
&
\ZZ^{\st \Cyclic_p}
&
\ZZ^{\st \Cyclic_p}
&
\ZZ^{\st \Cyclic_p}
&
\cdots
&
\ZZ^{\st \Cyclic_p}
\end{tikzcd}~. \]
\end{prop}

\begin{observation}
\label{obs.morphism.between.isotropy.separation.sequences}
Fix any $i \geq 1$. Consider the unit morphism
\begin{equation}
\label{unit.morphism.from.Z.underline.to.its.borel.completion}
\ul{\ZZ}
\longra
\beta U \ul{\ZZ}
 \simeq
\beta \ZZ
\end{equation}
in $\Mod^{\gen \Cyclic_{p^i}}_\ZZ$. Taking categorical $\Cyclic_p$-fixedpoints of the morphism \Cref{unit.morphism.from.Z.underline.to.its.borel.completion}, we obtain the morphism
\[
\ul{\ZZ}^{\Cyclic_p}
:=
\ZZ
\xlongra{\varepsilon}
\ZZ^{\htpy \Cyclic_p}
\simeq
(\beta \ZZ)^{\Cyclic_p}
\]
in $\Mod^{\htpy \Cyclic_{p^{i-1}}}_\ZZ$. Hence, on isotropy separation sequences the morphism \Cref{unit.morphism.from.Z.underline.to.its.borel.completion} determines a commutative diagram
\begin{equation}
\label{morphism.between.isotropy.separation.sequences}
\begin{tikzcd}[column sep=1.5cm]
&
\ul{\ZZ}^{\Cyclic_p}
\arrow{r}
&
\Phi^{\Cyclic_p} \ul{\ZZ}
\\[-0.7cm]
&
\rotatebox{90}{$=:$}
&
\rotatebox{90}{$\simeq$}
\\[-0.7cm]
\ZZ_{\htpy \Cyclic_p}
\arrow{r}
\arrow[equals]{d}
&
\ZZ
\arrow{r}
\arrow{d}{\varepsilon}
&
\tau_{\geq 0} \ZZ^{\st \Cyclic_p}
\arrow{d}{\varepsilon}
\\
\ZZ_{\htpy \Cyclic_p}
\arrow{r}[swap]{\Nm_{\Cyclic_p}(\ZZ)}
&
\ZZ^{\htpy \Cyclic_p}
\arrow{r}[swap]{\sQ_{\Cyclic_p}(\ZZ)}
&
\ZZ^{\st \Cyclic_p}
\\[-0.7cm]
&
\rotatebox{90}{$\simeq$}
&
\rotatebox{90}{$\simeq$}
\\[-0.7cm]
&
(\beta \ZZ)^{\Cyclic_p}
\arrow{r}
&
\Phi^{\Cyclic_p} \beta \ZZ
\end{tikzcd}
\end{equation}
in $\Mod^{\htpy \Cyclic_{p^{i-1}}}_\ZZ$ (and in particular the equivalence $\Phi^{\Cyclic_p} \ul{\ZZ} \simeq \tau_{\geq 0} \ZZ^{\st \Cyclic_p}$), in which both horizontal composites are cofiber sequences and hence the middle right square is a pullback.
\end{observation}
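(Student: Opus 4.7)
The plan is to realize the entire diagram \eqref{morphism.between.isotropy.separation.sequences} as the effect on $\Cyclic_p$-categorical fixedpoints of the unit $\ul{\ZZ} \to \beta\ZZ$ applied to the isotropy separation cofiber sequence
\[
(\Cyclic_{p^i}/e)_+ \otimes (-) \longra (-) \longra \wEF_{^{\not\geq}\Cyclic_p} \otimes (-)
\]
of endofunctors of $\Mod^{\gen\Cyclic_{p^i}}_\ZZ$. Both the top and bottom rows then arise as cofiber sequences automatically, and the morphism between them is induced by naturality. For the identifications, $\ul{\ZZ}^{\Cyclic_p} = \ZZ$ holds by definition of the constant Mackey functor (\Cref{notn.constant.mackey.functor.at.Z}), $(\beta\ZZ)^{\Cyclic_p} \simeq \ZZ^{\htpy\Cyclic_p}$ because the right adjoint $\beta$ to the forgetful $U$ sends a homotopy $G$-object to the Borel-complete genuine $G$-object whose $H$-categorical fixedpoints are $H$-homotopy fixedpoints, and $\Phi^{\Cyclic_p}\beta\ZZ \simeq \ZZ^{\tate\Cyclic_p} \simeq \ZZ^{\st\Cyclic_p}$ by \Cref{prop.proper.Tate.from.genuine.G.objs} combined with \Cref{obs.proper.F.tate.recovers.tate} (applied to $R=\ZZ$; note that the family $(^{\not\geq}\Cyclic_p)$ of proper subgroups of $\Cyclic_p$ coincides with $\{e\}$). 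The first terms of both rows are $\ZZ_{\htpy\Cyclic_p}$ via the standard computation $((\Cyclic_{p^i}/e)_+\otimes E)^{\Cyclic_p} \simeq (UE)_{\htpy\Cyclic_p}$ for any $E$, since $U\ul{\ZZ} \simeq \ZZ \simeq U\beta\ZZ$; the left vertical is the identity by the triangle identity for the adjunction $U \dashv \beta$.

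Next I would identify the middle vertical as $\varepsilon$. By naturality it is the canonical comparison $\ul{\ZZ}^{\Cyclic_p} \to (U\ul{\ZZ})^{\htpy\Cyclic_p}$ from categorical to homotopy $\Cyclic_p$-fixedpoints. Since $\ZZ \in \Mod_\ZZ$ is discrete, any morphism $\ZZ \to \ZZ^{\htpy\Cyclic_p}$ is uniquely determined up to homotopy by its effect on $\pi_0$ (there are no higher obstructions from a free $\ZZ$-resolution of $\ZZ$). The induced map on $\pi_0$ is the identity $\ZZ \to \ZZ$ by direct inspection of the constant Mackey functor, so this map must agree with the connective cover $\varepsilon$.

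To establish the equivalence $\Phi^{\Cyclic_p}\ul{\ZZ} \simeq \tau_{\geq 0}\ZZ^{\st\Cyclic_p}$, I would apply the vertical-fiber functor to the morphism of cofiber sequences. Since the fiber of the left vertical is zero, I get an equivalence
\[
\fib(\varepsilon\colon \ZZ \to \ZZ^{\htpy\Cyclic_p}) \simeq \fib(\Phi^{\Cyclic_p}\ul{\ZZ} \to \ZZ^{\st\Cyclic_p}).
\]
Since $\cofib(\varepsilon) = \tau_{<0}\ZZ^{\htpy\Cyclic_p}$ is concentrated in degrees $\leq -1$, $\fib(\varepsilon)$ is concentrated in degrees $\leq -2$, so the right vertical is an isomorphism on $\pi_n$ for $n \geq -1$. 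On the other hand, $\Phi^{\Cyclic_p}\ul{\ZZ}$ is connective as the cofiber of the map $\ZZ_{\htpy\Cyclic_p} \to \ZZ$ between connective objects, and so the induced factorization $\Phi^{\Cyclic_p}\ul{\ZZ} \to \tau_{\geq 0}\ZZ^{\st\Cyclic_p}$ is an equivalence (an isomorphism on $\pi_n$ for all $n$, both sides being connective). The middle row as written is then identified with the isotropy separation sequence for $\ul{\ZZ}$, hence is indeed a cofiber sequence. Finally, the middle right square is a pullback because both of its extensions to the left give cofiber sequences with equivalent (identity) leftmost term, and in a stable $\infty$-category such a square is simultaneously a pushout and a pullback.

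The main obstacle will be pinning down the identification of the middle vertical as $\varepsilon$: the naturality construction produces the comparison map from categorical to homotopy fixedpoints, but asserting that this is literally the connective cover requires the $\pi_0$ verification and the uniqueness principle for maps out of discrete spectra. Everything else is a formal manipulation of cofiber sequences in the stable $\infty$-category $\Mod^{\htpy\Cyclic_{p^{i-1}}}_\ZZ$ together with the previously established identifications of $\Phi^{\Cyclic_p}\beta$ as the Tate construction.
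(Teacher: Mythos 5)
Your overall strategy is the one the paper intends for this Observation: apply $\Cyclic_p$-categorical fixedpoints to the map of isotropy separation sequences induced by the unit $\ul{\ZZ} \to \beta\ZZ$, identify the bottom row with the norm cofiber sequence via $(\beta\ZZ)^{\Cyclic_p} \simeq \ZZ^{\htpy\Cyclic_p}$ and $\Phi^{\Cyclic_p}\beta\ZZ \simeq \ZZ^{\st\Cyclic_p}$, pin down the middle vertical as $\varepsilon$ using discreteness of $\ZZ$ and a $\pi_0$ check, and then get $\Phi^{\Cyclic_p}\ul{\ZZ} \simeq \tau_{\geq 0}\ZZ^{\st\Cyclic_p}$ by the connectivity/fiber comparison. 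Those parts, including the pushout-equals-pullback remark at the end, are fine (one small citation point: \Cref{obs.proper.F.tate.recovers.tate} is stated for $H=G$, so for $\Cyclic_p \leq \Cyclic_{p^i}$ with its residual $\Cyclic_{p^{i-1}}$-equivariance you really want the identification $\Phi^{\Cyclic_p}\beta(-) \simeq (-)^{\st\Cyclic_p}$ as established in the proof of \Cref{thm.stratn.of.genuine.Cpn.Z.mods}).

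However, the step you start from is wrong as written. The isotropy separation sequence for $\Phi^{\Cyclic_p}$ on genuine $\Cyclic_{p^i}$-objects is
\[
(E\Cyclic_{p^i})_+ \otimes (-) \longra (-) \longra \wEF_{^{\not\geq}\Cyclic_p} \otimes (-),
\]
with first term the universal space for the family $\{e\}$, not the orbit $(\Cyclic_{p^i}/e)_+$: the cofiber of $(\Cyclic_{p^i}/e)_+ \otimes E \to E$ is not $\wEF_{^{\not\geq}\Cyclic_p} \otimes E$, so your displayed sequence of endofunctors is not a cofiber sequence. Correspondingly, the ``standard computation'' $((\Cyclic_{p^i}/e)_+\otimes E)^{\Cyclic_p} \simeq (UE)_{\htpy\Cyclic_p}$ is false: for $i=1$ and $E=\ul{\ZZ}$ one has $(\Cyclic_p/e)_+\otimes\ul{\ZZ} \simeq \Ind_e^{\Cyclic_p}\ZZ$, whose $\Cyclic_p$-fixedpoints are the discrete module $\ZZ$ (by the induction/coinduction adjunction), whereas $\ZZ_{\htpy\Cyclic_p}$ has nonvanishing higher homotopy. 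What you need instead is the Adams isomorphism $((E\Cyclic_{p^i})_+\otimes E)^{\Cyclic_p} \simeq (UE)_{\htpy\Cyclic_p}$ (with its residual $\Cyclic_{p^{i-1}}$-equivariance), together with \Cref{obs.geom.fps.on.gen.G.objects.from.catl.fps} for the third term. With that substitution the left vertical is an equivalence by the triangle identity as you say, and the rest of your argument goes through unchanged.
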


\begin{notation}
Fix any $i \geq 1$. We write
\[
\ZZ_{\htpy \Cyclic_p}
\xra{\genNm}
\ZZ
\xra{\genQuot}
\tau_{\geq 0} \ZZ^{\st \Cyclic_p}
\]
for the morphisms in the upper horizontal composite in diagram \Cref{morphism.between.isotropy.separation.sequences} in $\Mod^{\htpy \Cyclic_{p^{i-1}}}_\ZZ$.
\end{notation}

\begin{observation}
\label{obs.tate.Cp.of.Z.to.conn.cover.of.Z.tate.Cp.is.an.equivalence}
Fix any $i \geq 2$. By Observations \ref{obs.morphism.between.isotropy.separation.sequences} \and \ref{obs.tate.vanishing.for.Z.mods}, applying the $\Cyclic_p$-Tate construction to the morphism
\[
\ZZ
\xra{\genQuot}
\tau_{\geq 0} \ZZ^{\st \Cyclic_p}
\]
in $\Mod^{\htpy \Cyclic_{p^{i-1}}}_\ZZ$ yields an equivalence
\[
\ZZ^{\st \Cyclic_p}
\xra[\sim]{\genQuot^{\st \Cyclic_p}}
(\tau_{\geq 0} \ZZ^{\st \Cyclic_p})^{\st \Cyclic_p}
\]
in $\Mod^{\htpy \Cyclic_{p^{i-2}}}_\ZZ$.
\end{observation}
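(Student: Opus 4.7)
The plan is to realize $\genQuot$ as a morphism in a cofiber sequence whose third term is annihilated by $(-)^{\st \Cyclic_p}$, thereby reducing the claimed equivalence to a vanishing statement handled directly by Observation \ref{obs.tate.vanishing.for.Z.mods}.

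First, I would appeal to Observation \ref{obs.morphism.between.isotropy.separation.sequences} to identify $\genQuot$ as the second morphism in the cofiber sequence
\[
\ZZ_{\htpy \Cyclic_p} \xra{\genNm} \ZZ \xra{\genQuot} \tau_{\geq 0} \ZZ^{\st \Cyclic_p}
\]
in $\Mod^{\htpy \Cyclic_{p^{i-1}}}_\ZZ$ (namely the upper horizontal composite in diagram \Cref{morphism.between.isotropy.separation.sequences}). The functor $(-)^{\st \Cyclic_p} : \Mod^{\htpy \Cyclic_{p^{i-1}}}_\ZZ \to \Mod^{\htpy \Cyclic_{p^{i-2}}}_\ZZ$ is exact, being defined (see \Cref{obs.various.properties.of.tate}) as the cofiber of the natural transformation $\Nm_{\Cyclic_p}$ between exact functors. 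Applying it will therefore yield a cofiber sequence
\[
(\ZZ_{\htpy \Cyclic_p})^{\st \Cyclic_p} \longra \ZZ^{\st \Cyclic_p} \xra{\genQuot^{\st \Cyclic_p}} (\tau_{\geq 0} \ZZ^{\st \Cyclic_p})^{\st \Cyclic_p}
\]
in $\Mod^{\htpy \Cyclic_{p^{i-2}}}_\ZZ$, so that $\genQuot^{\st \Cyclic_p}$ is an equivalence if and only if its leftmost term vanishes.

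To obtain this vanishing, I would invoke Observation \ref{obs.tate.vanishing.for.Z.mods}, which asserts that for any $E \in \Mod^{\htpy \Cyclic_{p^2}}_\ZZ$ the norm map $\Nm_{\Cyclic_p}(E_{\htpy \Cyclic_p})$ is an equivalence, equivalently $(E_{\htpy \Cyclic_p})^{\st \Cyclic_p} \simeq 0$. Applying this with $E = \ZZ$ endowed with the trivial $\Cyclic_{p^2}$-action (compatible with the trivial $\Cyclic_{p^{i-1}}$-action in our setting, either by restriction along a subgroup inclusion when $i \geq 3$ or by pullback along a quotient when $i = 2$) will deliver the vanishing of the underlying $\Mod_\ZZ$-object of $(\ZZ_{\htpy \Cyclic_p})^{\st \Cyclic_p}$, and conservativity of the forgetful functor $\Mod^{\htpy \Cyclic_{p^{i-2}}}_\ZZ \to \Mod_\ZZ$ will promote this to the required vanishing in $\Mod^{\htpy \Cyclic_{p^{i-2}}}_\ZZ$. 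I do not anticipate a serious obstacle; the only care required is the mild bookkeeping to ensure the $\Cyclic_{p^2}$-action invoked in Observation \ref{obs.tate.vanishing.for.Z.mods} is compatible with the ambient $\Cyclic_{p^{i-1}}$-action, and this is automatic because both are trivial.
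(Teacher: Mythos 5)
Your proposal is correct and is exactly the argument the paper intends: apply the exact functor $(-)^{\st \Cyclic_p}$ to the cofiber sequence $\ZZ_{\htpy \Cyclic_p} \xra{\genNm} \ZZ \xra{\genQuot} \tau_{\geq 0}\ZZ^{\st \Cyclic_p}$ from Observation \ref{obs.morphism.between.isotropy.separation.sequences} and kill the first term via the Tate vanishing of Observation \ref{obs.tate.vanishing.for.Z.mods} (the equivariance bookkeeping you mention is the same implicit step the paper takes). No discrepancies.
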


\begin{proof}[Proof of \Cref{prop.describe.gluing.diagram.of.Z.underline}]
We give the proof assuming that $n \geq 3$, proving the cases $0 \leq n \leq 2$ implicitly along the way. It is clear that the gluing diagram of $\ul{\ZZ} \in \Mod^{\gen \Cyclic_{p^n}}_\ZZ$ restricts as asserted over $0 \in [n]$. The fact that it restricts as asserted over $1 \in [n]$ follows from \Cref{obs.morphism.between.isotropy.separation.sequences} (in the case that $i=n$). Thereafter, the fact that it restricts as asserted over $2 \in [n]$ follows from the commutative diagram
\[ \begin{tikzcd}
\Phi^{\Cyclic_p} \ul{\ZZ}
\arrow{d}
&[-1cm]
=:
\Phi^{\Cyclic_p} ( \ul{\ZZ}^{\Cyclic_p})
\arrow{r}{\sim}
&
\Phi^{\Cyclic_p} (\Phi^{\Cyclic_p}_\gen \ul{\ZZ})
\arrow{d}
\\
\ZZ^{\st \Cyclic_p}
&
\hspace{-0.4cm}
=:
(\ul{\ZZ}^{\Cyclic_p})^{\st \Cyclic_p}
\arrow{r}[swap]{\sim}
&
(\Phi^{\Cyclic_p} \ul{\ZZ})^{\st \Cyclic_p}
\end{tikzcd} \]
in $\Mod^{\htpy \Cyclic_{p^{n-2}}}_\ZZ$, in which the upper horizontal morphism is an equivalence by \Cref{lem.projection.formula.categorical.then.geometric.is.geometric} and the lower horizontal morphism is an equivalence by Observations \ref{obs.morphism.between.isotropy.separation.sequences} \and \ref{obs.tate.Cp.of.Z.to.conn.cover.of.Z.tate.Cp.is.an.equivalence} (in the case that $i=n$). Thereafter, the fact that it restricts as asserted over $s \in [n]$ for any $s \geq 3$ follows from the commutative diagram 
\[ \begin{tikzcd}
\Phi^{\Cyclic_{p^2}}(\ul{\ZZ})
\arrow{d}
&[-1.1cm]
=:
\Phi^{\Cyclic_p} ( \Phi^{\Cyclic_p}_\gen ( \ul{\ZZ}^{\Cyclic_{p^{s-2}}} ) )
\arrow{r}{\sim}
&
\Phi^{\Cyclic_{p^s}} \ul{\ZZ}
\arrow{d}
\\
(\Phi^{\Cyclic_p} \ul{\ZZ})^{\st \Cyclic_p}
&
=:
( \Phi^{\Cyclic_p} ( \ul{\ZZ}^{\Cyclic_{p^{s-2}}} ) )^{\st \Cyclic_p}
\arrow{r}[swap]{\sim}
&
(\Phi^{\Cyclic_{p^{s-1}}} \ul{\ZZ})^{\st \Cyclic_p}
\end{tikzcd} \]
in $\Mod^{\htpy \Cyclic_{p^{n-s}}}_\ZZ$, in which both horizontal morphisms are equivalences by \Cref{lem.projection.formula.categorical.then.geometric.is.geometric}.
\end{proof}

\begin{cor}
\label{cor.identify.THH.FP}
Considering $\ul{\ZZ}$ as an object of the $\infty$-category
\[
\Spectra^{\gen \Cyclic_{p^\infty}}
:=
\lim
\left(
\cdots
\xra{\Res^{\Cyclic_{p^3}}_{\Cyclic_{p^2}}}
\Spectra^{\gen \Cyclic_{p^2}}
\xra{\Res^{\Cyclic_{p^2}}_{\Cyclic_{p}}}
\Spectra^{\gen \Cyclic_p}
\xra{\Res^{\Cyclic_{p}}_{e}}
\Spectra
\right)
\]
in the evident way, there is an equivalence
\[
\Phi^{\Cyclic_p}(\ul{\ZZ})
\simeq
\THH(\FF_p)
\]
in $\Spectra^{\gen \Cyclic_{p^\infty}}$.
\end{cor}

\begin{proof}
This follows by combining \Cref{prop.describe.gluing.diagram.of.Z.underline} with \cite[Corollary IV.4.16 et seq.]{NS}.
\end{proof}

\begin{remark}
\label{rmk.wonder.about.significance.of.Bokstedt}
In fact, the object $\ul{\ZZ} \in \Spectra^{\gen \Cyclic_{p^\infty}}$ naturally carries the structure of a $p$-typical topological Cartier module in the sense of \cite{AN-topCart}; thereafter, \Cref{cor.identify.THH.FP} gives an equivalence of $p$-cyclotomic spectra. We would be very interested in a conceptual explanation of this equivalence, which would give a conceptual explanation for B\"okstedt's computation \cite{Bok-THHFp} of $\pi_* \THH(\FF_p)$.
\end{remark}

\section{The $\Pic(\Mod^{\gen \Cyclic_{p^n}}_\ZZ)$-graded cohomology of a point}
\label{section.Cpn.eqvrt.cohomology}

In this section, we establish our explicit chain-level description of equivariant cohomology (\Cref{intro.thm.cohomology}) as \Cref{thm.describe.ho.Mod.Z.valued.mackey.functor}. We begin by recalling the notation established in \Cref{subsection.intro.computations} as \Cref{notn.main.body.of.paper.eqvrt.coh}.

\begin{remark}
In this section, we freely use the material of \Cref{section.homological.algebra}. We refer the reader to Notations \ref{notation.all.chain.level.stuff} \and \ref{notn.shifted.endo.of.T} for Lemmas \ref{lemma.values.of.mackey.functor} \and \ref{lemma.res.of.mackey.functor}, and additionally to Notations \ref{notn.S.chain.complex}, \ref{notn.B.tilde.complex}, \and \ref{notn.for.nullhtpy} for \Cref{lemma.trf.of.mackey.functor}. Furthermore, we refer the reader to \Cref{rmk.mnemonics.for.ch.cxes} for some mnemonics regarding these various notations.
\end{remark}

\begin{notation}
\label{notn.main.body.of.paper.eqvrt.coh}
Given a finite group $G$, a genuine $G$-space $X \in \Spaces^{\gen G}$, a subgroup $H \leq G$, and a Picard element $L \in \Pic(\Mod^{\gen G}_\ZZ)$, we write
\[
\sC^L_G(X)(H)
:=
\ulhom_{\Mod^{\gen G}_\ZZ} ( \Sigma^\infty_G (X \times G/H)_+ \otimes \ZZ , L \otimes \ul{\ZZ} )
\in
\Mod_\ZZ
~.
\]
This assembles as a $\Mod_\ZZ$-valued Mackey functor for $G$, i.e.\! an additive functor
\[
\Burn_G
\xra{\sC^L_G(X)}
\Mod_\ZZ
\]
carrying $G/H \in \Burn_G$ to $\sC^L_G(X)(H) \in \Mod_\ZZ$. Thereafter, for any integer $i \in \ZZ$ we write
\[
\sH^{i+L}_G(X)(H)
:=
\pi_{-i} ( \sC^L_G(X)(H))
\in
\Ab
~.
\]
This assembles as an $\Ab$-valued Mackey functor for $G$, i.e.\! an additive functor
\[
\Burn_G
\xra{\sH^{i+L}_G(X)}
\Ab
\]
carrying $G/H \in \Burn_G$ to $\sH^{i+L}_G(X)(H) \in \Ab$.
\end{notation}

\begin{observation}
Let $\cA$ be an additive category. Then, $\cA$-valued Mackey functors for $G$ are equivalent data to their values on $G/H$ along with inclusion and transfer morphisms satisfying a double coset formula. We use this fact without further comment.
\end{observation}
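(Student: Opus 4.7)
The plan is to invoke the classical Lindner--Dress presentation of the Burnside category. First, I would use additivity to reduce to the full subcategory $\Burn_G^\orb \subseteq \Burn_G$ spanned by the transitive $G$-sets $\{G/H\}$: since every finite $G$-set canonically decomposes as a coproduct of orbits and $\Burn_G$ inherits coproducts, an additive functor $\Burn_G \to \cA$ is determined by (and can be freely specified on) its restriction to $\Burn_G^\orb$. This immediately identifies ``values on $G/H$'' with the object-level data.

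Next, I would analyze the morphisms of $\Burn_G^\orb$. A morphism $G/K \to G/H$ is a span $G/K \xla{f} Z \xra{g} G/H$, and orbit-decomposing $Z$ expresses this span as a finite direct sum of elementary spans $G/K \xla{} G/J \xra{} G/H$. Each such elementary span factors, up to isomorphism of spans, as the composite of a transfer $G/K \xla{} G/J \xra{=} G/J$ with an inclusion $G/J \xla{=} G/J \xra{} G/H$ (possibly after absorbing a conjugation isomorphism from the ambiguity in choosing $J$ within its $G$-conjugacy class). Consequently, the morphisms of $\Burn_G^\orb$ are generated, under composition and direct sum, by the inclusion morphisms $\inc_H^K$ and transfer morphisms $\trf_H^K$ of \Cref{defn.inc.and.trf}.

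The only nontrivial relation among these generators is the double coset formula, which I would derive by computing the composite $\inc \circ \trf$ directly from the definition of composition in $\Burn_G$ as a pullback of spans: pulling back $G/H \to G/K \leftarrow G/L$ and applying orbit decomposition to the resulting $G$-set yields precisely the expected sum over $H\backslash G / L$ of composites of transfers followed by inclusions. Trivial composite relations (composability of two inclusions along a chain $H \leq K \leq L$, and dually for transfers) are automatic from the span composition. Thus an additive functor out of $\Burn_G^\orb$ is freely and faithfully presented by the specified data.

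The principal subtlety is the $(2,1)$-categorical structure of $\Burn_G$: isomorphisms of spans give nontrivial $2$-morphisms which in principle must be matched by $2$-cells in the target. Since $\cA$ is an ordinary additive $1$-category in this context, every $2$-cell in the target is an identity, so the $(2,1)$-categorical coherences collapse to checks on $1$-morphisms and are absorbed into well-definedness of the generators (specifically, the independence of $\inc_H^K$ and $\trf_H^K$ from the choice of representative in the $G$-conjugacy class). This is exactly the point where one uses that we work with the conjugacy-class poset $\pos_G$ rather than a literal subgroup lattice, and it is the only step that requires genuine bookkeeping rather than direct computation.
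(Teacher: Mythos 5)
The paper itself offers no argument here: the observation is the classical Lindner--Dress presentation of Mackey functors, invoked and ``used without further comment.'' Your outline is exactly that standard argument --- reduce to orbits by additivity, decompose a span into elementary spans with transitive middle term, factor each elementary span as a wrong-way leg followed by a right-way leg, and extract the double coset formula from the pullback defining composition in $\Burn_G$ --- and your remark that the $(2,1)$-structure collapses because $\cA$ is a $1$-category (so one is really working with the homotopy category of $\Burn_G$, whose hom-sets are isomorphism classes of spans) is correct.

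Three points need tightening. First, the index in your double coset computation should be $H\backslash K/L$, double cosets in the group $K$ over which you pull back, not $H\backslash G/L$: the decomposition is $G/H\times_{G/K}G/L\cong\coprod_{HgL\in H\backslash K/L}G/(H\cap gLg^{-1})$. Second, for nonabelian $G$ the conjugation data is genuinely part of the presentation rather than ``bookkeeping'': the values on the orbits $G/H$ must be taken together with the isomorphisms $G/H\cong G/gHg^{-1}$ in $\Burn_G$ (in particular the $\Weyl(H)$-actions), and the relations include equivariance of $\inc$ and $\trf$ under these and triviality of inner conjugations; the double coset formula itself involves conjugations. Your phrase ``independence of $\inc_H^K$ and $\trf_H^K$ from the choice of representative in the conjugacy class'' is not quite the right statement --- these morphisms are defined for literal nested subgroups, and what is needed is their compatibility with the conjugation isomorphisms (for the paper's application $G=\Cyclic_{p^n}$ this collapses, since $G$ is abelian). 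Third, the substantive half of the equivalence --- that an assignment of values, inclusions, transfers (and conjugations) satisfying the stated relations extends to a well-defined additive functor on $\Burn_G$, i.e.\! that these relations generate \emph{all} relations among spans --- is asserted (``freely and faithfully presented'') rather than argued; the key input making a generators-and-relations argument legitimate is that the hom-monoid in the homotopy category of $\Burn_G$ from $G/K$ to $G/H$ is free as a commutative monoid on isomorphism classes of spans with transitive middle term, so that a homomorphism out of it is determined by, and may be freely specified on, the elementary spans. With these repairs your argument is the standard proof of the fact the paper is citing.
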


\begin{local}
We fix a Picard element $L \in \Pic(\Mod^{\gen \Cyclic_{p^n}}_\ZZ)$, as well as a presentation $L = L^{(\vec{\beta},\vec{\gamma})}$ for some $(\vec{\beta},\vec{\gamma}) \in \tilde{\PP}$ as afforded by \Cref{thm.picard.group} (see Notations \ref{notn.P.tilde.and.P}, \ref{notn.factorize.K.bullet.to.L.bullet}, and \ref{notn.for.elts.of.tilde.P}). We furthermore fix arbitrary lifts $\tilde{\gamma}_s \in \ZZ$ of the elements $\gamma_s \in (\ZZ/p^{n-s+1})^\times$.
\end{local}

\begin{theorem}
\label{thm.describe.ho.Mod.Z.valued.mackey.functor}
The $\ho(\Mod_\ZZ)$-valued Mackey functor
\[
\Burn_{\Cyclic_{p^n}}
\xra{\sC^L_{\Cyclic_{p^n}}(\pt)}
\Mod_\ZZ
\longra
\ho(\Mod_\ZZ)
\]
admits an explicit chain-level description as follows: its values, restriction maps, and transfer maps are respectively as described in Lemmas \ref{lemma.values.of.mackey.functor}, \ref{lemma.res.of.mackey.functor}, \and \ref{lemma.trf.of.mackey.functor}.
\end{theorem}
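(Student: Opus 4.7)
The plan is to combine the nanocosm-level consequences of the geometric stratification of \Cref{thm.stratn.of.genuine.Cpn.Z.mods} with the explicit descriptions of both $\ul{\ZZ}$ (from \Cref{prop.describe.gluing.diagram.of.Z.underline}) and the Picard element $L = L^{(\vec\beta,\vec\gamma)}$ (from \Cref{notn.potential.picard.elt.K} and \Cref{thm.picard.group}), and then to present the resulting homotopical data by the chain-level models produced in \Cref{section.homological.algebra}. As a first reduction, observe that for $G = \Cyclic_{p^n}$ the subgroup poset is totally ordered and the Weyl groups are trivial on inclusions between adjacent subgroups, so a $\ho(\Mod_\ZZ)$-valued Mackey functor is completely determined by its values on the orbits $\Cyclic_{p^n}/\Cyclic_{p^s}$ for $s \in [n]$ together with the pairwise adjacent restriction and transfer maps $\inc_{s-1}^s$, $\trf_{s-1}^s$. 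Hence the theorem reduces to \Cref{lemma.values.of.mackey.functor}, \Cref{lemma.res.of.mackey.functor}, and \Cref{lemma.trf.of.mackey.functor} individually.

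Next, I would compute the gluing diagram of $L \otimes \ul{\ZZ} \in \Mod^{\gen \Cyclic_{p^n}}_\ZZ$. The gluing diagram of $L$ is furnished directly by its presentation $L^{(\vec\beta,\vec\gamma)}$: its $s$-th stratum is a shift of $\ZZ$ determined by $\vec\beta$, and its $s$-th gluing morphism is determined (via \Cref{obs.identify.monoids.of.homogenous.elts} and \Cref{obs.htpy.ring.of.Z.tCp.hCpnminusr}) by multiplication by $(c_{n-s+1})^{-\beta_s}\tilde\gamma_s$. The gluing diagram of $\ul{\ZZ}$ is described by \Cref{prop.describe.gluing.diagram.of.Z.underline}: above $0 \in [n]$ it is $\ZZ$, above $s\geq 1$ it is $\tau_{\geq 0}\ZZ^{\st\Cyclic_p}$, and the gluing maps are $\varepsilon$ or identities. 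Combining these via the symmetric-monoidal recipe of \Cref{consequences.of.stratn.of.genuine.Cpn.Z.mods}\Cref{microcosm.consequences.of.stratn.of.genuine.Cpn.Z.mods}\Cref{microcosm.consequences.of.stratn.of.genuine.Cpn.Z.mods.symm.mon.str} (using the right-lax symmetric monoidal structure of $(-)^{\st \Cyclic_p}$ from \Cref{obs.various.properties.of.tate}\Cref{part.rlax.s.m.str.on.h.to.tate}) yields the gluing diagram of $L \otimes \ul{\ZZ}$ explicitly.

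Having the gluing diagram of $L \otimes \ul{\ZZ}$, I would apply \Cref{prop.fixedpts.res.and.trf.for.general.Cpn.Z.mods} to obtain homotopical descriptions of $\sC^L_{\Cyclic_{p^n}}(\pt)(\Cyclic_{p^s}) \simeq (L \otimes \ul{\ZZ})^{\Cyclic_{p^s}}$ and of the inclusion and transfer maps as homotopy limits over $\Zig_s$ (respectively over a morphism of $\Zig_{s+1}$-diagrams) of explicit zigzags built from homotopy fixed points, norm maps, and Tate constructions on shifted copies of $\ZZ$. This is the abstract $\infty$-categorical answer, which already determines the Mackey functor uniquely.

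The main obstacle — and the step whose routine but delicate execution is deferred to \Cref{section.homological.algebra} — is the passage from these homotopy limits of zigzags to explicit chain complexes, chain maps, and (for transfers) chain homotopies. The key enabling fact is the Tate-vanishing of \Cref{obs.tate.vanishing.for.Z.mods}, which implies that each entry of the relevant zigzag diagram is equivalent to a rank-$\leq 1$ complex; the bookkeeping of signs, of the twisting parameters $\tilde\gamma_s$, and of the null-homotopies of $\sQ_{\Cyclic_p}\circ\htrf$ (\Cref{obs.nullhtpy.of.htrf.followed.by.Q}) is exactly what is done when constructing the complexes of \Cref{notn.all.chain.level.stuff}, \Cref{notn.shifted.endo.of.T}, \Cref{notn.S.chain.complex}, \Cref{notn.B.tilde.complex}, and \Cref{notn.for.nullhtpy}. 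Once these presentations are in hand, each of the three lemmas follows by comparing the abstract $\infty$-categorical limit description with the strict limit in $\Ch_\ZZ$ of the corresponding explicit zigzag, thereby completing the proof of \Cref{thm.describe.ho.Mod.Z.valued.mackey.functor}.
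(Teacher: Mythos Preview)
Your proposal is correct and follows essentially the same approach as the paper: the theorem is reduced immediately to Lemmas \ref{lemma.values.of.mackey.functor}, \ref{lemma.res.of.mackey.functor}, and \ref{lemma.trf.of.mackey.functor}, and the rest of your write-up is a faithful outline of how those lemmas are in fact proved (via the gluing diagram of $L\otimes\ul{\ZZ}$, \Cref{prop.fixedpts.res.and.trf.for.general.Cpn.Z.mods}, and the chain-level presentations of \Cref{section.homological.algebra}). The paper's proof of the theorem itself is the one-line invocation of the three lemmas.
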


\begin{proof}
This follows from Lemmas \ref{lemma.values.of.mackey.functor}, \ref{lemma.res.of.mackey.functor}, \and \ref{lemma.trf.of.mackey.functor}.
\end{proof}


\begin{notation}
For any $1 \leq i \leq n$, we write
\[
\beta_{\leq i}
:=
\beta_0 + 2(\beta_1 + \cdots + \beta_i)
~,
\]
for simplicity.
\end{notation}

\begin{lemma}
\label{lemma.values.of.mackey.functor}
For any $0 \leq a \leq n$, diagram \Cref{zigzag.diagram.whose.limit.is.genuine.Cpa.fixedpoints} in $\Mod^{\htpy \Cyclic_{p^{n-a}}}_\ZZ$ in the case that $E = L^{(\vec{\beta},\vec{\gamma})} \otimes \ul{\ZZ}$ is presented by the diagram
\begin{equation}
\label{chain.level.zigzag.diagram.whose.limit.is.genuine.Cpa.fixedpoints}
\begin{tikzcd}[row sep=2cm, column sep=1.5cm]
\Sigma^{\beta_0} \ttZ^a_{n-a}
\arrow{rd}[sloped, swap]{\ttq^{a-1}_{n-a}}
&
\Sigma^{\beta_{\leq 1}} \ttC^{a-1}_{n-a}
\arrow{rd}[sloped, swap]{\ttg^{a-2}_{n-a}}
\arrow{d}
&
\cdots
\arrow{d}
&
\cdots
\arrow{rd}[sloped, swap]{\ttg^0_{n-a}}
&
\Sigma^{\beta_{\leq a}} \ttC^0_{n-a}
\arrow{d}
\\
&
\Sigma^{\beta_0} \ttT^{a-1}_{n-a}
&
\Sigma^{\beta_{\leq 1}} \ttT^{a-2}_{n-a}
&
\cdots
&
\Sigma^{\beta_{\leq(a-1)}} \ttT^0_{n-a}
\end{tikzcd}
\end{equation}
in $\Ch_{\ZZ[\Cyclic_{p^{n-a}}]}$, in which for all $1 \leq s \leq a$ the $s\th$ vertical morphism is the composite
\begin{equation}
\label{description.of.sth.vertical.morphism.in.diagram.for.categorical.fixedpoints.as.a.composite}
\Sigma^{\beta_{\leq s}} \ttC^{a-s}_{n-a}
\xra{\tte^{a-s}_{n-a}}
\Sigma^{\beta_{\leq s}} \ttT^{a-s}_{n-a}
\xra{\tilde{\gamma}_s (\ttc^{a-s}_{n-a})^{\beta_s}}
\Sigma^{\beta_{\leq (s-1)}} \ttT^{a-s}_{n-a}
~.
\end{equation}
In particular, by \Cref{prop.fixedpts.res.and.trf.for.general.Cpn.Z.mods}\Cref{prop.fixedpts.res.and.trf.for.general.Cpn.Z.mods.the.fixedpts} the object
\[
\sC^L_{\Cyclic_{p^n}}(\pt)(\Cyclic_{p^a})
:=
(L^{(\vec{\beta},\vec{\gamma})} \otimes \ul{\ZZ})^{\Cyclic_{p^a}}
\in
\ho(\Mod_\ZZ)
\]
is the homotopy limit of the diagram \Cref{chain.level.zigzag.diagram.whose.limit.is.genuine.Cpa.fixedpoints}.
\end{lemma}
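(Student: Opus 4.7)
The plan is to first unpack the gluing data of $E := L^{(\vec{\beta},\vec{\gamma})} \otimes \ul{\ZZ}$ at the homotopical level using the monoidality of the geometric stratification, then realize each term and each connecting morphism by the chain-level gadgets from \Cref{section.homological.algebra}, and finally assemble the comparison into the specific diagram \Cref{chain.level.zigzag.diagram.whose.limit.is.genuine.Cpa.fixedpoints}.

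For the first step, I would combine \Cref{consequences.of.stratn.of.genuine.Cpn.Z.mods}\Cref{microcosm.consequences.of.stratn.of.genuine.Cpn.Z.mods}\Cref{microcosm.consequences.of.stratn.of.genuine.Cpn.Z.mods.symm.mon.str} with the defining gluing data of $L^{(\vec{\beta},\vec{\gamma})}$ (i.e.\! $L_s \simeq \Sigma^{\beta_{\leq s}} \ZZ$ for $s \geq 1$, $L_0 \simeq \Sigma^{\beta_0} \ZZ$, and gluing maps expressing multiplication by $\tilde{\gamma}_s (c_{n-s+1})^{\beta_s}$) and the gluing data for $\ul{\ZZ}$ from \Cref{prop.describe.gluing.diagram.of.Z.underline} (i.e.\! $\ul{\ZZ}_0 \simeq \ZZ$, $\ul{\ZZ}_s \simeq \tau_{\geq 0}\ZZ^{\st \Cyclic_p}$ for $s \geq 1$, and gluing maps $\varepsilon$). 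This gives $E_0 \simeq \Sigma^{\beta_0} \ZZ$ and $E_s \simeq \Sigma^{\beta_{\leq s}} \tau_{\geq 0}\ZZ^{\st \Cyclic_p}$ for $s \geq 1$, with gluing maps factoring as the tensor of the two factor gluing maps followed by the right-lax symmetric monoidal structure of $(-)^{\st \Cyclic_p}$.

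Next, I would substitute into \Cref{prop.fixedpts.res.and.trf.for.general.Cpn.Z.mods}\Cref{prop.fixedpts.res.and.trf.for.general.Cpn.Z.mods.the.fixedpts} so that the target of the $s$th vertical map in \Cref{zigzag.diagram.whose.limit.is.genuine.Cpa.fixedpoints} becomes $(\Sigma^{\beta_{\leq(s-1)}}(\tau_{\geq 0}\ZZ^{\st\Cyclic_p})^{\st\Cyclic_p})^{\htpy\Cyclic_{p^{a-s}}}$ for $s \geq 2$ (and analogously for $s=1$), which via \Cref{obs.tate.Cp.of.Z.to.conn.cover.of.Z.tate.Cp.is.an.equivalence} is canonically identified with $(\Sigma^{\beta_{\leq(s-1)}}\ZZ^{\st \Cyclic_p})^{\htpy \Cyclic_{p^{a-s}}}$. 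The vertical map then visibly factors as the application of $\sQ_{\Cyclic_p}$ (the ``$\tte$'' portion of the map) composed with the tensor of $\tilde{\gamma}_s$ and the iterated multiplication by $c_{n-s+1}$ on the appropriate Tate object (the ``$\tilde{\gamma}_s (\ttc)^{\beta_s}$'' portion). This matches the stipulated composite \Cref{description.of.sth.vertical.morphism.in.diagram.for.categorical.fixedpoints.as.a.composite} once one has chain-level presentations of the source and target.

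Finally, I would invoke the chain-level presentations constructed in \Cref{section.homological.algebra}: the complexes $\ttZ^a_{n-a}$, $\ttC^i_{n-a}$, $\ttT^i_{n-a}$ realize $\ZZ^{\htpy \Cyclic_{p^a}}$, $(\tau_{\geq 0}\ZZ^{\st\Cyclic_p})^{\htpy \Cyclic_{p^i}}$, and $(\ZZ^{\st\Cyclic_p})^{\htpy\Cyclic_{p^i}}$ respectively (all with their residual $\Cyclic_{p^{n-a}}$-action), while $\tte^i_{n-a}$, $\ttc^i_{n-a}$, $\ttq^{a-1}_{n-a}$, and $\ttg^{i}_{n-a}$ present $\sQ_{\Cyclic_p}$ on fixedpoints, multiplication by $c_{i+1}$, the composite $\sQ_{\Cyclic_p} \circ \varepsilon$ on fixedpoints, and the corresponding horizontal diagonals. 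The diagram \Cref{chain.level.zigzag.diagram.whose.limit.is.genuine.Cpa.fixedpoints} is then the assembly of these presentations with the appropriate suspensions dictated by the computation above.

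The main obstacle will be verifying that the chain-level composite \Cref{description.of.sth.vertical.morphism.in.diagram.for.categorical.fixedpoints.as.a.composite} faithfully presents the homotopical composite arising from $(\gamma^E_{s-1,s})^{\htpy \Cyclic_{p^{a-s}}}$: one must check that the factorization through the right-lax monoidality of $(-)^{\st \Cyclic_p}$ of the two factor gluing maps strictifies at the chain level to the product of $\tilde{\gamma}_s$ (from $L$) and $(\ttc^{a-s}_{n-a})^{\beta_s}$ (from the $\beta_s$-fold iteration of multiplication by $c_{n-s+1}$) precomposed with the map $\tte^{a-s}_{n-a}$ induced by $\varepsilon$ (from $\ul{\ZZ}$). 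This requires unwinding the compatibility of the chain-level lifts in \Cref{section.homological.algebra} with the symmetric monoidal right-lax structure, but is essentially bookkeeping once those lifts are in hand.
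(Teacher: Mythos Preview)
Your outline is correct and follows essentially the same route as the paper. Two small points: you mislabel $\tte^i$ as presenting $\sQ_{\Cyclic_p}$ when in fact it presents $\varepsilon_{\geq 0}(\ZZ^{\st\Cyclic_p})^{\htpy\Cyclic_{p^i}}$ (you self-correct a few lines later), and the ``bookkeeping'' you defer is precisely where the paper is careful---it rewrites the lax-monoidal multiplication $\mu$ as the action of the element $c_s^{\beta_s}$ on $\ZZ^{\st\Cyclic_p}$, invokes \Cref{lemma.present.mult.by.chern.class.on.tate} to present that action by $\ttc^0_{n-s}$, and then uses adaptedness to homotopy $\Cyclic_p$-fixedpoints (\Cref{lemma.ABX.are.adapted}) together with \Cref{obs.fixedpoints.of.ABX.are.themselves}, \Cref{obs.fixedpoints.of.chain.level.c.is.itself}, and \Cref{obs.apply.tensor.with.bimodule.correct.if.to.htpy.Cpj.fixedpts} to promote the presentation from residual equivariance $n-s$ to $n-a$. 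The adaptedness input is what actually justifies that point-set fixedpoints compute homotopy fixedpoints for these complexes, so it should be named rather than absorbed.
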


\begin{proof}
The fact that the constituents and the diagonal morphisms of diagram \Cref{chain.level.zigzag.diagram.whose.limit.is.genuine.Cpa.fixedpoints} in $\Ch_{\ZZ[\Cyclic_{p^{n-a}}]}$ present those of diagram \Cref{zigzag.diagram.whose.limit.is.genuine.Cpa.fixedpoints} in $\Mod^{\htpy \Cyclic_{p^{n-a}}}_\ZZ$ follows from \Cref{prop.describe.gluing.diagram.of.Z.underline} and \Cref{lem.ABX.htpy.invariantly}. So, it remains to show that for any $1 \leq s \leq a$ the morphism \Cref{description.of.sth.vertical.morphism.in.diagram.for.categorical.fixedpoints.as.a.composite} in $\Ch_{\ZZ[\Cyclic_{p^{n-a}}]}$ is a presentation of the morphism
\begin{equation}
\label{homotopical.sth.vertical.map.for.gluing.diagram.for.categorical.fixedpoints}
\Sigma^{\beta_{\leq (s-1)}}
\left(
\Sigma^{2 \beta_s} \ZZ \otimes \tau_{\geq 0} \ZZ^{\st \Cyclic_p}
\xra{\gamma_s c_s^{\beta_s} \otimes \varepsilon}
\ZZ^{\st \Cyclic_p} \otimes \ZZ^{\st \Cyclic_p}
\xlongra{\mu}
\ZZ^{\st \Cyclic_p}
\right)^{\htpy \Cyclic_{a-s}}
\end{equation}
in $\Mod^{\htpy \Cyclic_{p^{n-a}}}_\ZZ$, in which the parenthesized morphism lies in $\Mod^{\htpy \Cyclic_{p^{n-s}}}_\ZZ$ and $\mu$ denotes the multiplication morphism resulting from \Cref{obs.various.properties.of.tate}\Cref{part.rlax.s.m.str.on.h.to.tate}. For this, observe the commutative diagram
\begin{equation}
\label{homotopical.diagram.rewriting.multiplication.as.endo}
\begin{tikzcd}[column sep=1.5cm, row sep=1.5cm]
\Sigma^{2 \beta_s} \ZZ \otimes \tau_{\geq 0} \ZZ^{\st \Cyclic_p}
\arrow{rr}{\gamma_s c_s^{\beta_s} \otimes \varepsilon}
\arrow{rd}[sloped, swap]{\tilde{\gamma}_s \otimes \varepsilon}
&
&
\ZZ^{\st \Cyclic_p} \otimes \ZZ^{\st \Cyclic_p}
\arrow{r}{\mu}
&
\ZZ^{\st \Cyclic_p}
\\
&
\Sigma^{2 \beta_s} \ZZ \otimes \ZZ^{\st \Cyclic_p}
\arrow{ru}[sloped, swap]{c_s^{\beta_s} \otimes \id_{\ZZ^{\st \Cyclic_p}}}
\arrow[dashed, bend right]{rru}[sloped, swap]{c_s^{\beta_s}}
\end{tikzcd}
\end{equation}
in $\Mod^{\htpy \Cyclic_{p^{n-s}}}_\ZZ$ (in which we use \Cref{notn.mult.by.an.eqvrt.htpy.elt} for the dashed morphism). It follows from Lemmas \ref{lem.ABX.htpy.invariantly} \and \ref{lemma.present.mult.by.chern.class.on.tate} that the lower composite in diagram \Cref{homotopical.diagram.rewriting.multiplication.as.endo} is presented by the morphism
\[
\Sigma^{2 \beta_s}
\ttC^0_{n-s}
\xra{\tilde{\gamma}_s \tte_{n-s}^0}
\Sigma^{2 \beta_s}
\ttT^0_{n-s}
\xra{(\ttc_{n-s}^0)^{\beta_s}}
\ttT^0_{n-s}
\]
in $\Ch_{\ZZ[\Cyclic_{p^{n-s}}]}$. Hence, it follows from \Cref{lemma.ABX.are.adapted} and Observations \ref{obs.fixedpoints.of.ABX.are.themselves}, \ref{obs.fixedpoints.of.chain.level.c.is.itself}, \ref{obs.basics.of.adaptedness.to.fixedpoints}\Cref{item.operations.preserve.adapted.to.fixedpoints}\Cref{subitem.adapted.to.fixedpoints.preserved.under.shifts}, \and \ref{obs.apply.tensor.with.bimodule.correct.if.to.htpy.Cpj.fixedpts} that the morphism \Cref{homotopical.sth.vertical.map.for.gluing.diagram.for.categorical.fixedpoints} in $\Mod^{\htpy \Cyclic_{p^{n-a}}}$ is indeed presented by the morphism \Cref{description.of.sth.vertical.morphism.in.diagram.for.categorical.fixedpoints.as.a.composite} in $\Ch_{\ZZ[\Cyclic_{p^{n-a}}]}$.
\end{proof}

\newcommand{\septtfromcdotsforchcxes}{1cm}
\newcommand{\sepcdotsfromcdotsforchcxes}{1.8cm}

\begin{lemma}
\label{lemma.res.of.mackey.functor}
For any $0 \leq a < n$, the morphism of diagrams \Cref{general.description.of.inclusion.map.via.zigzags} in $\Mod^{\htpy \Cyclic_{p^{n-a}}}_\ZZ$ in the case that $E = L^{(\vec{\beta},\vec{\gamma})} \otimes \ul{\ZZ}$ is presented by the morphism of diagrams
\begin{equation}
\label{coh.inclusion.map.via.zigzags}
\begin{tikzcd}[row sep=1cm, column sep=-0.2cm]
\Sigma^{\beta_0} \ttZ^{a+1}_{n-a-1}
\arrow{rd}
\arrow{dd}[swap, pos=0.7]{\tthinc}
&[\septtfromcdotsforchcxes]
&[\sepcdotsfromcdotsforchcxes]
\cdots
\arrow{ld}
\arrow{rd}
\arrow{dd}[swap, pos=0.7]{\tthinc}
&[\septtfromcdotsforchcxes]
&
\Sigma^{\beta_{\leq a}} \ttC^1_{n-a-1}
\arrow{ld}
\arrow{rd}
\arrow{dd}[swap, pos=0.7]{\tthinc}
&
&
\Sigma^{\beta_{\leq (a+1)}} \ttC^0_{n-a-1}
\arrow{ld}
\arrow{dd}
\\
&
\cdots
\arrow{dd}[swap, pos=0.4]{\tthinc}
&
&
\Sigma^{\beta_{\leq(a-1)}} \ttT^1_{n-a-1}
\arrow{dd}[swap, pos=0.4]{\tthinc}
&
&
\Sigma^{\beta_{\leq a}} \ttT^0_{n-a-1}
\arrow{dd}
\\
\Sigma^{\beta_0} \ttZ^a_{n-a}
\arrow{rd}
&
&
\cdots
\arrow{ld}
\arrow{rd}
&
&
\Sigma^{\beta_{\leq a}} \ttC^0_{n-a}
\arrow{ld}
\arrow{rd}
&
&
0
\arrow{ld}
\\
&
\cdots
&
&
\Sigma^{\beta_{\leq(a-1)}} \ttT^0_{n-a}
&
&
0
\end{tikzcd}
\end{equation}
in $\Ch_{\ZZ[\Cyclic_{p^{n-a}}]}$, in which the nontrivial non-vertical maps are as in diagram \Cref{chain.level.zigzag.diagram.whose.limit.is.genuine.Cpa.fixedpoints} and we implicitly apply \Cref{obs.fixedpoints.of.ABX.are.themselves}. In particular, by \Cref{lemma.values.of.mackey.functor} and \Cref{prop.fixedpts.res.and.trf.for.general.Cpn.Z.mods}\Cref{prop.fixedpts.res.and.trf.for.general.Cpn.Z.mods.the.restriction}, the inclusion morphism
\[
\sC^L_{\Cyclic_{p^n}}(\pt)(\Cyclic_{p^{a+1}})
\xlongra{\inc}
\sC^L_{\Cyclic_{p^n}}(\pt)(\Cyclic_{p^{a}})
\]
in $\Mod_\ZZ$ is the induced map on homotopy limits.
\end{lemma}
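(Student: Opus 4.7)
The plan is to combine \Cref{lemma.values.of.mackey.functor} (which already presents the two functors $\sD_{a+1}(E)$ and $\sD_a(E)$ at the chain level) with the description of the inclusion morphism furnished by \Cref{prop.fixedpts.res.and.trf.for.general.Cpn.Z.mods}\Cref{prop.fixedpts.res.and.trf.for.general.Cpn.Z.mods.the.restriction}. According to the latter, the morphism \Cref{general.description.of.inclusion.map.via.zigzags} arises from the functoriality of limits for a diagram built out of the commutative square \Cref{square.relating.DaE.and.DaplusoneE} and the counit of the adjunction $\triv \adj (-)^{\htpy \Cyclic_p}$. Therefore it suffices to lift each of these ingredients to the chain level: the top row of \Cref{coh.inclusion.map.via.zigzags} presents $\sD_{a+1}(E)$ by invoking \Cref{lemma.values.of.mackey.functor} with $a$ replaced by $a+1$, and the bottom row presents the trivial extension of $\sD_a(E)$ to $\Zig_{a+1}$ by invoking \Cref{lemma.values.of.mackey.functor} on the first $a$ vertices and noting that on the two remaining (``new'') vertices the bottom row is the zero complex.

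Next I would identify the vertical arrows. On the vertices that lie in the image of $\Zig_{i\mapsto i}:\Zig_a\hookrightarrow\Zig_{a+1}$, the vertical map of \Cref{general.description.of.inclusion.map.via.zigzags} is the counit $\hinc : E'^{\htpy \Cyclic_p} \to E'$ for the various $E' \in \{\ZZ,\tau_{\geq 0}\ZZ^{\st\Cyclic_p},\ZZ^{\st\Cyclic_p}\}$ (possibly shifted and tensored with Tate cohomology). By \Cref{obs.basics.of.adaptedness.to.fixedpoints} the chain complexes $\ttZ^\bullet_\bullet$, $\ttC^\bullet_\bullet$, $\ttT^\bullet_\bullet$ are adapted for passage to homotopy $\Cyclic_p$-fixedpoints, so that the chain-level inclusion $\tthinc$ (defined in \Cref{section.homological.algebra}) indeed presents $\hinc$. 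On the two ``new'' vertices the map is to the zero object, and so is trivially presented by the zero map in $\Ch_{\ZZ[\Cyclic_{p^{n-a}}]}$.

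It then remains to check that the resulting diagram \Cref{coh.inclusion.map.via.zigzags} commutes in $\Ch_{\ZZ[\Cyclic_{p^{n-a}}]}$ (and not merely up to chain homotopy). Each face of the 3-dimensional diagram is either (i) a naturality square for $\tthinc$ against one of the horizontal/diagonal morphisms $\ttq^\bullet_\bullet$, $\ttg^\bullet_\bullet$ of \Cref{lemma.values.of.mackey.functor}, which commutes on the nose since each of those chain maps is manifestly built out of $\Cyclic_{p^{n-a}}$-equivariant operations to which $\tthinc$ is natural; (ii) a face involving the composite \Cref{description.of.sth.vertical.morphism.in.diagram.for.categorical.fixedpoints.as.a.composite} whose two subfactors $\tte^{\bullet}_\bullet$ and $\tilde\gamma_s(\ttc^{\bullet}_\bullet)^{\beta_s}$ are again $\tthinc$-natural; or (iii) a face with target $0$, which commutes automatically. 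Once all squares commute strictly, the top-row-to-bottom-row morphism in \Cref{coh.inclusion.map.via.zigzags} presents the natural transformation of \Cref{general.description.of.inclusion.map.via.zigzags}, and passing to the (homotopy) limit gives the asserted presentation of the inclusion morphism $\inc$.

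The main obstacle will be step two: verifying with full precision that $\tthinc$ presents the counit $\hinc$ on each of the three chain-complex families $\ttZ^\bullet_\bullet$, $\ttC^\bullet_\bullet$, $\ttT^\bullet_\bullet$, and checking the strict commutativity of the diagonal faces, in particular those in which the vertical map is the twisted composite $\tilde\gamma_s(\ttc^{\bullet}_\bullet)^{\beta_s}\circ\tte^\bullet_\bullet$ rather than a single $\tthinc$. This naturality is readily reduced to the observation (see \Cref{obs.fixedpoints.of.chain.level.c.is.itself} and \Cref{obs.basics.of.adaptedness.to.fixedpoints}\Cref{item.operations.preserve.adapted.to.fixedpoints}) that the chain-level Euler class $\ttc^{\bullet}_\bullet$ and the quotient maps $\ttq^\bullet_\bullet$ restrict to themselves on $\Cyclic_p$-fixedpoints, after which everything falls out of naturality of $\tthinc$.
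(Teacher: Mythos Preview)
Your approach is essentially the paper's approach: establish that the chain complexes in the lower zigzag are adapted to homotopy $\Cyclic_p$-fixedpoints, and then conclude that the point-set inclusion $\tthinc$ presents the homotopical $\hinc$. The paper's proof is just two sentences: it cites \Cref{lemma.ABX.are.adapted} (together with the stability of adaptedness under shifts from \Cref{obs.basics.of.adaptedness.to.fixedpoints}\Cref{item.operations.preserve.adapted.to.fixedpoints}\Cref{subitem.adapted.to.fixedpoints.preserved.under.shifts}) and then invokes \Cref{obs.point.set.res.and.tr.often.give.htpy.res.and.tr}.

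There is one genuine slip in your write-up that you should fix. You appeal to \Cref{obs.basics.of.adaptedness.to.fixedpoints} for the adaptedness of $\ttZ^\bullet_\bullet$, $\ttC^\bullet_\bullet$, $\ttT^\bullet_\bullet$, but that observation only handles bounded-above levelwise-free complexes (and operations preserving adaptedness). The complexes $\ttC^i_r$ and $\ttT^i_r$ are unbounded in both directions, so \Cref{obs.basics.of.adaptedness.to.fixedpoints} alone does not apply to them. Their adaptedness is the content of \Cref{lemma.ABX.are.adapted}, whose proof writes each as a cone on a map from a bounded-below complex ${}_{i+1}\ttS_r$ to a bounded-above complex $\ttZ^\bullet_r$ and then uses a Tate-vanishing argument to show ${}_{i+1}\ttS_r$ is adapted. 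You should cite \Cref{lemma.ABX.are.adapted} here. Also, the passage ``so that the chain-level inclusion $\tthinc$ indeed presents $\hinc$'' is exactly \Cref{obs.point.set.res.and.tr.often.give.htpy.res.and.tr}; citing it makes the argument complete (and handles the naturality-in-morphisms that you unpack by hand in step three, since that observation is already functorial in the module).
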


\begin{proof}
Note that all terms in the lower zigzag of diagram \Cref{coh.inclusion.map.via.zigzags} are adapted to homotopy $\Cyclic_p$-fixedpoints by \Cref{lemma.ABX.are.adapted} and \Cref{obs.basics.of.adaptedness.to.fixedpoints}\Cref{item.operations.preserve.adapted.to.fixedpoints}\Cref{subitem.adapted.to.fixedpoints.preserved.under.shifts}. Hence, the claim follows from \Cref{obs.point.set.res.and.tr.often.give.htpy.res.and.tr}.
\end{proof}

\begin{lemma}
\label{lemma.trf.of.mackey.functor}
For any $0 \leq a < n$, the morphism of diagrams \Cref{general.description.of.trasfer.map.via.zigzags} in $\Mod^{\htpy \Cyclic_{p^{n-a}}}_\ZZ$ in the case that $E = L^{(\vec{\beta},\vec{\gamma})} \otimes \ul{\ZZ}$ is presented by the homotopy-coherent morphism of diagrams
\begin{equation}
\label{coh.transfer.map.via.zigzags}
\begin{tikzcd}[row sep=1cm, column sep=-0.2cm]
\Sigma^{\beta_0} \ttZ^{a+1}_{n-a-1}
\arrow{rd}
&[\septtfromcdotsforchcxes]
&[\sepcdotsfromcdotsforchcxes]
\cdots
\arrow{ld}
\arrow{rd}
&[\septtfromcdotsforchcxes]
&
\Sigma^{\beta_{\leq a}} \ttC^1_{n-a-1}
\arrow{ld}
\arrow{rd}
&
&
\Sigma^{\beta_{\leq (a+1)}} \ttC^0_{n-a-1}
\arrow{ld}
\\
&
\cdots
&
&
\Sigma^{\beta_{\leq(a-1)}} \ttT^1_{n-a-1}
&
&
\Sigma^{\beta_{\leq a}} \ttT^0_{n-a-1}
\\
\Sigma^{\beta_0} \ttZ^a_{n-a}
\arrow{rd}
\arrow{uu}[pos=0.3]{\tthtrf}
&
&
\cdots
\arrow{ld}
\arrow{rd}
\arrow{uu}[pos=0.3]{\tthtrf}
&
&
\Sigma^{\beta_{\leq a}} \w{\ttC}_{n-a}
\arrow{ld}
\arrow{rd}
\arrow{uu}[pos=0.3]{\tthtrf \ttk_{n-a}}[swap, xshift=0.8cm, yshift=-0.9cm]{\tth}
&
&
0
\arrow{ld}
\arrow{uu}
\\
&
\cdots
\arrow{uu}[pos=0.6]{\tthtrf}
&
&
\Sigma^{\beta_{\leq(a-1)}} \ttT^0_{n-a}
\arrow{uu}[pos=0.6]{\tthtrf}
&
&
0
\arrow{uu}
\end{tikzcd}
\end{equation}
in $\Ch_{\ZZ[\Cyclic_{p^{n-a}}]}$, in which
\begin{itemize}

\item in the lower zigzag we replace $\ttC^0_{n-a}$ with $\w{\ttC}_{n-a}$ via the quasi-isomorphism $\ttk_{n-a}$ of \Cref{obs.cones.get.previous.cxes}\Cref{item.B.tilde.a.presents.Baa}, so that the nontrivial non-vertical morphism out of it is the composite
\[
\Sigma^{\beta_{\leq a}} \w{\ttC}_{n-a}
\xra[\approx]{\ttk_{n-a}}
\Sigma^{\beta_{\leq a}} \ttC^0_{n-a}
\xra{\tte^0_{n-a}}
\Sigma^{\beta_{\leq a}} \ttT^0_{n-a}
\xra{\tilde{\gamma}_a (\ttc^0_{n-a})^{\beta_a}}
\Sigma^{\beta_{\leq (a-1)}} \ttT^0_{n-a}
~,
\]

\item the remaining nontrivial non-vertical morphisms are as in diagram \Cref{chain.level.zigzag.diagram.whose.limit.is.genuine.Cpa.fixedpoints}, and

\item we implicitly apply \Cref{obs.fixedpoints.of.ABX.are.themselves}.
\end{itemize}
In particular, by \Cref{lemma.values.of.mackey.functor} and \Cref{prop.fixedpts.res.and.trf.for.general.Cpn.Z.mods}\Cref{prop.fixedpts.res.and.trf.for.general.Cpn.Z.mods.the.transfer}, the transfer morphism
\[
\sC^L_{\Cyclic_{p^n}}(\pt)(\Cyclic_{p^{a}})
\xlongra{\trf}
\sC^L_{\Cyclic_{p^n}}(\pt)(\Cyclic_{p^{a+1}})
\]
in $\Mod_\ZZ$ is the induced map on homotopy limits.
\end{lemma}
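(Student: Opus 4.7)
The plan is to promote the point-set data in diagram \Cref{coh.transfer.map.via.zigzags} to a presentation of the homotopy-coherent morphism \Cref{general.description.of.trasfer.map.via.zigzags} between diagrams in $\Mod^{\htpy \Cyclic_{p^{n-a}}}_\ZZ$. By \Cref{lemma.values.of.mackey.functor}, the upper and lower zigzags in \Cref{coh.transfer.map.via.zigzags} already present the corresponding zigzags in \Cref{general.description.of.trasfer.map.via.zigzags} (with the lower extended by zero on the rightmost column, matching the right Kan extension by zero alluded to in the statement of \Cref{prop.fixedpts.res.and.trf.for.general.Cpn.Z.mods}\Cref{prop.fixedpts.res.and.trf.for.general.Cpn.Z.mods.the.transfer}), so only the vertical transfer morphisms and the homotopy $\tth$ remain to be identified.

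For all vertical morphisms except the second-to-rightmost one, the argument proceeds exactly as in the proof of \Cref{lemma.res.of.mackey.functor}: by \Cref{lemma.ABX.are.adapted} and \Cref{obs.basics.of.adaptedness.to.fixedpoints}\Cref{item.operations.preserve.adapted.to.fixedpoints}\Cref{subitem.adapted.to.fixedpoints.preserved.under.shifts}, every term appearing in the lower zigzag of \Cref{coh.transfer.map.via.zigzags} is adapted to homotopy $\Cyclic_p$-fixedpoints. Consequently, by \Cref{obs.point.set.res.and.tr.often.give.htpy.res.and.tr}, the chain-level morphisms $\tthtrf$ present the homotopical maps $\htrf$ of \Cref{notn.htpy.trf}, with the squares that they inhabit commuting strictly at the point-set level and hence homotopically.

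The main obstacle is the second-to-rightmost square. Homotopically, this square does not commute strictly: by \Cref{obs.nullhtpy.of.htrf.followed.by.Q}, the composite $\sQ_{\Cyclic_p}(E_a) \circ \htrf$ in $\Mod^{\htpy \Cyclic_{p^{n-a-1}}}_\ZZ$ is equipped with a \emph{chosen} nullhomotopy (coming from the fact that $\htrf$ factors through $(E_a)_{\htpy \Cyclic_p}$, which is then annihilated by the defining pushout for the Tate construction). To present this picture at the chain level, we replace $\ttC^0_{n-a}$ by the quasi-isomorphic complex $\w{\ttC}_{n-a}$ using $\ttk_{n-a}$ from \Cref{obs.cones.get.previous.cxes}\Cref{item.B.tilde.a.presents.Baa}, because $\w{\ttC}_{n-a}$ is designed so that the composite obtained from transferring and then quotienting to $\ttT$ is \emph{strictly} nullhomotopic with an explicit chain homotopy $\tth$ (recorded in \Cref{notn.for.nullhtpy}). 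The remaining verification is that this point-set nullhomotopy $\tth$ is a presentation of the homotopical nullhomotopy of \Cref{obs.nullhtpy.of.htrf.followed.by.Q}; this is a matter of tracing through the construction of $\w{\ttC}_{n-a}$ as a mapping cone and comparing it to the cofiber sequence defining $(-)^{\st \Cyclic_p}$.

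Finally, the rightmost square is trivial: both bottom entries are zero, so the zero morphism automatically provides a (unique) strict commutativity. Assembling these pieces gives a homotopy-coherent diagram in $\Ch_{\ZZ[\Cyclic_{p^{n-a}}]}$ whose image under $\Pi_\infty$ agrees levelwise with the homotopical morphism \Cref{general.description.of.trasfer.map.via.zigzags}, completing the proof. The hard part is the compatibility check for the nullhomotopy $\tth$; once that is in hand, \Cref{prop.fixedpts.res.and.trf.for.general.Cpn.Z.mods}\Cref{prop.fixedpts.res.and.trf.for.general.Cpn.Z.mods.the.transfer} immediately identifies the induced map on homotopy limits as the desired transfer.
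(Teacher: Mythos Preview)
Your plan is correct and matches the paper's approach essentially verbatim: adaptedness of the lower zigzag (via \Cref{lemma.ABX.are.adapted} and \Cref{obs.basics.of.adaptedness.to.fixedpoints}) feeds into \Cref{obs.point.set.res.and.tr.often.give.htpy.res.and.tr} to identify the $\tthtrf$ morphisms with the homotopical transfers, and the remaining nullhomotopy compatibility is exactly the content of \Cref{lemma.chain.level.nullhtpy.presents.canonical.nullhtpy}, which the paper cites directly where you instead sketch its proof idea.
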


\begin{proof}
Note that all terms in the lower zigzag of diagram \Cref{coh.transfer.map.via.zigzags} are adapted to homotopy $\Cyclic_p$-fixedpoints by \Cref{lemma.ABX.are.adapted} and \Cref{obs.basics.of.adaptedness.to.fixedpoints}\Cref{item.operations.preserve.adapted.to.fixedpoints}\Cref{subitem.adapted.to.fixedpoints.preserved.under.shifts}. Hence, the claim follows from \Cref{obs.point.set.res.and.tr.often.give.htpy.res.and.tr} and \Cref{lemma.chain.level.nullhtpy.presents.canonical.nullhtpy}.
\end{proof}

\appendix

\section{Some homological algebra}
\label{section.homological.algebra}

In this appendix, we collect the various results in homological algebra that underlie the work carried out in the main body of the paper -- primarily our cohomology computation of the $\Pic(\Mod^{\gen \Cyclic_{p^n}}_\ZZ)$-graded cohomology of a point (\Cref{intro.thm.cohomology}). It is organized as follows.
\begin{itemize}

\item[\Cref{subsection.notation.and.conventions.for.homological.algebra}:] We lay out our notations and conventions regarding homological algebra.

\item[\Cref{subsection.adaptedness}:] We study the notion of \textit{adaptedness} of chain complexes with respect to various homotopical operations (a close analog of co/fibrancy in a model category).

\item[\Cref{subsection.some.chain.complexes}:] We introduce most of the chain complexes and chain maps that participate in our computation of equivariant cohomology, and prove that they present the correct homotopical data.

\item[\Cref{subsection.inclusion.and.transfer.stuff}:] We study the chain complexes introduced in \Cref{subsection.some.chain.complexes} in more depth, towards establishing chain-level data that presents the inclusion and transfer maps on equivariant cohomology.

\item[\Cref{subsection.a.coh.ring}:] We compute a certain Tate cohomology ring, and we prove a related result identifying a chain-level automorphism as a presentation of a resulting homotopical automorphism.

\end{itemize}

\subsection{Basic notation and conventions}
\label{subsection.notation.and.conventions.for.homological.algebra}

In this subsection, we establish the basic notation and conventions that we use for chain complexes in this paper. (These are only used in \S\S \ref{section.homological.algebra} \and \ref{section.Cpn.eqvrt.cohomology}.)

\begin{warning}
In this section, we discuss chain complexes of discrete modules over discrete rings. We simply use the word ``module'' to refer both to discrete modules and to module spectra; our meaning will always be clear from context.
\end{warning}

\begin{notation}
\label{notn.chain.cxes}
Fix an associative ring $R$.
\begin{enumerate}

\item We write $\Ch_R$ for the category of chain complexes of $R$-modules.

\item We freely identify elements of $R$ with endomorphisms of $R$-modules.

\item In depicting a chain complex, we indicate its degree-0 term with a squiggled underline.

\item We generally denote point-set objects (e.g.\! chain complexes and morphisms between them) using typerwriter text font.

\item For typographical reasons, we depict chain complexes horizontally and morphisms of chain complexes vertically.

\item When indexing a $\ZZ$-indexed family of morphisms (e.g.\! the differentials in a chain complex, the constituent morphisms in a chain map, or the constituent morphisms in a chain homotopy), we always number a morphism according to the degree of its \textit{source}. So for example, a complex $\ttM \in \Ch_R$ may be depicted as a diagram
\[
\cdots
\xra{\partial^\ttM_2}
\ttM_1
\xra{\partial^\ttM_1}
\uwave{\ttM_0}
\xra{\partial^\ttM_0}
\ttM_{-1}
\xra{\partial^\ttM_{-1}}
\cdots
~.
\]

\item\label{item.convention.for.cones} Given a morphism $\ttM \xra{\ttf} \ttN$ in $\Ch_R$, we take the convention that its cone is the complex $\ChCone(\ttf) \in \Ch_R$ such that
\[
\ChCone(\ttf)_n
:=
\left(
\def\arraystretch{1}
\begin{array}{c}
\ttM_{n-1}
\\
\oplus
\\
\ttN_n
\end{array}
\right)
\qquad
\text{and}
\qquad
\partial^{\ChCone(\ttf)}_n
:=
\left(
\def\arraystretch{1}
\begin{array}{cc}
\partial^\ttM_{n-1}
&
0
\\
(-1)^{n-1} \ttf_{n-1}
&
\partial^\ttN_n
\end{array}
\right)
~.
\]

\item\label{item.convention.for.shift} We write $\Ch_R \xra{\Sigma} \Ch_R$ for the autoequivalence given by shifting (even though it is not literally the suspension functor on the category $\Ch_R$).

\item We do not notationally distinguish between a chain map and its shifts, since (given the source and target of the shifted morphism) the meaning is unambiguous.

\end{enumerate}
\end{notation}

\begin{remark}
Note that parts \Cref{item.convention.for.cones} \and \Cref{item.convention.for.shift} of \Cref{notn.chain.cxes} are consistent: there is an evident isomorphism $\Sigma \cong \ChCone(\id_R \ra 0)$ in $\Fun ( \Ch_R,\Ch_R)$.
\end{remark}

\begin{notation}
Given a dg-algebra $\ttR \in \Alg(\Ch_\ZZ)$, we write $\Ch_\ttR := \LMod_\ttR(\Ch_\ZZ)$ for the category of (left) dg-modules over it.
\end{notation}

\begin{observation}
By \cite[Theorem 1.4]{DugShip-dga}, for any dg-algebra $\ttR \in \Alg(\Ch_\ZZ)$, freely inverting the quasi-isomorphisms in the category $\Ch_\ttR$ yields the $\infty$-category $\Mod_{\Pi_\infty(\ttR)} := \Mod_{\Pi_\infty(\ttR)}(\Mod_\ZZ)$. In particular, for any finite group $G$, freely inverting the quasi-isomorphisms in the category $\Ch_{\ZZ[G]}$ yields the $\infty$-category $\Mod_{\ZZ[G]} \simeq \Mod_\ZZ^{\htpy G}$. We use these facts without further comment.
\end{observation}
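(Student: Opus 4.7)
The statement decomposes into two assertions: first, that $\Ch_\ttR$ localized at its quasi-isomorphisms recovers $\Mod_{\Pi_\infty(\ttR)}$; second, that for $\ttR = \ZZ[G]$ concentrated in degree zero, $\Mod_{\Pi_\infty(\ZZ[G])}$ is equivalent to $\Mod_\ZZ^{\htpy G} := \Fun(\BG,\Mod_\ZZ)$. The plan is to invoke the cited result of Dugger--Shipley for the first assertion and to reduce the second to a standard identification of endomorphism algebras.

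For the first assertion, I would apply \cite[Theorem 1.4]{DugShip-dga} directly. Dugger and Shipley construct a zig-zag of Quillen equivalences between the projective model structure on $\Ch_\ttR$ (whose weak equivalences are quasi-isomorphisms) and the model category of $H\ttR$-module spectra, where $H\ttR$ denotes the Eilenberg--MacLane ring spectrum associated to the dg-algebra $\ttR$. Passing to underlying $\infty$-categories converts this into an equivalence between the localization $\Ch_\ttR[\mathrm{qis}^{-1}]$ and $\Mod_{H\ttR}(\Mod_\ZZ)$, and one identifies $H\ttR \simeq \Pi_\infty(\ttR)$ as objects of $\Alg(\Mod_\ZZ)$ via the naturality of the Dold--Kan-style comparison.

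For the second assertion, specialize to $\ttR = \ZZ[G]$ viewed as a discrete dg-algebra concentrated in degree zero; then $\Pi_\infty(\ZZ[G])$ is (equivalent to) the group algebra $\ZZ \otimes_\SS \Sigma^\infty_+ G \in \Alg(\Mod_\ZZ)$. The identification $\Mod_{\ZZ[G]} \simeq \Mod_\ZZ^{\htpy G}$ then follows from the observation that the forgetful functor $\Mod_\ZZ^{\htpy G} \to \Mod_\ZZ$ is monadic with monad given by tensoring with $\ZZ[G]$ (using the equivalence $\ulhom_{\Mod_\ZZ^{\htpy G}}(\ZZ,\ZZ) \simeq \ZZ[G]$ in $\Alg(\Mod_\ZZ)$, which in turn follows from the fact that $\ZZ \in \Mod_\ZZ^{\htpy G}$ is a compact generator when $G$ is a finite group).

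Since the statement is explicitly presented as a consequence of a cited external theorem, there is no substantial obstacle: the only real work is to verify compatibility of the two models for $\Pi_\infty$ on dg-algebras (the $\infty$-categorical one used in the paper and the Eilenberg--MacLane one implicit in Dugger--Shipley). I would accomplish this by invoking the uniqueness of symmetric monoidal functors $\Ch_\ZZ \to \Mod_\ZZ$ sending $\ZZ$ to $\ZZ$, which forces the two constructions to agree on $\Alg$-objects.
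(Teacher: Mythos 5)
Your overall route is the paper's own: the observation is justified there solely by the citation to Dugger--Shipley, with the specialization to $\ZZ[G]$ treated as standard, and your first paragraph (pass the zig-zag of Quillen equivalences to underlying $\infty$-categories and match the Eilenberg--MacLane model with $\Pi_\infty$) is the intended reading of that citation.

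However, your justification of $\Mod_{\ZZ[G]} \simeq \Mod_\ZZ^{\htpy G}$ contains a genuine error. The trivial object $\ZZ \in \Mod_\ZZ^{\htpy G} := \Fun(\BG,\Mod_\ZZ)$ does \emph{not} have endomorphism algebra $\ZZ[G]$: rather $\ulhom_{\Mod_\ZZ^{\htpy G}}(\ZZ,\ZZ) \simeq \ZZ^{\htpy G}$, the $\ZZ$-cochains on $\BG$ (its $\pi_0$ is $\ZZ$, not $\ZZ[G]$). Moreover, for a nontrivial finite group $G$ the trivial module is neither compact (group cohomology $\mathrm{H}^n(G;\ZZ)$ is nonzero in arbitrarily large degrees, so $\ZZ$ is not a perfect $\ZZ[G]$-complex) nor a generator (for $G = \Cyclic_p$, the module $\QQ(\zeta_p)$ with $\sigma$ acting by multiplication by $\zeta_p$ is nonzero but has vanishing homotopy fixedpoints, hence is right-orthogonal to $\ZZ$). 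The object that \emph{is} a compact generator with endomorphism algebra $\ZZ[G]$ is the induced module $\ZZ[G] \simeq \Ind_e^G(\ZZ)$, i.e.\ the left Kan extension of $\ZZ$ along $\pt \to \BG$. Equivalently -- and this repairs your monadicity sentence without reference to generators at all -- evaluation $\Fun(\BG,\Mod_\ZZ) \to \Mod_\ZZ$ is conservative and colimit-preserving, its left adjoint is $M \mapsto \ZZ[G] \otimes M$, and the resulting monad is the group algebra, so Barr--Beck--Lurie gives $\Fun(\BG,\Mod_\ZZ) \simeq \LMod_{\ZZ[G]}(\Mod_\ZZ)$ (with no finiteness hypothesis on $G$ needed). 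Finally, your closing appeal to ``uniqueness of symmetric monoidal functors $\Ch_\ZZ \to \Mod_\ZZ$'' is vaguer than necessary: the identification of the Eilenberg--MacLane ring spectrum of a dg-algebra with $\Pi_\infty$ of it is part of the standard lax symmetric monoidal Dold--Kan/shuffle comparison and is already packaged in the cited Dugger--Shipley equivalence.
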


\begin{notation}
For any dg-algebra $\ttR \in \Alg(\Ch_\ZZ)$, we write $\Ch_{\ttR} \xra{\Pi_\infty} \Mod_{\Pi_\infty(\ttR)}$ for the localization functor.
\end{notation}

\subsection{Adaptedness to homotopy fixedpoints and homotopy orbits}
\label{subsection.adaptedness}

In this subsection, we study the notion of \textit{adaptedness} of chain complexes to homotopy fixedpoints and homotopy orbits. The primary output is \Cref{obs.point.set.res.and.tr.often.give.htpy.res.and.tr}, which establishes that for a chain complex that is adapted to homotopy fixedpoints, its point-set inclusion and transfer morphisms are presentations of its homotopical inclusion and transfer morphisms.

\begin{local}
In this subsection, we fix a finite group $G$ and a subgroup $H \leq G$.
\end{local}

\begin{definition}
Observe the canonical natural transformation
\[ \begin{tikzcd}[column sep=1.5cm]
\Ch_{\ZZ[G]}
\arrow{d}[swap]{\Pi_\infty}
\arrow{r}{(-)^{G}}[swap, yshift=-0.5cm]{\rotatebox{45}{$\Leftarrow$}}
&
\Ch_\ZZ
\arrow{d}{\Pi_\infty}
\\
\Mod_\ZZ^{\htpy G}
\arrow{r}[swap]{(-)^{\htpy G}}
&
\Mod_\ZZ
\end{tikzcd}~. \]
We say that a chain complex $\ttM \in \Ch_{\ZZ[G]}$ is \bit{adapted to homotopy $G$-fixedpoints} if the morphism
\[
\Pi_\infty(\ttM^G)
\longra
\Pi_\infty(\ttM)^{\htpy G}
\]
is an equivalence. More generally, we say that $\ttM \in \Ch_{\ZZ[G]}$ is adapted to homotopy $H$-fixedpoints if its image under the forgetful functor $\Ch_{\ZZ[G]} \xra{\fgt} \Ch_{\ZZ[H]}$ is so.
\end{definition}

\needspace{2\baselineskip}
\begin{observation}
\label{obs.basics.of.adaptedness.to.fixedpoints}
\begin{enumerate}
\item[]

\item\label{item.free.mods.are.adapted.to.fixedpoints}

A free $\ZZ[G]$-module concentrated in degree zero is adapted to homotopy $G$-fixedpoints.

\item\label{item.operations.preserve.adapted.to.fixedpoints}

Complexes that are adapted to homotopy $G$-fixedpoints are stable under taking

\begin{enumeratesub}

\item\label{subitem.adapted.to.fixedpoints.preserved.under.shifts}

shifts,

\item\label{subitem.adapted.to.fixedpoints.preserved.under.cones}

cones, and

\item\label{subitem.adapted.to.fixedpoints.preserved.under.holims}

homotopy limits.

\end{enumeratesub}

\item\label{item.bdd.above.free.adapted.to.fixedpoints}

Combining parts \Cref{item.free.mods.are.adapted.to.fixedpoints} and \Cref{item.operations.preserve.adapted.to.fixedpoints}, it follows that a bounded-above levelwise-free complex of $\ZZ[G]$-modules is adapted to homotopy $G$-fixedpoints. Indeed, this follows by examining its presentation as a homotopy limit of its truncations from below.

\end{enumerate}
\end{observation}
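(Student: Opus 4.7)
The plan is to prove the three parts in order, letting (i) and (ii) feed into (iii). For (i), I would observe that for finite $G$ induction and coinduction from the trivial subgroup agree, so a free $\ZZ[G]$-module $\bigoplus_I \ZZ[G]$ is equivalent to $\coInd_e^G(\bigoplus_I \ZZ)$. The adjunction $\triv \dashv (-)^{\htpy G}$ (combined with the Wirthm\"uller isomorphism identifying $\coInd$ with $\Ind$) gives $(\coInd_e^G M)^{\htpy G} \simeq M$ naturally in $M$. On the point-set side, $(\bigoplus_I \ZZ[G])^G$ is the free abelian group $\bigoplus_I \ZZ\cdot N$ on the norm element. The canonical comparison map $\Pi_\infty(\ttM^G) \to \Pi_\infty(\ttM)^{\htpy G}$ identifies the norm element with the homotopical generator coming from the Wirthm\"uller equivalence, and so is an equivalence.

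For (ii)(a) the suspension $\Sigma$ commutes strictly with $(-)^G$, $\Pi_\infty$, and $(-)^{\htpy G}$, so adaptedness is preserved on the nose. For (ii)(b), given $\ttf \colon \ttM \to \ttN$ with $\ttM,\ttN$ adapted, the sequence $0 \to \ttN \to \ChCone(\ttf) \to \Sigma \ttM \to 0$ is levelwise split (as $\ZZ[G]$-modules), so remains short exact after applying $(-)^G$ degree-wise. Applying $\Pi_\infty$ and comparing with the cofiber sequence $\Pi_\infty(\ttN)^{\htpy G} \to \Pi_\infty(\ChCone(\ttf))^{\htpy G} \to \Sigma\Pi_\infty(\ttM)^{\htpy G}$ (using exactness of $(-)^{\htpy G}$), two-out-of-three for cofiber sequences implies $\ChCone(\ttf)$ is adapted. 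For (ii)(c), both $(-)^G$ and $(-)^{\htpy G}$ are right adjoints and hence preserve limits in $\Ch_{\ZZ[G]}$ and $\Mod_\ZZ^{\htpy G}$ respectively; so whenever the strict limit of a point-set diagram computes the homotopy limit at both levels, the comparison map at the limit is obtained as the limit of comparison maps and is therefore an equivalence.

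For (iii), I would write $\ttM = \lim_m \ttM^{(m)}$ with $\ttM^{(m)} := \sigma^{\geq -m}\ttM$ the brutal truncation to degrees $\geq -m$. Since $\ttM$ is bounded above, each $\ttM^{(m)}$ is a bounded levelwise-free complex, and the levelwise-split short exact sequence $0 \to \Sigma^{-m}\ttM_{-m} \to \ttM^{(m)} \to \ttM^{(m-1)} \to 0$ presents $\ttM^{(m)}$ as an extension in $\Ch_{\ZZ[G]}$ that becomes a cofiber sequence in $\Mod_\ZZ^{\htpy G}$. Induction on the length using (i), (ii)(a), and (ii)(b) therefore shows that each $\ttM^{(m)}$ is adapted. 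The tower is eventually constant in each fixed degree, so its strict limit $\ttM$ coincides with the homotopy limit both in $\Ch_{\ZZ[G]}$ and in $\Mod_\ZZ^{\htpy G}$, and (ii)(c) gives adaptedness of $\ttM$. The most delicate point is the interpretation of (ii)(c): the strictification functor $\Pi_\infty$ does not preserve arbitrary limits, but for the sequentially-truncating tower used in (iii), vanishing of $\lim^1$ (forced by surjectivity of the transition maps) ensures strict and homotopical limits agree at both levels, so the argument goes through without obstruction.
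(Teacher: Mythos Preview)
Your proof is correct. The paper states this as an Observation without detailed proof, giving only the one-line hint for part~(iii) that it ``follows by examining its presentation as a homotopy limit of its truncations from below,'' which is exactly the tower of brutal truncations you construct; your treatment supplies the details the paper omits, and your careful handling of the $\lim^1$ issue (noting that $\Pi_\infty$ does not preserve arbitrary limits but does preserve this particular sequential limit because the transition maps are surjective) makes precise a point the paper leaves implicit.
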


\begin{definition}
Observe the canonical natural transformation
\[ \begin{tikzcd}[column sep=1.5cm]
\Ch_{\ZZ[G]}
\arrow{d}[swap]{\Pi_\infty}
\arrow{r}{(-)_{G}}[swap, yshift=-0.5cm]{\rotatebox{45}{$\Rightarrow$}}
&
\Ch_\ZZ
\arrow{d}{\Pi_\infty}
\\
\Mod_\ZZ^{\htpy G}
\arrow{r}[swap]{(-)_{\htpy G}}
&
\Mod_\ZZ
\end{tikzcd}~. \]
We say that a chain complex $\ttM \in \Ch_{\ZZ[G]}$ is \bit{adapted to homotopy $G$-orbits} if the morphism
\[
\Pi_\infty(\ttM)_{\htpy G}
\longra
\Pi_\infty(\ttM_{G})
\]
is an equivalence. More generally, we say that $\ttM \in \Ch_{\ZZ[G]}$ is adapted to homotopy $H$-orbits if its image under the forgetful functor $\Ch_{\ZZ[G]} \xra{\fgt} \Ch_{\ZZ[H]}$ is so.
\end{definition}

\needspace{2\baselineskip}
\begin{observation}
\label{obs.basics.of.adaptedness.to.orbits}
\begin{enumerate}
\item[]

\item\label{item.free.mods.are.adapted.to.orbits}

A free $\ZZ[G]$-module concentrated in degree zero is adapted to homotopy $G$-orbits.

\item\label{item.operations.preserve.adapted.to.orbits}

Complexes that are adapted to homotopy $G$-orbits are stable under taking

\begin{enumeratesub}

\item\label{subitem.adapted.to.orbits.preserved.under.shifts}

shifts,

\item\label{subitem.adapted.to.orbits.preserved.under.cones}

cones, and

\item\label{subitem.adapted.to.orbits.preserved.under.holims}

homotopy colimits.

\end{enumeratesub}

\item\label{item.bdd.below.free.adapted.to.orbits}

Combining parts \Cref{item.free.mods.are.adapted.to.orbits} and \Cref{item.operations.preserve.adapted.to.orbits}, it follows that a bounded-below levelwise-free complex of $\ZZ[G]$-modules is adapted to homotopy $G$-orbits. Indeed, this follows by examining its presentation as a homotopy colimit of its truncations from above.

\end{enumerate}
\end{observation}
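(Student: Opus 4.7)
The plan is to mirror the proof of the preceding dual Observation \ref{obs.basics.of.adaptedness.to.fixedpoints} at every step, exchanging limits for colimits and fixedpoints for orbits. For part \ref{item.free.mods.are.adapted.to.orbits}, I would identify $\Pi_\infty(\ZZ[G]^{\oplus S})_{\htpy G}$ with the derived tensor product $\ZZ \otimes^{\mathbb{L}}_{\ZZ[G]} \ZZ[G]^{\oplus S}$ and use the fact that $\ZZ[G]^{\oplus S}$ is a projective (in particular flat) $\ZZ[G]$-module, so the derived and underived tensor products agree, giving $\ZZ^{\oplus S}$; this agrees tautologically with $(\ZZ[G]^{\oplus S})_G = \ZZ^{\oplus S}$, and one checks that the canonical comparison is this identification.

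For part \ref{item.operations.preserve.adapted.to.orbits}, the key inputs are that the point-set orbits functor $(-)_G : \Ch_{\ZZ[G]} \to \Ch_\ZZ$ is left adjoint to the trivial action functor, so it preserves all colimits (and in particular commutes with $\Sigma$ and with $\ChCone$), and likewise that the homotopy orbits functor $(-)_{\htpy G} : \Mod^{\htpy G}_\ZZ \to \Mod_\ZZ$ is a left adjoint, hence colimit-preserving on module $\infty$-categories. Given these, closure under shifts follows by direct comparison; closure under cones follows by applying the natural transformation $\Pi_\infty((-)_G) \to \Pi_\infty(-)_{\htpy G}$ to a cofiber sequence $\ttM \to \ttN \to \ChCone(\ttf)$ (both sides yielding cofiber sequences in $\Mod_\ZZ$) and invoking two-out-of-three; closure under homotopy colimits follows identically, after modeling the homotopy colimit on the chain side by a suitable cofibrant replacement of the indexing diagram so that its levelwise point-set orbits compute the homotopy colimit of the orbits.

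Part \ref{item.bdd.below.free.adapted.to.orbits} then follows from the first two parts: a bounded-below levelwise-free complex $\ttM$ is the sequential homotopy colimit of its truncations $\tau_{\leq k} \ttM$ from above, each of which is a bounded complex of free $\ZZ[G]$-modules and hence a finite iterated cone of shifts of degree-zero free $\ZZ[G]$-modules; these are adapted by parts \ref{item.free.mods.are.adapted.to.orbits} and \ref{item.operations.preserve.adapted.to.orbits}\ref{subitem.adapted.to.orbits.preserved.under.shifts}--\ref{subitem.adapted.to.orbits.preserved.under.cones}, and the homotopy colimit is adapted by part \ref{item.operations.preserve.adapted.to.orbits}\ref{subitem.adapted.to.orbits.preserved.under.holims}.

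The only mildly delicate step is part \ref{item.operations.preserve.adapted.to.orbits}\ref{subitem.adapted.to.orbits.preserved.under.holims}: one must ensure that the homotopy colimit in $\Mod^{\htpy G}_\ZZ$ of a diagram of chain complexes can be presented so that applying $(-)_G$ levelwise commutes both with $\Pi_\infty$ and with the formation of the homotopy colimit in $\Mod_\ZZ$. This is standard model-categorical technology (e.g.\ projectively cofibrant replacement of the indexing diagram, or equivalently writing the homotopy colimit as a bar construction whose terms are levelwise coproducts of modules of the form $\ZZ[G] \otimes \ttM$), and causes no real difficulty; everything else is formal.
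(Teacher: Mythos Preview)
Your proposal is correct and is exactly the standard verification; the paper itself offers no proof for this Observation (only the one-line hint embedded in part~\ref{item.bdd.below.free.adapted.to.orbits}), so there is nothing to compare against beyond noting that your argument is the one the authors evidently have in mind. One small slip: the comparison transformation runs $\Pi_\infty(-)_{\htpy G} \Rightarrow \Pi_\infty((-)_G)$ (homotopy orbits is the left derived functor of strict orbits, so the canonical map goes from derived to underived), not the reverse as you wrote in the cone step---but this does not affect the two-out-of-three argument.
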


\begin{lemma}
\label{lemma.take.radjts.get.LKan}
Consider a commutative square
\[
\begin{tikzcd}
\tilde{\cD}
\arrow{d}[swap]{M}
&
\tilde{\cC}
\arrow{l}[swap]{\tilde{F}}
\arrow{d}{L}
\\
\cD
&
\cC
\arrow{l}{F}
\end{tikzcd}
\]
in $\Cat$ in which the functor $L$ is a localization and the functors $\tilde{F}$ and $F$ admit right adjoints. Then, the resulting diagram
\[
\begin{tikzcd}
\tilde{\cD}
\arrow{r}{\tilde{F}^R}
\arrow{d}[swap]{M}[xshift=0.5cm]{\rotatebox{45}{$\Leftarrow$}}
&
\tilde{\cC}
\arrow{r}{L}
&
\cC
\\
\cD
\arrow[bend right=10]{rru}[sloped, swap]{F^R}
\end{tikzcd}
\qquad
:=
\qquad
\begin{tikzcd}
\tilde{\cD}
\arrow{r}{\tilde{F}^R}[swap, yshift=-0.35cm]{\rotatebox{45}{$\Leftarrow$}}
\arrow{d}[swap]{M}
&
\tilde{\cC}
\arrow{d}{L}
\\
\cD
\arrow{r}[swap]{F^R}
&
\cC
\end{tikzcd}
\]
is a left Kan extension diagram.
\end{lemma}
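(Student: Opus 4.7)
The plan is to verify the universal property of the left Kan extension directly: for every functor $G \colon \cD \to \cC$, the map
\[
\Map_{\Fun(\cD,\cC)}(F^R, G) \longrightarrow \Map_{\Fun(\tilde{\cD},\cC)}(L\tilde{F}^R, GM)
\]
given by whiskering with $M$ and composing with the canonical mate 2-cell
\[
L\tilde{F}^R \xRightarrow{\eta_F L\tilde{F}^R} F^R F L\tilde{F}^R = F^R M \tilde{F}\tilde{F}^R \xRightarrow{F^R M \epsilon_{\tilde{F}}} F^R M
\]
is an equivalence of mapping spaces.

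The strategy is to rewrite both sides in terms of $\Map_{\Fun(\cC,\cC)}(\id_\cC, GF)$ via a chain of canonical equivalences. The adjunction $F \dashv F^R$ gives the standard ``conjugation'' equivalence
\[
\Map_{\Fun(\cD,\cC)}(F^R, G) \simeq \Map_{\Fun(\cC,\cC)}(\id_\cC, GF), \qquad \alpha \longmapsto (\alpha F)\circ \eta_F,
\]
with inverse sending $\beta$ to $(G\epsilon_F)\circ (\beta F^R)$; both composites are identities by the triangle identities. On the other side, the adjunction $\tilde{F} \dashv \tilde{F}^R$ and the commutativity $M\tilde{F}=FL$ give
\[
\Map_{\Fun(\tilde{\cD},\cC)}(L\tilde{F}^R, GM) \simeq \Map_{\Fun(\tilde{\cC},\cC)}(L, GM\tilde{F}) = \Map_{\Fun(\tilde{\cC},\cC)}(L, GFL).
\]
Since $L$ is a localization, the restriction functor $L^*\colon \Fun(\cC,\cC) \to \Fun(\tilde{\cC},\cC)$ is fully faithful; applied to the objects $\id_\cC, GF \in \Fun(\cC,\cC)$, this identifies the previous space with $\Map_{\Fun(\cC,\cC)}(\id_\cC, GF)$. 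So both sides are canonically equivalent to the same space.

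The remaining step -- the only nontrivial one -- is checking that the composite equivalence agrees with the one induced by the mate 2-cell. Unwinding the definitions, this reduces to showing that for every $\alpha\colon F^R \to G$, the two natural transformations
\[
L\tilde{F}^R \xrightarrow{\eta_F L\tilde{F}^R} F^R M\tilde{F}\tilde{F}^R \xrightarrow{F^R M\epsilon_{\tilde{F}}} F^R M \xrightarrow{\alpha M} GM
\]
and
\[
L\tilde{F}^R \xrightarrow{\eta_F L\tilde{F}^R} F^R M\tilde{F}\tilde{F}^R \xrightarrow{\alpha M\tilde{F}\tilde{F}^R} GM\tilde{F}\tilde{F}^R \xrightarrow{GM\epsilon_{\tilde{F}}} GM
\]
coincide. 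This is immediate from naturality of $\alpha$ applied to the morphism $M\epsilon_{\tilde{F}}\colon M\tilde{F}\tilde{F}^R \to M$. No real obstacle arises in the argument: the substantive content is the combination of the two adjunctions with the fully faithfulness of $L^*$, and the mate-compatibility reduces to a single application of naturality.
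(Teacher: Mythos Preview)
Your proof is correct and uses essentially the same ingredients as the paper's, just unpacked one level further. The paper packages the argument by setting up the square
\[
\begin{tikzcd}[row sep=1.4cm, column sep=1.4cm]
\Fun(\tilde{\cC},\cC)
\arrow[shift left]{r}
\arrow[hookleftarrow, shift right]{r}[swap]{L^*}
\arrow[shift right]{d}[swap]{(\tilde{F}^R)^*}
\arrow[leftarrow, shift left]{d}{\tilde{F}^*}
&
\Fun(\cC,\cC)
\arrow[shift right]{d}[swap]{(F^R)^*}
\arrow[leftarrow, shift left]{d}{F^*}
\\
\Fun(\tilde{\cD},\cC)
\arrow[shift left]{r}
\arrow[leftarrow, shift right]{r}[swap]{M^*}
&
\Fun(\cD,\cC)
\end{tikzcd}
\]
in which the right adjoints $L^*,M^*,\tilde{F}^*,F^*$ commute (because $M\tilde{F}\simeq FL$); it then invokes uniqueness of (partially defined) left adjoints to trace $L \mapsto \id_\cC \mapsto F^R$ along the top-then-right path and conclude that the bottom horizontal left adjoint sends $L\tilde{F}^R$ to $F^R$. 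Your chain of equivalences $\Map(F^R,G)\simeq\Map(\id_\cC,GF)\simeq\Map(L,GFL)\simeq\Map(L\tilde{F}^R,GM)$ is exactly what this Beck--Chevalley step says at the level of representable functors, and your explicit mate-compatibility check is what the phrase ``uniqueness of left adjoints'' encodes. The paper's formulation is slicker and avoids the bookkeeping with units and counits; yours has the virtue of making the role of each hypothesis (the two adjunctions, the full faithfulness of $L^*$, and the single naturality square) completely transparent.
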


\begin{proof}
Consider the diagram
\[ \begin{tikzcd}[row sep=1.5cm, column sep=1.5cm]
\Fun(\tilde{\cC} , \cC )
\arrow[dashed, transform canvas={yshift=0.9ex}]{r}
\arrow[hookleftarrow, transform canvas={yshift=-0.9ex}]{r}[yshift=-0.2ex]{\bot}[swap]{L^*}
\arrow[xshift=-0.9ex]{d}[swap]{(\tilde{F}^R)^*}
\arrow[leftarrow, xshift=0.9ex]{d}[anchor=north, sloped, yshift=0.2ex]{\top}{\tilde{F}^*}
&
\Fun ( \cC , \cC)
\arrow[xshift=-0.9ex]{d}[swap]{(F^R)^*}
\arrow[leftarrow, xshift=0.9ex]{d}[anchor=north, sloped, yshift=0.2ex]{\top}{F^*}
\\
\Fun ( \tilde{\cD} , \cC)
\arrow[dashed, transform canvas={yshift=0.9ex}]{r}
\arrow[leftarrow, transform canvas={yshift=-0.9ex}]{r}[yshift=-0.2ex]{\bot}[swap]{M^*}
&
\Fun ( \cD , \cC)
\end{tikzcd} \]
in $\Cat$, in which the right adjoints commute and the dashed horizontal left adjoints may only be partially defined. It suffices to observe that we have assignments
\[ \begin{tikzcd}
L
\arrow[maps to]{r}
\arrow[maps to]{d}
&
\id_\cC
\arrow[maps to]{d}
\\
L \circ \tilde{F}^R
&
F^R
\end{tikzcd}
~,
\]
by the uniqueness of (partially defined) left adjoints.
\end{proof}

\begin{cor}
\label{cor.tensor.with.bimodule.correct.if}
Let $\ttR,\ttS \in \Alg(\Ch_\ZZ)$ be dg-algebras, let $\ttM \in \BiMod_{(\ttS,\ttR)}(\Ch_\ZZ)$ be a dg-bimodule, and suppose that the functor
\[
\Ch_\ZZ
\xra{\ttM \otimes_\ZZ (-)}
\Ch_\ZZ
\]
preserves quasi-isomorphisms. Then, the canonical diagram
\[ \begin{tikzcd}
\Ch_\ttS
\arrow{r}{\ulhom_{\Ch_\ttS}( \ttM , - ) }
\arrow{d}[swap]{\Pi_\infty}[xshift=1cm]{\rotatebox{45}{$\Leftarrow$}}
&[0.5cm]
\Ch_\ttR
\arrow{r}{\Pi_\infty}
&
\Mod_{\Pi_\infty(\ttR)}
\\
\Mod_{\Pi_\infty(\ttS)}
\arrow[bend right=10]{rru}[sloped, swap]{\ulhom_{\Mod_{\Pi_\infty(\ttS)}}(\Pi_\infty(\ttM),-)}
\end{tikzcd} \]
is a left Kan extension.
\end{cor}

\begin{proof}
To simplify our notation, we write $R := \Pi_\infty(\ttR)$, $S := \Pi_\infty(\ttS)$, and $M := \Pi_\infty(\ttM)$.

Because the functor $\Mod_R \xra{\fgt} \Mod_\ZZ$ is colimit-preserving and conservative, it suffices to show that the diagram
\[ \begin{tikzcd}
\Ch_\ttS
\arrow{r}{\ulhom_{\Ch_\ttS}(\ttM,-)}
\arrow{d}[swap]{\Pi_\infty}[xshift=1cm]{\rotatebox{45}{$\Leftarrow$}}
&[1cm]
\Ch_\ttR
\arrow{r}{\Pi_\infty}
&
\Mod_R
\arrow{r}{\fgt}
&
\Mod_\ZZ
\\
\Mod_S
\arrow[bend right=10]{rrru}[sloped, swap]{\ulhom_{\Mod_S}(M,-)}
\end{tikzcd} \]
is a left Kan extension, which is equivalent to the condition that the diagram
\begin{equation}
\label{later.diagram.that.must.be.LKE.only.involving.Zmods.not.Rmods}
\begin{tikzcd}
\Ch_\ttS
\arrow{r}{\ulhom_{\Ch_\ttS}(\ttM,-)}
\arrow{d}[swap]{\Pi_\infty}[xshift=1cm]{\rotatebox{45}{$\Leftarrow$}}
&[1cm]
\Ch_\ZZ
\arrow{r}{\Pi_\infty}
&
\Mod_\ZZ
\\
\Mod_S
\arrow[bend right=10]{rru}[sloped, swap]{\ulhom_{\Mod_S}(M,-)}
\end{tikzcd}
\end{equation}
is a left Kan extension. Now, the canonical lax-commutative square
\[ \begin{tikzcd}[column sep=1.5cm]
\Ch_\ttS
\arrow{d}[swap]{\Pi_\infty}
&
\Ch_\ZZ
\arrow{l}[swap]{\ttM \otimes_\ZZ (-)}[yshift=-0.4cm]{\rotatebox{-45}{$\Leftarrow$}}
\arrow{d}{\Pi_\infty}
\\
\Mod_S
&
\Mod_\ZZ
\arrow{l}{M \otimes_\ZZ (-)}
\end{tikzcd} \]
commutes by our assumption (e.g.\! by taking a projective resolution of $\ttM \in \Ch_\ZZ$ to compute the derived tensor product (i.e.\! the tensor product in $\Mod_\ZZ$)). Hence, the fact that the diagram \Cref{later.diagram.that.must.be.LKE.only.involving.Zmods.not.Rmods} is a left Kan extension follows from \Cref{lemma.take.radjts.get.LKan}.
\end{proof}

\begin{observation}
\label{obs.apply.tensor.with.bimodule.correct.if.to.htpy.Cpj.fixedpts}
We apply \Cref{cor.tensor.with.bimodule.correct.if} in the case that $\ttS = \ZZ [ G ] ,\ttR = \ZZ[\Weyl(H)] \in \Alg(\Ab) \subset \Alg(\Ch_\ZZ)$ and $\ttM = \ZZ [ G/H ]$. Then, we have identifications
\[
\ulhom_{\Ch_{\ZZ[G]}}( \ZZ[G/H] , - )
\simeq
(-)^{H}
\qquad
\text{and}
\qquad
\ulhom_{\Mod_\ZZ^{\htpy G}} ( \ZZ[G/H] , - )
\simeq
(-)^{\htpy H}
~.
\]
As $\ZZ[G/H] \in \Ab \subset \Ch_\ZZ$ is a free abelian group, we find that the canonical diagram
\begin{equation}
\label{map.from.strict.Cpj.fixedpts.to.htpy.Cpj.fixedpts}
\begin{tikzcd}
\Ch_{\ZZ[G]}
\arrow{r}{(-)^H}
\arrow{d}[swap]{\Pi_\infty}[xshift=1cm]{\rotatebox{45}{$\Leftarrow$}}
&
\Ch_{\ZZ[\Weyl(H)]}
\arrow{r}{\Pi_\infty}
&
\Mod_\ZZ^{\htpy \Weyl(H)}
\\
\Mod_\ZZ^{\htpy G}
\arrow[bend right=10]{rru}[sloped, swap]{(-)^{\htpy \Cyclic_j}}
\end{tikzcd}
\end{equation}
is a left Kan extension. In particular, it follows that for any morphism $\ttM \xra{\ttf} \ttN$ in $\Ch_{\ZZ[G]}$, if $\ttM$ and $\ttN$ are adapted to homotopy $H$-fixedpoints then the morphism $\ttf^{H}$ in $\Ch_{\ZZ[\Weyl(H)]}$ is a presentation of the morphism $(\Pi_\infty(\ttf))^{\htpy H}$ in $\Mod^{\htpy \Weyl(H)}_\ZZ$.
\end{observation}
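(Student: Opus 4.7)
The plan is to verify the hypothesis of \Cref{cor.tensor.with.bimodule.correct.if} in the specified case, identify the relevant $\ulhom$-functors as fixed-point functors, and then extract the morphism-level consequence from the left Kan extension property.

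First, I would check that $\ZZ[G/H]$ carries the required bimodule structure and that the hypothesis of \Cref{cor.tensor.with.bimodule.correct.if} is satisfied. The set $G/H$ admits commuting left $G$- and right $\Weyl(H)$-actions (the latter by right multiplication through a lift, well-defined modulo $H$), so $\ZZ[G/H] \in \BiMod_{(\ZZ[G], \ZZ[\Weyl(H)])}(\Ab) \subset \BiMod_{(\ZZ[G],\ZZ[\Weyl(H)])}(\Ch_\ZZ)$. Since $\ZZ[G/H]$ is a free abelian group concentrated in degree zero, the functor $\ZZ[G/H] \otimes_\ZZ (-) : \Ch_\ZZ \to \Ch_\ZZ$ is exact and in particular preserves quasi-isomorphisms, so the corollary applies.

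Next I would establish the two identifications of $\ulhom$-functors with fixed-point functors. Writing $\ZZ[G/H] \cong \ZZ[G] \otimes_{\ZZ[H]} \ZZ = \Ind_H^G \ZZ$ as a left $\ZZ[G]$-module, the hom-tensor adjunction gives
\[
\ulhom_{\Ch_{\ZZ[G]}}(\ZZ[G/H], \ttM) \simeq \ulhom_{\Ch_{\ZZ[H]}}(\ZZ, \Res^G_H \ttM) \simeq \ttM^H,
\]
with the residual left $\Weyl(H)$-action on $\ttM^H$ induced from the right $\Weyl(H)$-action on $\ZZ[G/H]$. The analogous identification on the $\infty$-categorical side follows by the same argument applied in $\Mod^{\htpy G}_\ZZ$. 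Feeding these identifications into \Cref{cor.tensor.with.bimodule.correct.if} yields the asserted left Kan extension property of the diagram \Cref{map.from.strict.Cpj.fixedpts.to.htpy.Cpj.fixedpts}.

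Finally, for the morphism-level consequence, the left Kan extension structure furnishes a natural transformation
\[
\Pi_\infty \circ (-)^H \Longrightarrow (-)^{\htpy H} \circ \Pi_\infty
\]
of functors $\Ch_{\ZZ[G]} \to \Mod^{\htpy \Weyl(H)}_\ZZ$. By definition, a chain complex is adapted to homotopy $H$-fixedpoints exactly when the component of this natural transformation at it is an equivalence. Thus, given $\ttM \xra{\ttf} \ttN$ in $\Ch_{\ZZ[G]}$ with both $\ttM$ and $\ttN$ adapted to homotopy $H$-fixedpoints, naturality yields a commutative square in $\Mod^{\htpy \Weyl(H)}_\ZZ$ whose vertical morphisms are equivalences, exhibiting $\Pi_\infty(\ttf^H)$ as equivalent to $(\Pi_\infty(\ttf))^{\htpy H}$. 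The only step requiring any care is the bookkeeping around the bimodule structure and the residual $\Weyl(H)$-action; everything else is a direct appeal to the preceding corollary.
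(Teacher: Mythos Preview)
Your proposal is correct and matches the paper's approach exactly: the observation is stated without separate proof, as a direct specialization of \Cref{cor.tensor.with.bimodule.correct.if} using that $\ZZ[G/H]$ is free as an abelian group together with the standard induction--restriction identification of the $\ulhom$-functors as fixed points. Your write-up simply spells out the details the paper leaves implicit, including the naturality argument for the morphism-level consequence.
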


\begin{notation}
Consider the morphisms
\begin{equation}
\label{corepresent.strict.inclusion}
\ZZ[G/e]
\xra{1 \longmapsto 1}
\ZZ[G/H]
\end{equation}
and
\begin{equation}
\label{corepresent.strict.transfer}
\ZZ[G/H]
\xra{1 \longmapsto \sum_{h \in H} h}
\ZZ[G/e]
\end{equation}
in $\Mod_{\ZZ[G]}(\Ab) \subset \Mod_{\ZZ[G]}(\Ch_\ZZ)$. For any $\ttM \in \Ch_{\ZZ[G]}$, we write
\[
\left(
\ttM
\xla{\tthinc}
\ttM^H
\right)
:=
\ulhom_{\Ch_{\ZZ[G]}} ( \Cref{corepresent.strict.inclusion} , \ttM )
\qquad
\text{and}
\qquad
\left(
\ttM^H
\xla{\tthtrf}
\ttM
\right)
:=
\ulhom_{\Ch_{\ZZ[G]}} ( \Cref{corepresent.strict.transfer} , \ttM )
\]
for the indicated morphisms in $\Ch_{\ZZ[\Normzer(H)]}$.\footnote{Recall from Notations \ref{notn.htpy.inc} \and \ref{notn.htpy.trf} that in the case that $G = \Cyclic_{p^i}$ and $H = \Cyclic_p$, for any $M \in \Mod^{\htpy \Cyclic_{p^i}}_\ZZ$ we likewise simply write
\[
\left(
M
\xla{\hinc}
M^{\htpy \Cyclic_p}
\right)
:=
\ulhom_{\Mod_\ZZ^{\htpy \Cyclic_{p^i}}} ( \Cref{corepresent.strict.inclusion} , M )
\qquad
\text{and}
\qquad
\left(
M^{\htpy \Cyclic_p}
\xla{\htrf}
M
\right)
:=
\ulhom_{\Mod_\ZZ^{\htpy \Cyclic_{p^i}}} ( \Cref{corepresent.strict.transfer} , M )
\]
for the indicated morphisms.}
\end{notation}

\begin{observation}
\label{obs.manifest.functoriality.in.cor.tensor.with.bimodule.correct.if}
\Cref{cor.tensor.with.bimodule.correct.if} (and in particular \Cref{obs.apply.tensor.with.bimodule.correct.if.to.htpy.Cpj.fixedpts}) is functorial in the dg-bimodule $\ttM$. That is, there is a natural transformation
\[
(\Pi_\infty)_!
( \Pi_\infty ( \ulhom_{\Ch_\ttS} ( - , = ) ) )
\longra
\ulhom_{\Mod_{\Pi_\infty(\ttS)}}(\Pi_\infty(-) , = )
\]
in $\Fun ( \BiMod_{(\ttS,\ttR)}(\Ch_\ZZ)^\op \times \Ch_\ttS , \Mod_{\Pi_\infty(R)} )$, which is an equivalence when restricting to those dg-bimodules $\ttM$ such that the functor $\Ch_\ZZ \xra{\ttM \otimes_\ZZ (-)} \Ch_\ZZ$ preserves quasi-isomorphisms.
\end{observation}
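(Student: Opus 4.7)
The plan is to construct the natural transformation by upgrading the pointwise construction behind Corollary \ref{cor.tensor.with.bimodule.correct.if} to one that is functorial in the bimodule variable, and then to verify the equivalence claim by reducing pointwise back to that Corollary. Write $\cB := \BiMod_{(\ttS,\ttR)}(\Ch_\ZZ)^\op$ for brevity. Since $\Pi_\infty$ is a functor (in fact, a dg-localization), its action on internal homs yields a canonical natural transformation
\[
\eta : \Pi_\infty \circ \ulhom_{\Ch_\ttS}(-,=) \longra \ulhom_{\Mod_{\Pi_\infty\ttS}}(\Pi_\infty(-),\Pi_\infty(=))
\]
of bifunctors $\cB \times \Ch_\ttS \to \Mod_{\Pi_\infty \ttR}$, natural in both variables. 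The target factors through $\id_\cB \times \Pi_\infty : \cB \times \Ch_\ttS \to \cB \times \Mod_{\Pi_\infty \ttS}$, so the $((\id \times \Pi_\infty)_!, (\id\times\Pi_\infty)^*)$ adjunction produces a canonical mate of $\eta$, which is the asserted natural transformation (and which one can restrict back along $\id \times \Pi_\infty$ to land in $\Fun(\cB \times \Ch_\ttS, \Mod_{\Pi_\infty \ttR})$, as written in the statement).

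Next I would verify that the mate of $\eta$ is an equivalence after restriction to the full subcategory $\cB^\good \subseteq \cB$ spanned by those $\ttM$ for which $\ttM \otimes_\ZZ (-)$ preserves quasi-isomorphisms. Equivalences of functors out of a product are detected pointwise in the first variable, so it suffices to show that for each $\ttM \in \cB^\good$, restricting the mate of $\eta$ to $\{\ttM\} \times \Mod_{\Pi_\infty \ttS}$ yields an equivalence of functors $\Mod_{\Pi_\infty \ttS} \to \Mod_{\Pi_\infty \ttR}$. Now pointwise left Kan extension along a product functor of the form $\id_\cB \times \Pi_\infty$ can be computed variable-by-variable: the colimit formula shows that the restriction of $(\id \times \Pi_\infty)_! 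F$ to $\{\ttM\} \times \Mod_{\Pi_\infty \ttS}$ agrees with $(\Pi_\infty)_! (F|_{\{\ttM\} \times \Ch_\ttS})$. Consequently, the restriction of the mate of $\eta$ to $\{\ttM\}$ is precisely the natural transformation produced by the universal property of $(\Pi_\infty)_!$ applied to the component of $\eta$ at $\ttM$ — which is exactly the natural transformation whose left-Kan-extension property is the content of Corollary \ref{cor.tensor.with.bimodule.correct.if}. Since $\ttM \in \cB^\good$, the Corollary asserts that this is a left Kan extension, and hence the mate is an equivalence at this $\ttM$.

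Main obstacle: there is essentially none beyond bookkeeping; this is a formal consequence of the Corollary together with compatibility of left Kan extension with restriction along the inclusion of a slice $\{\ttM\} \times \Mod_{\Pi_\infty \ttS} \hookrightarrow \cB \times \Mod_{\Pi_\infty \ttS}$. The only mild care needed is to confirm that the construction of $\eta$ really is bifunctorial, which is immediate from the naturality of the comparison map $\Pi_\infty \ulhom(X,Y) \to \ulhom(\Pi_\infty X, \Pi_\infty Y)$ attached to any localization of dg-categories (equivalently, from the fact that $\Pi_\infty$ is a symmetric monoidal functor between the relevant $(\infty,2)$-categorical enhancements of $\Ch_\ttS$ and $\Mod_{\Pi_\infty \ttS}$).
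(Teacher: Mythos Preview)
Your proposal is correct. The paper does not actually provide a proof of this Observation: it is stated as self-evident bookkeeping following Corollary~\ref{cor.tensor.with.bimodule.correct.if}, and the next environment begins immediately afterward. Your argument spells out precisely the formal content the paper leaves implicit---constructing the comparison map via the localization's action on internal homs, passing to the mate along $\id \times \Pi_\infty$, and then checking the equivalence claim pointwise by reducing to Corollary~\ref{cor.tensor.with.bimodule.correct.if} using that left Kan extension along $\id \times \Pi_\infty$ is computed fiberwise in the first variable. This is the natural (and essentially unique) way to justify the Observation, and matches the spirit of how the paper uses it in Observation~\ref{obs.point.set.res.and.tr.often.give.htpy.res.and.tr}.
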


\begin{observation}
\label{obs.point.set.res.and.tr.often.give.htpy.res.and.tr}
Fix a subgroup $H \leq G$. Using \Cref{obs.manifest.functoriality.in.cor.tensor.with.bimodule.correct.if}, we apply \Cref{obs.apply.tensor.with.bimodule.correct.if.to.htpy.Cpj.fixedpts} to the morphisms \Cref{corepresent.strict.inclusion} and \Cref{corepresent.strict.transfer} to obtain for each $\ttM \in \Ch_{\ZZ[G]}$ natural commutative squares
\[
\begin{tikzcd}[column sep=1.5cm]
\Pi_\infty(\ttM^H)
\arrow{r}{\Pi_\infty(\tthinc)}
\arrow{d}
&
\Pi_\infty(\ttM^e)
\arrow{d}[sloped, anchor=south]{\sim}
\\
\Pi_\infty(\ttM)^{\htpy H}
\arrow{r}[swap]{\hinc}
&
\Pi_\infty(\ttM)^{\htpy e}
\end{tikzcd}
\qquad
\text{and}
\qquad
\begin{tikzcd}[column sep=1.5cm]
\Pi_\infty(\ttM^e)
\arrow{r}{\Pi_\infty(\tthtrf)}
\arrow{d}[sloped, anchor=north]{\sim}
&
\Pi_\infty(\ttM^H)
\arrow{d}
\\
\Pi_\infty(\ttM)^{\htpy e}
\arrow{r}[swap]{\htrf}
&
\Pi_\infty(\ttM)^{\htpy H}
\end{tikzcd}
\]
in $\Mod_\ZZ^{\htpy G}$, in which all vertical morphisms are components of natural transformations of the form \Cref{map.from.strict.Cpj.fixedpts.to.htpy.Cpj.fixedpts}. In particular, if $\ttM$ is adapted to homotopy $H$-fixedpoints, then we obtain canonical identifications $\hinc \simeq \Pi_\infty(\tthinc)$ and $\htrf \simeq \Pi_\infty(\tthtrf)$ in $\Ar(\Mod_\ZZ^{\htpy \Weyl(H)})$.
\end{observation}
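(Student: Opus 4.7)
The plan is simply to unpack the naturality assertion of \Cref{obs.manifest.functoriality.in.cor.tensor.with.bimodule.correct.if} applied to the two corepresenting morphisms \Cref{corepresent.strict.inclusion} and \Cref{corepresent.strict.transfer}, and then to specialize to complexes that are adapted to homotopy $H$-fixedpoints. No new homological input is required beyond the adaptedness hypothesis and the Kan extension statement already established in \Cref{obs.apply.tensor.with.bimodule.correct.if.to.htpy.Cpj.fixedpts}.

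First, I would instantiate \Cref{obs.manifest.functoriality.in.cor.tensor.with.bimodule.correct.if} in the setting $\ttS = \ZZ[G]$, $\ttR = \ZZ[\Normzer(H)]$ (or just $\ZZ[\Weyl(H)]$, as needed), using the dg-bimodules $\ZZ[G/e]$ and $\ZZ[G/H]$. Both of these are free abelian groups concentrated in degree $0$, so the functors $\ZZ[G/e] \otimes_\ZZ (-)$ and $\ZZ[G/H] \otimes_\ZZ (-)$ on $\Ch_\ZZ$ preserve quasi-isomorphisms; hence for each of them the comparison natural transformation of \Cref{obs.manifest.functoriality.in.cor.tensor.with.bimodule.correct.if} is an equivalence. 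Applying the contravariant functoriality of that natural transformation to the morphisms \Cref{corepresent.strict.inclusion} and \Cref{corepresent.strict.transfer} then produces, naturally in $\ttM \in \Ch_{\ZZ[G]}$, the two commutative squares displayed in the statement, with bottom horizontals $\hinc$ and $\htrf$ respectively.

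Second, I would identify the right-hand vertical morphisms in both squares with the comparison map arising from \Cref{map.from.strict.Cpj.fixedpts.to.htpy.Cpj.fixedpts} in the case $H = e$. Since $(-)^e$ on $\Ch_{\ZZ[G]}$ is the forgetful functor and $(-)^{\htpy e}$ on $\Mod^{\htpy G}_\ZZ$ is likewise the forgetful functor, this comparison is the canonical identification $\Pi_\infty(\ttM^e) \xra{\sim} \Pi_\infty(\ttM)^{\htpy e}$, which is an equivalence with no hypotheses on $\ttM$. The left-hand vertical morphisms are, by construction, precisely the comparison map $\Pi_\infty(\ttM^H) \to \Pi_\infty(\ttM)^{\htpy H}$; by the definition of adaptedness to homotopy $H$-fixedpoints, this is an equivalence whenever $\ttM$ is so adapted.

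Finally, with three out of the four morphisms in each commutative square being equivalences, the identifications $\hinc \simeq \Pi_\infty(\tthinc)$ and $\htrf \simeq \Pi_\infty(\tthtrf)$ in $\Ar(\Mod^{\htpy \Weyl(H)}_\ZZ)$ follow immediately. There is no substantive obstacle here: the entire argument is bookkeeping around the already-established Kan extension identification, and the only content-bearing step is the observation that $\ZZ[G/e]$ and $\ZZ[G/H]$ satisfy the flatness hypothesis of \Cref{cor.tensor.with.bimodule.correct.if}, which is immediate from their being levelwise free.
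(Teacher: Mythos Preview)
Your proposal is correct and matches the paper's approach: the paper states this as an observation with no separate proof, treating it as an immediate consequence of applying the functoriality in \Cref{obs.manifest.functoriality.in.cor.tensor.with.bimodule.correct.if} to the morphisms \Cref{corepresent.strict.inclusion} and \Cref{corepresent.strict.transfer}, exactly as you have unpacked it. Your added remarks about why the $e$-comparison is always an equivalence and why the $H$-comparison is an equivalence under the adaptedness hypothesis are accurate and simply make explicit what the paper leaves implicit.
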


\subsection{Chain-level data for cohomology groups}
\label{subsection.some.chain.complexes}

In this subsection, we introduce and study the specific chain complexes and most of the chain maps that participate in our computation of the values of equivariant cohomology (\Cref{notation.all.chain.level.stuff}). The main output is \Cref{lem.ABX.htpy.invariantly}, which establishes the underlying homotopical content of these chain-level data.

\begin{remark}
\label{rmk.mnemonics.for.ch.cxes}
We have tried to make the notation that is introduced in the remainder of the paper mnemonical, as we now describe.
\begin{itemize}

\item The number $a$ is that appearing in \S\S\ref{section.stratn.of.gen.Cpn.Z.mods} and \ref{section.Cpn.eqvrt.cohomology}, where it participates in the categorical fixedpoints functor $(-)^{\Cyclic_{p^a}}$.

\item The letter $r$ stands for ``residual equivariance''. This is generally recorded as a subscript.

\item Superscripts generally record ``the number of times that homotopy $\Cyclic_p$-fixedpoints has been taken''. Correspondingly, we use subscripts on the left to record ``the number of times that homotopy $\Cyclic_p$-orbits has been taken''.

\item The letter $\ttZ$ stands for ``integers'', and the letter $\ttS$ is chosen because it is similar to $\ttZ$.

\item The letter $\ttT$ stands for ``Tate''.

\item The letter $\ttC$ stands for ``connective cover of Tate''.

\item The letter $\ttq$ stands for ``quotient'', and the letter $\ttg$ is chosen because it is similar to $\ttq$.

\item The letter $\tte$ is chosen because it is similar to $\varepsilon$.

\item The letter $\ttc$ stands for ``Chern class''.

\end{itemize}
\end{remark}

\begin{notation}
We fix nonnegative integers $a,r \geq 0$.
\end{notation}

\begin{definition}
\label{defn.norm.elt}
The \bit{norm element} for the group $\Cyclic_{p^r}$ is the element
\[
N
:=
N_r
:=
\left( \sum_{i=1}^{p^r} \sigma^i \right)
=
\left( 1 + \sigma + \cdots + \sigma^{p^r-1} \right)
\in
\ZZ[\Cyclic_{p^r}]
\]
of its group ring.
\end{definition}

\begin{remark}
Notations \ref{notation.all.chain.level.stuff} \and \ref{notation.of.all.chain.level.stuff.htpy} are formatted in a way that makes them well-suited for application in \Cref{section.Cpn.eqvrt.cohomology}. However, from a certain point of view this formatting is suboptimal: most of the data defined therein is independent of the choice of $a$.
\end{remark}

\begin{notation}
\label{notation.all.chain.level.stuff}
We define a diagram
\begin{equation}
\label{defining.diagram.of.ZCT}
\begin{tikzcd}
\ttZ^a_r
\arrow{rd}[sloped]{\ttq_r^{a-1}}
&
\ttC^{a-1}_r
\arrow{rd}[sloped]{\ttg_r^{a-2}}
\arrow{d}{\tte_r^{a-1}}
&
\ttC^{a-2}_r
\arrow{rd}[sloped]{\ttg_r^{a-3}}
\arrow{d}{\tte_r^{a-2}}
&
\cdots
&
\cdots
\arrow{rd}[sloped]{\ttg_r^0}
&
\ttC^0_r
\arrow{d}{\tte_r^0}
\\
&
\ttT^{a-1}_r
&
\ttT^{a-2}_r
&
\ttT^{a-3}_r
&
\cdots
&
\ttT^0_r
\end{tikzcd}
\end{equation}
in $\Ch_{\ZZ[\Cyclic_{p^r}]}$ (i.e.\! a functor $\Zig_a \ra \Ch_{\ZZ[\Cyclic_{p^r}]}$) as follows.

\begin{enumerate}

\item\label{notation.for.chain.complexes.themselves}

We define the objects in diagram \Cref{defining.diagram.of.ZCT} as follows; all are levelwise free $\ZZ[\Cyclic_{p^r}]$-modules of rank 0 or 1.

\begin{enumeratesub}

\item\label{notation.for.chain.complex.A}

We define the object
\[
\ttZ^a_r
:=
\left(
\cdots
\longra
0
\longra
0
\longra
\uwave{\ZZ[\Cyclic_{p^r}]}
\xra{1-\sigma}
\ZZ[\Cyclic_{p^r}]
\xra{p^a N}
\ZZ[\Cyclic_{p^r}]
\xra{1-\sigma}
\ZZ[\Cyclic_{p^r}]
\xra{p^a N}
\cdots
\right)
~.
\]

\item

For any $i \geq 0$, we define the object
\[
\ttC^i_r
:=
\left(
\cdots
\xra{p^{i+1} N}
\ZZ[\Cyclic_{p^r}]
\xra{1-\sigma}
\ZZ[\Cyclic_{p^r}]
\xra{p^{i+1} N}
\uwave{\ZZ[\Cyclic_{p^r}]}
\xra{1-\sigma}
\ZZ[\Cyclic_{p^r}]
\xra{p^i N}
\ZZ[\Cyclic_{p^r}]
\xra{1-\sigma}
\ZZ[\Cyclic_{p^r}]
\xra{p^i N}
\cdots
\right)
~.\footnote{Note that the differentials in nonnegative degrees differ from those in negative degrees.}
\]

\item\label{notation.for.chain.complex.X}

For any $i \geq 0$, we define the object
\[
\ttT^i_r
:=
\left(
\cdots
\xra{p^{i+1}N}
\ZZ[\Cyclic_{p^r}]
\xra{1-\sigma}
\ZZ[\Cyclic_{p^r}]
\xra{p^{i+1} N}
\uwave{\ZZ[\Cyclic_{p^r}]}
\xra{1-\sigma}
\ZZ[\Cyclic_{p^r}]
\xra{p^{i+1} N}
\ZZ[\Cyclic_{p^r}]
\xra{1-\sigma}
\cdots
\right)
~.
\]

\end{enumeratesub}

\item\label{notation.for.chain.complex.morphisms}

We define the morphisms in diagram \Cref{defining.diagram.of.ZCT} as follows.

\begin{enumeratesub}

\item\label{notation.for.chain.complex.morphism.f.one}

Assuming that $a \geq 1$, we define the morphism
\[
\ttZ^a_r
\xra{\ttq_r^{a-1}}
\ttT^{a-1}_r
\]
as
\[ \begin{tikzcd}
\cdots
\arrow{r}
&
0
\arrow{r}
\arrow{d}
&
0
\arrow{r}
\arrow{d}
&
\uwave{\ZZ[\Cyclic_{p^r}]}
\arrow{r}{1-\sigma}
\arrow{d}{1}
&
\ZZ[\Cyclic_{p^r}]
\arrow{r}{p^a N}
\arrow{d}{1}
&
\ZZ[\Cyclic_{p^r}]
\arrow{r}{1-\sigma}
\arrow{d}{1}
&
\cdots
\\
\cdots
\arrow{r}[swap]{p^a N}
&
\ZZ[\Cyclic_{p^r}]
\arrow{r}[swap]{1-\sigma}
&
\ZZ[\Cyclic_{p^r}]
\arrow{r}[swap]{p^a N}
&
\uwave{\ZZ[\Cyclic_{p^r}]}
\arrow{r}[swap]{1-\sigma}
&
\ZZ[\Cyclic_{p^r}]
\arrow{r}[swap]{p^a N}
&
\ZZ[\Cyclic_{p^r}]
\arrow{r}[swap]{1-\sigma}
&
\cdots
\end{tikzcd}~, \]
i.e.\! it is the identity in all nonpositive degrees.

\item For any $i \geq 0$, we define the morphism
\[
\ttC^i_r
\xra{\tte_r^i}
\ttT^i_r
\]
as
\[
\hspace{-1.5cm}
\begin{tikzcd}[column sep=1.25cm]
\cdots
\arrow{r}{1-\sigma}
&
\ZZ[\Cyclic_{p^r}]
\arrow{r}{p^{i+1}N}
\arrow{d}[swap]{1}
&
\uwave{\ZZ[\Cyclic_{p^r}]}
\arrow{r}{1-\sigma}
\arrow{d}[swap]{1}
&
\ZZ[\Cyclic_{p^r}]
\arrow{r}{p^i N}
\arrow{d}[swap]{1}
&
\ZZ[\Cyclic_{p^r}]
\arrow{r}{1-\sigma}
\arrow{d}[swap]{p}
&
\ZZ[\Cyclic_{p^r}]
\arrow{r}{p^i N}
\arrow{d}[swap]{p}
&
\ZZ[\Cyclic_{p^r}]
\arrow{r}{1-\sigma}
\arrow{d}[swap]{p^2}
&
\cdots
\\
\cdots
\arrow{r}[swap]{1-\sigma}
&
\ZZ[\Cyclic_{p^r}]
\arrow{r}[swap]{p^{i+1}N}
&
\uwave{\ZZ[\Cyclic_{p^r}]}
\arrow{r}[swap]{1-\sigma}
&
\ZZ[\Cyclic_{p^r}]
\arrow{r}[swap]{p^{i+1}N}
&
\ZZ[\Cyclic_{p^r}]
\arrow{r}[swap]{1-\sigma}
&
\ZZ[\Cyclic_{p^r}]
\arrow{r}[swap]{p^{i+1}N}
&
\ZZ[\Cyclic_{p^r}]
\arrow{r}[swap]{1-\sigma}
&
\cdots
\end{tikzcd}
~, \]
i.e.\! it is the identity in each positive degree and, for all $j \geq 0$, in degree $-j$ it is multiplication by $p^{\lfloor \frac{j}{2} \rfloor}$.\footnote{Here the exponent is the floor of $\frac{j}{2}$, i.e.\! the largest integer that is at most $\frac{j}{2}$.}

\item For any $i \geq 0$, we define the morphism
\[
\ttC^{i+1}_r
\xra{\ttg_r^i}
\ttT^{i}_r
\]
as
\[
\hspace{-1.5cm}
\begin{tikzcd}[column sep=1.25cm]
\cdots
\arrow{r}{1-\sigma}
&
\ZZ[\Cyclic_{p^r}]
\arrow{r}{p^{i+2} N}
\arrow{d}[swap]{p^2}
&
\ZZ[\Cyclic_{p^r}]
\arrow{r}{1-\sigma}
\arrow{d}[swap]{p}
&
\ZZ[\Cyclic_{p^r}]
\arrow{r}{p^{i+2}N}
\arrow{d}[swap]{p}
&
\uwave{\ZZ[\Cyclic_{p^r}]}
\arrow{r}{1-\sigma}
\arrow{d}[swap]{1}
&
\ZZ[\Cyclic_{p^r}]
\arrow{r}{p^{i+1} N}
\arrow{d}[swap]{1}
&
\ZZ[\Cyclic_{p^r}]
\arrow{r}{1-\sigma}
\arrow{d}[swap]{1}
&
\cdots
\\
\cdots
\arrow{r}[swap]{1-\sigma}
&
\ZZ[\Cyclic_{p^r}]
\arrow{r}[swap]{p^{i+1} N}
&
\ZZ[\Cyclic_{p^r}]
\arrow{r}[swap]{1-\sigma}
&
\ZZ[\Cyclic_{p^r}]
\arrow{r}[swap]{p^{i+1} N}
&
\uwave{\ZZ[\Cyclic_{p^r}]}
\arrow{r}[swap]{1-\sigma}
&
\ZZ[\Cyclic_{p^r}]
\arrow{r}[swap]{p^{i+1} N}
&
\ZZ[\Cyclic_{p^r}]
\arrow{r}[swap]{1-\sigma}
&
\cdots
\end{tikzcd}~, \]
i.e.\! it is the identity in each negative degree and, for all $j \geq 0$, in degree $j$ it is multiplication by $p^{\lceil \frac{j}{2} \rceil}$.\footnote{Here the exponent is the ceiling of $\frac{j}{2}$, i.e.\! the smallest integer that is at least $\frac{j}{2}$.}

\end{enumeratesub}
\end{enumerate}
\end{notation}

\begin{notation}
\label{notation.of.all.chain.level.stuff.htpy}
We define a diagram
\begin{equation}
\label{defining.diagram.of.ZCT.htpy}
\begin{tikzcd}[column sep=0.5cm]
\ZZ^{\htpy \Cyclic_{p^a}}
\arrow{rd}[sloped]{q_r^{a-1}}
&
(\tau_{\geq 0} \ZZ^{\st \Cyclic_p})^{\htpy \Cyclic_{p^{a-1}}}
\arrow{rd}[sloped]{g_r^{a-2}}
\arrow{d}{e_r^{a-1}}
&
(\tau_{\geq 0} \ZZ^{\st \Cyclic_p})^{\htpy \Cyclic_{p^{a-2}}}
\arrow{rd}[sloped]{g_r^{a-3}}
\arrow{d}{e_r^{a-2}}
&
\cdots
&
\cdots
\arrow{rd}[sloped]{g_r^0}
&
\tau_{\geq 0} \ZZ^{\st \Cyclic_p}
\arrow{d}{e_r^0}
\\
&
(\ZZ^{\st \Cyclic_p})^{\htpy \Cyclic_{p^{a-1}}}
&
(\ZZ^{\st \Cyclic_p})^{\htpy \Cyclic_{p^{a-2}}}
&
(\ZZ^{\st \Cyclic_p})^{\htpy \Cyclic_{p^{a-3}}}
&
\cdots
&
\ZZ^{\st \Cyclic_p}
\end{tikzcd}
\end{equation}
in $\Mod^{\htpy \Cyclic_{p^r}}_\ZZ$ (i.e.\! a functor $\Zig_a \ra \Mod^{\htpy \Cyclic_{p^r}}_\ZZ$) as follows.
\begin{enumeratesub}

\item

Assuming that $a \geq 1$, we define the morphism $q_r^{a-1}$ as
\[
\ZZ^{\htpy \Cyclic_{p^a}}
\xlongla{\sim}
(\ZZ^{\htpy \Cyclic_p})^{\htpy \Cyclic_{p^{a-1}}}
\xra{\sQ_{\Cyclic_p}(\ZZ)^{\htpy \Cyclic_{p^{a-1}}}}
(\ZZ^{\st \Cyclic_p})^{\htpy \Cyclic_{p^{a-1}}}
~,
\]
the homotopy $\Cyclic_{p^{a-1}}$-fixedpoints of the canonical morphism from the homotopy $\Cyclic_p$-fixedpoints to the $\Cyclic_p$-Tate construction for the object $\ZZ \in \Mod^{\htpy \Cyclic_{p^n}}_\ZZ$.

\item For any $i \geq 0$, we define the morphism $e_r^i$ as
\[
( \tau_{\geq 0} \ZZ^{\st \Cyclic_p} )^{\htpy \Cyclic_{p^i}}
\xra{\varepsilon_{\geq 0}(\ZZ^{\st \Cyclic_p})^{\htpy \Cyclic_{p^i}}}
(\ZZ^{\st \Cyclic_p} )^{\htpy \Cyclic_{p^i}}
~,
\]
the homotopy $\Cyclic_{p^i}$-fixedpoints of the canonical morphism to the $\Cyclic_p$-Tate construction on the object $\ZZ \in \Mod^{\htpy \Cyclic_{p^{r+i+1}}}_\ZZ$ from its connective cover.

\item For any $i \geq 0$, we define the morphism $g_r^i$ as
\[
(\tau_{\geq 0} \ZZ^{\st \Cyclic_p})^{\htpy \Cyclic_{p^{i+1}}}
\xlongla{\sim}
( (\tau_{\geq 0} \ZZ^{\st \Cyclic_p})^{\htpy \Cyclic_p} )^{\htpy \Cyclic_{p^{i}}}
\xra{\sQ_{\Cyclic_p}(\tau_{\geq 0} \ZZ^{\st \Cyclic_p})^{\htpy \Cyclic_{p^{i}}}}
((\tau_{\geq 0} \ZZ^{\st \Cyclic_p})^{\st \Cyclic_p})^{\htpy \Cyclic_{p^{i}}}
\xla[\sim]{(\genQuot^{\st \Cyclic_p})^{\htpy \Cyclic_{p^{i}}}}
( \ZZ^{\st \Cyclic_p})^{\htpy \Cyclic_{p^{i}}}
~,
\]
the composite of
\begin{itemize}
\item the homotopy $\Cyclic_{p^{i}}$-fixedpoints of the canonical map from the homotopy $\Cyclic_p$-fixedpoints to the $\Cyclic_p$-Tate construction for the connective cover of the $\Cyclic_p$-Tate construction on the object $\ZZ \in \Mod^{\htpy \Cyclic_{p^{r+i+2}}}_\ZZ$ and 

\item the homotopy $\Cyclic_{p^{i}}$-fixedpoints of the inverse of the equivalence $\genQuot^{\st \Cyclic_p}$ in $\Mod^{\htpy \Cyclic_{p^{r+i}}}_\ZZ$ of \Cref{obs.tate.Cp.of.Z.to.conn.cover.of.Z.tate.Cp.is.an.equivalence}.
\end{itemize}

\end{enumeratesub}
\end{notation}

\begin{lemma}
\label{lem.ABX.htpy.invariantly}
The diagram \Cref{defining.diagram.of.ZCT} in $\Ch_{\ZZ[\Cyclic_{p^r}]}$ presents the diagram \Cref{defining.diagram.of.ZCT.htpy} in $\Mod^{\htpy \Cyclic_{p^r}}_\ZZ$.
\end{lemma}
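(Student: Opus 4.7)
The plan is to identify each chain complex in diagram \Cref{defining.diagram.of.ZCT} with an explicit chain-level model of its homotopical target in diagram \Cref{defining.diagram.of.ZCT.htpy}, and then to match the structure maps under these identifications. The key mechanism throughout is the standard formula for homotopy fixedpoints as $\operatorname{Hom}$ against a projective (or complete) resolution, combined with the adaptedness criteria of \Cref{obs.basics.of.adaptedness.to.fixedpoints}.

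First I would handle $\ttZ^a_r$. Starting from the standard bar-type projective resolution
\[
\tilde{P}^{a+r}_\bullet := \left( \cdots \xra{N_{a+r}} \ZZ[\Cyclic_{p^{a+r}}] \xra{1-\sigma} \uwave{\ZZ[\Cyclic_{p^{a+r}}]} \right) \longra \ZZ
\]
in $\Ch_{\ZZ[\Cyclic_{p^{a+r}}]}$, I would use that $\ZZ[\Cyclic_{p^{a+r}}]$ is free of rank $p^r$ as a $\ZZ[\Cyclic_{p^a}]$-module to conclude that the restriction of $\tilde{P}^{a+r}_\bullet$ to $\Ch_{\ZZ[\Cyclic_{p^a}]}$ remains a projective resolution. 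Applying $\operatorname{Hom}_{\ZZ[\Cyclic_{p^a}]}(-,\ZZ)$ then yields, by construction, a chain-level presentation of $\ZZ^{\htpy \Cyclic_{p^a}}$ in $\Mod^{\htpy \Cyclic_{p^r}}_\ZZ$ with its residual $\Cyclic_{p^r} \cong \Cyclic_{p^{a+r}}/\Cyclic_{p^a}$-action. A direct computation -- using the self-duality of $\ZZ[\Cyclic_{p^{a+r}}]$ as a $\Cyclic_{p^a}$-equivariant $\ZZ$-module, which gives $\operatorname{Hom}_{\ZZ[\Cyclic_{p^a}]}(\ZZ[\Cyclic_{p^{a+r}}], \ZZ) \cong \ZZ[\Cyclic_{p^r}]$ with the regular $\Cyclic_{p^r}$-action -- identifies this $\operatorname{Hom}$ complex with $\ttZ^a_r$: the induced differential $(1-\sigma)^*$ becomes multiplication by $1-\sigma$ on $\ZZ[\Cyclic_{p^r}]$, while $N_{a+r}^*$ becomes multiplication by $p^a N_r$, the factor $p^a$ arising because $N_{a+r}$ evaluated on a $\Cyclic_{p^a}$-invariant functional collapses to $p^a$ times the sum over $\Cyclic_{p^r}$-coset representatives. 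The complexes $\ttT^i_r$ and $\ttC^i_r$ are treated analogously, using a complete (Tate) resolution of $\ZZ$ over $\ZZ[\Cyclic_{p^{i+r+1}}]$ restricted to $\ZZ[\Cyclic_{p^{i+1}}]$; here the factor $p^{i+1}$ in the norm-differential reflects $|\Cyclic_{p^{i+1}}|$, and for $\ttC^i_r$ the drop to $p^i$ in negative degrees encodes the truncation to the connective cover.

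To conclude that these chain-level identifications descend to the asserted equivalences in $\Mod^{\htpy \Cyclic_{p^r}}_\ZZ$, I would invoke adaptedness: $\ttZ^a_r$ is bounded above and levelwise free, hence adapted to homotopy $\Cyclic_{p^r}$-fixedpoints by \Cref{obs.basics.of.adaptedness.to.fixedpoints}\Cref{item.bdd.above.free.adapted.to.fixedpoints}; for the unbounded $\ttT^i_r$ and $\ttC^i_r$, I would write each as a homotopy limit of its truncations $\tau_{\geq -N}$ (valid since the limit is computed degreewise with all transition maps surjective), which are bounded above and levelwise free, and then appeal to \Cref{obs.basics.of.adaptedness.to.fixedpoints}\Cref{item.operations.preserve.adapted.to.fixedpoints}\Cref{subitem.adapted.to.fixedpoints.preserved.under.holims}. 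Finally, the structure maps $\ttq^{a-1}_r$, $\tte^i_r$, and $\ttg^i_r$ would be matched by tracing through the natural transformations in \Cref{defining.diagram.of.ZCT.htpy} under the resolution identifications: $\ttq$ corresponds to extending a projective resolution to a complete one; $\tte$ corresponds to the connective-cover map $\varepsilon_{\geq 0}$ at the chain level (inclusion of the truncated part of the complete resolution); and $\ttg$ corresponds to the composition involving $\sQ_{\Cyclic_p}$ and the inverse of the $\Cyclic_p$-Tate equivalence $\genQuot^{\st \Cyclic_p}$ from \Cref{obs.tate.Cp.of.Z.to.conn.cover.of.Z.tate.Cp.is.an.equivalence}. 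I expect the main obstacle to be the verification for $\ttg^i_r$, since this map requires inverting $\genQuot^{\st \Cyclic_p}$ at the chain level; the fact that the explicit powers $p^{\lceil j/2 \rceil}$ in the definition of $\ttg^i_r$ produce the correct composite can be verified by a direct calculation on homotopy groups, using that the mapping spaces in $\Mod^{\htpy \Cyclic_{p^r}}_\ZZ$ between the objects involved are tightly constrained by the highly restricted homotopy-ring structure of $(\ZZ^{\st \Cyclic_p})^{\htpy \Cyclic_{p^i}}$.
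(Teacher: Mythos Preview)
Your resolution approach correctly identifies $\ttZ^a_r$ as a presentation of $\ZZ^{\htpy \Cyclic_{p^a}}$, and your complete-resolution computation would show that $\ttT^i_r$ presents $\ZZ^{\st \Cyclic_{p^{i+1}}}$ in $\Mod^{\htpy \Cyclic_{p^r}}_\ZZ$. But the target object in diagram \Cref{defining.diagram.of.ZCT.htpy} is $(\ZZ^{\st \Cyclic_p})^{\htpy \Cyclic_{p^i}}$, and these two are \emph{not} tautologically the same: identifying them is exactly the content of the Tate vanishing of \Cref{obs.tate.vanishing.for.Z.mods} (via the equivalence $\Nm_{\Cyclic_{p^i}}(\ZZ_{\htpy \Cyclic_p})$), which you never invoke. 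The same issue is more serious for $\ttC^i_r$: the object $(\tau_{\geq 0} \ZZ^{\st \Cyclic_p})^{\htpy \Cyclic_{p^i}}$ is \emph{not} the connective cover of anything in sight --- homotopy fixedpoints do not commute with $\tau_{\geq 0}$ --- so your sentence ``the drop to $p^i$ in negative degrees encodes the truncation to the connective cover'' does not actually pin down the correct homotopical object.

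The paper's proof proceeds quite differently. Rather than computing Hom against resolutions, it uses the norm-composition formula (\Cref{obs.various.properties.of.tate}\Cref{part.norm.maps.compose}) together with the Tate vanishing of \Cref{obs.tate.vanishing.for.Z.mods} to build explicit morphisms of cofiber sequences in $\Mod^{\htpy \Cyclic_{p^r}}_\ZZ$ that exhibit $q_r^{a-1}$, $e_r^i$, and $g_r^i$ as maps on cofibers of morphisms between \emph{iterated orbits and fixedpoints of $\ZZ$ itself}. Those latter morphisms are then manifestly presented by $\ttq$, $\tte$, $\ttg$ once one knows the standard resolution for $\ZZ^{\htpy \Cyclic_{p^j}}$ and $\ZZ_{\htpy \Cyclic_{p^j}}$. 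In particular, the identification of $\ttg_r^i$ --- which you flag as the hard case and propose to check only on homotopy groups --- is handled in the paper by a cofiber-sequence argument of exactly the same shape as for $\tte_r^i$, so no separate $\pi_*$-calculation is needed. Your proposed endgame of ``checking on homotopy groups'' would in any case require an argument that the relevant mapping space has no higher homotopy interfering, which you do not supply.
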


\begin{proof}
The case where $a=0$ is trivial to verify, so let us assume that $a \geq 1$. We also fix any $i \geq 0$. Then, we prove that the morphisms $q_r^{a-1}$, $g_r^i$, and $e_r^i$ in $\Mod^{\htpy \Cyclic_{p^r}}_\ZZ$ are equivalent to other morphisms that are manifestly presented by the morphisms $\ttq_r^{a-1}$, $\ttg_r^i$, and $\tte_r^i$ in $\Ch_{\ZZ[\Cyclic_{p^r}]}$, respectively. In particular, this shows that the objects
\[
\ttZ_r^a , \ttC_r^i , \ttT_r^i
\in
\Ch_{\ZZ[\Cyclic_{p^r}]}
\]
are indeed presentations of the objects
\[
\ZZ^{\htpy \Cyclic_{p^a}}
,
(\tau_{\geq 0} \ZZ^{\st \Cyclic_p})^{\htpy \Cyclic_{p^{i}}}
,
(\ZZ^{\st \Cyclic_p})^{\htpy \Cyclic_{p^{i}}}
\in
\Mod^{\htpy \Cyclic_{p^r}}_\ZZ
~,
\]
respectively.

We begin with the morphisms $\ttq_r^{a-1}$ and $\tte_r^i$. For these, note the commutative diagram
\begin{figure}[h]
\begin{equation}
\label{comm.diagram.for.f.one.and.g.r}
\hspace{-2cm}
\begin{tikzcd}[column sep=2.5cm, row sep=1.5cm]
&
(\ZZ_{\htpy \Cyclic_p})^{\htpy \Cyclic_{p^{i}}}
\arrow{r}{\genNm^{\htpy \Cyclic_{p^{i}}}}
\arrow[equals]{d}
&
\ZZ^{\htpy \Cyclic_{p^{i}}}
\arrow{r}{\genQuot^{\htpy \Cyclic_{p^{i}}}}
\arrow{d}{\varepsilon_{\geq 0}(\ZZ^{\htpy \Cyclic_p})^{\htpy \Cyclic_{p^{i}}}}
&
(\tau_{\geq 0} \ZZ^{\st \Cyclic_p})^{\htpy \Cyclic_{p^{i}}}
\arrow{d}{\varepsilon_{\geq 0}(\ZZ^{\st \Cyclic_p})^{\htpy \Cyclic_{p^{i}}} =: e_r^i}
\\
(\ZZ_{\htpy \Cyclic_p})_{\htpy \Cyclic_{p^{i}}}
\arrow{ru}[sloped]{\Nm_{\Cyclic_{p^{i}}}(\ZZ_{\htpy \Cyclic_p})}[sloped, swap]{\sim}
\arrow{r}[swap]{\Nm_{\Cyclic_{p^{i}}}(\ZZ_{\htpy \Cyclic_p})}{\sim}
&
(\ZZ_{\htpy \Cyclic_p})^{\htpy \Cyclic_{p^{i}}}
\arrow{r}{\Nm_{\Cyclic_p}(\ZZ)^{\htpy \Cyclic_{p^{i}}}}
&
(\ZZ^{\htpy \Cyclic_p})^{\htpy \Cyclic_{p^{i}}}
\arrow{r}{\sQ_{\Cyclic_p}(\ZZ)^{\htpy \Cyclic_{p^{i}}}}
\arrow{d}[sloped, anchor=south]{\sim}
&
(\ZZ^{\st \Cyclic_p})^{\htpy \Cyclic_{p^{i}}}
\arrow{d}[sloped, anchor=south]{\sim}
\\
&
\ZZ_{\htpy \Cyclic_{p^{i+1}}}
\arrow{r}[swap]{\Nm_{\Cyclic_{p^{i+1}}}(\ZZ)}
\arrow{lu}[sloped, swap]{\sim}
&
\ZZ^{\htpy \Cyclic_{p^{i+1}}}
\arrow{r}[swap]{\sQ_{\Cyclic_{p^{i+1}}}(\ZZ)}
\arrow[dashed]{ru}[sloped]{q_r^{a-1} \textup{ (when $i=a-1$)}}
&
\ZZ^{\st \Cyclic_{p^{i+1}}}
\end{tikzcd}
\end{equation}
\caption{This commutative diagram in $\Mod^{\htpy \Cyclic_{p^{n-a}}}_\ZZ$ contains the morphisms $q_r^{a-1}$ and $e_r^i$. All three horizontal composites are cofiber sequences.\label{figure.comm.diagram.for.f.one.and.g.r}}
\end{figure}
in $\Mod^{\htpy \Cyclic_{p^r}}_\ZZ$ of \Cref{figure.comm.diagram.for.f.one.and.g.r}, obtained as follows.
\begin{itemize}

\item The two upper commutative squares are obtained by applying the functor
\[
\Mod^{\htpy \Cyclic_{p^{r+i}}}_\ZZ
\xra{(-)^{\htpy \Cyclic_{p^{i}}}}
\Mod^{\htpy \Cyclic_{p^{r+i}}}_\ZZ
\]
to (the middle two rows of) the commutative diagram \Cref{morphism.between.isotropy.separation.sequences}, and so are cofiber sequences by \Cref{obs.morphism.between.isotropy.separation.sequences}.

\item The lower left pentagon commutes by \Cref{obs.various.properties.of.tate}\Cref{part.norm.maps.compose}.

\item The morphism $\Nm_{\Cyclic_{p^{i}}}(\ZZ_{\htpy \Cyclic_p})$ is an equivalence

\begin{itemize}

\item trivially if $i=0$,

\item by \Cref{obs.tate.vanishing.for.Z.mods} if $i=1$, and

\item by Observations \ref{obs.tate.vanishing.for.Z.mods} \and \ref{obs.if.tate.Cpr.vanishes.then.tate.Cprplusone.vanishes} if $i \geq 2$.

\end{itemize}

\item The lower right vertical equivalence is the induced equivalence between cofibers.

\end{itemize}
We now argue as follows.
\begin{enumeratesub}

\item In the case that $i=a-1$, the morphism $\ttq_r^{a-1}$ is evidently a presentation of the morphism $\sQ_{\Cyclic_{p^a}}(\ZZ)$, which proves that it is indeed a presentation of the morphism $q_r^{a-1}$ by diagram \Cref{comm.diagram.for.f.one.and.g.r}.

\item 

Consider the morphism of cofiber sequences
\begin{equation}
\label{g.r.as.map.on.cofibers}
\begin{tikzcd}[column sep=4cm, row sep=2cm]
\ZZ_{\htpy \Cyclic_{p^{i+1}}}
\arrow{r}{{\genNm^{\htpy \Cyclic_{p^{i}}}} \circ \Nm_{\Cyclic_{p^{i}}}(\ZZ_{\htpy \Cyclic_p})}
\arrow[equals]{d}
&
\ZZ^{\htpy \Cyclic_{p^{i}}}
\arrow{r}{\genQuot^{\htpy \Cyclic_{p^{i}}}}
\arrow{d}{\varepsilon_{\geq 0}(\ZZ^{\htpy \Cyclic_p})^{\htpy \Cyclic_{p^{i}}}}
&[-2cm]
(\tau_{\geq 0} \ZZ^{\st \Cyclic_p})^{\htpy \Cyclic_{p^{i}}}
\arrow{d}{e_r^i}
\\
\ZZ_{\htpy \Cyclic_{p^{i+1}}}
\arrow{r}[swap]{\Nm_{\Cyclic_{p^{i+1}}}(\ZZ)}
&
\ZZ^{\htpy \Cyclic_{p^{i+1}}}
\arrow{r}[swap]{\sQ_{\Cyclic_{p^{i+1}}}(\ZZ)}
&
\ZZ^{\st \Cyclic_{p^{i+1}}}
\end{tikzcd}
\end{equation}
in $\Mod^{\htpy \Cyclic_{p^r}}_\ZZ$ extracted from diagram \Cref{comm.diagram.for.f.one.and.g.r}. 
Noting that the middle vertical morphism in diagram \Cref{g.r.as.map.on.cofibers} induces an equivalence on connective covers (and that $\ZZ_{\htpy \Cyclic_{p^{i+1}}} \in \Mod^{\htpy \Cyclic_{p^r}}_\ZZ$ is connective), we see that the morphism $\tte_r^i$ is indeed a presentation of the morphism $e_r^i$.
\end{enumeratesub}

We now proceed to the morphism $\ttg_r^i$. For this, note the commutative diagram
\begin{figure}[h]
\begin{equation}
\label{comm.diagram.for.f.r}
\begin{tikzcd}[row sep=1.5cm, column sep=3cm]
&[-3.2cm]
((\ZZ_{\htpy \Cyclic_p})_{\htpy \Cyclic_p})_{\htpy \Cyclic_{p^{i}}}
\arrow{r}{(\genNm_{\htpy \Cyclic_p})_{\htpy \Cyclic_{p^{i}}}}
\arrow{d}[swap]{\Nm_{\Cyclic_{p^{i}}}((\ZZ_{\htpy \Cyclic_p})_{\htpy \Cyclic_p})}[sloped, anchor=south]{\sim}
&
(\ZZ_{\htpy \Cyclic_p})_{\htpy \Cyclic_{p^{i}}}
\arrow{d}{\Nm_{\Cyclic_{p^{i}}}(\ZZ_{\htpy \Cyclic_p})}[sloped, anchor=north]{\sim}
\\
&
((\ZZ_{\htpy \Cyclic_p})_{\htpy \Cyclic_p})^{\htpy \Cyclic_{p^{i}}}
\arrow{r}{(\genNm_{\htpy \Cyclic_p})^{\htpy \Cyclic_{p^{i}}}}
\arrow{d}[swap]{\Nm_{\Cyclic_p}(\ZZ_{\htpy \Cyclic_p})^{\htpy \Cyclic_{p^{i}}}}[sloped, anchor=south]{\sim}
&
(\ZZ_{\htpy \Cyclic_p})^{\htpy \Cyclic_{p^{i}}}
\arrow{r}{(\genQuot_{\htpy \Cyclic_p})^{\htpy \Cyclic_{p^{i}}}}
\arrow{d}{\Nm_{\Cyclic_p}(\ZZ)^{\htpy \Cyclic_{p^{i}}}}
&
(( \tau_{\geq 0} \ZZ^{\st \Cyclic_p})_{\htpy \Cyclic_p})^{\htpy \Cyclic_{p^{i}}}
\arrow{d}{\Nm_{\Cyclic_p}(\tau_{\geq 0} \ZZ^{\st \Cyclic_p})^{\htpy \Cyclic_{p^{i}}}}
\\
&
((\ZZ_{\htpy \Cyclic_p})^{\htpy \Cyclic_p})^{\htpy \Cyclic_{p^{i}}}
\arrow{r}[swap]{(\genNm^{\htpy \Cyclic_p})^{\htpy \Cyclic_{p^{i}}}}
\arrow{d}[swap]{\sQ_{\Cyclic_p}(\ZZ_{\htpy \Cyclic_p})^{\htpy \Cyclic_{p^{i}}}}
&
(\ZZ^{\htpy \Cyclic_p})^{\htpy \Cyclic_{p^{i}}}
\arrow{r}[swap]{(\genQuot^{\htpy \Cyclic_p})^{\htpy \Cyclic_{p^{i}}}}
\arrow{d}{\sQ_{\Cyclic_p}(\ZZ)^{\htpy \Cyclic_{p^{i}}}}
&
(( \tau_{\geq 0} \ZZ^{\st \Cyclic_p})^{\htpy \Cyclic_p})^{\htpy \Cyclic_{p^{i}}}
\arrow{d}{\sQ_{\Cyclic_p}(\tau_{\geq 0} \ZZ^{\st \Cyclic_p})^{\htpy \Cyclic_{p^{i}}}}
\arrow[dashed, bend left=15]{ld}[sloped, swap, pos=0.4]{g_r^i}
\\
0 \simeq 
&
((\ZZ_{\htpy \Cyclic_p})^{\st \Cyclic_p})^{\htpy \Cyclic_{p^{i}}}
\arrow{r}[swap]{(\genNm^{\st \Cyclic_p})^{\htpy \Cyclic_{p^{i}}}}
&
(\ZZ^{\st \Cyclic_p})^{\htpy \Cyclic_{p^{i}}}
\arrow{r}[swap]{(\genQuot^{\st \Cyclic_p})^{\htpy \Cyclic_{p^{i}}}}{\sim}
&
((\tau_{\geq 0} \ZZ^{\st \Cyclic_p})^{\st \Cyclic_p})^{\htpy \Cyclic_{p^{i}}}
\end{tikzcd}
\end{equation}
\caption{This commutative diagram in $\Mod^{\htpy \Cyclic_{p^r}}_\ZZ$ contains the morphism $g_r^i$. Omitting the top row, all three horizontal composites and all three vertical composites are cofiber sequences.\label{figure.comm.diagram.for.f.r}}
\end{figure}
in $\Mod^{\htpy \Cyclic_{p^r}}_\ZZ$  of \Cref{figure.comm.diagram.for.f.r}, obtained as follows.
\begin{itemize}

\item The bottom three rows are obtained from the second row of diagram \Cref{morphism.between.isotropy.separation.sequences} by applying the cofiber sequence
\[
(-)_{\htpy \Cyclic_p}
\xra{\Nm_{\Cyclic_p}}
(-)^{\htpy \Cyclic_p}
\xra{\sQ_{\Cyclic_p}}
(-)^{\st \Cyclic_p}
\]
in $\Fun^\ex( \Mod^{\htpy \Cyclic_{p^{r+1}}}_\ZZ , \Mod^{\htpy \Cyclic_{p^r}}_\ZZ)$ followed by the functor
\[
\Mod^{\htpy \Cyclic_{p^{r+i}}}_\ZZ
\xra{(-)^{\htpy \Cyclic_{p^{i}}}}
\Mod^{\htpy \Cyclic_{p^{r}}}_\ZZ
~.
\]
So indeed, the three lower vertical composites are indeed cofiber sequences, and moreover the lower three rows are cofiber sequences by \Cref{obs.morphism.between.isotropy.separation.sequences}.

\item The equivalence $0 \simeq ((\ZZ_{\htpy \Cyclic_p})^{\st \Cyclic_p})^{\htpy \Cyclic_{p^{i}}}$ follows from \Cref{obs.tate.vanishing.for.Z.mods}, and it implies that the morphisms $\Nm_{\Cyclic_p}(\ZZ_{\htpy \Cyclic_p})^{\htpy \Cyclic_{p^{i}}}$ and $(\genQuot^{\st \Cyclic_p})^{\htpy \Cyclic_{p^{i}}}$ are both equivalences.

\item The morphisms $\Nm_{\Cyclic_{p^{i}}}((\ZZ_{\htpy \Cyclic_p})_{\htpy \Cyclic_p})$ and $\Nm_{\Cyclic_{p^{i}}}(\ZZ_{\htpy \Cyclic_p})$ are equivalences
\begin{itemize}

\item trivially if $i=0$,

\item by \Cref{obs.tate.vanishing.for.Z.mods} if $i = 1$, and

\item by Observations \ref{obs.tate.vanishing.for.Z.mods} \and \ref{obs.if.tate.Cpr.vanishes.then.tate.Cprplusone.vanishes} if $i \geq 2$.

\end{itemize}

\end{itemize}
We now argue as follows.
\begin{enumeratesub}

\setcounter{enumi}{2}

\item Consider the morphism of cofiber sequences
\begin{equation}
\label{f.r.as.map.on.cofibers}
\begin{tikzcd}[column sep=5cm, row sep=2cm]
\ZZ_{\htpy \Cyclic_{p^{i+2}}}
\arrow{r}{(\genNm^{\htpy \Cyclic_p})^{\htpy \Cyclic_{p^{i}}} \circ \Nm_{\Cyclic_{p^{i+1}}}(\ZZ_{\htpy \Cyclic_p})}
\arrow{d}[swap]{\genNm_{\htpy \Cyclic_{p^{i+1}}}}
&
\ZZ^{\htpy \Cyclic_{p^{i+1}}}
\arrow{r}{\genQuot^{\htpy \Cyclic_p^{i+1}}}
\arrow[equals]{d}
&[-3cm]
(\tau_{\geq 0} \ZZ^{\st \Cyclic_p})^{\htpy \Cyclic_{p^{i+1}}}
\arrow{d}{g_r^i}
\\
\ZZ_{\htpy \Cyclic_{p^{i+1}}}
\arrow{r}[swap]{\Nm_{\Cyclic_{p^{i+1}}}(\ZZ)}
&
\ZZ^{\htpy \Cyclic_{p^{i+1}}}
\arrow{r}[swap]{\sQ_{\Cyclic_p}(\ZZ)^{\htpy \Cyclic_{p^{i}}}}
&
(\ZZ^{\st \Cyclic_p})^{\htpy \Cyclic_{p^{i}}}
\end{tikzcd}
\end{equation}
in $\Mod^{\htpy \Cyclic_{p^r}}_\ZZ$ extracted from diagram \Cref{comm.diagram.for.f.r}, where we have used \Cref{obs.various.properties.of.tate}\Cref{part.norm.maps.compose} to reidentify the left two horizontal morphisms. Noting that the left vertical morphism in diagram \Cref{f.r.as.map.on.cofibers} induces an equivalence after coconnective truncation (and that $\ZZ^{\htpy \Cyclic_{p^{i+1}}} \in \Mod^{\htpy \Cyclic_{p^r}}_\ZZ$ is coconnective), we see that the morphism $\ttg_r^i$ is indeed a presentation of the morphism $g_r^i$. \qedhere

\end{enumeratesub}
\end{proof}

\subsection{Chain-level data for inclusion and transfer}
\label{subsection.inclusion.and.transfer.stuff}

In this subsection, we establish the results that support our identification of the inclusion and transfer morphisms in equivariant cohomology. These involve some new auxiliary chain complexes and chain maps introduced in Notations \ref{notn.S.chain.complex} \and \ref{notn.B.tilde.complex}, as well as a chain homotopy introduced in \Cref{notn.for.nullhtpy}. The main results are \Cref{lemma.ABX.are.adapted} (which establishes the relevant adaptedness) and \Cref{lemma.chain.level.nullhtpy.presents.canonical.nullhtpy} (which proves that the chain homotopy of \Cref{notn.B.tilde.complex} presents the desired $\infty$-categorical homotopy).

\begin{notation}
\label{notn.S.chain.complex}
For any $i \geq 0$, we define the object ${_{i} \ttS_r} \in \Ch_{\ZZ[\Cyclic_{p^r}]}$ as
\[
{_{i} \ttS_r}
:=
\left(
\cdots
\xra{p^i N}
\ZZ[\Cyclic_{p^r}]
\xra{1-\sigma}
\ZZ[\Cyclic_{p^r}]
\xra{p^i N}
\ZZ[\Cyclic_{p^r}]
\xra{1-\sigma}
\uwave{\ZZ[\Cyclic_{p^r}]}
\longra
0
\longra
0
\longra
\cdots
\right)
~.
\]
\end{notation}

\begin{local}
In this subsection, we henceforth assume that $r \geq 1$.
\end{local}

\begin{observation}
\label{obs.fixedpoints.of.ABX.are.themselves}
For any $i \geq 0$, there are evident isomorphisms
\[
(\ttZ^a_r)^{\Cyclic_p}
\cong
\ttZ^{a+1}_{r-1}
~,
\qquad
(\ttC^i_r)^{\Cyclic_p}
\cong
\ttC^{i+1}_{r-1}
~,
\qquad
\text{and}
\qquad
(\ttT^i_r)^{\Cyclic_p}
\cong
\ttT^{i+1}_{r-1}
\]
in $\Ch_{\ZZ[\Cyclic_{p^{r-1}}]}$, as well as evident isomorphisms
\[
(\ttq_r^{a-1})^{\Cyclic_p}
\cong
\ttq_{r-1}^a
~,
\qquad
(\ttg_r^i)^{\Cyclic_p}
\cong
\ttg_{r-1}^{i+1}
~,
\qquad
\text{and}
\qquad
(\tte_r^i)^{\Cyclic_p}
\cong
\tte_{r-1}^{i+1}
\]
in $\Ar(\Ch_{\ZZ[\Cyclic_{p^{r-1}}]})$.
\end{observation}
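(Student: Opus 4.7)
The plan is to reduce the entire statement to a single module-level computation: identifying $(\ZZ[\Cyclic_{p^r}])^{\Cyclic_p}$ as a $\ZZ[\Cyclic_{p^{r-1}}]$-module and tracking how the two types of differentials ($1-\sigma$ and $p^i N_r$) behave under this identification. Since $\Cyclic_p \subseteq \Cyclic_{p^r}$ is the unique subgroup of order $p$ (generated by $\sigma^{p^{r-1}}$), a direct orbit-sum calculation exhibits
\[
(\ZZ[\Cyclic_{p^r}])^{\Cyclic_p} \;\xrightarrow{\cong}\; \ZZ[\Cyclic_{p^{r-1}}],
\]
sending the norm element $N_{\Cyclic_p} := 1 + \sigma^{p^{r-1}} + \cdots + \sigma^{(p-1)p^{r-1}}$ to $1$. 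I will note that this identification is $\ZZ[\Cyclic_{p^r}]$-linear in the sense that, because the ambient group is abelian, multiplication by any element $x \in \ZZ[\Cyclic_{p^r}]$ preserves fixedpoints and descends to multiplication by its image $\bar{x} \in \ZZ[\Cyclic_{p^{r-1}}]$.

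With that in hand, the translation of the two differentials is immediate: the operator $1-\sigma$ descends to $1-\sigma$ (now in $\ZZ[\Cyclic_{p^{r-1}}]$), and using the factorization $N_r = \bar{N}_{r-1} \cdot N_{\Cyclic_p}$ together with the elementary identity $N_{\Cyclic_p} \cdot N_r = p\,N_r$, one sees that multiplication by $p^i N_r$ descends to multiplication by $p^{i+1} N_{r-1}$. Applying these two rules levelwise to the explicit complexes $\ttZ^a_r$, $\ttC^i_r$, and $\ttT^i_r$ of Notation~\ref{notation.all.chain.level.stuff}, and comparing with the analogous complexes at index $(r-1)$ (with $a$ or $i$ shifted up by one), yields the claimed isomorphisms $(\ttZ^a_r)^{\Cyclic_p} \cong \ttZ^{a+1}_{r-1}$, $(\ttC^i_r)^{\Cyclic_p} \cong \ttC^{i+1}_{r-1}$, and $(\ttT^i_r)^{\Cyclic_p} \cong \ttT^{i+1}_{r-1}$ by direct inspection of each degree.

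For the morphism identifications, I will observe that each of $\ttq_r^{a-1}$, $\ttg_r^i$, and $\tte_r^i$ is defined levelwise either as the identity on $\ZZ[\Cyclic_{p^r}]$ or as multiplication by a power of $p$. Since the $\Cyclic_p$-fixedpoint functor commutes with multiplication by any central element (in particular powers of $p$), and the exponents appearing in the definitions (as floors/ceilings of $j/2$, resp.\ constants $1$) are determined by the position in the shifted complex rather than by $r$, they match up exactly under the shifts $i \mapsto i+1$, $a \mapsto a+1$ that govern the complexes themselves. No step in the argument is substantive enough to be a real obstacle — the only mild bookkeeping concern is making sure the position of the squiggled-underlined degree-$0$ term is preserved under the identification, which follows because the orbit-sum isomorphism is an isomorphism in each degree separately.
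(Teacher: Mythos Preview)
Your proposal is correct and constitutes exactly the verification that the paper leaves implicit: the paper offers no argument for this Observation beyond the word ``evident,'' and your orbit-sum identification $(\ZZ[\Cyclic_{p^r}])^{\Cyclic_p}\cong\ZZ[\Cyclic_{p^{r-1}}]$ together with the tracking of $1-\sigma\mapsto 1-\sigma$ and $p^iN_r\mapsto p^{i+1}N_{r-1}$ is precisely the computation that makes it so. There is nothing to compare --- you have simply supplied the details the authors omitted.
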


\begin{lemma}
\label{lemma.ABX.are.adapted}
For any $i \geq 0$, the complexes $\ttZ^a_r, \ttC^i_r, \ttT^i_r \in \Ch_{\ZZ[\Cyclic_{p^r}]}$ are adapted to homotopy $\Cyclic_p$-fixedpoints.
\end{lemma}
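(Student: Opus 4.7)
The plan is to apply Observation~\ref{obs.basics.of.adaptedness.to.fixedpoints} systematically. Each term $\ZZ[\Cyclic_{p^r}]$ of our complexes is free as a $\ZZ[\Cyclic_p]$-module of rank $p^{r-1}$ (since $\Cyclic_p$ acts freely on $\Cyclic_{p^r}$), hence adapted in degree~$0$ by part~(\ref{item.free.mods.are.adapted.to.fixedpoints}). Since $\ttZ^a_r$ is concentrated in chain-degrees $\leq 0$ and is levelwise $\ZZ[\Cyclic_p]$-free, its adaptedness is immediate from part~(\ref{item.bdd.above.free.adapted.to.fixedpoints}).

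For $\ttT^i_r$ and $\ttC^i_r$, which extend to all integer degrees, I would first reduce $\ttT^i_r$ to $\ttC^i_r$ via the short exact sequence
\begin{equation*}
0 \to \ttC^i_r \xra{\tte^i_r} \ttT^i_r \to \coker(\tte^i_r) \to 0
\end{equation*}
in $\Ch_{\ZZ[\Cyclic_p]}$. The map $\tte^i_r$ is levelwise injective and an isomorphism in degrees $\geq -1$, so by direct inspection of Notation~\ref{notation.all.chain.level.stuff} the cokernel is concentrated in degrees $\leq -2$ with terms of the form $(\ZZ/p^{\lfloor j/2 \rfloor})[\Cyclic_{p^r}]$ in degree $-j$. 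Each such module is induced from the trivial subgroup as a $\ZZ[\Cyclic_p]$-module (explicitly, $\ZZ[\Cyclic_{p^r}] \cong \Ind_e^{\Cyclic_p}(\ZZ[\Cyclic_{p^{r-1}}])$), so Shapiro's lemma shows its strict and derived $\Cyclic_p$-fixedpoints coincide and it is adapted in degree~$0$. Consequently $\coker(\tte^i_r)$ is a bounded-above complex of adapted modules and is itself adapted by iterated application of part~(\ref{item.operations.preserve.adapted.to.fixedpoints}). Applying $\Pi_\infty$ to the short exact sequence yields a cofiber sequence in $\Mod^{\htpy \Cyclic_p}_\ZZ$, and adaptedness of $\ttT^i_r$ follows from that of $\ttC^i_r$.

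For $\ttC^i_r$ I would further split via the short exact sequence $0 \to \sigma_{\leq 0}\ttC^i_r \to \ttC^i_r \to \sigma_{>0}\ttC^i_r \to 0$, whose first term is bounded-above and $\ZZ[\Cyclic_p]$-free (adapted by part~(\ref{item.bdd.above.free.adapted.to.fixedpoints})), reducing the problem to the bounded-below quotient $\sigma_{>0}\ttC^i_r = \sigma_{>0}\ttT^i_r$. This is where I expect the main difficulty to lie, since a bounded-below levelwise-free complex need not be adapted (e.g.\ the trivial $\ZZ$ in degree~$0$ is not). My plan is to present $\sigma_{>0}\ttT^i_r$ as the homotopy limit of its Postnikov truncations $\tau_{\leq n}$; each $\tau_{\leq n}$ is bounded in homotopy and admits a bounded chain-level presentation by iterated cones of adapted modules (the relevant homotopy groups are concentrated in degree~$1$ and even degrees $\geq 2$, with values $\ZZ$ and $\ZZ/p^{i+1}$, easily read off from the homology of $\ttT^i_r$). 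Adaptedness of $\sigma_{>0}\ttT^i_r$ then follows from stability under homotopy limits. The delicate technical step, which I would expect to occupy most of the work, is producing these bounded chain-level presentations coherently so that their inverse limit simultaneously computes $\Pi_\infty$ and $(-)^{\Cyclic_p}$.
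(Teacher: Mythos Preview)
Your reductions in steps 1--3 are valid (the short exact sequences have levelwise $\ZZ[\Cyclic_p]$-free kernels, so $(-)^{\Cyclic_p}$ preserves them and the two-out-of-three argument goes through), and they land you at exactly the same core problem the paper faces: show that the bounded-below levelwise-free complex $\sigma_{>0}\ttT^i_r$ is adapted. In fact $\sigma_{>0}\ttT^i_r \cong \Sigma\,{_{i+1}\ttS_r}$, and the paper's decomposition $\ttC^i_r \cong \ChCone({_{i+1}\ttS_r} \to \ttZ^i_r)$, $\ttT^i_r \cong \ChCone({_{i+1}\ttS_r} \to \ttZ^{i+1}_r)$ reduces to the same thing by a slightly different route.

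The gap is in step 4. The Postnikov-tower approach cannot work as stated, because adaptedness is a property of a specific chain complex, not of its quasi-isomorphism class. Even if you build coherent bounded adapted models $\ttM_n$ of the Postnikov stages with $\holim_n \ttM_n$ quasi-isomorphic to $\sigma_{>0}\ttT^i_r$, this only shows that $\holim_n \ttM_n$ is adapted; relating $(\sigma_{>0}\ttT^i_r)^{\Cyclic_p}$ to $(\holim_n \ttM_n)^{\Cyclic_p}$ is precisely what you are trying to prove. And there is no cheap fix: a bounded-below levelwise-free complex is \emph{not} adapted in general (the standard resolution of the trivial module $\ZZ$ is a counterexample), so adaptedness of ${_{i+1}\ttS_r}$ is a genuine fact about this particular complex, not a consequence of any soft Postnikov or truncation argument.

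The paper supplies the missing input: Tate vanishing (Observation~\ref{obs.tate.vanishing.for.Z.mods}). Since $\Pi_\infty({_{i+1}\ttS_r}) \simeq \ZZ_{\htpy\Cyclic_{p^{i+1}}}$, the norm map $\Nm_{\Cyclic_p}$ on it is an equivalence. Combined with the fact that ${_{i+1}\ttS_r}$ is adapted to homotopy $\Cyclic_p$-\emph{orbits} (bounded-below levelwise-free, Observation~\ref{obs.basics.of.adaptedness.to.orbits}) and that the point-set norm $({_{i+1}\ttS_r})_{\Cyclic_p} \xra{N_1} ({_{i+1}\ttS_r})^{\Cyclic_p}$ is an isomorphism of complexes, a simple four-term diagram forces adaptedness to fixedpoints. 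This Tate-vanishing input is the key idea your proposal is missing.
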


\begin{proof}
It follows from \Cref{obs.basics.of.adaptedness.to.fixedpoints}\Cref{item.bdd.above.free.adapted.to.fixedpoints} that $\ttZ^a_r$ is adapted to homotopy $\Cyclic_p$-fixedpoints.

There is an evident diagram $\ttZ^i_r \la {_{i+1} \ttS_r} \ra \ttZ^{i+1}_r$ in $\Ch_{\ZZ[\Cyclic_{p^r}]}$ given by
\[
\hspace{-1cm}
\begin{tikzcd}[column sep=1.25cm]
\cdots
\arrow{r}
&
0
\arrow{r}
&
0
\arrow{r}
&
\uwave{\ZZ[\Cyclic_{p^r}]}
\arrow{r}{1-\sigma}
&
\ZZ[\Cyclic_{p^r}]
\arrow{r}{p^{i}N}
&
\ZZ[\Cyclic_{p^r}]
\arrow{r}{1-\sigma}
&
\cdots
\\
\cdots
\arrow{r}{1-\sigma}
&
\ZZ[\Cyclic_{p^r}]
\arrow{r}{p^{i+1}N}
\arrow{u}
\arrow{d}
&
\ZZ[\Cyclic_{p^r}]
\arrow{r}{1-\sigma}
\arrow{u}
\arrow{d}
&
\uwave{\ZZ[\Cyclic_{p^r}]}
\arrow{r}
\arrow{u}[swap]{p^{i+1}N}
\arrow{d}{p^{i+1}N}
&
0
\arrow{r}
\arrow{u}
\arrow{d}
&
0
\arrow{r}
\arrow{u}
\arrow{d}
&
\cdots
\\
\cdots
\arrow{r}
&
0
\arrow{r}
&
0
\arrow{r}
&
\uwave{\ZZ[\Cyclic_{p^r}]}
\arrow{r}[swap]{1-\sigma}
&
\ZZ[\Cyclic_{p^r}]
\arrow{r}[swap]{p^{i+1}N}
&
\ZZ[\Cyclic_{p^r}]
\arrow{r}[swap]{1-\sigma}
&
\cdots
\end{tikzcd}
\]
(reading from top to bottom) such that we have isomorphisms
\[
\ttC^i_r
\cong
\ChCone ( {_{i+1} \ttS_r} \longra \ttZ^i_r )
\qquad
\text{and}
\qquad
\ttT^i_r
\cong
\ChCone ( {_{i+1} \ttS_r } \longra \ttZ^{i+1}_r )
~.
\]
So by \Cref{obs.basics.of.adaptedness.to.fixedpoints}\Cref{item.operations.preserve.adapted.to.fixedpoints}\Cref{subitem.adapted.to.fixedpoints.preserved.under.cones}, to show that $\ttC^i_r$ and $\ttZ^i_r$ are adapted to homotopy $\Cyclic_p$-fixedpoints it suffices to show that ${_{i+1} \ttS_r}$, $\ttZ^i_r$, and $\ttZ^{i+1}_r$ are. We have just seen that the latter two are adapted to homotopy $\Cyclic_p$-fixedpoints, so it remains to prove that $\ttS := {_{i+1} \ttS_r}$ is as well. For this, let us write
\[
N_1
:=
\left(
\sum_{j=1}^p
(\sigma^{p^{r-1}})^j
\right)
=
\left(
1 + \sigma^{p^{r-1}} + \cdots + \sigma^{(p-1) \cdot p^{r-1}}
\right)
\in
\ZZ[\Cyclic_{p^r}]
\]
for the image of the norm element for $\Cyclic_p$ under the ring homomorphism $\ZZ[\Cyclic_p \hookra \Cyclic_{p^{n-a}}]$. Then, consider the evident factorization
\[ \begin{tikzcd}
\ttS
\arrow{r}{N_1}
\arrow[two heads]{d}
&
\ttS
\\
\ttS_{\Cyclic_p}
\arrow[dashed]{r}[swap]{N_1}
&
\ttS^{\Cyclic_p}
\arrow[hook]{u}
\end{tikzcd} \]
in $\Ch_{\ZZ[\Cyclic_{p^r}]}$, which determines a commutative diagram
\begin{equation}
\label{norm.of.Pi.infty.S.versus.Pi.infty.of.point.set.norm}
\begin{tikzcd}[column sep=2cm]
\Pi_\infty(\ttS)_{\htpy \Cyclic_p}
\arrow{r}{\Nm_{\Cyclic_p}(\Pi_\infty(\ttS))}
\arrow{d}
&
\Pi_\infty(\ttS)^{\htpy \Cyclic_p}
\\
\Pi_\infty(\ttS_{\Cyclic_p})
\arrow{r}[swap]{\Pi_\infty(N_1)}
&
\Pi_\infty(\ttS^{\Cyclic_p})
\arrow{u}
\end{tikzcd}
\end{equation}
in $\Mod_\ZZ^{\htpy \Cyclic_{p^r}}$. We claim that the three morphisms in diagram \Cref{norm.of.Pi.infty.S.versus.Pi.infty.of.point.set.norm} aside from the right vertical morphism are equivalences.
\begin{itemize}

\item Its upper horizontal morphism is an equivalence due to the evident equivalence $\Pi_\infty(\ttS) \simeq \ZZ_{\htpy \Cyclic_{p^{i+1}}}$ in $\Mod_\ZZ^{\htpy \Cyclic_{p^r}}$ and using \Cref{obs.tate.vanishing.for.Z.mods}.

\item Its left vertical morphism is an equivalence because $\ttS \in \Ch_{\ZZ[\Cyclic_{p^r}]}$ is adapted to homotopy $\Cyclic_p$-orbits by \Cref{obs.basics.of.adaptedness.to.orbits}\Cref{item.bdd.below.free.adapted.to.orbits}.

\item Its lower horizontal morphism is an equivalence because the morphism $\ttS_{\Cyclic_p} \xra{N_1} \ttS^{\Cyclic_p}$ in $\Ch_{\ZZ[\Cyclic_{p^r}]}$ is evidently an isomorphism.

\end{itemize}
Hence, the right vertical morphism in diagram \Cref{norm.of.Pi.infty.S.versus.Pi.infty.of.point.set.norm} is an equivalence, i.e.\! $\ttS$ is adapted to homotopy $\Cyclic_p$-fixedpoints.
\end{proof}

\begin{notation}
\label{notn.B.tilde.complex}
We define the morphisms
\[
{_{1} \ttS_r}
\xlongra{\ttgenNm}
{_{0} \ttS_r}
\xlongra{\tti}
\ttZ^0_r
\xlongra{\ttj}
\ttZ^1_{r-1}
\]
in $\Ch_{\ZZ[\Cyclic_{p^{n-a}}]}$ as
\[
\hspace{-1.5cm}
\begin{tikzcd}
\cdots
\arrow{r}{pN}
&
\ZZ[\Cyclic_{p^r}]
\arrow{r}{1-\sigma}
\arrow{d}{p}
&
\ZZ[\Cyclic_{p^r}]
\arrow{r}{pN}
\arrow{d}{p}
&
\ZZ[\Cyclic_{p^r}]
\arrow{r}{1-\sigma}
\arrow{d}{1}
&
\uwave{\ZZ[\Cyclic_{p^r}]}
\arrow{r}
\arrow{d}{1}
&
0
\arrow{r}
\arrow{d}
&
0
\arrow{r}
\arrow{d}
&
0
\arrow{r}
\arrow{d}
&
\cdots
\\
\cdots
\arrow{r}[swap]{N}
&
\ZZ[\Cyclic_{p^r}]
\arrow{r}[swap]{1-\sigma}
\arrow{d}
&
\ZZ[\Cyclic_{p^r}]
\arrow{r}[swap]{N}
\arrow{d}
&
\ZZ[\Cyclic_{p^r}]
\arrow{r}[swap]{1-\sigma}
\arrow{d}
&
\uwave{\ZZ[\Cyclic_{p^r}]}
\arrow{r}
\arrow{d}{N}
&
0
\arrow{r}
\arrow{d}
&
0
\arrow{r}
\arrow{d}
&
0
\arrow{r}
\arrow{d}
&
\cdots
\\
\cdots
\arrow{r}
&
0
\arrow{r}
\arrow{d}
&
0
\arrow{r}
\arrow{d}
&
0
\arrow{r}
\arrow{d}
&
\uwave{\ZZ[\Cyclic_{p^r}]}
\arrow{r}{1-\sigma}
\arrow{d}{p}
&
\ZZ[\Cyclic_{p^r}]
\arrow{r}{N}
\arrow{d}{p}
&
\ZZ[\Cyclic_{p^r}]
\arrow{r}{1-\sigma}
\arrow{d}{p^2}
&
\ZZ[\Cyclic_{p^r}]
\arrow{r}{N}
\arrow{d}{p^2}
&
\cdots
\\
\cdots
\arrow{r}
&
0
\arrow{r}
&
0
\arrow{r}
&
0
\arrow{r}
&
\uwave{\ZZ[\Cyclic_{p^{r-1}}]}
\arrow{r}[swap]{1-\sigma}
&
\ZZ[\Cyclic_{p^{r-1}}]
\arrow{r}[swap]{pN}
&
\ZZ[\Cyclic_{p^{r-1}}]
\arrow{r}[swap]{1-\sigma}
&
\ZZ[\Cyclic_{p^{r-1}}]
\arrow{r}[swap]{pN}
&
\cdots
\end{tikzcd}
~.
\]
Moreover, we define the chain complex
\[
\w{\ttC}_r
:=
\ChCone \left( \ttgenNm \right)
\in
\Ch_{\ZZ[\Cyclic_{p^r}]}
~,
\]
and we write
\[ \begin{tikzcd}
{_1 \ttS_r}
\arrow{r}{\ttgenNm}
\arrow[equals]{d}
&
{_0 \ttS_r}
\arrow{r}
\arrow{d}{\tti}
&
\w{\ttC}_r
\arrow[dashed]{d}{\ttk_r}
\\
{_1 \ttS_r}
\arrow{r}[swap]{\tti \circ \ttgenNm}
&
\ttZ^0_r
\arrow{r}
&
\ttC^0_r
\end{tikzcd} \]
for the induced morphism on cones.
\end{notation}

\begin{local}
For simplicity, we often omit the functor $\Ch_{\ZZ[\Cyclic_{p^r}]} \xla{\triv} \Ch_{\ZZ[\Cyclic_{p^{r-1}}]}$ from our notation. Moreover, we use the notation $(-)^\dagger$ to denote passage to adjunct morphisms in the adjunction
\[ \begin{tikzcd}[column sep=1.5cm]
\Ch_{\ZZ[\Cyclic_{p^r}]}
\arrow[transform canvas={yshift=0.9ex}]{r}{(-)_{\Cyclic_p}}
\arrow[leftarrow, transform canvas={yshift=-0.9ex}]{r}[yshift=-0.2ex]{\bot}[swap]{\triv}
&
\Ch_{\ZZ[\Cyclic_{p^{r-1}}]}
\end{tikzcd}
~.
\]
\end{local}

\needspace{2\baselineskip}
\begin{observation}
\label{obs.cones.get.previous.cxes}
\begin{enumerate}
\item[]

\item\label{item.B.tilde.a.presents.Baa}

There is a canonical isomorphism
\[
\ChCone \left( \tti \circ \ttgenNm \right)
\cong
\ttC^0_r
\]
in $\Ch_{\ZZ[\Cyclic_{p^r}]}$. Moreover, the morphism
\[
\w{\ttC}_r
:=
\ChCone \left( \ttgenNm \right)
\xlongra{\ttk_r}
\ChCone \left( \tti \circ \ttgenNm \right)
\cong
\ttC^0_r
\]
is a quasi-isomorphism, because the morphism ${_0 \ttS_r} \xra{\tti} \ttZ^0_r$ is a quasi-isomorphism.

\item\label{item.fa.is.induced.map.on.cones}

We have a canonical commutative diagram
\[ \begin{tikzcd}[column sep=1.5cm]
({_1 \ttS_r })_{\Cyclic_p}
\arrow{r}{(\ttj \circ \tti \circ \ttgenNm)^\dagger}
\arrow{d}[swap]{\ttgenNm_{\Cyclic_p}}
&
\ttZ^1_{r-1}
\arrow{r}
\arrow[equals]{d}
&
\ttC^1_{r-1}
\arrow{d}{\ttg_{r-1}^0}
\\
({_0 \ttS_r })_{\Cyclic_p}
\arrow{r}[swap]{(\ttj \circ \tti)^\dagger}
&
\ttZ^1_{r-1}
\arrow{r}
&
\ttT^0_{r-1}
\end{tikzcd} \]
in $\Ch_{\ZZ[\Cyclic_{p^{r-1}}]}$ in which both rows are cone sequences and $\ttg_{r-1}^0$ is the induced morphism on cones.

\end{enumerate}
\end{observation}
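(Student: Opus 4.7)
Both parts consist of direct calculations with the explicit chain complexes of Notations~\ref{notation.all.chain.level.stuff}, \ref{notn.S.chain.complex}, and~\ref{notn.B.tilde.complex}. For part~(1), the plan is to compute $\ChCone(\tti \circ \ttgenNm)$ degreewise using the cone convention of \Cref{notn.chain.cxes}\Cref{item.convention.for.cones}. Since $(\ttgenNm)_0 = 1$ and $(\tti)_0 = N$, the composite $\tti \circ \ttgenNm$ is concentrated in degree $0$, so the cone is levelwise free of rank one over $\ZZ[\Cyclic_{p^r}]$ in every integer degree; a term-by-term comparison of differentials against the defining formula for $\ttC^0_r$ then yields the canonical identification. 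For the second assertion, I would verify that $\tti$ itself is a quasi-isomorphism: both $_0\ttS_r$ and $\ttZ^0_r$ are classical free $\ZZ[\Cyclic_{p^r}]$-resolutions of the trivial module $\ZZ$ (concentrated respectively in nonnegative and nonpositive degrees, together forming the standard periodic resolution), and $\tti$ acts in degree $0$ as multiplication by $N$, carrying the coinvariant generator $[1]$ to the invariant generator $[N]$ and thereby inducing an isomorphism on $H_0 \cong \ZZ$. The octahedral axiom applied to the composition $\tti \circ \ttgenNm$ then identifies $\ttk_r$ as a quasi-isomorphism, since $\ChCone(\tti) \simeq 0$.

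For part~(2), the commutative square is immediate upon applying the $\Cyclic_p$-coinvariants functor and passing to adjuncts in the row $_1\ttS_r \xra{\ttgenNm} {_0\ttS_r} \xra{\tti} \ttZ^0_r \xra{\ttj} \ttZ^1_{r-1}$ of \Cref{notn.B.tilde.complex}. Identifying $\ttC^1_{r-1}$ and $\ttT^0_{r-1}$ as the displayed cones is then analogous to part~(1) but carried out one level lower; the key combinatorial input is that the norm element $N_r \in \ZZ[\Cyclic_{p^r}]$ descends on $\Cyclic_p$-coinvariants to $p N_{r-1} \in \ZZ[\Cyclic_{p^{r-1}}]$, which supplies the extra factor of $p$ needed to match the differentials prescribed for $\ttC^1_{r-1}$ and $\ttT^0_{r-1}$. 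Finally, the identification of the induced map on cones with $\ttg^0_{r-1}$ is a degreewise check against the explicit piecewise formula for $\ttg^0_{r-1}$ in \Cref{notation.all.chain.level.stuff}.

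The main obstacle is the careful bookkeeping: tracking the $\Cyclic_p$-coinvariants functor together with its adjunction $(-)_{\Cyclic_p} \dashv \triv$, the reduction $N_r \mapsto p N_{r-1}$ on coinvariants, and the signs in the cone differentials, and then matching all of this against the piecewise formulas defining $\tte_r^i$ and $\ttg_r^i$ in \Cref{notation.all.chain.level.stuff}\Cref{notation.for.chain.complex.morphisms}.
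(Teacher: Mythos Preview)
Your plan is exactly what the paper intends: the statement is an Observation without proof, and the verification is the direct degreewise inspection you describe. Your identification of the key input for part~(2)---that $N_r$ becomes $pN_{r-1}$ on $\Cyclic_p$-coinvariants---is spot on.

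A word of caution on part~(1): when you compute $\partial_1$ in $\ChCone(\tti\circ\ttgenNm)$ you will get $(\tti)_0\circ(\ttgenNm)_0 = N\cdot 1 = N$, but $\ttC^0_r$ has $\partial_1 = pN$. In fact $H_0$ of that cone vanishes (since $N\cdot\ZZ[\Cyclic_{p^r}]=\ker(1-\sigma)$) while $H_0(\ttC^0_r)\cong\ZZ/p$, so the two are not even quasi-isomorphic. This is an apparent slip in \Cref{notn.B.tilde.complex}: the map $\genNm\colon\ZZ_{\htpy\Cyclic_p}\to\ZZ$ that $\ttgenNm$ is supposed to present (see the proof of \Cref{lemma.chain.level.nullhtpy.presents.canonical.nullhtpy}) is multiplication by~$p$ on $\pi_0$, since $\varepsilon\circ\genNm=\Nm_{\Cyclic_p}$ by \Cref{obs.morphism.between.isotropy.separation.sequences}, whereas $\ttgenNm$ as written induces the identity on $H_0$. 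Compare the proof of \Cref{lemma.ABX.are.adapted}, where the cone realizing $\ttC^i_r$ uses a map ${_{i+1}\ttS_r}\to\ttZ^i_r$ with degree-$0$ component $p^{i+1}N$. Multiplying each component of $\ttgenNm$ by an additional~$p$ repairs this; with that correction, both parts of your verification go through exactly as planned.
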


\begin{notation}
\label{notn.for.nullhtpy}
We write $\tth'$ for the canonical nullhomotopy of the composite
\[
(\w{\ttC}_r)_{\Cyclic_p}
\longra
\ttC^1_{r-1}
\xra{\ttg_{r-1}^0}
\ttT^0_{r-1}
\]
determined by the commutative triangle
\[ \begin{tikzcd}[column sep=1.5cm]
( {_1 \ttS_r } )_{\Cyclic_p}
\arrow{rr}{\ttgenNm_{\Cyclic_p}}
\arrow{rd}[sloped, swap]{(\ttj \circ \tti \circ \ttgenNm)^\dagger}
&
&
( {_0 \ttS_r} )_{\Cyclic_p}
\arrow{ld}[sloped, swap]{(\ttj \circ \tti)^\dagger}
\\
&
\ttZ^1_{r-1}
\end{tikzcd} \]
in $\Ch_{\ZZ[\Cyclic_{p^r}]}$ (using \Cref{obs.cones.get.previous.cxes}\Cref{item.fa.is.induced.map.on.cones}), and we write $\tth$ for its precomposition with the quotient morphism $\w{\ttC}_r \ra (\w{\ttC}_r)_{\Cyclic_p}$. So explicitly, $\tth$ is a sequence
\[
\tth
:=
\left(
(\w{\ttC}_r)_n
\xra{\tth_n}
(\ttT^0_{r-1})_{n+1}
\right)_{n \in \ZZ}
\]
of maps of $\ZZ[\Cyclic_{p^r}]$-modules given as follows:
\begin{itemize}

\item for $n<0$, $\tth_n$ is the zero map
\[
(\w{\ttC}_r)_n
:=
0
\longra
\ZZ[\Cyclic_{p^{r-1}}]
=:
(\ttT^0_{r-1})_{n+1}
~;
\]

\item $\tth_0$ is the quotient map
\[
(\w{\ttC}_r)_0
:=
\ZZ[\Cyclic_{p^r}]
\xra{1 \longmapsto 1}
\ZZ[\Cyclic_{p^{r-1}}]
=:
(\ttT^0_{r-1})_{n+1}
~;
\]

\item for $n > 0$, $\tth_n$ is the composite
\[
(\w{\ttC}_r)_n
:=
\left(
\def\arraystretch{1}
\begin{array}{c}
\ZZ[\Cyclic_{p^r}]
\\
\oplus
\\
\ZZ[\Cyclic_{p^r}]
\end{array}
\right)
\xra{( 0 \ 1 )}
\ZZ[\Cyclic_{p^r}]
\xra{1 \longmapsto 1}
\ZZ[\Cyclic_{p^{r-1}}]
=:
(\ttT^0_{r-1})_{n+1}
\]
of the projection onto the second factor followed by the quotient map.

\end{itemize}
\end{notation}

\begin{lemma}
\label{lemma.chain.level.nullhtpy.presents.canonical.nullhtpy}
The homotopy-commutative diagram
\[ \begin{tikzcd}[row sep=1cm, column sep=1.5cm]
\w{\ttC}_r
\arrow{rd}[sloped]{\tthtrf \circ \ttk_r}
\arrow{d}
\\
(\w{\ttC}_r)_{\Cyclic_p}
\arrow{r}
\arrow{d}
&
\ttC^1_{r-1}
\arrow{d}{\ttg_{r-1}^0}[swap, xshift=-1.1cm]{\tth'}
\\
0
\arrow{r}
&
\ttT^0_{r-1}
\end{tikzcd} \]
in $\Ch_{\ZZ[\Cyclic_{p^r}]}$ is a presentation of the commutative diagram
\begin{equation}
\label{comm.pentagon.for.tau.geq.zero.Z.tCp.of.obs.nullhtpy.of.htrf.followed.by.Q}
\begin{tikzcd}[row sep=1.5cm, column sep=2.5cm]
\tau_{\geq 0} \ZZ^{\st \Cyclic_p}
\arrow{d}
\arrow{rd}[sloped]{\htrf_{\Cyclic_p}(\tau_{\geq 0} \ZZ^{\st \Cyclic_p})}
\\
(\tau_{\geq 0} \ZZ^{\st \Cyclic_p})_{\htpy \Cyclic_p}
\arrow{r}[swap]{\Nm_{\Cyclic_p}(\tau_{\geq 0} \ZZ^{\st \Cyclic_p})}
\arrow{d}
&
(\tau_{\geq 0} \ZZ^{\st \Cyclic_p})^{\htpy \Cyclic_p}
\arrow{d}{\sQ_{\Cyclic_p}(\tau_{\geq 0} \ZZ^{\st \Cyclic_p})}
\\
0
\arrow{r}
&
(\tau_{\geq 0} \ZZ^{\st \Cyclic_p})^{\st \Cyclic_p}
\end{tikzcd}
\end{equation}
in $\Mod^{\htpy \Cyclic_{p^r}}_\ZZ$ of \Cref{obs.nullhtpy.of.htrf.followed.by.Q} (applied to $E = \tau_{\geq 0} \ZZ^{\st \Cyclic_p} \in \Mod^{\htpy \Cyclic_{p^r}}_\ZZ$). In particular, the homotopy-commutative square
\[ \begin{tikzcd}[column sep=1.5cm, row sep=1cm]
\w{\ttC}_r
\arrow{r}{\tthtrf \circ \ttk_r}
\arrow{d}
&
\ttC^1_{r-1}
\arrow{d}{\ttg_{r-1}^0}[swap, xshift=-1.1cm]{\tth}
\\
0
\arrow{r}
&
\ttT^0_{r-1}
\end{tikzcd} \]
in $\Ch_{\ZZ[\Cyclic_{p^r}]}$ is a presentation of the commutative square
\[ \begin{tikzcd}[column sep=2.5cm, row sep=1.5cm]
\tau_{\geq 0} \ZZ^{\st \Cyclic_p}
\arrow{r}{\htrf_{\Cyclic_p}(\tau_{\geq 0} \ZZ^{\st \Cyclic_p})}
\arrow{d}
&
(\tau_{\geq 0} \ZZ^{\st \Cyclic_p})^{\htpy \Cyclic_p}
\arrow{d}{\sQ_{\Cyclic_p}(\tau_{\geq 0} \ZZ^{\st \Cyclic_p})}
\\
0
\arrow{r}
&
(\tau_{\geq 0} \ZZ^{\st \Cyclic_p})^{\st \Cyclic_p}
\end{tikzcd} \]
in $\Mod^{\htpy \Cyclic_{p^r}}_\ZZ$.
\end{lemma}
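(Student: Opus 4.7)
My plan is to view the first statement of the lemma as asserting that, upon applying $\Pi_\infty$, the sequence $(\w{\ttC}_r)_{\Cyclic_p} \to \ttC^1_{r-1} \xrightarrow{\ttg^0_{r-1}} \ttT^0_{r-1}$ is a presentation of the cofiber sequence
\[
(\tau_{\geq 0}\ZZ^{\st\Cyclic_p})_{\htpy\Cyclic_p}
\xra{\Nm_{\Cyclic_p}}
(\tau_{\geq 0}\ZZ^{\st\Cyclic_p})^{\htpy\Cyclic_p}
\xra{\sQ_{\Cyclic_p}}
(\tau_{\geq 0}\ZZ^{\st\Cyclic_p})^{\st\Cyclic_p}
\]
(up to the equivalence $\genQuot^{\st\Cyclic_p}$ of \Cref{obs.tate.Cp.of.Z.to.conn.cover.of.Z.tate.Cp.is.an.equivalence}), and that $\tth'$ corresponds under this identification to the canonical nullhomotopy of $\sQ_{\Cyclic_p} \circ \Nm_{\Cyclic_p}$ appearing in the pentagon \Cref{comm.pentagon.for.tau.geq.zero.Z.tCp.of.obs.nullhtpy.of.htrf.followed.by.Q}. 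Granting this, the second (square) statement will follow by precomposition with the quotient morphism $\w{\ttC}_r \to (\w{\ttC}_r)_{\Cyclic_p}$, invoking \Cref{obs.htrf.as.a.composite} to identify the composite $\w{\ttC}_r \to \ttC^1_{r-1}$ with a presentation of $\htrf_{\Cyclic_p}(\tau_{\geq 0}\ZZ^{\st\Cyclic_p})$.

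First I would verify that the four chain complexes present the asserted objects of $\Mod^{\htpy\Cyclic_{p^r}}_\ZZ$. The quasi-isomorphism $\ttk_r$ of \Cref{obs.cones.get.previous.cxes}\Cref{item.B.tilde.a.presents.Baa} together with \Cref{lem.ABX.htpy.invariantly} shows that $\w{\ttC}_r$ presents $\tau_{\geq 0}\ZZ^{\st\Cyclic_p}$. Since $\w{\ttC}_r$ is the cone of a map between bounded-below levelwise-free $\ZZ[\Cyclic_{p^r}]$-complexes, \Cref{obs.basics.of.adaptedness.to.orbits}\Cref{item.bdd.below.free.adapted.to.orbits} and \Cref{item.operations.preserve.adapted.to.orbits}\Cref{subitem.adapted.to.orbits.preserved.under.cones} imply that it is adapted to homotopy $\Cyclic_p$-orbits, so that $(\w{\ttC}_r)_{\Cyclic_p}$ presents $(\tau_{\geq 0}\ZZ^{\st\Cyclic_p})_{\htpy\Cyclic_p}$. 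The remaining two vertices $\ttC^1_{r-1}$ and $\ttT^0_{r-1}$ are handled directly by \Cref{lem.ABX.htpy.invariantly}, using \Cref{obs.tate.Cp.of.Z.to.conn.cover.of.Z.tate.Cp.is.an.equivalence} to reidentify the target of $\ttg^0_{r-1}$ with $(\tau_{\geq 0}\ZZ^{\st\Cyclic_p})^{\st\Cyclic_p}$.

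Next I would identify the cofiber sequence itself. Applying $(-)_{\Cyclic_p}$ to the defining cone sequence ${_1\ttS_r} \xrightarrow{\ttgenNm} {_0\ttS_r} \to \w{\ttC}_r$ and comparing, via the morphism of cone sequences supplied by \Cref{obs.cones.get.previous.cxes}\Cref{item.fa.is.induced.map.on.cones}, with the cone sequence $({_1\ttS_r})_{\Cyclic_p} \xrightarrow{(\ttj\tti\ttgenNm)^\dagger} \ttZ^1_{r-1} \to \ttC^1_{r-1}$ produces a cone sequence $(\w{\ttC}_r)_{\Cyclic_p} \to \ttC^1_{r-1} \xrightarrow{\ttg^0_{r-1}} \ttT^0_{r-1}$ in $\Ch_{\ZZ[\Cyclic_{p^{r-1}}]}$. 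Because all modules in sight are levelwise free, this cone sequence is termwise split and so yields a cofiber sequence after $\Pi_\infty$. By \Cref{lem.ABX.htpy.invariantly} the second map presents $g^0_{r-1}$, namely $\sQ_{\Cyclic_p}(\tau_{\geq 0}\ZZ^{\st\Cyclic_p})$ composed with $(\genQuot^{\st\Cyclic_p})^{-1}$; by essential uniqueness of cofiber extensions of a given morphism in a stable $\infty$-category, the preceding map must then present $\Nm_{\Cyclic_p}(\tau_{\geq 0}\ZZ^{\st\Cyclic_p})$.

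Finally, the nullhomotopy $\tth'$ is by construction the canonical null-composition datum carried by this chain-level cone sequence: it is defined in \Cref{notn.for.nullhtpy} via the commutative triangle of \Cref{obs.cones.get.previous.cxes}\Cref{item.fa.is.induced.map.on.cones}, which is precisely the structure exhibiting the composite as zero at the chain level. Its image under $\Pi_\infty$ is therefore the canonical null-composition datum of the resulting cofiber sequence in $\Mod^{\htpy\Cyclic_{p^{r-1}}}_\ZZ$, which agrees with the canonical nullhomotopy of $\sQ_{\Cyclic_p} \circ \Nm_{\Cyclic_p}$ arising from the defining pushout square used in \Cref{obs.nullhtpy.of.htrf.followed.by.Q}. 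I expect this last coherence to be the main obstacle: one must check that $\Pi_\infty$ carries the canonical nullhomotopy built into a termwise-split cone sequence to the canonical nullhomotopy built into the resulting cofiber sequence. Once that compatibility is in hand, all of the above pieces assemble to give the first statement, and the second statement follows from the first by precomposition as explained at the outset.
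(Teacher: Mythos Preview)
Your approach works directly at the level of the pentagon, identifying each object and map there and then arguing the coherences separately. The paper instead works ``upstream'': it first exhibits the pentagon \Cref{comm.pentagon.for.tau.geq.zero.Z.tCp.of.obs.nullhtpy.of.htrf.followed.by.Q} as what one obtains by taking horizontal cofibers in the strictly commutative diagram
\[
\begin{tikzcd}
\ZZ_{\htpy\Cyclic_p} \arrow{r}{\genNm} \arrow{d} & \ZZ \arrow{d} \arrow{rd} & \\
\ZZ_{\htpy\Cyclic_{p^2}} \arrow{r}[swap]{\genNm_{\htpy\Cyclic_p}} & \ZZ_{\htpy\Cyclic_p} \arrow{r}[swap]{\Nm_{\Cyclic_p}(\ZZ)} & \ZZ^{\htpy\Cyclic_p}
\end{tikzcd}
\]
in $\Mod^{\htpy\Cyclic_{p^r}}_\ZZ$ (the identification of the cofibers with the pentagon being extracted from the proof of \Cref{lem.ABX.htpy.invariantly}), and then shows that this small diagram is presented at the chain level by the analogous diagram built from ${_1\ttS_r}$, ${_0\ttS_r}$, their $\Cyclic_p$-orbits, and $\ttZ^1_{r-1}$ (using that $\ttgenNm$ presents $\genNm$ via \Cref{obs.cones.get.previous.cxes}\Cref{item.B.tilde.a.presents.Baa}, and that the ${_i\ttS_r}$ are adapted to homotopy $\Cyclic_p$-orbits). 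Functoriality of passing to cofibers then yields the entire pentagon---objects, maps, commutative triangle, and nullhomotopy---with all identifications automatically coherent.

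Your approach has a gap beyond the one you flag. The uniqueness-of-fibers argument shows only that $\Pi_\infty\bigl((\w{\ttC}_r)_{\Cyclic_p} \to \ttC^1_{r-1}\bigr)$ is equivalent to $\Nm_{\Cyclic_p}(\tau_{\geq 0}\ZZ^{\st\Cyclic_p})$ under \emph{some} identification of $\Pi_\infty\bigl((\w{\ttC}_r)_{\Cyclic_p}\bigr)$ with $(\tau_{\geq 0}\ZZ^{\st\Cyclic_p})_{\htpy\Cyclic_p}$, namely the one coming from the fiber of $g^0_{r-1}$. But you also have a second identification, coming from adaptedness to homotopy $\Cyclic_p$-orbits, and it is \emph{that} one which governs the upper triangle (the factorization of $\htrf$ through the quotient map, via \Cref{obs.htrf.as.a.composite}). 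You have not checked that the two identifications agree, and without this the triangle and the square need not be compatible. Resolving this coherence, and the nullhomotopy coherence you mention, would in effect force you back to the upstream picture anyway: the upstream diagram is strictly commutative with no homotopies, so presenting it reduces to identifying objects and ordinary chain maps, and the pentagon's coherences then come for free.
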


\begin{proof}
Consider the commutative diagram
\begin{equation}
\label{homotopical.MNP.sequence.in.specific.case}
\begin{tikzcd}[column sep=2cm, row sep=1.5cm]
\ZZ_{\htpy \Cyclic_p}
\arrow{r}{\genNm}
\arrow{d}
&
\ZZ
\arrow{d}
\arrow{rd}
\\
\ZZ_{\htpy \Cyclic_{p^2}}
\arrow{r}[swap]{\genNm_{\htpy \Cyclic_p}}
&
\ZZ_{\htpy \Cyclic_p}
\arrow{r}[swap]{\Nm_{\Cyclic_p}(\ZZ)}
&
\ZZ^{\htpy \Cyclic_p}
\end{tikzcd}
\end{equation}
in $\Mod^{\htpy \Cyclic_{p^r}}_\ZZ$. Passing to cofibers as indicated, this yields a commutative diagram
\begin{equation}
\label{cofib.presentation.of.comm.pentagon.for.tau.geq.zero.Z.tCp.of.obs.nullhtpy.of.htrf.followed.by.Q}
\begin{tikzcd}[row sep=1.5cm]
\cofib ( \genNm )
\arrow{d}
\arrow{rd}
\\
\cofib ( \genNm_{\htpy \Cyclic_p} )
\arrow{r}
\arrow{d}
&
\cofib ( \Nm_{\Cyclic_p}(\ZZ) \circ \genNm_{\htpy \Cyclic_p} )
\arrow{d}
\\
0
\arrow{r}
&
\cofib ( \Nm_{\Cyclic_p}(\ZZ) )
\end{tikzcd}
\end{equation}
in $\Mod^{\htpy \Cyclic_{p^r}}$ in which the square is a pushout. Moreover, it follows from the proof of \Cref{lem.ABX.htpy.invariantly} that the commutative diagram \Cref{cofib.presentation.of.comm.pentagon.for.tau.geq.zero.Z.tCp.of.obs.nullhtpy.of.htrf.followed.by.Q} is precisely the commutative diagram \Cref{comm.pentagon.for.tau.geq.zero.Z.tCp.of.obs.nullhtpy.of.htrf.followed.by.Q}. So to conclude, it suffices to show that the commutative diagram \Cref{homotopical.MNP.sequence.in.specific.case} in $\Mod^{\htpy \Cyclic_{p^r}}_\ZZ$ is presented by the commutative diagram
\[ \begin{tikzcd}[column sep=2cm, row sep=1.5cm]
{_1 \ttS_r}
\arrow{r}{\ttgenNm}
\arrow{d}
&
{_0 \ttS_r}
\arrow{d}
\arrow{rd}[sloped]{\ttj \circ \tti}
\\
({_1 \ttS_r})_{\Cyclic_p}
\arrow{r}{\ttgenNm_{\Cyclic_p}}
\arrow[bend right=20]{rr}[swap]{(\ttj \circ \tti \circ \ttgenNm)^\dagger}
&
({_0 \ttS_r})_{\Cyclic_p}
\arrow{r}{(\ttj \circ \tti)^\dagger}
&
\ttZ^1_{r-1}
\end{tikzcd} \]
in $\Ch_{\ZZ[\Cyclic_{p^r}]}$. For this, we note that it follows from \Cref{obs.cones.get.previous.cxes}\Cref{item.B.tilde.a.presents.Baa} that the morphism $\ttgenNm$ presents the morphism $\genNm$ and moreover that ${_1 \ttS_r}$ and ${_0 \ttS_r}$ are adapted to homotopy $\Cyclic_p$-orbits by \Cref{obs.basics.of.adaptedness.to.orbits}\Cref{item.bdd.below.free.adapted.to.orbits}.
\end{proof}

\subsection{Multiplicative structure of Tate cohomology}
\label{subsection.a.coh.ring}

In this subsection, we study the multiplicative structure on Tate cohomology. Namely, in \Cref{lem.compute.htpy.ring.of.htpy.of.tate} we compute the ring structure on the homotopy groups of $(\ZZ^{\st \Cyclic_p})^{\htpy \Cyclic_{p^{a-1}}} \in \Mod_\ZZ$, and in \Cref{lemma.present.mult.by.chern.class.on.tate} we give a chain-level presentation of an endomorphism of $\ZZ^{\st \Cyclic_p} \in \Mod^{\htpy \Cyclic_{p^r}}_\ZZ$ given by multiplying by a $\Cyclic_{p^r}$-equivariant homotopy element.

\begin{notation}
\label{notn.chern.class.elts}
We respectively write
\[
c_a'
\in
\pi_{-2} ( \ZZ^{\htpy \Cyclic_{p^a}})
\qquad
\text{and}
\qquad
c_a
\in
\pi_{-2} ( (\ZZ^{\st \Cyclic_p})^{\htpy \Cyclic_{p^{a-1}}})
\]
for the elements represented by the cycles
\[
1
\in
\ZZ
=:
(\ttZ_0^a)_{-2}
\qquad
\text{and}
\qquad
1
\in
\ZZ
=:
(\ttT^{a-1}_0)_{-2}
\]
(using \Cref{lem.ABX.htpy.invariantly}).\footnote{It is not hard to see that more generally these same elements are respectively represented by the cycles
\[
N
\in
\ZZ[\Cyclic_{p^r}]
=:
(\ttZ_r^a)_{-2}
\qquad
\text{and}
\qquad
N
\in
\ZZ[\Cyclic_{p^r}]
=:
(\ttT_r^{a-1})_{-2}
\]
(again using \Cref{lem.ABX.htpy.invariantly}).}
\end{notation}

\begin{lemma}
\label{lem.compute.htpy.ring.of.htpy.of.tate}
The homomorphism
\begin{equation}
\label{map.on.htpy.gr.comm.rings.from.ZhCpa.to.ZtCphCpaminusone}
\pi_* ( \ZZ^{\htpy \Cyclic_{p^a}} )
\xra{\pi_*(q_0^{a-1})}
\pi_* ( (\ZZ^{\st \Cyclic_p})^{\htpy \Cyclic_{p^{a-1}}} )
\end{equation}
of graded-commutative rings guaranteed by \Cref{obs.various.properties.of.tate}\Cref{part.rlax.s.m.str.on.h.to.tate} (which lifts $q_0^{a-1}$ from a morphism in $\Mod_\ZZ$ to a morphism in $\CAlg ( \Mod_\ZZ )$) is the homomorphism
\begin{equation}
\label{hom.of.gr.comm.rings}
\ZZ[c_a'] / (p^a c_a')
\longra
(\ZZ / p^a)[c_a^\pm]
\end{equation}
of graded-commutative rings characterized by the fact that it carries $c_a'$ to $c_a$.
\end{lemma}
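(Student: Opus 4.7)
The plan is to reduce to an explicit homological computation using the chain-level models provided by \Cref{lem.ABX.htpy.invariantly}. Specializing to $r=0$, the group $\Cyclic_{p^0}$ is trivial, so the chain complexes $\ttZ^a_0$ and $\ttT^{a-1}_0$ become ordinary $\ZZ$-chain complexes whose differentials alternate between $0$ (for $1-\sigma$) and $p^a$ (for $p^a N$). A direct homology computation then yields that $\pi_*(\ZZ^{\htpy \Cyclic_{p^a}}) \cong H_*(\ttZ^a_0)$ is $\ZZ$ in degree $0$ and $\ZZ/p^a$ in every degree $-2k$ with $k \geq 1$ (and zero in odd or positive degrees), while $\pi_*((\ZZ^{\st \Cyclic_p})^{\htpy \Cyclic_{p^{a-1}}}) \cong H_*(\ttT^{a-1}_0)$ is $\ZZ/p^a$ in every even degree and zero otherwise. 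These match the underlying graded abelian groups of $\ZZ[c_a']/(p^a c_a')$ and $(\ZZ/p^a)[c_a^\pm]$, with the classes of \Cref{notn.chern.class.elts} corresponding to the stated generators. Since $\ttq_0^{a-1}$ is the identity in every nonpositive degree, the induced map $\pi_*(q_0^{a-1})$ carries $c_a'$ to $c_a$ (and consequently $(c_a')^k$ to $(c_a)^k$ for all $k \geq 0$).

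For the source ring structure, I would invoke the classical identification of $\pi_{-*}(\ZZ^{\htpy \Cyclic_{p^a}})$ with the group cohomology ring $H^*(B\Cyclic_{p^a};\ZZ) \cong \ZZ[c_a']/(p^a c_a')$; this ring structure coincides with the $E_\infty$-algebra structure inherited from the right-lax symmetric monoidality of $(-)^{\htpy \Cyclic_{p^a}}$ applied to $\ZZ \in \CAlg(\Mod_\ZZ)$ via the diagonal of $B\Cyclic_{p^a}$. Consequently, the ring homomorphism $\pi_*(q_0^{a-1})$ restricts on the subring of nonpositive-degree classes to the map $\ZZ[c_a']/(p^a c_a') \to (\ZZ/p^a)[c_a]$ uniquely determined by $c_a' \mapsto c_a$.

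The remaining and most substantive step is to show that the target ring is genuinely the Laurent polynomial ring $(\ZZ/p^a)[c_a^\pm]$, i.e.\! that multiplication by $c_a$ acts invertibly on $\pi_*((\ZZ^{\st \Cyclic_p})^{\htpy \Cyclic_{p^{a-1}}})$. For this I propose to exhibit a chain-level periodicity quasi-isomorphism $\Sigma^{-2}\ttT^{a-1}_0 \xra{\sim} \ttT^{a-1}_0$ from the manifest 2-periodic structure of $\ttT^{a-1}_0$, and to verify by direct comparison of cycles in degree $-2$ that its induced homotopical automorphism coincides, up to a unit in $(\ZZ/p^a)^\times$, with multiplication by $c_a$. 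Once invertibility is established, the subring $(\ZZ/p^a)[c_a]$ identified in the previous paragraph extends to the full Laurent ring $(\ZZ/p^a)[c_a^\pm]$, completing the identification of $\pi_*(q_0^{a-1})$ as the map \Cref{hom.of.gr.comm.rings}. The main obstacle is precisely this last step: while the chain-level periodicity is apparent from the complex, matching its homotopical effect against scalar multiplication by the specific element $c_a$ requires careful bookkeeping at the level of homological-algebra representatives.
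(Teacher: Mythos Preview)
Your outline tracks the paper's proof through the chain-level computation of both graded abelian groups, the identification of the source ring with group cohomology, and the verification that $c_a' \mapsto c_a$. For the target ring structure, however, the paper takes a different and shorter route: rather than identifying the chain-level $2$-periodicity of $\ttT^{a-1}_0$ with multiplication by $c_a$, it observes that $\pi_*(q_0^{a-1})$ is a map of graded $\ZZ[c_a']/(p^a c_a')$-modules coinciding with the one underlying \Cref{hom.of.gr.comm.rings}, and then that the Laurent ring is the \emph{unique} graded-commutative ring structure on the target for which that module homomorphism lifts to a ring homomorphism. This bypasses your obstacle entirely.

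You correctly flag the periodicity identification as the hard step, but the proposed fix---comparing cycles in degree $-2$---is not enough. Both the periodicity and multiplication by $c_a$ send $1 \in \pi_0$ to $c_a \in \pi_{-2}$, yet agreement at the unit does not force agreement as self-maps, since $(\ZZ^{\st \Cyclic_p})^{\htpy \Cyclic_{p^{a-1}}}$ is not generated over $\ZZ^{\htpy \Cyclic_{p^a}}$ by its unit (it is nonzero in positive degrees where the source vanishes). Making your route rigorous would require a chain-level model of the Tate multiplication---essentially \Cref{lemma.present.mult.by.chern.class.on.tate}, which in the paper is proved \emph{after} and \emph{using} the present lemma, so pursuing it here risks circularity.
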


\begin{proof}
By \Cref{lem.ABX.htpy.invariantly}, the morphism $\ttq_0^{a-1}$ in $\Ch_{\ZZ}$ presents the morphism $q_0^{a-1}$ in $\Mod_\ZZ$. From this, we easily identify the morphisms \Cref{map.on.htpy.gr.comm.rings.from.ZhCpa.to.ZtCphCpaminusone} and \Cref{hom.of.gr.comm.rings} of graded abelian groups. Moreover, it is clear that the isomorphism
\[
\pi_* ( \ZZ^{\htpy \Cyclic_{p^a}} )
\cong
\ZZ[c_a'] / (p^a c_a')
\]
of graded abelian groups is in fact one of graded-commutative rings. It follows that the homomorphism \Cref{map.on.htpy.gr.comm.rings.from.ZhCpa.to.ZtCphCpaminusone} of graded $\pi_*(\ZZ^{\htpy \Cyclic_{p^a}})$-modules must coincide with the homomorphism \Cref{hom.of.gr.comm.rings} of graded $\ZZ[c_a'] / (p^a c_a')$-modules. Now, it suffices to observe that the commutative ring structure on the graded abelian group $(\ZZ / p^a)[c_a^\pm]$ is the only one that lifts the homomorphism \Cref{hom.of.gr.comm.rings} of graded $\ZZ[c_a'] / (p^a c_a')$-modules to one of graded-commutative rings.
\end{proof}

\begin{notation}
\label{notn.mult.by.an.eqvrt.htpy.elt}
Given a stably symmetric monoidal $\infty$-category $\cC$, a commutative algebra object $A \in \CAlg(\cC)$, an $A$-module $M \in \Mod_A(\cC)$, and a morphism $\uno_\cC \xra{f} \Sigma^k A$ in $\cC$ for any $k \in \ZZ$, we simply write $M \xra{f} \Sigma^k M$ for the morphism given by multiplication by $f$, i.e.\! the composite
\[
M
\simeq
\uno_\cC
\otimes
M
\xra{f \otimes \id_M}
\Sigma^k A
\otimes M
\longra
\Sigma^k M
~.
\]
In particular, we apply this to the commutative algebra objects $\ZZ^{\htpy \Cyclic_p} , \ZZ^{\st \Cyclic_p} \in \CAlg(\Mod^{\htpy \Cyclic_{p^r}}_\ZZ)$ (guaranteed by \Cref{obs.various.properties.of.tate}\Cref{part.rlax.s.m.str.on.h.to.tate}) and the morphisms
\[
c'_{r+1}
\in
\ulhom_{\Mod^{\htpy \Cyclic_{p^r}}_\ZZ} \left( \ZZ , \Sigma^2 \ZZ^{\htpy \Cyclic_p} \right)
\simeq
\Sigma^2 \ZZ^{\htpy \Cyclic_{p^{r+1}}}
\qquad
\text{and}
\qquad
c_{r+1}
\in
\ulhom_{\Mod^{\htpy \Cyclic_{p^r}}_\ZZ} \left( \ZZ , \Sigma^2 \ZZ^{\st \Cyclic_p} \right)
\simeq
\Sigma^2 (\ZZ^{\st \Cyclic_p})^{\htpy \Cyclic_{p^r}}
\]
of \Cref{notn.chern.class.elts}.
\end{notation}

\begin{notation}
\label{notn.shifted.endo.of.T}
For any $i,r \geq 0$ we write
\[
\ttT^i_r
\xra{\ttc_r^i}
\Sigma^2 \ttT^i_r
\]
for the evident isomorphism in $\Ch_{\ZZ[\Cyclic_{p^r}]}$ that is the identity in all degrees.
\end{notation}

\begin{observation}
\label{obs.fixedpoints.of.chain.level.c.is.itself}
For any $i \geq 0$ and $r \geq 1$ we have an evident identification
\[
\left(
\ttT^i_r
\xra{\ttc_r^i}
\Sigma^2 \ttT^i_r
\right)^{\Cyclic_p}
\cong
\left(
\ttT^{i+1}_{r-1}
\xra{\ttc^{i+1}_{r-1}}
\Sigma^2 \ttT^{i+1}_{r-1}
\right)
\]
in $\Ar(\Ch_{\ZZ[\Cyclic_{p^{r-1}}]})$.
\end{observation}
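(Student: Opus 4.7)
The proof is a straightforward degree-by-degree calculation, but the mildly subtle arithmetic identity $N_1^2 = p N_1$ governs why the exponent on $p$ jumps by one, matching the indexing shift $i \mapsto i+1$. The plan is as follows.

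First, I would unpack the definitions. The complex $\ttT^i_r$ has the $\ZZ[\Cyclic_{p^r}]$-module $\ZZ[\Cyclic_{p^r}]$ in every degree, with differentials alternating between $p^{i+1}N$ (where $N = 1+\sigma+\cdots+\sigma^{p^r-1}$) and $1-\sigma$, and the map $\ttc^i_r$ is the identity in every degree (viewed as a map $\ttT^i_r \to \Sigma^2 \ttT^i_r$). Hence to prove the claim it suffices to compute the $\Cyclic_p$-fixed points of $\ZZ[\Cyclic_{p^r}]$ together with the two differentials, and then observe that the identity map is preserved under fixed points.

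Next, I would identify the $\Cyclic_p$-fixed points of $\ZZ[\Cyclic_{p^r}]$, where $\Cyclic_p = \langle \sigma^{p^{r-1}}\rangle \leq \Cyclic_{p^r}$. Writing $N_1 := 1 + \sigma^{p^{r-1}} + \cdots + \sigma^{(p-1)p^{r-1}}$ (the norm for this $\Cyclic_p$) and $N' := 1+\sigma+\cdots+\sigma^{p^{r-1}-1}$ (the norm for $\Cyclic_{p^{r-1}}$), one has $\ZZ[\Cyclic_{p^r}]^{\Cyclic_p} = N_1 \cdot \ZZ[\Cyclic_{p^r}]$, and the map
\[
\ZZ[\Cyclic_{p^{r-1}}] \longra N_1 \cdot \ZZ[\Cyclic_{p^r}], \qquad \bar{x} \longmapsto N_1 \cdot x
\]
is an isomorphism of $\ZZ[\Cyclic_{p^{r-1}}]$-modules (the map is well-defined since $N_1 \cdot (\sigma^{p^{r-1}} - 1) = 0$, and it is surjective by definition and injective by a direct coefficient count).

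The key calculation is to transport the differentials across this isomorphism. The differential $1-\sigma$ carries $N_1 \cdot x$ to $N_1 \cdot (1-\sigma)x$, which corresponds to $(1-\sigma)\bar{x}$ under the identification (here $\sigma$ on the right-hand side is the generator of $\Cyclic_{p^{r-1}}$). The differential $p^{i+1}N$ carries $N_1 \cdot x$ to $p^{i+1}N N_1 x = p^{i+1} N' N_1^2 x$; using the identity $N_1^2 = p N_1$ (which holds because $N_1 \cdot \sigma^{jp^{r-1}} = N_1$ for every $j$), this equals $p^{i+2}\, N_1 \cdot (N' x)$, which corresponds to $p^{i+2} N' \bar{x}$. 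Thus the two differentials on $(\ttT^i_r)^{\Cyclic_p}$ are $1 - \sigma$ and $p^{(i+1)+1}N'$, which are exactly the differentials of $\ttT^{i+1}_{r-1}$.

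Finally, since $\ttc^i_r$ is the identity map in every degree, its restriction to $\Cyclic_p$-fixed points is again the identity, and under the identification above this is precisely $\ttc^{i+1}_{r-1}$. The main (mild) obstacle is just the arithmetic identity $N_1^2 = pN_1$ responsible for the shift of the superscript on $\ttT$; everything else is direct inspection.
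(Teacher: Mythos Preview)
Your proof is correct. The paper gives no proof at all: it labels this an ``Observation'' with an ``evident identification,'' relying on the earlier (also unproved) Observation that $(\ttT^i_r)^{\Cyclic_p} \cong \ttT^{i+1}_{r-1}$ together with the fact that $\ttc^i_r$ is levelwise the identity. Your argument is exactly the natural way to make ``evident'' precise, and the identity $N_1^2 = pN_1$ you isolate is indeed the mechanism behind the index shift $i \mapsto i+1$.
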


\begin{lemma}
\label{lemma.present.mult.by.chern.class.on.tate}
The morphism
\[
\ttT^0_r
\xra{\ttc^0_r}
\Sigma^2
\ttT^0_r
\]
in $\Ch_{\ZZ[\Cyclic_{p^r}]}$ is a presentation of the morphism
\[
\ZZ^{\st \Cyclic_p}
\xra{c_{r+1}}
\Sigma^2 \ZZ^{\st \Cyclic_p}
\]
in $\Mod^{\htpy \Cyclic_{p^r}}_\ZZ$ (using \Cref{notn.mult.by.an.eqvrt.htpy.elt}).
\end{lemma}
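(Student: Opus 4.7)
The plan is to reduce the lemma to a concrete chain-level computation, followed by a $\ZZ^{\st \Cyclic_p}$-linearity argument.

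First, I would present the unit $\eta: \ZZ \to \ZZ^{\st \Cyclic_p}$ in $\Mod^{\htpy \Cyclic_{p^r}}_\ZZ$ by the $\Cyclic_{p^r}$-equivariant chain map $\uwave{\ZZ} \to \ttT^0_r$ that carries $1$ to the norm element $N \in \ZZ[\Cyclic_{p^r}] = (\ttT^0_r)_0$. By the adaptedness of $\ttT^0_r$ to homotopy $\Cyclic_{p^r}$-fixedpoints (\Cref{lemma.ABX.are.adapted}), $\pi_0((\ZZ^{\st \Cyclic_p})^{\htpy \Cyclic_{p^r}})$ is computed as $\pi_0((\ttT^0_r)^{\Cyclic_{p^r}})$. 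Using that $N \cdot N = p^r N$, the invariant subcomplex $(\ttT^0_r)^{\Cyclic_{p^r}}$ is levelwise $\ZZ \cdot N$ with alternating differentials $0$ and $p^{r+1}$, so this $\pi_0$ is $\ZZ/p^{r+1}$, generated by $[N]$---which is the class of $\eta$.

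Next, by \Cref{notn.chern.class.elts} (and its footnote), the morphism $c_{r+1}: \ZZ \to \Sigma^2 \ZZ^{\st \Cyclic_p}$ is presented by the chain map $\uwave{\ZZ} \to \Sigma^2 \ttT^0_r$ that carries $1$ to $N$ in degree $-2$ of $\ttT^0_r$ (equivalently, degree $0$ of $\Sigma^2 \ttT^0_r$). Since $\ttc^0_r$ is the identity in every degree, its chain-level composite with the presentation of $\eta$ is precisely this chain map. Consequently $\Pi_\infty(\ttc^0_r) \circ \eta = c_{r+1}$ in $\Mod^{\htpy \Cyclic_{p^r}}_\ZZ$.

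To upgrade this to the full identity $\Pi_\infty(\ttc^0_r) = c_{r+1} \cdot (-)$, I would argue that both morphisms are $\ZZ^{\st \Cyclic_p}$-linear endomorphisms of $\ZZ^{\st \Cyclic_p}$ and hence are determined by their precomposition with $\eta$. Multiplication by $c_{r+1}$ is $\ZZ^{\st \Cyclic_p}$-linear by construction; the nontrivial claim is that $\Pi_\infty(\ttc^0_r)$ is too. The main obstacle is making this rigorous: since $\ttT^0_r$ does not visibly carry a chain-level $E_\infty$-algebra structure modeling that of $\ZZ^{\st \Cyclic_p}$, one must either exploit the strict $2$-periodicity of $\ttT^0_r$ to transport the compatibility to the homotopical level, or else characterize multiplication by $c_{r+1}$ as the unique equivalence $\ZZ^{\st \Cyclic_p} \xra{\sim} \Sigma^2 \ZZ^{\st \Cyclic_p}$ whose restriction along $\eta$ is $c_{r+1}$ and verify this uniqueness directly.
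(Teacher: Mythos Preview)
Your strategy is sound in spirit but stalls at exactly the point you flag: neither of your two suggested fixes for establishing that $\Pi_\infty(\ttc^0_r)$ is $\ZZ^{\st \Cyclic_p}$-linear (or, equivalently, that an equivalence $\ZZ^{\st \Cyclic_p}\xra{\sim}\Sigma^2\ZZ^{\st \Cyclic_p}$ is determined by its restriction along $\eta$) is actually carried out, and neither is obviously easy. The uniqueness route would require knowing that $\eta^*\colon\pi_0\Aut_{\Mod^{\htpy\Cyclic_{p^r}}_\ZZ}(\ZZ^{\st\Cyclic_p})\to\pi_0((\ZZ^{\st\Cyclic_p})^{\htpy\Cyclic_{p^r}})^\times$ is injective, which amounts to computing the full $\Cyclic_{p^r}$-equivariant endomorphism ring of $\ZZ^{\st\Cyclic_p}$ --- a computation you have not supplied and which is not immediate from the available lemmas. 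The ``strict $2$-periodicity'' route is too vague to evaluate: the chain-level periodicity of $\ttT^0_r$ does not by itself endow $\Pi_\infty(\ttc^0_r)$ with a module-map structure over the lax-monoidal Tate algebra.

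The paper circumvents this difficulty entirely by working one step upstream. Rather than analyzing module-linearity over $\ZZ^{\st\Cyclic_p}$, it first presents multiplication by $c'_{r+1}$ on $\ZZ^{\htpy\Cyclic_p}$: writing $\ZZ^{\htpy\Cyclic_p}\simeq\ulhom_{\Mod^{\htpy\Cyclic_{p^{r+1}}}_\ZZ}(\ZZ,\ZZ)$ and dualizing, multiplication by $c'$ becomes precomposition with an explicit map $\Sigma^{-2}\ZZ\to\ZZ$, which is modeled using the free resolution ${_0\ttS_{r+1}}$ by a concrete shift map $\ttc'\colon\ttZ^1_r\to\Sigma^2\ttZ^1_r$. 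Since $c$ is invertible on $\ZZ^{\st\Cyclic_p}$ and $\sQ_{\Cyclic_p}(\ZZ)$ intertwines $c'$ with $c$, the Tate construction is identified with the mapping telescope $\tel(c')$; at the chain level $\tel(\ttc')\cong\ttT^0_r$, and the induced self-map on the telescope is visibly $\ttc^0_r$. This approach trades your module-linearity obstruction for a routine dualization-plus-telescope argument, at the cost of passing through $\ZZ^{\htpy\Cyclic_p}$ as an intermediary.
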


\begin{proof}
For simplicity, we write
\[
c' := c'_{r+1}
~,
\qquad
c := c_{r+1}
~,
\qquad
\text{and}
\qquad
\ttc := \ttc_r^0
~.
\]

It follows from \Cref{lem.compute.htpy.ring.of.htpy.of.tate} (and \Cref{obs.various.properties.of.tate}\Cref{part.rlax.s.m.str.on.h.to.tate}) that we have a commutative square
\begin{equation}
\label{mult.by.c.prime.and.c.commute}
\begin{tikzcd}[column sep=1.5cm]
\ZZ^{\htpy \Cyclic_p}
\arrow{r}{\sQ_{\Cyclic_p}(\ZZ)}
\arrow{d}[swap]{c'}
&
\ZZ^{\st \Cyclic_p}
\arrow{d}{c' \simeq c}
\\
\Sigma^2 \ZZ^{\htpy \Cyclic_p}
\arrow{r}[swap]{\sQ_{\Cyclic_p}(\ZZ)}
&
\Sigma^2 \ZZ^{\st \Cyclic_p}
\end{tikzcd}
\end{equation}
in $\Mod^{\htpy \Cyclic_{p^r}}_\ZZ$. By definition, the left vertical morphism in diagram \Cref{mult.by.c.prime.and.c.commute} is given by
\begin{align}
\nonumber
\ulhom_{\Mod^{\htpy \Cyclic_{p^{r+1}}}_\ZZ} \left( \ZZ[\Cyclic_{p^r}] , \ZZ \xlongra{c'} \Sigma^2 \ZZ \right)
& \simeq
\ulhom_{\Mod^{\htpy \Cyclic_{p^{r+1}}}_\ZZ} \left( \ZZ[\Cyclic_{p^r}] \otimes \left( \ZZ \xlongra{c'} \Sigma^2 \ZZ \right)^\vee  , \ZZ \right)
\\
\label{rewritten.endomorphism.in.ZhCp}
&
\simeq
\ulhom_{\Mod^{\htpy \Cyclic_{p^{r+1}}}_\ZZ} \left( \ZZ[\Cyclic_{p^r}] \otimes \left( \ZZ \xlongla{c'} \Sigma^{-2} \ZZ \right)  , \ZZ \right)
~.
\end{align}

We give a chain-level presentation of the morphism \Cref{rewritten.endomorphism.in.ZhCp}. For this, consider the commutative triangle
\begin{equation}
\label{definitional.presentation.of.c.prime.gives.shifted.endo.presentation}
\begin{tikzcd}
&
{_0 \ttS_{r+1}}
\arrow{d}[sloped, anchor=south]{\approx}
\\
\Sigma^{-2} {_0 \ttS_{r+1}}
\arrow{ru}
\arrow{r}
&
\uwave{\ZZ}
\end{tikzcd}
\end{equation}
in $\Ch_{\ZZ[\Cyclic_{p^{r+1}}]}$, in which both morphisms to $\uwave{\ZZ}$ are characterized by the fact that they act as $\ZZ[\Cyclic_{p^{r+1}}] \xra{\sigma \mapsto 1} \ZZ$ in degree 0 and the diagonal morphism is characterized by the fact that it acts as the identity in all nonnegative degrees (and the vertical morphism is evidently a quasi-isomorphism). Through the evident isomorphism
\[
\ttZ^{r+1}_0
\cong
\ulhom_{\Ch_{\ZZ[\Cyclic_{p^{r+1}}]}}( {_0 \ttS_{r+1}} , \uwave{\ZZ} )
\]
in $\Ch_\ZZ$, the horizontal morphism in diagram \Cref{definitional.presentation.of.c.prime.gives.shifted.endo.presentation} in $\Ch_{\ZZ[\Cyclic_{p^{r+1}}]}$ represents the morphism $\Sigma^{-2} \ZZ \xra{c'} \ZZ$ in $\Mod^{\htpy \Cyclic_{p^{r+1}}}_\ZZ$. Therefore, the diagonal morphism in diagram \Cref{definitional.presentation.of.c.prime.gives.shifted.endo.presentation} in $\Ch_{\ZZ[\Cyclic_{p^{r+1}}]}$ represents the same morphism in $\Mod^{\htpy \Cyclic_{p^{r+1}}}_\ZZ$. Hence, the morphism \Cref{rewritten.endomorphism.in.ZhCp} in $\Mod_{\ZZ[\Cyclic_{p^r}]}$ is represented by the morphism
\begin{equation}
\label{doubly.rewritten.endomorphism.in.ZhCp}
\ulhom_{\Ch_{\ZZ[\Cyclic_{p^{r+1}}]}} \left( \uwave{ \ZZ[\Cyclic_{p^r}]} \otimes \left( {_0 \ttS_{r+1}} \longla \Sigma^{-2} {_0 \ttS_{r+1}} \right) , \uwave{\ZZ} \right)
\end{equation}
in $\Ch_\ZZ[\Cyclic_{p^r}]$ (using that ${_0 \ttS_r}$ is levelwise free and concentrated in nonnegative degrees).

We have just shown that the morphism \Cref{doubly.rewritten.endomorphism.in.ZhCp} in $\Ch_{\ZZ[\Cyclic_{p^r}]}$ is a presentation of the left vertical morphism in diagram \Cref{mult.by.c.prime.and.c.commute} in $\Mod^{\htpy \Cyclic_{p^r}}_\ZZ$. Unwinding the definitions, we find that the former is the morphism
\[
\ttZ^1_r
\xlongra{\ttc'}
\Sigma^2 \ttZ^1_r
\]
in $\Ch_{\ZZ[\Cyclic_{p^r}]}$ characterized by the fact that it acts as the identity in all nonpositive degrees. To proceed, we define the mapping telescopes
\[
\tel(\ttc')
:=
\colim \left(
\ttZ^1_r
\xlongra{\ttc'}
\Sigma^2 \ttZ^1_r
\xlongra{\ttc'}
\Sigma^4 \ttZ^1_r
\xlongra{\ttc'}
\cdots
\right)
\in
\Ch_{\ZZ[\Cyclic_{p^r}]}
\]
and
\[
\tel(c')
:=
\colim \left(
\ZZ^{\htpy \Cyclic_p}
\xlongra{c'}
\Sigma^2 \ZZ^{\htpy \Cyclic_p}
\xlongra{c'}
\Sigma^4 \ZZ^{\htpy \Cyclic_p}
\xlongra{c'}
\cdots
\right)
\in
\Mod^{\htpy \Cyclic_{p^r}}_\ZZ
~.
\]
Now, by \Cref{lem.compute.htpy.ring.of.htpy.of.tate}, the morphism $\ZZ^{\st \Cyclic_p} \xra{c} \Sigma^2 \ZZ^{\st \Cyclic_p}$ in $\Mod^{\htpy \Cyclic_{p^r}}_\ZZ$ is an equivalence. Therefore the commutative square \Cref{mult.by.c.prime.and.c.commute} in $\Mod^{\htpy \Cyclic_{p^r}}_\ZZ$ extends to a commutative diagram
\begin{equation}
\label{mult.by.c.prime.and.c.commute.and.also.telescope}
\begin{tikzcd}
\ZZ^{\htpy \Cyclic_p}
\arrow[dashed]{r}
\arrow[bend left]{rr}{\sQ_{\Cyclic_p}(\ZZ)}
\arrow{d}[swap]{c'}
&
\tel(c')
\arrow[dashed]{r}{\sim}
\arrow[dashed]{d}[swap]{c'}
&
\ZZ^{\st \Cyclic_p}
\arrow{d}{c' \simeq c}
\\
\Sigma^2 \ZZ^{\htpy \Cyclic_p}
\arrow[dashed]{r}
\arrow[bend right]{rr}[swap]{\sQ_{\Cyclic_p}(\ZZ)}
&
\Sigma^2 \tel(c')
\arrow[dashed]{r}{\sim}
&
\Sigma^2 \ZZ^{\st \Cyclic_p}
\end{tikzcd}~,
\end{equation}
in which the equivalences also follow from \Cref{lem.compute.htpy.ring.of.htpy.of.tate}. On the other hand, using \Cref{lem.ABX.htpy.invariantly} (and the fact that the functor $\Ch_{\ZZ[\Cyclic_{p^r}]} \xra{\Pi_\infty} \Mod^{\htpy \Cyclic_{p^r}}_\ZZ$ commutes with filtered colimits) we see that the diagram \Cref{mult.by.c.prime.and.c.commute.and.also.telescope} in $\Mod^{\htpy \Cyclic_{p^r}}_\ZZ$ is presented by the diagram
\[ \begin{tikzcd}
\ttZ^1_r
\arrow{r}
\arrow[bend left]{rr}{\ttq^0_r}
\arrow{d}[swap]{\ttc'}
&
\tel(\ttc')
\arrow{r}{\cong}
\arrow{d}[swap]{\ttc'}
&
\ttT^0_r
\arrow{d}{\ttc}
\\
\Sigma^2 \ttZ^1_r
\arrow{r}
\arrow[bend right]{rr}[swap]{\ttq^0_r}
&
\Sigma^2 \tel(\ttc')
\arrow{r}{\cong}
&
\Sigma^2 \ttT^0_r
\end{tikzcd} \]
in $\Ch_{\ZZ[\Cyclic_{p^r}]}$, in which the square on the right commutes by inspection. In particular, the claim follows.
\end{proof}

\bibliographystyle{amsalpha}
\bibliography{mackey}{}

\end{document}